\renewcommand{\zero}{\ee_0}
\newcommand{\Grass}{\mathcal G}
\newcommand{\Proj}{{\mathrm{proj}}}
\newcommand{\Euc}{{\mathrm{euc}}}
\newcommand{\para}{\mathrm{para}}
\newcommand{\token}{\infty}
\newcommand{\LS}{\mathbf{K}}
\newcommand{\RLS}{\mathbf{L}}
\newcommand{\SK}{\mathbf{S}}
\newcommand{\dimH}{d}
\DeclareMathOperator{\Rad}{Rad}
\begin{document}
\title{Dimension rigidity in conformal structures}

\authortushar\authordavid\authormariusz

\subjclass[2010]{Primary 53C24, 20H10, 28A80, 37F35}
\keywords{dimension rigidity, conformal dynamics, Hausdorff dimension, rectifiability, Kleinian groups, iterated function systems, iteration of rational functions, Julia sets}

\begin{Abstract}
Let $\Lambda$ be the limit set of a conformal dynamical system, i.e. a Kleinian group acting on either finite- or infinite-dimensional real Hilbert space, a conformal iterated function system, or a rational function. We give an easily expressible sufficient condition, requiring that the limit set is not too much bigger than the radial limit set, for the following dichotomy: $\Lambda$ is either a real-analytic manifold or a fractal in the sense of Mandelbrot (i.e. its Hausdorff dimension is strictly greater than its topological dimension).

Our primary focus is on the infinite-dimensional case. An important component of the strategy of our proof comes from the rectifiability techniques of Mayer and Urba\'nski ('03), who obtained a dimension rigidity result for conformal iterated function systems (including those with infinite alphabets). In order to handle the infinite dimensional case, both for Kleinian groups and for iterated function systems, we introduce the notion of pseudorectifiability, a variant of rectifiability, and develop a theory around this notion similar to the theory of rectifiable sets.

Our approach also extends existing results in the finite-dimensional case, where it unifies the realms of Kleinian groups, conformal iterated function systems, and rational functions. For Kleinian groups, we improve on the rigidity result of Kapovich ('09) by substantially weakening its hypothesis of geometrical finiteness. Moreover, our proof, based on rectifiability, is entirely different than that of Kapovich, which depends on homological algebra.

Another advantage of our approach is that it allows us to use the ``demension'' of \v Stan$'$ko ('69) as a substitute for topological dimension. For example, we prove that any dynamically defined version of Antoine's necklace must have Hausdorff dimension strictly greater than 1 (i.e. the demension of Antoine's necklace).
\end{Abstract}
\maketitle



\tableofcontents

\draftnewpage
\section{Introduction}

Beginning with historical perspective, our problem has roots in Poincar\'e's 1883 \emph{M\'emoire sur les groupes klein\'eens}, in which Poincar\'e famously described his investigations into the surprising geometric intricacy of the ``limit curves'' of a certain family of discrete groups \cite[\68]{Poincare1}. These groups, generated by inversions in a simple closed chain of circles externally tangent to each other, had been presented to Poincar\'e by Klein in the course of their 1881-1882 correspondence \cite[p.102]{PoincareKlein}, and their limit sets are notable for being perhaps the first examples of ``naturally occurring'' fractals. For groups in this class such that the limit curve is not a circle, Poincar\'e argued that at every parabolic point, the curve had a tangent line but no osculating circle.\Footnote{In modern notation, an \emph{osculating circle} for a set $K$ at a point $\pp\in K$ is a circle $C$ such that for all $\xx\in K$, $\dist(\xx,C) = o(\|\xx - \pp\|^2)$.} He went on to conjecture that no point of the limit curve which was not a parabolic limit point could have a tangent line.\Footnote{Poincar\'e writes, ``De plus j'ai tout lieu de croire qu'il n'y \`a pas de tangente aux points de $L$ qui ne font pas partie de $P$'' \cite[p.79]{Poincare1} (``Also I have every reason to believe that there are no tangent points of $L$ that are not part of $P$''). Here $L$ is the limit curve and $P$ the set of parabolic points. In the case of loxodromic fixed points, the conjecture was proven by Fricke in 1894 \cite{Fricke} (see also \cite[pp.399-445]{FrickeKlein}). Although it seems that there has been no interest in the conjecture for the last 120 years, we give the full solution below (Remark \ref{remarkpoincareconjecture}).} Both of these facts exemplify strong senses in which the limit curve is not analytic, which were deduced from the assumption that the limit curve is not a circle. Thus they are the prototype of dynamical rigidity theorems: a dichotomy between circles and fractals.

 
 

Another historically important source of fractals was the theory of rational functions, whose Julia sets are in many ways analogous to the limit sets of Kleinian groups. In 1920, Fatou proved the following dichotomy regarding these sets \cite[p.250]{Fatou1}: if some relatively open subset of the Julia set of a rational function is a simple curve that has a tangent at every point, then the Julia set must either be a generalized circle (i.e. a geometric circle or line) or an arc of a generalized circle. Recently, the hypothesis of Fatou's theorem was weakened as follows:
\begin{theorem}[{\cite[Corollary 1 and Theorem 2]{EremenkoVanstrien}}; see also {\cite[Theorem 2]{BergweilerEremenko}}]
\label{theoremeremenkovanstrien}
If a relatively open subset of the Julia set $J$ of a rational function is contained in a smooth curve (i.e. one that that has a tangent at every point), then $J$ is contained in a generalized circle $C$. Moreover, in this case either $J = C$, $J$ is an arc in $C$, or $J$ is homeomorphic to the Cantor set.
\end{theorem}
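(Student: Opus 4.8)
The plan is to promote the hypothesis --- that $J$ meets some smooth curve in a relatively open set --- to the statement that $J$ lies \emph{locally} on a smooth curve near essentially every one of its points, using the ``blowing-up'' property of Julia sets, and then to read off the geometry from the dynamics at repelling periodic points. Throughout, recall that $J$ is perfect, completely invariant, equal to the closure of the set of repelling periodic points, and that for any open set $U$ with $U\cap J\ne\emptyset$ one has $\bigcup_{n\ge 0}f^n(U)\supseteq\widehat{\mathbb C}\setminus E_f$, where the exceptional set $E_f$ has cardinality at most $2$ and lies in the Fatou set. Let $G$ be the (countable) grand orbit of the finitely many critical points of $f$; it is forward invariant, so if $p\notin G$ then no preimage of $p$ lies in $G$, and consequently the forward orbit of such a preimage avoids all critical points. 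Fix a repelling periodic point $p\in J\setminus G$ (these are dense in $J$, since only finitely many cycles can meet the critical orbit). By blowing up there are $n$ and $x\in U$ with $f^n(x)=p$; since the orbit of $x$ misses the critical points, $f^n$ maps a neighbourhood $V\subseteq U$ of $x$ conformally onto a neighbourhood $W$ of $p$, complete invariance gives $f^n(J\cap V)=J\cap W$, and because a holomorphic map with nonvanishing derivative sends a smooth curve to a smooth curve, $J\cap W$ lies on one. Thus $J$ lies locally on a smooth curve near a dense set $D$ of repelling periodic points.

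Next I would analyze the dynamics at a point $p\in D$, of period $k$: writing $g=f^k$ and $\lambda=g'(p)$, we have $|\lambda|>1$, and by Koenigs' linearization theorem there is a conformal chart $\phi$ near $p$ with $\phi(p)=0$ and $\phi\circ g\circ\phi^{-1}(z)=\lambda z$. Then $\Gamma:=\phi(J)$, near $0$, is an infinite set accumulating at $0$ (as $J$ is perfect), contained in a smooth curve with some tangent direction $v$ at $0$, and invariant under $z\mapsto\lambda z$ together with its inverse. For any $z\in\Gamma\setminus\{0\}$ near $0$, the points $\lambda^{-m}z$ lie in $\Gamma$, tend to $0$, and have argument $\arg z-m\arg\lambda$; since these arguments must tend to that of $v$ modulo $\pi$, we conclude $\arg\lambda\in\pi\mathbb Z$, i.e.\ $\lambda\in\mathbb R$, and moreover every such $z$ satisfies $\arg z\equiv\arg v\pmod{\pi}$. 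Hence near $0$ the set $\Gamma$ is contained in the line $\mathbb R v$, so near $p$ the set $J$ is contained in the \emph{real-analytic} arc $\phi^{-1}(\mathbb R v)$. Re-running the transport argument of the first paragraph with ``smooth'' replaced by ``real-analytic'' (still preserved by holomorphic pullback), $J$ lies locally on a real-analytic arc near every point of $J\setminus G$.

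To globalize, note that two real-analytic arcs sharing an infinite subset with an accumulation point have the same germ there; since $J$ is perfect, the local arcs from the previous step are therefore uniquely determined and patch together, along $J$, into a real-analytic $1$-manifold containing $J\setminus G$ (and, by continuity, $J$), along which $f$ and its local inverse branches act. Let $\sigma$ be the Schwarz reflection in this manifold, an anti-holomorphic involution fixing it pointwise; on the manifold $f\circ\sigma=f=\sigma\circ f$, and since two anti-holomorphic maps agreeing on a real-analytic arc agree on a neighbourhood of it, $f\circ\sigma=\sigma\circ f$ near $J$. Using the density of forward and backward orbits of points of $J$, one propagates this functional equation to obtain that $\sigma$ is the restriction of a global anti-M\"obius involution of $\widehat{\mathbb C}$, whose fixed-point set is a generalized circle $C$; in particular $J\subseteq C$. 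This step --- converting purely local real-analyticity of $J$ into membership in a single generalized circle --- is the one I expect to be the main obstacle, because an $f$-invariant real-analytic curve is not obviously a generalized circle; the reflection argument (or, equivalently, a welding/extension argument for the finite Blaschke products attached to the complementary Fatou components, in the spirit of the classical Fatou--Julia analysis) must be carried out with care to ensure that $\sigma$ is defined near all of $J$ and extends coherently along the dynamics.

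Finally, for the trichotomy: $J$ is now a closed, perfect, completely invariant subset of $C\cong S^1$, on which $f$ restricts to a branched self-covering. If $J$ has empty interior in $C$, then, being a closed nowhere-dense subset of a $1$-manifold, it is totally disconnected, hence --- being compact, perfect and metrizable --- homeomorphic to the Cantor set by Brouwer's characterization. If $J$ has nonempty interior in $C$, then blowing up an open subarc of $J$ within $C$ (again using density of repelling periodic points and complete invariance) shows that $J$ is a finite union of closed arcs, and the connectivity forced by complete invariance together with density of repelling periodic points then reduces this to $J$ being a single closed arc or all of $C$. This is exactly the asserted dichotomy.
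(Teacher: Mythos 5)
The paper does not prove this theorem; it is cited from \cite{EremenkoVanstrien} and \cite{BergweilerEremenko}, so there is no internal proof to compare against. Evaluating your proposal on its own merits: the first two steps are sound and in fact capture the same key mechanism used in the literature. Spreading local smoothness along the dynamics via the blowing-up property, and then using Koenigs linearization at a repelling periodic point to force $\lambda\in\mathbb{R}$ and to upgrade ``smooth'' to ``real-analytic'' locally, is exactly the right idea. (Minor care is needed, as you note, to keep critical points out of the transport chains, and to rule out the superattracting case $\lambda=0$, which cannot occur for a periodic point of $J$; and at a critical point of $f$ lying on $J$ the pushforward of a real-analytic germ is a finite union of analytic rays, so patching across $J\cap G$ needs a sentence or two more than ``by continuity.'')

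The real gap is the globalization step, which you flagged yourself, and it is a genuine obstacle rather than a routine verification. Extending the local Schwarz reflection $\sigma$ to $f(N)$ by $\sigma(f(z)):=f(\sigma(z))$ is not obviously well-defined: checking $f(\sigma(z_1))=f(\sigma(z_2))$ whenever $f(z_1)=f(z_2)$ uses the commutation $\sigma\circ f=f\circ\sigma$, which you only know where both sides are already defined, i.e. for $f(z_i)\in N$; for a point of $f(N)\setminus N$ you have no a priori control. More to the point, the assertion that a germ of an anti-holomorphic involution commuting with $f$ near $J$ must be the restriction of a global anti-M\"obius involution is, after passing to the holomorphic part $h$ with $\sigma(z)=\overline{h(z)}$ and setting $g(z)=\overline{f(\bar z)}$, precisely the statement that a local holomorphic conjugacy from $f$ to $g$ near $J(f)$ extends to a global M\"obius conjugacy. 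That is a rigidity theorem, not a formal manipulation; it is essentially the content of the Eremenko--van Strien result on rational maps with real multipliers, which they prove with quasiconformal deformation theory, ruling out all nontrivial cases except flexible Latt\`es maps (excluded here because $J\neq\widehat{\mathbb C}$). Your parenthetical alternative (Riemann maps and Blaschke products on complementary Fatou components) is only directly available when $J$ is a priori a Jordan curve or arc, which you do not yet know. Finally, the trichotomy at the end also needs a bit more: the blow-up property gives $f^n(U)\supset J$ for open neighbourhoods $U$ in $\widehat{\mathbb C}$, not that $f^n$ of an open subarc of $C$ covers $J$ within $C$, since $C$ need not be $f$-invariant; the usual route is to first conclude $J$ is a compact, perfect, forward-invariant subset of $C$ and then analyse the boundary map as a Blaschke-type circle map. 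So: steps~1--2 are correct and match the literature strategy; step~3 is a real gap that cannot be closed by elementary means; step~4 has a fixable but nontrivial imprecision.
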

Again this theorem can be viewed as a dichotomy: every Julia set is either contained in a generalized circle or is not contained in any smooth curve. It is interesting to ask for a strengthening of the latter case of the dichotomy by describing other fractal properties that the Julia set (or limit set in the case of Kleinian groups) must have in that case. The first result in this direction was given in 1979 by R.\,Bowen \cite{Bowen}, who showed that the limit set of a convex-cocompact quasi-Fuchsian group\Footnote{A convex-cocompact Kleinian group $G \leq \Mob(\what\C)$ is \emph{quasi-Fuchsian} if it is conjugate to some cocompact Fuchsian group (i.e. a uniform lattice) in $\Mob(S^1)$.} is either a generalized circle or has Hausdorff dimension strictly greater than 1. Bowen's result was improved by D.\,P.\,Sullivan \cite{Sullivan_discrete_conformal_groups}, P.\,J.\,Braam \cite{Braam}, R.\,D.\,Canary and E.\,C.\,Taylor \cite{CanaryTaylor}, and finally by C.\,J.\,Bishop and P.\,W.\,Jones, who proved the following:
\begin{theorem}[{\cite[Corollary 1.8]{BishopJones}}]
\label{theorembishopjones}
Let $G\leq\Mob(\what\C)$ be a finitely generated Kleinian group, and let $\Lambda$ denote the limit set of $G$. Then either $\Lambda$ is totally disconnected, $\Lambda$ is a generalized circle, or the Hausdorff dimension of $\Lambda$ is $>1$.
\end{theorem}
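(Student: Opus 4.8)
The plan is to recast the stated trichotomy as a rigidity statement and then feed it through the structure theory of finitely generated Kleinian groups. If $G$ is elementary then $\Lambda$ has at most two points and is in particular totally disconnected, so assume $G$ is non-elementary. It then suffices to show that if $\Lambda$ is not totally disconnected and $\dim_H(\Lambda)\le 1$, then $\Lambda$ is a generalized circle. Since $\Lambda$ is not totally disconnected it has a connected component $K$ with more than one point; any such continuum has topological dimension at least $1$, and since Hausdorff dimension dominates topological dimension, $\dim_H(\Lambda)\ge 1$. Combined with the hypothesis this forces $\dim_H(\Lambda)=1$ exactly.

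Next I would bring in dimension theory. By Ahlfors's finiteness theorem $G$ is analytically finite, so the Bishop--Jones formula $\dim_H(\Lambda_{\mathrm{r}})=\delta(G)$ for the radial (conical) limit set $\Lambda_{\mathrm{r}}\subseteq\Lambda$ applies and yields $\delta(G)\le 1$. A geometrically infinite finitely generated Kleinian group has critical exponent $2$ (by the Tameness Theorem together with a theorem of Canary on ergodicity of the geodesic flow), so $\delta(G)\le 1$ forces $G$ to be geometrically finite. For such a group $\Lambda\setminus\Lambda_{\mathrm{r}}$ is the countable orbit of the parabolic fixed points, whence $\dim_H(\Lambda)=\dim_H(\Lambda_{\mathrm{r}})=\delta(G)=1$.

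It remains to prove the core assertion: a geometrically finite Kleinian group $G$ with $\delta(G)=1$ whose limit set is not totally disconnected has a generalized circle as its limit set. I would establish this by a rigidity argument in the spirit of Bowen \cite{Bowen} and its extensions to geometrically finite groups (Canary--Taylor \cite{CanaryTaylor}): if $G$ did not preserve a generalized circle, the boundary of the convex core of the associated hyperbolic $3$-manifold would be genuinely bent, forcing $\delta(G)>1$. Hence $\Lambda$ lies in a generalized circle $C$ on which $G$ acts as a Fuchsian group, so $\Lambda$ is either all of $C$ (Fuchsian of the first kind) or a Cantor subset of $C$ (second kind); the latter is totally disconnected, contrary to assumption, so $\Lambda=C$.

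The main obstacle is this last structural step, and behind it the reduction to the geometrically finite case: the Bowen-type rigidity, the Tameness Theorem, and Canary's theorem are the deepest inputs and are in no sense routine. An alternative route --- the one natural to the rectifiability philosophy of the present paper --- would bypass $\delta(G)$ entirely: one would argue directly that a $G$-invariant set of Hausdorff dimension exactly $1$ that is not totally disconnected must be a $1$-rectifiable curve (using Ahlfors regularity of $\Lambda$ when $G$ has no parabolics, the structure theory of continua of finite $\mathcal{H}^1$-measure, and the approximate self-similarity of $\Lambda$ at radial points to promote rectifiability to a genuine tangent line at every radial point), and then invoke the classical fact --- the Kleinian analogue of Theorem~\ref{theoremeremenkovanstrien}, going back to Poincar\'e and Fricke \cite{Fricke} --- that a Kleinian group whose limit set is a smooth curve preserves a round disk. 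Carrying this out uniformly, in particular in the cuspidal case where $\Lambda$ is not Ahlfors regular, would be the crux of that approach.
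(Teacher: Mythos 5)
The paper does not prove this statement; it is cited from Bishop--Jones as historical motivation, and the only indication of a derivation is the remark after Theorem~\ref{theorem3}: ``Theorem~\ref{theorem1} implies Theorem~\ref{theoremkapovich}, which in turn implies Theorem~\ref{theorembishopjones} modulo [BJ, Theorem~1.2].'' Unpacked, the paper's intended route is: (a) if $G$ is finitely generated and geometrically infinite then $\HD(\Lambda)=2$ by [BJ, Theorem~1.2] and one is done; (b) if $G$ is geometrically finite, then Kapovich's theorem (or the paper's own Theorem~\ref{theorem1}, since $\scrH^\delta(\LS\butnot\RLS)=0$ holds for a geometrically finite group) gives the dichotomy $\HD(\Lambda)>\TD(\Lambda)$ or $\Lambda$ is a generalized $\TD(\Lambda)$-sphere, and $\Lambda$ not totally disconnected forces $\TD(\Lambda)\ge 1$, so either $\HD(\Lambda)>1$ or $\Lambda$ is a circle (the $2$-sphere case being $\Lambda=\what\C$, where $\HD=2>1$).

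Your proposal is structurally parallel --- reduce to geometrical finiteness, then apply a rigidity theorem --- but both halves are carried out with different tools. For the reduction you invoke the Tameness theorem plus Canary's ergodicity theorem to conclude $\delta(G)=2$ for geometrically infinite $G$; this is a perfectly valid modern substitute for [BJ, Theorem~1.2], though the original Bishop--Jones proof did not have Tameness available and proved $\HD(\Lambda)=2$ by other means, so your route is anachronistic relative to the cited source but not incorrect. The more significant divergence is in the rigidity step. You sketch a Bowen/Canary--Taylor ``bending'' argument: if $G$ does not preserve a round circle, the convex-core boundary is genuinely bent, forcing $\delta>1$. This is the right intuition for quasi-Fuchsian groups, but as stated it leaves a gap for geometrically finite $G$ whose limit set is connected yet not a quasicircle (for instance a Sierpi\'nski carpet or, more generally, a web-group limit set). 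For such $G$ the convex core has several boundary components and the ``not a circle means bent'' dichotomy is not a single step; one would have to run the bending estimate on some component and rule out the degenerate possibilities by a further case analysis. By contrast, the route through Kapovich (or through Theorem~\ref{theorem1}) is uniform in the topology of $\Lambda$: the only topological input needed is $\TD(\Lambda)\ge 1$, which follows immediately from ``not totally disconnected.'' You acknowledge the bending step is deep and nonroutine, but it is worth emphasizing that the obstruction is not just depth but uniformity over the possible topologies of $\Lambda$. Your closing ``alternative route'' via rectifiability of continua of finite $\scrH^1$-measure and the Poincar\'e--Fricke tangent-line argument is indeed the philosophy of the present paper, and you correctly anticipate that it too requires the reduction to the geometrically finite case, since that is what guarantees the hypothesis~\eqref{mainassumption} needed to run the rectifiability machinery.
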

Since $\Lambda$ is compact, an equivalent way of saying that it is totally disconnected is to say that its topological dimension is equal to zero. So another way to phrase Bishop and Jones' result is to say that if $\TD(\Lambda)\geq 1$, then either $\Lambda$ is a generalized circle or $\HD(\Lambda) > 1$.\Footnote{Here and in the sequel, $\HD(S)$ denotes the Hausdorff dimension of a set $S$ and $\TD(S)$ denotes its topological dimension. The unsubscripted notation $\dim(S)$ will only be used when unambiguous, e.g. when $S$ is a vector space.} This way of looking at things suggests a natural generalization, namely that the ``second case'' of the dichotomy may be described by the criterion that the Hausdorff dimension is strictly greater than the topological dimension. This point of view was introduced in \cite{MayerUrbanski}, where the following theorem was proven:

\begin{theorem}[{\cite[Theorem 1.2]{MayerUrbanski}}; generalized by Theorem \ref{theorem2} below]
\label{theoremmayerurbanski}
Let $(u_a)_{a\in E}$ be a (finite or infinite, hyperbolic) conformal iterated function system (CIFS) on $\R^d$ $(3\leq d < \infty)$ which satisfies the open set condition, and let $J$ denote the limit set of $(u_a)_{a\in E}$. Suppose that $\HD(\cl J\butnot J) < \HD(J)$.\Footnote{An easy-to-check equivalent condition for this inequality is described in \cite{MayerUrbanski}.} Then either $\HD(J) > k := \TD(\cl J)$, or $\cl J$ is contained in a generalized $k$-sphere.
\end{theorem}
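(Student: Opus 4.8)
\medskip
\noindent\emph{Proof strategy.} The plan is to prove the equivalent statement that $\HD(J)\le k$ implies $\cl J$ is contained in a generalized $k$-sphere. By monotonicity and countable stability of Hausdorff dimension, $\HD(\cl J)=\max\{\HD(J),\HD(\cl J\butnot J)\}=\HD(J)$ by hypothesis, and Szpilrajn's inequality $\TD\le\HD$ gives $\HD(J)=\HD(\cl J)\ge\TD(\cl J)=k$; if this is strict we are in the first alternative, so assume henceforth $\HD(J)=k$. The hypothesis and $\TD\le\HD$ also show that $Z:=\cl J\butnot J$ has $\TD(Z)<k$ and $\mathcal{H}^k(Z)=0$, so $Z$ is negligible both topologically (relative to dimension $k$) and for every $\mathcal{H}^k$-measure-theoretic consideration below. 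Using bounded distortion and the open set condition (Moran-type estimates) one obtains $0<\mathcal{H}^k(J)$ and --- in the infinite-alphabet case, exploiting precisely the gap $\HD(\cl J\butnot J)<\HD(J)$ to control the accumulation set --- that $\mathcal{H}^k$ restricted to $J$ may be taken $\sigma$-finite (for a finite alphabet, $J$ is outright Ahlfors $k$-regular). Since $d\ge3$, Liouville's theorem makes every $u_a$, and hence every $u_\omega$, a Möbius transformation, bi-Lipschitz with controlled distortion on each cylinder; write $F:=\cl J$, so that $\TD(F)=k$, $F=\big(\bigcup_{a\in E}u_a(F)\big)\cup Z$, and the $\mathcal{H}^k$-mass of $F$ lives on $J$.

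\medskip
The key technical device is a blow-up analysis: conformal self-similarity makes the infinitesimal structure of $F$ rigid. Fix $x=\pi(\omega)\in J$ and blow up $F$ along the scales $r_n=\operatorname{diam}u_{\omega|_n}(F)$ while renormalizing $u_{\omega|_n}^{-1}$ to unit conformal factor; bounded distortion makes the resulting family of Möbius maps precompact, so along a subsequence one obtains a tangent set $T$ of $F$ at $x$ that contains a full Möbius image $\psi(F)$ of $F$ and --- by a stopping-time argument together with the open set condition --- is contained in a finite union of Möbius images of $F$. In particular $\TD(T)=k$, since Möbius maps are homeomorphisms and $\TD(F)=k$, while $T$ has Hausdorff dimension $\le k$ and locally finite $\mathcal{H}^k$-measure.

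\medskip
The heart of the matter is to promote this to the statement that $F$ is $k$-rectifiable. Suppose the purely $k$-unrectifiable part $U$ of $F$ had $\mathcal{H}^k(U)>0$. Since Möbius maps preserve pure unrectifiability, $U$ inherits the same self-conformal structure, with comparable relative measure in each cylinder (bounded distortion), so the blow-up of the previous paragraph, performed at a density point of $U$, yields a tangent set containing a Möbius copy of $U$. One then contradicts the Besicovitch--Federer projection theorem: almost every orthogonal projection of $U$ to a $k$-plane is $\mathcal{H}^k$-null, hence --- by the Hurewicz--Wallman theorem that a subset of $\R^k$ of topological dimension $k$ has nonempty interior --- topologically $(k{-}1)$-dimensional, whereas the self-conformal tangent structure together with $\TD(F)=k$ keeps a generic projection of $U$ topologically $k$-dimensional. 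Making the purely topological hypothesis $\TD(\cl J)=k$ and the purely metric hypothesis $\HD(J)=k$ --- each far too weak on its own --- genuinely interact through the conformal dynamics, uniformly in scale and (for an infinite alphabet) across the accumulation set, is the principal obstacle; it is exactly this step which, in the infinite-dimensional setting of the present paper, must be rebuilt via the theory of pseudorectifiability.

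\medskip
Once $F$ is known to be $k$-rectifiable, the dichotomy follows quickly. Decompose $F$ into countably many pairwise disjoint Lipschitz $k$-graphs together with an $\mathcal{H}^k$-null set; at $\mathcal{H}^k$-a.e.\ point $x$ of $F$ the relevant graph is differentiable and the complementary part has $\mathcal{H}^k$-density zero, so every blow-up of $F$ at $x$ converges, in Hausdorff distance on compacta, to a single $k$-plane $V_x$. Choose such a point $x$ lying in $J$ --- almost every point does, as $\mathcal{H}^k(Z)=0$ --- and apply the blow-up construction of the second paragraph at this very $x$: the tangent set it produces is simultaneously the $k$-plane $V_x$ and a superset of a Möbius image $\psi(F)$ of $F$. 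Hence $\psi(F)\subseteq V_x$, so $F\subseteq\psi^{-1}(V_x)$, and $\psi^{-1}(V_x)$, a Möbius image of a $k$-plane, is a generalized $k$-sphere. Therefore $\cl J=F$ is contained in a generalized $k$-sphere, the second alternative, as desired. (Alternatively one can conclude by a differential-geometric argument in the spirit of Poincar\'e: rectifiability can be bootstrapped, using the real-analyticity of the $u_a$, to the statement that $\cl J$ is, off a topologically $(k{-}1)$-dimensional set, a real-analytic $k$-submanifold $M$ with $u_a(M)\subseteq M$; the conformal transformation law of the trace-free second fundamental form, combined with $|u_a'|<1$, forces $M$ to be totally umbilic, hence locally a piece of a generalized $k$-sphere, and a propagation argument using density of the cylinder images pins all these pieces to one such sphere.)
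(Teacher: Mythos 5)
Your proposal follows the same broad outline as the paper's proof of the more general Theorem~\ref{theorem2} (and of the original Mayer--Urba\'nski argument): combine the Besicovitch--Federer projection theorem with the topological-dimension hypothesis to produce tangent structure, then exploit the conformal dynamics to zoom in at a good point of the radial limit set. However, there are two places where you overclaim, and the second of these is a genuine gap. First, you assert that $F=\cl J$ is \emph{fully} $k$-rectifiable, and try to derive this by contradiction from the purely unrectifiable part $U$ having positive measure. The step ``the self-conformal tangent structure together with $\TD(F)=k$ keeps a generic projection of $U$ topologically $k$-dimensional'' is unjustified: the purely unrectifiable part of a set of topological dimension $k$ may well have topological dimension $0$ (the rectifiable part alone can carry the topological dimension), so there is no contradiction to draw. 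This overclaim is also unnecessary: the paper's Lemma~\ref{lemmafederermattila} only concludes that $F$ is \emph{not purely unrectifiable}, which (via \cite[Corollary 15.20]{Mattila}) already gives approximate tangent $k$-planes at a positive-$\scrH^k$-measure set of points, and that is all Step~3 needs.

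Second, even granting rectifiability, you assert that blow-ups of $F$ at $\scrH^k$-a.e.\ point converge in Hausdorff distance on compacta to a $k$-plane $V_x$; this is too strong. An approximate tangent plane (Definition~\ref{definitiontangentplane}) gives only that the $\scrH^k$-mass of $F$ outside the cone $\pp+\NN_{\mathrm{proj}}(L_0,\epsilon)$ is $o(r^k)$, not that the \emph{set} $F\cap B(\pp,r)$ is eventually contained in the cone. Points of $F$ (including points of $\cl J\butnot J$, which is merely $\scrH^k$-null and can be dense) may sit far from $V_x$ at every scale, so the Hausdorff limit of $r^{-1}(F-x)$ need not be $V_x$, and the set inclusion $\psi(F)\subset V_x$ does not follow. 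The paper's zooming argument (Step~3 of Section~\ref{sectionF}) sidesteps this: it uses the conformal transformation law $\mu(g_n(A))\geq\int_A|g_n'|^k\,\dee\mu$ together with $\mu\asymp\scrH^k|_\SK$ (established in Step~1 via the Rogers--Taylor density theorem) to turn the measure-theoretic smallness into $\mu\big(U\butnot g_n^{-1}(\pp+\NN_{\mathrm{proj}}(L_0,\epsilon))\big)\to0$, then extracts a subsequence, passes to a normal limit $h$ of the renormalized maps, and concludes $\SK\cap U\subset h^{-1}(L_0)$ from the fact that $\SK=\Supp(\mu)$ and $h^{-1}(L_0)$ is closed. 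You would need an argument of this measure-theoretic type to finish; the naive Hausdorff blow-up is not available.
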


\begin{remark*}
If $(u_a)_{a\in E}$ is a finite parabolic CIFS (see \cite{MauldinUrbanski3} for the definition) with limit set $J$, then there is an associated infinite hyperbolic CIFS $(\w u_a)_{a\in \w E}$ whose limit set $\w J$ is equal to $J$ minus a countable set of points \cite[\65]{MauldinUrbanski3}. So by applying Theorem \ref{theoremmayerurbanski} to the CIFS $(\w u_a)_{a\in \w E}$, one can see that either $\HD(J) > k := \TD(J)$, or $J$ is contained in a generalized $k$-sphere.
\end{remark*}

The proof of Theorem \ref{theoremmayerurbanski} uses the tools of rectifiability theory. By contrast, the following rigidity theorem due to M.\,Kapovich was proven using homological algebra:

\begin{theorem}[{\cite[Theorem 1.3]{Kapovich2}}; generalized by Theorem \ref{theorem1} below]
\label{theoremkapovich}
Let $G \leq \Mob(\R^d)$ be a nonelementary geometrically finite Kleinian group $(d < \infty)$, and let $\Lambda$ denote the limit set of $G$. Then either $\HD(\Lambda) > k := \TD(\Lambda)$, or $\Lambda$ is a generalized $k$-sphere,\Footnote{A subset of $\R^d$ is called a \emph{generalized $k$-sphere} if it is either a $k$-dimensional plane or a geometric sphere contained in a $(k + 1)$-dimensional affine subspace of $\R^d$.}
\end{theorem}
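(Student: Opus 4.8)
The plan is to prove Theorem~\ref{theoremkapovich} via the rectifiability-based strategy of this paper — which, unlike Kapovich's, uses no homological algebra — specialized to finite-dimensional Kleinian groups. Write $\Lambda_r\subseteq\Lambda$ for the radial limit set and $k:=\TD(\Lambda)$. Geometric finiteness makes $\Lambda\setminus\Lambda_r$ the countable set of bounded parabolic fixed points, so $\HD(\Lambda\setminus\Lambda_r)=0$, while nonelementarity gives a free Schottky subgroup and hence $\HD(\Lambda)\geq\delta(G)>0$; thus $\HD(\Lambda\setminus\Lambda_r)<\HD(\Lambda)$ — the ``not much larger than the radial limit set'' condition that drives the whole argument, and which, being all that is really needed, is what lets the general theorem dispense with geometrical finiteness. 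If $\HD(\Lambda)>k$ we are in the first alternative, so assume $\HD(\Lambda)=k$; then $k=\HD(\Lambda)=\delta(G)>0$, so $k\geq 1$, and the goal is to show that $\Lambda$ is a generalized $k$-sphere.

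Because $\TD(\Lambda)=k$, the classical fact that $\mathcal H^k(X)=0$ forces $\TD(X)\leq k-1$ gives $\mathcal H^k(\Lambda)>0$, and Sullivan's theory for geometrically finite groups gives that $\mathcal H^k|_\Lambda$ is $\sigma$-finite and lies in the measure class of the finite, $G$-ergodic Patterson--Sullivan measure. Split $\mathcal H^k$-almost all of $\Lambda$ into a $k$-rectifiable part and a purely $k$-unrectifiable part; since rectifiability is invariant under bi-Lipschitz maps and the elements of $G$ are locally bi-Lipschitz, each part is $G$-invariant modulo $\mathcal H^k$-null sets, so by ergodicity one of them is $\mathcal H^k$-full. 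If the purely unrectifiable part is full, then $\Lambda$ itself is purely $k$-unrectifiable (its $\mathcal H^k$-null complement carries no $k$-rectifiable subset of positive measure), and the key lemma stated below gives $\TD(\Lambda)\leq k-1$, contradicting $\TD(\Lambda)=k$. Hence $\Lambda$ is $k$-rectifiable modulo an $\mathcal H^k$-null set.

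Now the geometry. Since $\mathcal H^k$-a.e.\ point of $\Lambda$ carries a unique approximate tangent $k$-plane and $\mathcal H^k$-a.e.\ point is conical (the non-conical ones being countable, hence null), I fix a conical point $\xi$ with an approximate tangent plane and use the conical dynamics: there are $g_n\in G$ acting near $\xi$ like a translation composed with an orthogonal map composed with a dilation by $r_n^{-1}\to\infty$, while $g_n^{-1}(\Lambda)=\Lambda$, so the blow-ups of $\Lambda$ along this orbit reproduce local pieces of $\Lambda$ up to rigid motions, one of them being an approximate tangent $k$-plane. The now-standard rectifiability bootstrap of Mayer and Urba\'nski (cf.\ Theorem~\ref{theoremmayerurbanski}) — flatness propagated to all scales by the self-conformality of $\Lambda$ and upgraded, via the conformality of the elements of $G$, from a topological disk to a $C^1$ and then a real-analytic graph — shows that $\Lambda$ contains a nonempty relatively open subset that is a real-analytic $k$-dimensional submanifold; by minimality of the $G$-action on $\Lambda$ (no proper nonempty closed $G$-invariant subset) this ``regular part'' is all of $\Lambda$, so $\Lambda$ is a compact real-analytic $k$-manifold. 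A short argument with the loxodromic normal form $x\mapsto\lambda Rx$ $(0<\lambda<1,\ R\in O(d))$ of an element of $G$ — the higher-dimensional form of Poincar\'e's ``no osculating circle'' dichotomy, in which at an attracting fixed point the rescaled copies $\lambda^{-n}\Lambda$ are at once rotated copies of $\Lambda$ and convergent to the tangent plane, forcing $\Lambda$ to be flat there — then identifies $\Lambda$ as a generalized $k$-sphere.

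The crux is the lemma invoked above: \emph{a set $A\subseteq\R^d$ of $\sigma$-finite $k$-dimensional Hausdorff measure that is purely $k$-unrectifiable has $\TD(A)\leq k-1$.} I would argue by contraposition. If $\TD(A)\geq k$, then Alexandroff's characterization of topological dimension furnishes an essential — hence surjective — Lipschitz map $f:A\to[0,1]^k$ (essential because a map into the cube that omits an interior point retracts onto its boundary). On the other hand, the general area formula, applied on each finite-$\mathcal H^k$-measure piece of $A$, shows that the $k$-dimensional tangential Jacobian of $f$ vanishes $\mathcal H^k$-a.e.\ on a purely $k$-unrectifiable set, so almost every fiber of $f$ there is empty and $f(A)$ has Lebesgue measure zero in $\R^d$'s coordinate target $\R^k$ — contradicting surjectivity onto $[0,1]^k$. (For $k=1$ this recovers the classical fact that a purely $1$-unrectifiable set of finite length, containing no non-degenerate subcontinuum, has topological dimension $0$.) I expect this lemma, and still more its counterpart in the pseudorectifiability framework that the infinite-dimensional case forces us to develop, to be the technically substantial ingredient; the Kleinian-specific inputs — conicality, the properties of the conformal density, and the analyticity and rigidity steps — are by comparison routine once Sullivan's machinery is in place.
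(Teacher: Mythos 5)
Your overall strategy mirrors the paper's: reduce to the case $\HD(\Lambda)=\TD(\Lambda)=k$ using geometric finiteness to get \eqref{mainassumption}, show $\Lambda$ is not purely $k$-unrectifiable, produce an approximate tangent plane at a conical limit point, and then zoom to propagate flatness and rigidity. Several of the choices are fine variants of what the paper does — in particular replacing the paper's ``without loss of generality $\mu=\scrH^k\given_\SK$'' reduction by a Patterson--Sullivan ergodicity argument is explicitly mentioned in the paper as an acceptable alternative.

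The genuine gap is in your proof of the key lemma — that a purely $k$-unrectifiable set $A$ with $\sigma$-finite $\scrH^k$ measure has $\TD(A)\leq k-1$. The lemma itself is true and is exactly what the paper's Lemma \ref{lemmafederermattila} rests on, but your argument for it does not work. You produce an essential (hence onto) Lipschitz map $f:A\to[0,1]^k$ and then invoke ``the general area formula'' to say that the tangential Jacobian of $f$ vanishes $\scrH^k$-a.e.\ on $A$, so $f(A)$ has Lebesgue measure zero. There is no such area formula for purely $k$-unrectifiable sets: the area formula $\int_R J_k^R f\,\dee\scrH^k=\int\#(f^{-1}(\yy)\cap R)\,\dee\scrH^k(\yy)$ is a statement about \emph{rectifiable} $R$, where approximate tangent $k$-planes exist a.e.\ and the tangential Jacobian is defined. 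On a purely unrectifiable set the quantity you want to declare ``zero $\scrH^k$-a.e.'' is not defined at all. Moreover the conclusion you draw from it — that any Lipschitz map from $A$ to $\R^k$ has image of Lebesgue measure zero — is false in general: the Besicovitch--Federer projection theorem only controls \emph{almost every} linear projection, and there exist purely $1$-unrectifiable sets of finite length in $\R^2$ with a projection of positive measure in some exceptional direction. The essential map coming from Alexandroff's characterization has no reason to avoid this exceptional set, so the contradiction does not arrive.

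The correct route (and the one the paper takes) replaces the area-formula step by two projection facts that are each tied to a ``positive measure of directions'' quantifier: Federer's result \cite[\S 9]{Federer2} that a set $A\subset\R^d$ with $\TD(A)=k$ has $\scrH^k(\pi_V(A))>0$ for a \emph{positive-measure} set of $V\in\Grass_k(\R^d)$; and Besicovitch--Federer (Theorem \ref{theorembesicovitchfederer}), which says that if $A$ is purely $k$-unrectifiable with $\sigma$-finite $\scrH^k$ measure then $\scrH^k(\pi_V(A))=0$ for \emph{a.e.}\ $V$. These two are directly contradictory and give the lemma, with no Jacobian or area formula needed. Your essential-map idea is an interesting attempt, but as written it skips exactly the place where ``a.e.\ projection'' is essential, and that cannot be repaired within the area-formula framework.

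As a minor point, the existence of a Lipschitz essential map $A\to[0,1]^k$ (rather than merely a continuous one) also deserves a sentence: one must approximate the continuous essential map by a Lipschitz one in a way compatible with the boundary condition $f^{-1}(\del[0,1]^k)$, and while this can be done for compact $A$ by a uniform approximation plus a retraction argument, it is not automatic. The rest of your outline — the non-radial points being countable for geometrically finite $G$, the zooming at a conical point, the Mayer--Urba\'nski bootstrap from approximate tangent plane to real-analytic submanifold, minimality of the $G$-action, and the loxodromic rescaling to force a round sphere — is in line with the paper's proof and I see no further issues there.
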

\noindent Kapovich's paper extended a number of earlier results. For more on the history of this result and for similar results concerning isometry groups of negatively curved spaces, see \cite[p.2]{Kapovich2}.

Turning back to Julia sets, the first Hausdorff dimension result was proven by D.\,P.\,Sullivan in 1982, as the result of an effort to establish a ``dictionary'' between Kleinian groups and rational functions \cite[p.405]{Sullivan_wandering}, about which we shall say more below. Sullivan showed that if the Julia set of a hyperbolic rational map is a Jordan curve, then it is either a generalized circle or has Hausdorff dimension $>1$. Sullivan's result was improved by D.\,H.\,Hamilton, who proved the following:

\begin{theorem}[{\cite[Theorem 1]{Hamilton2}}]
\label{theoremhamilton}
If the Julia set of a rational map is a Jordan curve, then it is either a generalized circle or has Hausdorff dimension $>1$.
\end{theorem}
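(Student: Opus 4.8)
The plan is to prove the dichotomy in contrapositive form: assuming that $\HD(J)$ is \emph{not} strictly greater than $1$, I would show that the Julia set $J$ of the rational map is a generalized circle. Since $J$ is a Jordan curve it has topological dimension $1$, and as $\HD \ge \TD$ always, the hypothesis forces $\HD(J) = 1$ exactly. The target is then to upgrade this to the statement that $J$ is a \emph{smooth} Jordan curve, i.e.\ has a tangent line at every point: once this is known, Theorem~\ref{theoremeremenkovanstrien} applies and gives that $J$ is contained in a generalized circle $C$, and since a Jordan curve is homeomorphic neither to an arc nor to a Cantor set, the trichotomy there forces $J = C$.

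Suppose first that $f$ is expanding on $J$, so that the inverse branches of the iterates $f^n$ are defined on balls of a controlled radius and have uniformly bounded distortion. Here I would run the rectifiability mechanism of Mayer and Urba\'nski underlying Theorem~\ref{theoremmayerurbanski}. Repelling periodic points are dense in $J$; fix one, $p$, of period $m$, with multiplier $\lambda = (f^m)'(p)$, $|\lambda| > 1$. By Koenigs' linearization there is a conformal chart $\phi$ on a neighborhood $U$ of $p$ with $\phi(p) = 0$ conjugating $f^m|_U$ to $w \mapsto \lambda w$, so that $\Sigma := \phi(J \cap U)$ is a Jordan arc through $0$, invariant near $0$ under $w \mapsto \lambda^{-1} w$, and of Hausdorff dimension $1$. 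Because the rescalings $\lambda^{-n}\Sigma$ all agree near $0$, the blow-ups of $J$ at $p$ stabilize, and one faces a clean alternative: either $\Sigma$ has an approximate tangent line at $0$, or the absence of one produces, at every scale $|\lambda|^{-n}$, a definite amount of spreading of $\Sigma$ away from every line through $0$; transporting this self-similar spreading through the bounded-distortion inverse branches of $f$ builds a Cantor-type subset of $J$ of Hausdorff dimension strictly above $1$, contradicting $\HD(J) = 1$. Thus $J$ has a tangent at every repelling periodic point. Since the backward orbit of every point of $J$ is dense in $J$, and since pulling a tangent point back along inverse branches of $f^n$ preserves --- by bounded distortion --- near-flatness at all small scales, a limiting argument then yields a tangent at \emph{every} point of $J$, and we conclude as in the previous paragraph.

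When $f$ is not expanding on $J$ (parabolic cycles, or critical points in $J$), the plan is to reduce to the expanding case by inducing. One passes to a jump transformation which is uniformly expanding with bounded distortion on the radial part $J_r \subseteq J$ --- the points whose forward orbit stays away from the postcritical set --- and observes that $\HD(J_r) = \HD(J) = 1$ because the non-radial part, governed by the parabolic or non-uniformly hyperbolic dynamics, is negligible for Hausdorff dimension. This is exactly where an easily checkable hypothesis of the form ``$\HD(\cl J \butnot J) < \HD(J)$'' as in Theorem~\ref{theoremmayerurbanski} earns its keep: it ensures that the part of $J$ not reached by the good expanding dynamics is too small to influence the dimension. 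Running the argument of the previous paragraph on the induced system gives a tangent at a dense set of points of $J$; spreading it by the induced dynamics as before yields a tangent everywhere, and Theorem~\ref{theoremeremenkovanstrien} again finishes the proof.

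The main obstacle is the quantitative core of the two preceding paragraphs --- the implication ``$\HD(J) = 1 \implies J$ has a tangent''. A Jordan curve of Hausdorff dimension exactly $1$ need not carry positive or finite $\mathcal{H}^1$-measure, so Besicovitch's structure theory for $1$-sets is not directly available, and one must run by hand the estimate that non-flatness at a single scale, propagated self-similarly across all scales through bounded-distortion inverse branches, forces Hausdorff dimension strictly greater than $1$. The genuinely delicate sub-case --- the new content of Hamilton's theorem beyond the uniformly hyperbolic setting, illustrated by the parabolic map $z \mapsto z^2 + 1/4$, whose Julia set is a Jordan curve with a cusp --- is controlling the distortion of inverse branches near the postcritical set, which is precisely the purpose of the inducing scheme of the third paragraph.
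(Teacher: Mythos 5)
The statement you are proving is a \emph{cited} result (Hamilton, 1995): the paper does not supply a proof of Theorem~\ref{theoremhamilton}, and in fact goes out of its way to say that its own machinery cannot reproduce it. The remark following Theorems~\ref{theorem1}--\ref{theorem3} states explicitly that ``Theorem~\ref{theorem3} doesn't quite imply Theorem~\ref{theoremhamilton}, due to the additional hypothesis~\eqref{mainassumption},'' and the later discussion of possible weakenings records as an \emph{open question} whether Theorem~\ref{theorem3} survives if~\eqref{mainassumption} is dropped, noting that the Jordan-curve case of that question is resolved only because Hamilton proved it by other means.

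Your sketch has a gap precisely at that pressure point. In the inducing step you assert that ``the non-radial part, governed by the parabolic or non-uniformly hyperbolic dynamics, is negligible for Hausdorff dimension,'' i.e.\ that $\scrH^\delta(J\setminus J_r)=0$ (equivalently, that the jump transformation captures all the dimension). You then lean on the Mayer--Urba\'nski hypothesis $\HD(\overline J\setminus J)<\HD(J)$ as if it were automatically available. It is not: for a general rational map this is exactly condition~\eqref{mainassumption}, which the paper emphasizes is an extra assumption and not a consequence of $J$ being a Jordan curve (the topological Collet--Eckmann, NCP, etc.\ classes are listed precisely because the condition is known to hold there and not in general). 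Without it, the induced expanding system need not see the full dimension of $J$, and the propagation of flatness from $J_r$ to all of $J$ --- the engine of your last two paragraphs --- breaks down. So your approach reproduces the rectifiability-based conditional result (essentially Theorem~\ref{theorem3} plus Eremenko--van Strien), not Hamilton's unconditional theorem. Hamilton's actual argument exploits the Jordan-curve hypothesis in a different way (via conformal welding and harmonic measure on the two complementary components), which is why it does not need a condition of type~\eqref{mainassumption}; the quantitative ``no tangent at a scale $\Rightarrow$ dimension $>1$'' step in your second paragraph is also left as a sketch, but the decisive missing piece is the unjustified dimension-negligibility of $J\setminus J_r$.
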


\noindent For polynomials, the assumption that the Julia set is a Jordan curve can be weakened to merely assume that the Julia set is connected; see \cite[p.168]{Urbanski7}, where this is proven by combining the results of \cite{DouadyHubbard, Przytycki, Zdunik}.

The parallels between Theorems \ref{theoremmayerurbanski}-\ref{theoremhamilton} harmonize well with many other similarities between the fields of Kleinian groups, iterated function systems, and rational functions. The idea of systematically studying the analogies between the limit sets of Kleinian groups and the Julia sets of rational functions is known as Sullivan's dictionary, since it was introduced by D.\,P.\,Sullivan \cite[p.405]{Sullivan_wandering} (see \cite{McMullen_classification,McMullen_renormalization,Sullivan_discrete_conformal_groups,Sullivan_seminar, Sullivan_conformal_dynamical,Urbanski4} for more discussion of Sullivan's dictionary). Sullivan's dictionary has been the inspiration for many theorems both in the theory of Kleinian groups and the theory of rational functions, including the result of Sullivan mentioned above. In many cases, similar proofs work to demonstrate theorems on both sides of Sullivan's dictionary.

\begin{table}
\begin{tabular}{|c|c|c|}
\hline
\spc{\textbf{Kleinian groups and} \\ \textbf{their generalizations}}
&
\spc{\textbf{Conformal IFSes and} \\ \textbf{GDMSes\footnotemark}}
&
\spc{\textbf{Rational and} \\ \textbf{meromorphic functions}}
\\ \hline
\hline
Limit set
&
Closure of limit set
&
Julia set
\\ \hline
Radial limit set
&
Limit set
&
Radial Julia set \cite{Rempe}
\\ \hline
Poincar\'e exponent
&
Unique zero of pressure
&
\spc{Poincar\'e exponent \cite{Przytycki2} \\ Unique zero of pressure \cite{PRS2} \\ Hyperbolic dimension \cite[p.320]{PrzytyckiUrbanski} \\ Dynamical dimension \cite[p.320]{PrzytyckiUrbanski} \\ (these are all equal\footnotemark)}
\\ \hline
\spc{Bishop--Jones theorem \\ \cite[Theorem 1]{BishopJones}}
&
\spc{Bowen's formula \\ \cite[Theorem 3.15]{MauldinUrbanski1} \\ \cite[Theorem 10.2]{MSU}}
&
\spc{Bowen-type formula \\ \cite{BKZ, Przytycki2, PRS2, Rempe}}
\\ \hline
Discreteness assumption
&
Open set condition
&
\\ \hline
Convex-cobounded group
&
Finite IFS/GDMS \cite{MauldinUrbanski1, MauldinUrbanski2}
&
\spc{NCP map \cite{Urbanski6} with no parabolic points \\ Special case: \\ Hyperbolic rational map \cite[\62]{Urbanski4}}
\\ \hline
\spc{Geometrically finite group \\ \cite{Bowditch_geometrical_finiteness}}
&
Finite parabolic IFS \cite{MauldinUrbanski3}
&
\spc{NCP map \cite[Definition 4.1]{Urbanski4}, \cite{Urbanski6} \\ Special cases: \\ Parabolic rational map \cite[\63]{Urbanski4} \\ Geometrically finite rational map \cite{McMullen_conformal_2}}
\\ \hline
\spc{Patterson--Sullivan measure \\ \cite{Patterson2, Sullivan_density_at_infinity}}
&
\spc{$\delta$-conformal measure \\ \cite[p.10]{MauldinUrbanski1}}
&
\spc{$\delta$-conformal measure \\ \cite[Definition 1.4]{Urbanski4}, \cite[Theorem 3]{Sullivan_conformal_dynamical}}
\\ \hline
\spc{Patterson density of \\ a Gibbs cocycle \cite[p.3]{PPS}}
&
\spc{Equilibrium/Gibbs state of \\ a potential function \cite[\62.2]{MauldinUrbanski2}}
&
\spc{Equilibrium/Gibbs state of \\ a potential function \cite{KotusUrbanski2, MayerUrbanski2}} 
\\ \hline
\end{tabular}
\vspace{.2 in}
\caption{A three-way dictionary between the theories of Kleinian groups, dynamics of rational functions, and conformal iterated function systems, extending Sullivan's Dictionary, which consists of the first and last columns. We put the IFS column in the middle because in some sense the theory of IFSes ``interpolates'' between the theory of rational functions and the theory of Kleinian groups; for example, the fact that it extends to higher dimensions is shared with Kleinian groups, while in the theory of rational functions it is often useful to consider an infinitely generated IFS generated by inverse branches of iterates of a rational function.}
\label{tabledictionary}
\end{table}

\addtocounter{footnote}{-1}

\footnotetext{GDMS stands for ``graph directed Markov system'', a concept introduced and studied extensively in \cite{MauldinUrbanski2}.}

\addtocounter{footnote}{1}

\footnotetext{see \cite[Theorem 12.3.11]{PrzytyckiUrbanski} and \cite{Przytycki2, PRS2, Rempe}}

Although the theory of iterated function systems is not traditionally considered to be part of Sullivan's dictionary, the similarities between it and the other two fields seem significant enough that we include it as a column in our version of Sullivan's dictionary, Table \ref{tabledictionary}. Thus in our framework, Theorems \ref{theoremmayerurbanski}-\ref{theoremhamilton} can all be regarded as variations of a more general ``meta-theorem'' which applies to all conformal dynamical systems. In fact, in this paper we will prove such a meta-theorem (namely Theorem \ref{theoremrigidity}) and show that it implies a sequence of theorems (namely Theorems \ref{theorem1}-\ref{theorem3}) which are similar in spirit to Theorems \ref{theoremmayerurbanski}-\ref{theoremhamilton}.

The main purpose of this paper is not just to unify these results, but also to extend them to infinite dimensions. Such an extension can be made both in the Kleinian groups and IFS settings, although not in the rational function setting (since holomorphic maps in several complex variables are not conformal). Some motivation for studying the theory of infinite-dimensional Kleinian groups can be found in \cite[\61.3]{DSU}. The strategy of our proof is to extend the rectifiability argument of \cite{MayerUrbanski} to infinite dimensions by introducing the notion of \emph{pseudorectifiability}, a notion which agrees with the notion of rectifiability for finite-dimensional sets but not for infinite-dimensional ones. We develop the theory of pseudorectifiable sets to the extent of its applicability to our rigidity proof. As a side note, we also show that in infinite dimensions, the notions of pseudorectifiablility and rectifiability already disagree in the realm of limit sets of conformal iterated function systems satisfying the strong open set condition. It would be interesting to study the pseudorectifiability condition further, in the context of nonlinear functional analysis and geometric analysis in infinite dimensions.

Another important goal of this paper is to improve the finite-dimensional versions of these results by replacing the topological dimension with the ``demension'' of M.\,A.\,\v Stan$'$ko \cite{Stanko}. The demension of a set in Euclidean space is an integer which is always at least the topological dimension of that set and is sometimes strictly greater. A standard example is Antoine's necklace, which has demension 1 but topological dimension 0. For more on demension see \6\ref{sectiondem}, in which we state a theorem which implies that any ``dynamically defined'' version of Antoine's necklace has Hausdorff dimension $>1$ (Corollary \ref{corollaryantoine}).

{\bf Acknowledgements.} The authors thank R.\,J.\,Daverman and G.\,A.\,Venema for permission to re-use a figure from their book. The research of the first-named author was supported in part by a 2014-2015 Faculty Research Grant from the University of Wisconsin--La Crosse. The research of the second-named author was supported in part by the EPSRC Programme Grant EP/J018260/1. The research of the third-named author was supported in part by the NSF grant DMS-1361677. The authors thank the anonymous referee for valuable comments.

\section{Statement of results}
\label{sectionstatements}

\begin{convention}
Throughout this paper,
\begin{itemize}
\item All measures and sets are assumed to be Borel.
\item For each $\delta\geq 0$, $\scrH^\delta$ denotes the $\delta$-dimensional Hausdorff measure.
\item $\HH$ denotes a real separable Hilbert space (either finite- or infinite-dimensional).
\item $\dimH = \dim(\HH)\in\N\cup\{\infty\}$, and $1\leq k\leq d$ is fixed ($k\neq\infty$).
\item If $\mu$ is a measure on $X$ and $f:X\to Y$, then $f(\mu) = \mu\circ f^{-1}$ denotes the image measure.
\end{itemize}
\end{convention}

\begin{convention}
\label{conventionimplied}
The symbols $\lesssim_\times$, $\gtrsim_\times$, and $\asymp_\times$ will denote coarse multiplicative asymptotics. For example, $A\lesssim_{\times,K} B$ means that there exists a constant $C > 0$ (the \emph{implied constant}), depending only on $K$, such that $A\leq C B$. In general, dependence of the implied constant(s) on universal objects such as those given in the hypotheses of the main theorems will be omitted from the notation.

The notation $A \asymp_\times B$ should not be confused with the notation $\mu \asymp \nu$, which as usual means that the measures $\mu$ and $\nu$ are equivalent, i.e. each is absolutely continuous to the other.
\end{convention}

\begin{convention}
$\NN(A,\epsilon)$ denotes the open $\epsilon$-neighborhood of $A$, i.e. $\NN(A,\epsilon) = \{x : \dist(x,A) < \epsilon\}$.
\end{convention}

In the following theorems, we use somewhat unconventional notation: we use $\LS$ to denote the first row of Table \ref{tabledictionary}, and we use $\RLS$ to denote the second row. The reason for this is that if we used a more standard notation, such as $\Lambda$ or $L$ for the first row and $\Lr$ or $L_r$ for the second row, then this notation would look somewhat awkward when applied to the second column of Table \ref{tabledictionary}: $\Lambda$ or $L$ would denote the closure of the limit set and $\Lr$ or $L_r$ would denote the limit set. So instead, we use the letters $\LS$ and $\RLS$, which hopefully have less connotative baggage. (The fact that $\LS$ suggests a compact set is a connotation we want to keep, since in the hypotheses of our theorems we assume that $\LS$ is compact.)

For definitions of the terms used in Theorems \ref{theorem1}-\ref{theorem3}, see the proof of Lemma \ref{lemmaradial} below.

\begin{theorem}
\label{theorem1}
Let $G$ be a group of M\"obius transformations of $\HH$, and let $\LS$ and $\RLS$ denote the limit set and the radial limit set of $G$, respectively. If $\dimH = \infty$, assume additionally that $\LS$ is compact. Let $\delta = \HD(\RLS)$ and $k = \TD(\LS)$, and assume that
\begin{equation}
\label{mainassumption}
\scrH^\delta(\LS\butnot\RLS) = 0
\end{equation}
(which holds, for example, if $\HD(\RLS) < \HD(\LS)$). Then the following dichotomy holds: either $\delta > k$, or $\LS$ is a generalized $k$-sphere.
\end{theorem}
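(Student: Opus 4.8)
Theorem~\ref{theorem1} is the Kleinian-group instance of the general rigidity mechanism (Theorem~\ref{theoremrigidity}), so one route is to check that $\Mob(\HH)$, $\LS$, and $\RLS$ fit the conformal-dynamics framework set up in Lemma~\ref{lemmaradial} and then quote that theorem; here I describe the underlying argument. Suppose the dichotomy fails, i.e.\ $\delta\le k$; the goal is to prove that $\LS$ is a generalized $k$-sphere. The first observation is that $\delta\le k$ is then forced to be an equality: the topological dimension of a separable metric space never exceeds its Hausdorff dimension, so $\HD(\LS)\ge\TD(\LS)=k$, whereas \eqref{mainassumption} gives $\HD(\LS\butnot\RLS)\le\delta$ and hence $\HD(\LS)=\max\{\HD(\RLS),\HD(\LS\butnot\RLS)\}\le\delta$. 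Thus $\delta=k=\HD(\RLS)=\HD(\LS)$ and $\scrH^k(\LS\butnot\RLS)=0$, so everything in $\LS$ of positive $\scrH^k$-size sits on the radial limit set. (In finite dimensions with $G$ geometrically finite this recovers Kapovich's Theorem~\ref{theoremkapovich}, but the proof below is by rectifiability rather than homological algebra and needs no geometric finiteness.)

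Next I would produce the relevant measure. Patterson--Sullivan theory carries over to $\Mob(\HH)$, including the case $\dimH=\infty$ --- this is where compactness of $\LS$ enters --- yielding a $\delta$-conformal measure $\mu$ on $\LS$. Sullivan's shadow lemma, applied along radial geodesic rays, gives $\mu\big(B(x,r)\big)\asymp_\times r^{\delta}=r^{k}$ uniformly on a set of full $\mu$-measure; consequently $0<\scrH^k(\LS)<\infty$, $\mu\asymp\scrH^k|_{\RLS}$, and the upper and lower $k$-densities of $\scrH^k|_{\LS}$ are positive and finite at $\scrH^k$-a.e.\ point.

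The heart of the matter is establishing rectifiability, following the template of Mayer--Urba\'nski (Theorem~\ref{theoremmayerurbanski}). At a radial limit point $x$ one may choose $g_n\in G$ carrying a fixed-size neighborhood of a point of $\LS$ onto a scale-$r_n$ neighborhood of $x$, with $r_n\to0$ and with distortion bounded independently of $n$ (Koebe-type bounds for M\"obius maps). Hence the rescaled sets $r_n^{-1}(\LS-x)$ are, up to a uniformly bounded bi-Lipschitz error, conformal images of one fixed compact piece of $\LS$; any subsequential limit is a conformally homogeneous ``tangent object'' that carries a $k$-regular measure and has topological, hence Hausdorff, dimension at most $k$. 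The crucial claim is that every such tangent object is a $k$-dimensional affine subspace. Granting this, $\mu$ has an approximate tangent $k$-plane at $\scrH^k$-a.e.\ point, and together with the density bounds the standard rectifiability criteria (Marstrand--Mattila--Preiss) --- or, when $\dimH=\infty$, the \emph{pseudorectifiability} analogues developed in this paper --- promote $\mu$, hence $\RLS$, hence $\LS$ (which differs from $\RLS$ by an $\scrH^k$-null set), to a $k$-rectifiable (resp.\ $k$-pseudorectifiable) set.

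Finally, the rigidity step. A compact set invariant under a nonelementary M\"obius group that is $k$-(pseudo)rectifiable with $0<\scrH^k<\infty$ is forced to be real-analytic: the approximate tangent plane at one point is transported everywhere by the group, and a bootstrap --- conformality makes the local graph maps $C^1$, and conformal maps between pieces then give all higher regularity via Liouville-type rigidity (valid in $\HH$ when $\dimH\ge3$) --- shows $\LS$ is a real-analytic $k$-submanifold of $\HH$; being invariant under $G$ and containing the fixed point of a loxodromic element, it must be a generalized $k$-sphere, necessarily of dimension $k=\TD(\LS)$. I expect the main obstacle to be the rectifiability step, and within it the flatness of the tangent objects together with the passage to infinite dimensions: in $\R^n$ one extracts Hausdorff or tangent-measure limits using local compactness, but in a genuinely infinite-dimensional $\HH$ no ball is compact, so such limits need not exist. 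This is exactly why pseudorectifiability is introduced --- one replaces the tangent object by the requirement that $\LS$ be, at every location and scale, well approximated by $k$-dimensional affine subspaces sitting inside (possibly varying) finite-dimensional subspaces --- and one must redevelop density estimates, a structure theorem, and the comparison with $\scrH^k$ for this notion; extracting the flat approximation from the conformal self-similarity of $\LS$ is the technical core.
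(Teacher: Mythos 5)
You correctly identify that Theorem~\ref{theorem1} reduces to the general rigidity Theorem~\ref{theoremrigidity} via Lemma~\ref{lemmaradial}, and the opening reduction to $\delta = k$ matches the paper's remark (Szpilrajn gives $\scrH^k(\LS)>0$, while $\delta<k$ together with \eqref{mainassumption} would force $\scrH^k(\LS)=0$). You also correctly flag that compactness of $\LS$ is what gives Patterson--Sullivan measures in infinite dimensions, and that the lack of local compactness is the reason pseudorectifiability is introduced. However, the central step of your outline rests on an unproved claim that is not what the paper establishes, and I think it cannot be established along the lines you sketch.

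Your ``crucial claim'' is that every tangent object of $\LS$ at a radial point is a $k$-dimensional affine subspace, and you justify it by saying the tangent object has topological, hence Hausdorff, dimension at most $k$. Two problems. First, topological dimension is not lower- or upper-semicontinuous under Hausdorff-type limits, and the paper never claims that the topological dimension passes to a tangent object. Second, even granting $\HD = k$ and an Ahlfors $k$-regular invariant measure on a conformally self-similar tangent set, flatness is not automatic --- this is exactly the sort of rigidity statement that needs a proof, and it is close to being equivalent to the theorem itself. The paper's actual mechanism is different and weaker-but-sufficient: in Step~2 of Section~\ref{sectionF} (Lemma~\ref{lemmafederermattila}) it applies the hypothesis $\TD(\SK)=k$ directly to $\SK$ (not to tangent objects) to deduce via \cite[\S 9]{Federer2} that $\scrH^k(\pi_V(\SK))>0$ for a positive-measure set of $k$-planes $V$; the Besicovitch--Federer projection theorem then says $\SK$ is not purely $k$-unrectifiable, so by \cite[Corollary~15.20]{Mattila} there is a \emph{positive-measure} set of points with an approximate tangent $k$-plane. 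No claim about all tangent objects is made. One then chooses a single radial point $\pp\in\Rad_k(\mu)$ possessing an approximate tangent plane $L_0$ and runs a zooming argument: \eqref{tangentplane} and the conformality bounds in Definition~\ref{definitionradial} force $\mu(U\setminus g_n^{-1}(\pp+\NN_{\Proj}(L_0,\epsilon)))\to 0$, and a normal-family limit of the rescaled $g_n$ yields $h\in\Mob(\HH)$ with $h(\SK\cap U)\subset L_0$, i.e.\ $\SK\cap U$ is contained in a generalized $k$-sphere directly. Theorem~\ref{theorem1} is then finished by a compactness/invariance argument (nested choices of $U$, plus Theorem~\ref{theoreminterior} and $G$-invariance of the boundary), with no detour through a ``real-analytic manifold, therefore sphere'' step --- for $\AA=\Mob(\HH)$ the sphere comes for free. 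So in addition to the gap in the crucial claim, your route through full rectifiability of $\mu$ followed by a bootstrap to analyticity is substantially heavier than what is needed, and the bootstrap step itself (``rectifiable and $G$-invariant implies real-analytic'') is another unproved claim.

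A smaller point: your appeal to the shadow lemma to get $\mu(B(x,r))\asymp r^\delta$ uniformly and hence $0<\scrH^k(\LS)<\infty$ is not how the paper proceeds, and uniform shadow-lemma estimates are generally unavailable without geometric finiteness. The paper instead uses the abstract Rogers--Taylor density theorem (Theorem~\ref{theoremRTT}) together with the definition of $\Rad_k(\mu)$ to show $\scrH^k|_\SK$ is finite and $\mu\asymp\scrH^k|_\SK$, which is exactly what Step~1 of Section~\ref{sectionF} does.

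In the infinite-dimensional case with $k>1$, the paper's substitute for Besicovitch--Federer is Proposition~\ref{propositionfederermattilainfdim}, which uses a finite $\epsilon$-net to reduce the ``stable value'' argument for topological dimension to a finite-dimensional projection, then applies Besicovitch--Federer there. This is a concrete replacement for your last paragraph's hope of ``redeveloping density estimates, a structure theorem, and the comparison with $\scrH^k$,'' but you do not indicate how to carry it out, and the tangent-object extraction you lean on is exactly what is unavailable when $\dimH=\infty$. So the overall scaffolding of your proposal is recognizable, but as written the crucial flatness claim is a gap, and the paper's route around it --- positive measure of good projections $\Rightarrow$ not purely unrectifiable $\Rightarrow$ one approximate tangent plane $\Rightarrow$ zooming --- is genuinely different.
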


\begin{theorem}
\label{theorem2}
Let $(u_a)_{a\in E}$ be a (finite or infinite, hyperbolic) conformal iterated function system on $\HH$ which satisfies the open set condition. Let $\RLS$ denote the limit set of $(u_a)_{a\in E}$, and let $\LS$ denote the closure of $\RLS$. If $\dimH = \infty$, assume additionally that $\LS$ is compact and that $(u_a)_{a\in E}$ satisfies the strong open set condition. Let $\delta = \HD(\RLS)$ and $k = \TD(\LS)$, and assume that \eqref{mainassumption} holds. Then the following dichotomy holds: either $\delta > k$, or $\LS$ is the closure of a relatively open and relatively compact subset of a real-analytic $k$-dimensional manifold $M\subset\HH$. If $\dimH\geq 3$ (or more generally if $(u_a)_{a\in E}$ consists of M\"obius transformations), then in the latter case $M$ is a generalized $k$-sphere. If $(u_a)_{a\in E}$ consists of similarities, then in the latter case $M$ is a $k$-plane.
\end{theorem}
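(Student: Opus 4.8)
\emph{Proof strategy.} The plan is to carry out the rectifiability argument behind Theorem~\ref{theoremmayerurbanski} in the Hilbert-space setting, with two new ingredients: classical rectifiability is replaced by the pseudorectifiability developed below (so that the argument keeps working when $\dimH=\infty$), and the self-conformality of $\RLS$ is used to bootstrap any Lipschitz regularity found all the way up to real-analyticity. First I would dispose of the ``fractal'' horn of the dichotomy: topological dimension is always dominated by Hausdorff dimension, so $k=\TD(\LS)\le\HD(\LS)$, while \eqref{mainassumption} forces $\HD(\LS\butnot\RLS)\le\delta=\HD(\RLS)$ and hence $\HD(\LS)=\delta\ge k$. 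Thus it suffices to treat the critical case $\delta=k$ and to show that then $\LS$ is the closure of a relatively open, relatively compact subset of a real-analytic $k$-manifold $M\subset\HH$ of the asserted form. (In the finite-dimensional case one in fact proves the sharper statement with $\TD$ replaced by \v Stan$'$ko's demension, which coincides with $\TD$ on the manifolds appearing in the conclusion and hence implies the stated version.)

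In the critical case I would first build a good measure on $\LS$. By Bowen's formula \cite{MauldinUrbanski1}, $\delta$ is the unique zero of the topological pressure and carries a $\delta$-conformal measure $m$ supported on $\RLS$, and bounded distortion of the generators $u_a$ gives, at $m$-typical points and scales, the Frostman estimate $m(B(x,r))\asymp_\times r^{\delta}$; for infinite alphabets this comparison is only valid in the ``pseudo'' sense, which is precisely what the strong open set condition and the pseudorectifiability machinery are built to recover. Together with $\delta=k\in\N$ and \eqref{mainassumption}, this yields that $\scrH^{k}|_\LS$ is locally positive and finite and that $m\asymp\scrH^{k}|_\LS$. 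I would then split $\LS$, by a Besicovitch-type decomposition in the pseudorectifiable category, into a $k$-pseudorectifiable part and a purely $k$-unpseudorectifiable part; ergodicity of $m$ under the IFS dynamics makes this splitting ``all or nothing'', so the purely unpseudorectifiable part is $m$-null or $m$-conull. The conull alternative contradicts $k=\TD(\LS)$: a compact set of topological dimension $k$ (or, in finite dimensions, of demension $k$) carrying a locally finite $\scrH^{k}$-measure cannot be purely $k$-(pseudo)unrectifiable --- for $k=1$ this is the classical fact that a nondegenerate continuum of finite length is a rectifiable curve, and for larger $k$ it is a Besicovitch--Federer-type projection argument. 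Hence $\LS$ is $k$-pseudorectifiable: $m$-a.e.\ point of $\LS$ has an approximate tangent $k$-plane.

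From a point $x_{0}\in\RLS$ with approximate tangent $k$-plane $V$, I would pull back along inverse branches of the $u_{a}$ (which expand by definite factors and have bounded distortion) to see that, near $x_{0}$, $\LS$ is squeezed between two cones about $x_{0}+V$ at every scale; combined with the density estimate above this makes $\LS$ a Lipschitz graph over $V$ near $x_{0}$, hence a Lipschitz $k$-submanifold there. The functional equation $\RLS=\bigcup_{a}u_{a}(\RLS)$ with the $u_{a}$ real-analytic --- M\"obius when $\dimH\ge3$ by Liouville's theorem on conformal maps, holomorphic (or antiholomorphic) when $\dimH=2$, affine when the $u_{a}$ are similarities --- then upgrades this invariant Lipschitz graph first to $C^{1}$, then to $C^{\infty}$, then to real-analytic, exactly as in Mayer--Urba\'nski; call $M$ the resulting analytic $k$-manifold. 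Since $\RLS$ is invariant and the $u_{a}$ are open onto their images in $M$ (using the (strong) open set condition and analytic continuation to propagate the local picture along dense orbits), $\RLS$ is relatively open in $M$, so $\LS=\cl\RLS$ is the closure of a relatively open, relatively compact (as $\LS$ is compact) subset of $M$. When $\dimH\ge3$ or the $u_{a}$ are M\"obius, $M$ agrees locally with a piece of a generalized $k$-sphere, because the $u_{a}$ and their inverses are M\"obius and carry this piece into itself, and analytic continuation makes this global; when the $u_{a}$ are similarities the same argument with affine maps forces $M$ to be flat, i.e.\ a $k$-plane.

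I expect the genuinely hard part to be the pseudorectifiability step when $\dimH=\infty$: there the standard machinery --- compactness of the Grassmannian, the Besicovitch--Federer projection theorem, Preiss-type tangent-measure theory --- is unavailable, so the existence of approximate tangent $k$-planes $m$-a.e.\ and the associated decomposition must be re-established inside the pseudorectifiability framework; this, and not the (essentially standard) bootstrapping to real-analyticity, is where the new work of the paper is concentrated. A subsidiary technical nuisance is checking that the comparison $m\asymp\scrH^{k}|_\LS$ and the conical-density estimates degrade to exactly the ``pseudo'' form that infinite alphabets permit and no further.
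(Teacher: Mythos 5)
Your high-level architecture matches the paper's up to the point of producing approximate tangent $k$-planes: you correctly reduce to the critical case $\delta=k$ via Szpilrajn, pass to a $\delta$-conformal measure comparable to $\scrH^k|_\LS$ (the paper makes this WLOG by a Rogers--Taylor density argument rather than by invoking ergodicity of $m$, which it deliberately avoids assuming), decompose into a pseudorectifiable and a purely pseudounrectifiable part, and rule out the unrectifiable alternative via a Besicovitch--Federer-type projection argument keyed to $\TD(\LS)=k$ (this is exactly Proposition~\ref{propositionfederermattilainfdim}). You also correctly single out the pseudorectifiability machinery as the genuinely new content.

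Where you diverge is in the step from ``approximate tangent plane at a radial point'' to ``real-analytic manifold.'' You propose a cone-squeeze to obtain a Lipschitz graph over the tangent plane near $x_0$ and then a bootstrap $\text{Lipschitz}\to C^1\to C^\infty\to$ analytic driven by the functional equation. The paper instead uses a direct \emph{zooming argument}: renormalize the radial sequence by $h_n=T_n^{-1}\circ g_n$ (with $T_n$ an affine rescaling), extract a limit $h\in\AA$ (Arzel\`a--Ascoli when $\dimH<\infty$; Liouville's theorem and a Grassmannian ``$\epsilon$-intersection'' argument when $\dimH=\infty$), and show $\SK\cap U\subset h^{-1}(L_0)$. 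Since $h$ is conformal/M\"obius/a similarity by fiat, $M=h^{-1}(L_0)$ is already real-analytic, a generalized $k$-sphere, or a $k$-plane --- there is no Lipschitz-graph stage and no regularity bootstrap. This matters in infinite dimensions: the bootstrap you sketch needs quantitative cone control at \emph{all} nearby points and some elliptic-type regularity argument, neither of which is obviously available in a Hilbert space, and you offer no substitute; the paper's route sidesteps this entirely.

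Two further omissions are load-bearing. First, for $\dimH=\infty$ and $k>1$ the paper does not work with $\SK$ in $\HH$ directly: it passes to the projective model of conformal geometry (Proposition~\ref{propositionPhi}), so that conformal maps act linearly and Lemma~\ref{lemmapseudolinear} gives equivariance of the (extended) tangent plane function; the Grassmannian convergence is then extracted via the $\epsilon$-intersection Lemma~\ref{lemmaepsilonintersection} and Claim~\ref{claimintersection}, since Arzel\`a--Ascoli is unavailable. Second, the resulting case analysis leaves an ``exceptional'' alternative ($\ell=k+1$, $T_\Euc$ constant) which must be killed by Lemma~\ref{lemmaTPconstant}, a nontrivial topological argument slicing $K$ by translates of $L_0$. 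Your proposal is silent on all of this, and the ``subsidiary technical nuisance'' you mention at the end significantly underestimates where the work actually lies: the hard part is not just establishing pseudorectifiability, but extracting the limiting conformal map from it without any compactness of the ambient Grassmannian.
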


\begin{theorem}
\label{theorem3}
Let $T:\what\C\to\what\C$ be a rational function, and let $\LS$ and $\RLS$ denote the Julia set and the radial Julia set of $T$, respectively. Let $\delta = \HD(\RLS)$ and $k = \TD(\LS)$, and assume that \eqref{mainassumption} holds. Then the following dichotomy holds: either $\delta > k$, or $\LS$ is either a generalized circle or a segment of a generalized circle (if $k = 1$) or the entire Riemann sphere (if $k = 2$).
\end{theorem}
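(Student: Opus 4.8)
The strategy is to reduce Theorem \ref{theorem3} to the meta-theorem (Theorem \ref{theoremrigidity}) in the same way that Theorems \ref{theorem1} and \ref{theorem2} are. First I would verify that a rational function $T:\what\C\to\what\C$, together with its Julia set $\LS$ and radial Julia set $\RLS$, fits the abstract framework of a conformal dynamical system as axiomatized for Theorem \ref{theoremrigidity}: namely, that $\RLS$ is the ``radial'' part of the dynamics on which one has bounded distortion and a Markov/conformal-measure structure, and that the conformality is honest conformality of $\what\C \cong S^2$. This is exactly the content of the third column of Table \ref{tabledictionary}, and the relevant facts (that the radial Julia set carries the Bowen-type formula, that inverse branches of iterates of $T$ give a conformal iterated-function-type structure, bounded distortion on pieces that return to a fixed scale, etc.) are the ones cited there from \cite{Rempe, Przytycki2, PRS2, BKZ}. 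Once this identification is in place, hypothesis \eqref{mainassumption} is precisely the hypothesis of the meta-theorem, and the meta-theorem yields the dichotomy: either $\delta = \HD(\RLS) > k = \TD(\LS)$, or $\LS$ is (the closure of a relatively open and relatively compact subset of) a real-analytic $k$-dimensional submanifold $M$ of $S^2$.

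The remaining work is to translate ``real-analytic $k$-dimensional submanifold of $S^2$'' into the concrete list in the statement. Here $k \in \{1,2\}$ since $\LS \subset \what\C$ and $\TD(\LS) \geq 1$ in the nontrivial case. If $k = 2$, a real-analytic $2$-dimensional submanifold of the $2$-sphere that is the closure of a nonempty relatively open subset must be all of $S^2$ (connectedness of $S^2$ plus the fact that such an $M$ is open and closed), so $\LS = \what\C$. If $k = 1$, then $\LS$ is contained in a real-analytic curve $M \subset S^2$; I then invoke the rigidity for conformal maps — a rational function maps an open arc of its Julia set to an open arc of its Julia set conformally, and the only real-analytic curves in $\what\C$ invariant under a nontrivial conformal expanding dynamics are generalized circles. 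Concretely, this is the classical Fatou–Eremenko–van Strien phenomenon (Theorem \ref{theoremeremenkovanstrien}): a relatively open subset of $\LS$ lying on a smooth curve forces $\LS$ to lie on a generalized circle $C$, and then $\LS$ is either $C$ itself or a sub-arc of $C$. Since the meta-theorem already delivers a real-analytic (hence smooth) curve, Theorem \ref{theoremeremenkovanstrien} applies directly to finish the $k=1$ case, giving the stated trichotomy ($\LS$ equals $C$, an arc of $C$, or — the totally disconnected case — excluded here because $k = 1$).

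The main obstacle I anticipate is not the curve-rigidity step, which is essentially quotable, but the \emph{verification} that the rational-function dynamics genuinely satisfies the hypotheses of the meta-theorem, in particular that the radial Julia set plays the structural role demanded of $\RLS$ and that the bounded-distortion estimates hold uniformly along radial orbits. Unlike the Kleinian and CIFS settings, the radial Julia set is not the limit set of a single nice iterated function system but is better described via an exhausting family of inverse-branch systems (as hinted in the caption of Table \ref{tabledictionary}); one must check that the pseudorectifiability/rectifiability machinery of the meta-theorem is insensitive to this, i.e. that it suffices to run the argument on each piece and pass to a countable union. A secondary, more cosmetic point is that in dimension $2$ one need not worry about the distinction between rectifiability and pseudorectifiability, nor about ``demension'' versus topological dimension (they coincide for subsets of the plane in the relevant range), so the finite-dimensional, low-dimensional specialization of the meta-theorem is all that is needed.
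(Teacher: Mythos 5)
Your plan is essentially the paper's: reduce to Theorem \ref{theoremrigidity} via Lemma \ref{lemmaradial} (case (3), the rational-function case), then finish with Theorem \ref{theoremeremenkovanstrien} when $k=1$ and a topological/invariance argument when $k=2$. The verification step you flag as the ``main obstacle'' is precisely what Lemma \ref{lemmaradial}(3) carries out, via the K\"oebe distortion theorem for inverse branches and the $\delta$-conformal measure of \cite[Theorem 12.3.11]{PrzytyckiUrbanski}; you are right that this is the real work, and the paper does it in detail.

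One genuine gap: your $k=2$ argument imports the conclusion of Theorem \ref{theorem2} (``$\LS$ is the closure of a relatively open and relatively compact subset of a real-analytic $k$-manifold $M$'') and then argues $M = S^2$ by openness and closedness. But for Theorem \ref{theorem3} the meta-theorem only yields that $\LS \cap U$ is \emph{contained} in such an $M$ after a conformal change of coordinates — it does not identify $\LS$ with the closure of an open subset of $M$. In the case $k = 2 = \dimH$ this containment is vacuous (every subset of $\C$ is contained in a $2$-plane), so the meta-theorem gives nothing. The paper instead handles $k=2$ by applying Theorem \ref{theoreminterior} directly: $\TD(\LS) = 2$ forces $\LS$ to have nonempty interior in $\what\C$, and then the boundary $\partial\LS$ is a closed $T$-invariant proper subset of $\LS$, hence empty, so $\LS = \what\C$. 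Your ``open and closed'' reasoning is morally the same but needs this invariance step to conclude anything about $\LS$ itself rather than about the ambient manifold.

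Your $k=1$ reduction to Theorem \ref{theoremeremenkovanstrien} is exactly what the paper does, and your closing remark about not needing pseudorectifiability/demension in this finite-dimensional, planar setting is correct — the paper's proof of Theorem \ref{theoremrigidity}($\dimH < \infty$) is all that is invoked.
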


\begin{remark*}
Theorem \ref{theorem1} implies Theorem \ref{theoremkapovich}, which in turn implies Theorem \ref{theorembishopjones} modulo \cite[Theorem 1.2]{BishopJones}. Theorem \ref{theorem2} implies Theorem \ref{theoremmayerurbanski}. Theorem \ref{theorem3} doesn't quite imply Theorem \ref{theoremhamilton}, due to the additional hypothesis \eqref{mainassumption}, about which we will say more below.
\end{remark*}


\begin{remark*}
It is worth noting that in Theorems \ref{theorem1}-\ref{theorem3}, the weaker inequality $\delta\geq k$ follows directly from \eqref{mainassumption} and the classical Szpilrajn theorem:
\begin{theorem}[Szpilrajn's theorem, {\cite[Theorem VII.2]{HurewiczWallman}}]
\label{theoremsz}
Let $X$ be a metric space and let $k = \TD(X)$. Then $\scrH^k(X) > 0$. In particular, $\HD(X)\geq \TD(X)$.
\end{theorem}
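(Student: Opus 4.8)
The plan is to establish the contrapositive in the sharper form: for every integer $k \geq 0$ and every metric space $X$, if $\scrH^k(X) = 0$ then $\TD(X) \leq k - 1$. Granting this, the hypothesis $k = \TD(X)$ at once forces $\scrH^k(X) > 0$, and then $\HD(X) \geq \TD(X)$ is immediate from the definition of Hausdorff dimension. I would prove the implication by induction on $k$. The base case $k = 0$ is trivial, since $\scrH^0$ is counting measure: $\scrH^0(X) = 0$ means $X = \emptyset$, and $\TD(\emptyset) = -1$.

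The one substantial ingredient is a slicing estimate of Eilenberg type. For any $1$-Lipschitz function $f : X \to \R$ and any cover $(E_i)$ of $X$ by sets of diameter at most $\delta$, the members of the cover meeting a given fiber $f^{-1}(t)$ form a $\delta$-cover of that fiber, and since $f$ is $1$-Lipschitz each image $f(E_i)$ is an interval of length at most $\operatorname{diam} E_i$; hence by Fubini,
\[
\int_\R \scrH^{k-1}_\delta\bigl(f^{-1}(t)\bigr)\, dt \;\leq\; \sum_i \operatorname{diam}(E_i)^{k-1}\,|f(E_i)| \;\leq\; \sum_i \operatorname{diam}(E_i)^{k}.
\]
Taking the infimum over $\delta$-covers and then letting $\delta \to 0$ gives $\int_\R \scrH^{k-1}\bigl(f^{-1}(t)\bigr)\, dt \leq c_k\,\scrH^k(X)$, where the left-hand integral is an upper Lebesgue integral and $c_k$ absorbs the normalization of Hausdorff measure. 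Specializing to $f = \dist(\cdot, p)$ for a fixed $p \in X$, I obtain the fact I actually need: if $\scrH^k(X) = 0$, then the ``sphere'' $S_r := \{x \in X : \dist(x,p) = r\}$ has $\scrH^{k-1}(S_r) = 0$ for Lebesgue-almost every $r > 0$.

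For the inductive step, assume the implication at level $k-1$ and let $\scrH^k(X) = 0$. Given any $p \in X$ and any $\epsilon > 0$, I would choose (using the previous paragraph) some $r \in (0,\epsilon)$ with $\scrH^{k-1}(S_r) = 0$ and put $B := \{x \in X : \dist(x,p) < r\}$, an open neighborhood of $p$ contained in the open $\epsilon$-ball about $p$. Since $\partial B \subseteq S_r$, we get $\scrH^{k-1}(\partial B) = 0$, so the inductive hypothesis applied to the metric space $\partial B$ gives $\TD(\partial B) \leq k - 2$. Thus every point of $X$ has arbitrarily small open neighborhoods whose boundaries have topological dimension at most $k - 2$, and by the inductive characterization of dimension --- valid for separable metric spaces, where it is the defining property of the small inductive dimension, which coincides with $\TD$ (see \cite{HurewiczWallman}) --- we conclude $\TD(X) \leq k - 1$, completing the induction.

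The step I expect to be the main obstacle is making the Eilenberg estimate fully rigorous: one must accommodate the possible non-$\sigma$-finiteness of $\scrH^k$, work with upper integrals throughout so as to sidestep measurability of $r \mapsto \scrH^{k-1}(S_r)$, and justify the passage to the limit as the mesh $\delta$ shrinks. Everything downstream of that estimate is bookkeeping. It is also worth flagging that the statement, as cited from \cite{HurewiczWallman}, is for separable metric spaces --- precisely the hypothesis that lets one move freely between the inductive and covering notions of topological dimension in the final step (and it is the only case needed for the applications in this paper, where $X$ is always a subset of a separable Hilbert space).
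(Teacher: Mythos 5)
The paper does not prove this statement --- it simply cites Hurewicz and Wallman, Theorem VII.2, as a classical fact. Your argument (the Eilenberg slicing inequality for $1$-Lipschitz maps to $\R$, followed by induction on $k$ via the small-inductive-dimension characterization) reproduces the standard proof found in that reference and is correct; you rightly flag both the upper-integral/measurability bookkeeping in the slicing step and the separability hypothesis needed for the coincidence of small inductive, large inductive, and covering dimension, the latter being harmless here since the paper only ever applies the theorem to subsets of the separable Hilbert space $\HH$.
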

\noindent Letting $X = \LS$, we get $\scrH^k(\LS) > 0$. On the other hand, if $\delta < k$ then \eqref{mainassumption} implies that $\scrH^k(\LS\butnot \RLS) = \scrH^k(\RLS) = 0$, a contradiction. So $\delta\geq k$.
\end{remark*}

\begin{remark*}
When $k = d$, Theorems \ref{theorem1}-\ref{theorem3} can be deduced as corollaries from the following classical result:
\begin{theorem}[{\cite[Theorem 1.8.10]{Engelking}}]
\label{theoreminterior}
Let $M$ be a $k$-dimensional topological manifold and suppose that $S\subset M$ satisfies $\TD(S) = k$. Then $S$ has nonempty interior relative to $M$.
\end{theorem}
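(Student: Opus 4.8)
\emph{The plan and reductions.} The plan is to reduce the statement to a Euclidean one, then to a single key lemma, and prove that lemma by induction on $k$. First pass to $M=\R^k$: cover $M$ by countably many closed coordinate balls $W_1,W_2,\dots$ (possible as $M$ is separable). Each $S\cap W_i$ is closed in $S$, and if $S$ has empty interior in $M$ then, transported by a chart, $S\cap W_i$ becomes a subset of $\R^k$ with empty interior; granting the Euclidean case, $\TD(S\cap W_i)\le k-1$ for all $i$, so the countable closed sum theorem (a countable union of relatively closed sets of dimension $\le n$ has dimension $\le n$) applied to $S=\bigcup_i(S\cap W_i)$ yields $\TD(S)\le k-1$, which is the contrapositive of what we want. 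Thus it suffices to show that an $S\subseteq\R^k$ with empty interior has $\TD(S)\le k-1$. But if $\mathrm{int}(S)=\emptyset$ then $\R^k\setminus S$ is dense, hence by separability contains a countable set $P$ that is dense in $\R^k$, and $S\subseteq\R^k\setminus P$; by monotonicity of $\TD$ we are reduced to the \emph{key lemma}: for every countable dense $P\subseteq\R^k$, $\TD(\R^k\setminus P)\le k-1$.

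\emph{The induction.} I would prove the key lemma by induction on $k$, working with the small inductive dimension $\mathrm{ind}$ (which agrees with $\TD$ on separable metric spaces), in the slightly stronger form: $\mathrm{ind}(N\setminus P')\le k-1$ whenever $N$ is a $k$-manifold and $P'\subseteq N$ is dense (equivalent to the Euclidean form by the reduction above). The base case $k=1$ is immediate: $\R\setminus P$ is totally disconnected, and for $x\in\R\setminus P$ and $\epsilon>0$, choosing $a,b\in P$ with $a<x<b$ and $b-a<\epsilon$, the set $(a,b)\setminus P$ is a relatively clopen neighborhood of $x$ of diameter $<\epsilon$, so $\mathrm{ind}(\R\setminus P)\le 0$. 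For the inductive step, fix $x\in\R^k\setminus P$ and $\epsilon>0$; I want a neighborhood $U$ of $x$ in $\R^k\setminus P$ with $\mathrm{diam}(U)<\epsilon$ whose relative boundary has $\mathrm{ind}\le k-2$. The idea is to produce an embedded topological $(k-1)$-sphere $\Sigma\subseteq\R^k$ of diameter $<\epsilon$ with $x$ in the bounded complementary component $\Omega$ of $\Sigma$, such that $P\cap\Sigma$ is dense in $\Sigma$. Then $U:=\Omega\setminus P$ is a neighborhood of $x$ in $\R^k\setminus P$ with $\mathrm{diam}(U)\le\mathrm{diam}(\Sigma)<\epsilon$, its relative boundary is contained in $\Sigma\setminus P$, and the inductive hypothesis applied to the $(k-1)$-manifold $\Sigma$ with the dense subset $P\cap\Sigma$ gives $\mathrm{ind}(\Sigma\setminus P)\le k-2$; hence $\mathrm{ind}(\R^k\setminus P)\le k-1$.

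\emph{The main obstacle} is the construction of $\Sigma$: a small embedded $(k-1)$-sphere enclosing $x$ that meets $P$ in a set dense in it. I would build it as a limit of spheres $\Sigma_0,\Sigma_1,\dots$, starting from a round sphere $\Sigma_0=\partial B(x,r)$ with $r$ small, where $\Sigma_{m+1}$ is obtained from $\Sigma_m$ by pushing a small ``finger'' of $\Sigma_m$ out to touch a new point of $P$ lying near $\Sigma_m$ (possible since $P$ is dense), each step realized by an ambient homeomorphism that is close to the identity and supported in a tiny ball disjoint from the points of $P$ already captured on $\Sigma_m$. Choosing the target points by a back-and-forth enumeration of a countable dense family of (location, scale) pairs, and making the perturbation amplitudes summable (with the inverse homeomorphisms controlled as well), forces the $\Sigma_m$ to converge to an embedded $(k-1)$-sphere $\Sigma$ on which the captured points of $P$ are dense. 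This ``wild sphere through a prescribed dense set'' construction is classical in spirit, but it is where essentially all the care must go; once it is available the induction closes and, unwinding the reductions, $\TD(S)=k$ forces $S$ to have nonempty interior in $M$. (An alternative route to the key lemma replaces this separator construction by the Alexandroff extension characterization of covering dimension --- $\TD(Y)\le k-1$ iff every continuous map from a relatively closed subset of $Y$ into $S^{k-1}$ extends over $Y$ --- and produces the needed extension by a transversality/degree argument, using the density of $P$ to sweep the resulting obstruction set into $P$.)
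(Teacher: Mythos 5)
The paper does not prove this statement; it cites it as \cite[Theorem 1.8.10]{Engelking}, so there is no internal proof to compare against. Your reductions are correct and standard: passing to $\R^k$ via a countable cover by closed coordinate balls and the countable closed sum theorem, and then reducing (via monotonicity and the fact that a set with empty interior has dense complement) to the key lemma that $\TD(\R^k\setminus P)\le k-1$ for every countable dense $P\subseteq\R^k$. That lemma is true and is indeed one standard route to this theorem, and your inductive strategy with $\mathrm{ind}$ --- separating a point from a small closed set by an embedded $(k-1)$-sphere $\Sigma$ on which $P$ is dense, then applying the inductive hypothesis to $\Sigma\setminus P$ --- is logically sound.

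The genuine gap is precisely where you put it: the construction of $\Sigma$. As written, ``push small fingers out, controlled amplitudes, take a limit'' leaves unverified the three things that actually make the argument work: (a) that the composed isotopies converge to a homeomorphism of $\R^k$, which requires uniform control of the inverses (a $C^0$-limit of homeomorphisms need not be injective); (b) that the finitely-captured points of $P$ remain on the limit sphere and are dense in it --- you must fix captured points under all later isotopies and run a genuine back-and-forth scheme so that density on $\Sigma_m$ for each $m$ upgrades to density on the limit $\Sigma$; and (c) that the resulting $\Sigma$ is still a bicollared/separating topological $(k-1)$-sphere, so that $\Omega$ is defined and $\partial U\subseteq\Sigma\setminus P$ as you claim (this is fine once (a) holds, since $\Sigma$ is then ambiently equivalent to a round sphere, hence separates by Jordan--Brouwer). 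These points are precisely the content of the classical theorem (Fort; earlier folklore) that any two countable dense subsets of $\R^k$ are carried one to the other by an ambient homeomorphism, and a more efficient route to your key lemma is to quote that theorem and then handle the single case $P=\Q^k$ directly: $\R^k\setminus\Q^k$ is the union of the zero-dimensional set of points with all coordinates irrational and the set $H\setminus\Q^k$ where $H=\bigcup_{i,q\in\Q}\pi_i^{-1}(q)$, which by induction and the countable closed sum theorem has dimension $\le k-2$; the addition theorem then gives $\dim(\R^k\setminus\Q^k)\le k-1$. That variant trades your sphere construction for the (also nontrivial, but well-documented) homogeneity theorem, and avoids any need to build wild separators. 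Either way, the crux you have deferred is where the real work lies, and as the proposal stands it is not a complete proof.
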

\end{remark*}

Theorems \ref{theoremsz} and \ref{theoreminterior} will both be used in the proof of Theorems \ref{theorem1}-\ref{theorem3}.

{\bf Possible weakenings.} We now discuss to what degree it is possible to weaken the hypotheses of Theorems \ref{theorem1}-\ref{theorem3}.

{\bf The hypothesis \eqref{mainassumption}.}
For many examples, this hypothesis is satisfied trivially, since the set $\LS\butnot\RLS$ has Hausdorff dimension zero. In particular, this is true for geometrically finite Kleinian groups (e.g. \cite[Theorem 12.4.5]{DSU}), finitely generated conformal IFSes (trivially since $\LS = \RLS$), topological Collet--Eckmann rational functions \cite[p.139, para.3]{PrzytyckiRivera}, rational functions with no recurrent critical points \cite[Theorem 6.1]{Urbanski6}, and certain more general classes of rational functions \cite[Corollary 6.3]{RiveraShen}. However, the hypothesis cannot be removed entirely, as the following examples illustrate:
\begin{itemize}
\item By modifying the construction of \cite{Patterson3}, one can show that for any $\epsilon > 0$ and for any compact nowhere dense $F\subset\HH$, there exists $G$ a discrete group of M\"obius transformations such that $\HD(\RLS) \leq\epsilon$ but $\LS\butnot\RLS = G(F)$. By letting $F$ be a rectifiable set which is not a generalized sphere, one gets a counterexample to a hypothetical generalization of Theorem \ref{theorem1}. A similar construction would give a counterexample to a hypothetical generalization of Theorem \ref{theorem2}.
\item In \cite{AstalaZinsmeister}, a quasi-Fuchsian group is constructed such that $\LS$ is rectifiable but not a generalized circle, and in \cite{Bishop2}, another quasi-Fuchsian group is constructed such that $\HD(\LS) = \TD(\LS) = 1$ but $\LS$ is not rectifiable.
\end{itemize}
It is an open question whether the hypothesis \eqref{mainassumption} can be replaced by the hypothesis that $G$ is finitely generated. The question is solved for $\dimH = 2$ by Theorem \ref{theorembishopjones}, but higher dimensions are still open (cf. \cite[Conjecture 1.4]{Kapovich2} for a related conjecture).

It is also open whether Theorem \ref{theorem3} holds if the hypothesis \eqref{mainassumption} is removed; cf. \cite[p.207]{Bishop3}, where the same question is asked but with the assumption $\TD(\LS) = 1$ replaced by the slightly stronger assumption that $\LS$ is connected. If this hypothesis is strengthened even further to the assumption that $\LS$ is a Jordan curve, then the question is solved by Theorem \ref{theoremhamilton}. In the other direction, if ``rational function'' is replaced by ``transcendental meromorphic function'' or ``transcendental entire function'', then the answer is known to be negative \cite{Hamilton,Bishop_non_rigidity}.

{\bf The open set condition and strong open set condition.}
The only reason that these conditions are needed in Theorem \ref{theorem2} is to ensure that Bowen's Formula holds (cf. \cite[Theorem 3.15]{MauldinUrbanski1}, \cite[Theorem 10.2]{MSU}). So if $(u_a)_{a\in E}$ is an iterated function system for which Bowen's Formula holds, then the open set hypothesis is not necessary. For example, this holds when $(u_a)_{a\in E}$ is chosen at random from a family of iterated function systems satisfying the transversality condition \cite{SSU}. Note that if $\dimH = \infty$, then a counterexample to Bowen's formula which satisfies the open set condition but not the strong open set condition is given in \cite[Theorem 11.1]{MSU}.

{\bf The assumption that $T$ is a rational function.}
It is natural to weaken this assumption to the hypothesis that $T:\C\to\what\C$ is a meromorphic function. As it turns out, our proof is valid assuming that $T$ is a meromorphic function which admits a $\delta$-conformal measure. (We leave the details to the reader.) Sufficient conditions for the existence of a $\delta$-conformal measure may be found in \cite{MayerUrbanski2}.

\section{Demension rigidity}
\label{sectiondem}
This section concerns an improvement of Theorems \ref{theorem1}-\ref{theorem3} (in the finite-dimensional case) in which the topological dimension is replaced by the ``demension'' (short for ``dimension of embedding'') of M.\,A.\,\v Stan$'$ko \cite{Stanko}. It can be skipped on a first reading, since it will not be used in the proofs of Theorem \ref{theorem1}-\ref{theorem3}.

\subsection{Demension}
Our rigidity theorems \ref{theorem1}-\ref{theorem3} are in a sense motivated by Theorem \ref{theoremsz}, which shows a pre-existing relation (independent of dynamics) between the Hausdorff dimension of a set $\SK\subset\HH$ and its topological structure. A natural question is whether Theorem \ref{theoremsz} is a complete description of the relation between Hausdorff dimension and topology. The answer is in a sense yes, and in a sense no. The sense in which the answer is yes is that any set $\SK\subset\HH$ is homeomorphic to a set $\SK'\subset\HH$ such that $\HD(\SK') = \TD(\SK') = \TD(\SK)$. This is the converse to Theorem \ref{theoremsz}, i.e. the other direction of Szpilrajn's theorem \cite[Theorem VII.5]{HurewiczWallman}. The sense in which the answer is no is that the homeomorphism between $\SK$ and $\SK'$ may not be extendable to all of $\HH$.

The best way to illustrate this is with an example. Consider Antoine's necklace construction \cite{Antoine} (cf. Figure \ref{figureantoine}), in which a torus in $\R^3$ is replaced by a chain of linked tori, each of which is then replaced by a smaller chain of linked tori, and so on infinitely until the intersection is a compact set $\SK$ (a ``necklace'') homeomorphic to the Cantor set. If $\gamma$ is a loop through the hole of the first torus, then $\gamma$ is not contractible in $\R^3\butnot\SK$ \cite[\62]{Blankinship}. On the other hand, if $\SK'\subset\R^3$ is the standard Cantor set, then $\R^3\butnot\SK'$ is simply connected. Thus, the homeomorphism between $\SK$ and $\SK'$ cannot be extended to all of $\R^3$. For this reason, $\SK$ is sometimes called a ``wild'' Cantor set.

\begin{figure}
\centerline{\mbox{\includegraphics[scale=.63]{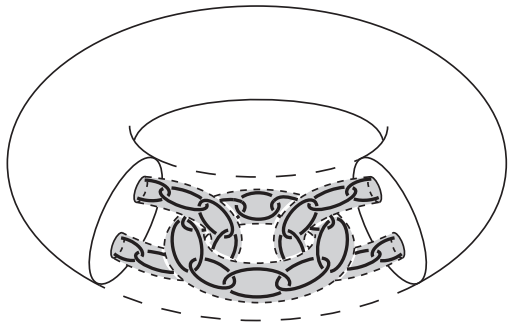}}}
\caption{The first- and second-level iterates of Antoine's necklace construction. In the IFS version of this construction, the generators for the IFS are similarities that send the large torus into the smaller tori. Since the IFS satsifies the strong separation condition, its limit set $\LS$ will be totally disconnected. On the other hand, if $\gamma$ is a loop through the ``hole'' of the large torus, then $\gamma$ is not homotopic to zero in $\HH\butnot\LS$, and thus $\pi_1(\HH\butnot\LS)\neq 0$ (see e.g. \cite[\62]{Blankinship}). Figure used with permission from \cite[Figure 2.5]{DavermanVenema}.}
\label{figureantoine}
\end{figure}

Given a compact set $\SK\subset\HH$, what is the smallest possible Hausdorff dimension among sets homeomorphic to $\SK$ such that there is a homeomorphism extendible to all of $\HH$? This question was answered by J.\,V\"ais\"al\"a \cite{Vaisala3}: it is the \emph{demension} of the set $\SK$:

\begin{definition}
\label{definitiondem}
The \emph{demension} of a set $\SK\subset\HH$, denoted $\dem(\SK)$, is the largest integer $k$ such that there exists a piecewise linear polyhedron $\PP$ of dimension $(d - k)$ and $\epsilon > 0$ such that for every continuous map $F:\HH\times[0,1]\to\HH$ satisfying the following conditions:
\begin{itemize}
\item[(I)] (Isotopy) For each $t\in [0,1]$, the map $F_t(\xx) = F(\xx,t)$ is a homeomorphism, and $F_0$ is the identity;
\item[(II)] For all $t\in [0,1]$,
\[
\|F_t\| := \sup_{\xx\in\HH} \dist(\xx,F_t(\xx)) \leq \epsilon;
\]
\item[(III)] $F_t(\xx) = \xx$ for all $\xx\in\HH\butnot\thickvar{\PP\cap\SK}\epsilon$ and $t\in [0,1]$;
\end{itemize}
we have $F_1(\PP)\cap \SK \neq \emptyset$. Loosely speaking, $\dem(\SK)$ is the largest integer $k$ such that there is a piecewise linear polyhedron $\PP$ of dimension $(d - k)$ whose sufficiently small isotopic perturbations all intersect $\SK$.
\end{definition}

We remark that this definition of demension is not the usual one (which is less relevant for our purposes), but is equivalent to the usual one by \cite[Proposition 1.2]{Edwards}.


Let us first address the question of how to compute the demension. The following observation is good enough to compute the demension of Antoine's necklace and its higher-dimensional generalizations (see \cite{Blankinship}):

\begin{observation}
\label{observationdemension}
If $\SK\subset\HH$ is a compact set such that
\begin{equation}
\label{homotopy}
\pi_1(\HH\butnot\SK) \neq 0,
\end{equation}
then $\dem(\SK)\geq d - 2$.
\end{observation}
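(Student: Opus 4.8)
The plan is to exhibit, from the hypothesis $\pi_1(\HH\butnot\SK)\neq 0$, a piecewise linear polyhedron $\PP$ of dimension $d-2$ witnessing $\dem(\SK)\geq d-2$ in the sense of Definition \ref{definitiondem}. The natural candidate is a spanning disk: choose a loop $\gamma$ in $\HH\butnot\SK$ that is not null-homotopic there, and let $\PP$ be (a piecewise linear approximation of) a $2$-dimensional disk $D$ in $\HH$ with $\partial D=\gamma$ — such a disk exists because $\HH$ is contractible, and by general position it can be taken to be a PL $2$-polyhedron, hence of codimension $d-2$ when $d<\infty$. (One first replaces $\gamma$ by a PL loop homotopic to it in $\HH\butnot\SK$, which does not change the homotopy class.) Set $\epsilon := \tfrac13\dist(\gamma,\SK) > 0$, which is positive since $\gamma$ is compact and disjoint from the closed set $\SK$.

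First I would verify the isotopy extension setup: let $F:\HH\times[0,1]\to\HH$ be any continuous map satisfying (I)–(III) for this choice of $\PP$ and $\epsilon$. Condition (III) says $F_t$ is the identity off the $\epsilon$-neighborhood of $\PP\cap\SK$; in particular, since $\dist(\gamma,\SK)=3\epsilon$, every point of $\gamma$ lies at distance $>\epsilon$ from $\PP\cap\SK\subset\SK$, so $F_t$ fixes $\gamma$ pointwise for all $t$. Therefore $F_1(\PP)$ is a (PL-image) disk whose boundary is still exactly $\gamma$. Moreover, by (II) we have $\|F_1\|\leq\epsilon<\dist(\gamma,\SK)$, and more to the point $F_1$ moves no point of $\SK$ — wait, that is not quite what (III) gives; what (III) does give is that $F_1$ is the identity outside an $\epsilon$-neighborhood of $\SK$, so $F_1$ is a homeomorphism of $\HH$ that restricts to a homeomorphism of $\HH\butnot\NN(\SK,\epsilon)$ onto itself, and hence preserves the fundamental group of any subset lying outside that neighborhood; combined with $\|F_1\|\le\epsilon$ this shows $F_1(\HH\butnot\SK)=\HH\butnot\SK$ (a self-homeomorphism of $\HH$ with $\|F_1\|\le\epsilon$ cannot move any point of $\SK$ out of $\SK$ nor bring an outside point into $\SK$ past a buffer zone — this is the one point needing a careful $\epsilon$-argument).

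Now suppose for contradiction that $F_1(\PP)\cap\SK=\emptyset$. Then $F_1(D)$ is a disk contained in $\HH\butnot\SK$ with $\partial(F_1(D))=F_1(\gamma)=\gamma$, so $\gamma$ bounds a disk in $\HH\butnot\SK$ and is therefore null-homotopic there, contradicting the choice of $\gamma$. Hence $F_1(\PP)\cap\SK\neq\emptyset$ for every admissible $F$, which is exactly the defining condition for $\dem(\SK)\geq d-2$.

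\textbf{Main obstacle.} The routine parts are standard, but the step I expect to require the most care is establishing that a self-homeomorphism $F_1$ of $\HH$ satisfying (II)–(III) genuinely preserves the decomposition $\HH=\SK\sqcup(\HH\butnot\SK)$ — i.e.\ that $\|F_1\|\le\epsilon$ together with ``identity outside $\NN(\PP\cap\SK,\epsilon)$'' forces $F_1(\SK)\subseteq\NN(\SK,\epsilon)$ in a way compatible with the homotopy-theoretic conclusion, so that $F_1(D)\subset\HH\butnot\SK$ really does follow from $F_1(\PP)\cap\SK=\emptyset$. Equivalently, one must be slightly careful that ``$F_1(\PP)$ misses $\SK$'' is the right event to contradict, rather than some neighborhood version; but since $\SK$ is closed and $F_1(D)$ is compact, $F_1(D)\cap\SK=\emptyset$ does place $F_1(D)$ in the open set $\HH\butnot\SK$, and the homotopy contracting $\gamma$ across $F_1(D)$ then lives in that open set. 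The finite-dimensionality of $\HH$ enters only in guaranteeing that a PL $2$-disk has the right codimension $d-2$; in the infinite-dimensional case the statement is vacuous or must be interpreted differently, which is consistent with the paper restricting demension to $d<\infty$.
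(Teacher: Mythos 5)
Your proof is correct and takes essentially the same approach as the paper: choose a PL noncontractible loop $\gamma$ in $\HH\butnot\SK$, span it by a PL $2$-polyhedron $\PP$, take $\epsilon < \dist(\gamma,\SK)$, note that (III) forces $F_1$ to fix $\gamma$ pointwise so $F_1(\PP)$ is again a disk bounded by $\gamma$, and conclude that $F_1(\PP)\cap\SK=\emptyset$ would make $\gamma$ null-homotopic in $\HH\butnot\SK$. The ``main obstacle'' you raise is not a real one --- you never need $F_1$ to preserve the decomposition $\HH=\SK\sqcup(\HH\butnot\SK)$; all that matters is the observation you arrive at by the end of that paragraph, namely that if the compact set $F_1(\PP)$ misses the closed set $\SK$ then the contraction of $\gamma$ across $F_1(\PP)$ lives in the open set $\HH\butnot\SK$.
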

\begin{proof}
Let $\gamma$ be a piecewise linear representation of a noncontractible loop in $\HH\butnot\SK$, and let $\PP$ be a piecewise linear (possibly self-intersecting) $2$-simplex whose boundary is $\gamma$. Fix $0 < \epsilon < \dist(\gamma,\SK)$. Then if $F$ is an isotopy satisfying (I)-(III), then $F_1(\PP)$ is a $2$-simplex whose boundary is $\gamma$. Since $\gamma$ is noncontractible, it follows that $F_1(\PP)\cap\SK\neq\emptyset$.
\end{proof}

In fact, a nearly complete characterization of the demension is known, but its proof requires deep topological results.

\begin{theorem}[{\cite[Theorem 1.4]{Edwards}}, {\cite[p.598]{Rushing}}]
Let $\SK\subset\HH$ be a compact set.
\begin{itemize}
\item If $\SK$ is locally homotopically 1-coconnected, then $\dem(\SK) = \TD(\SK)$.
\item If $\SK$ is not locally homotopically 1-coconnected then $\dem(\SK) = d - 2 \geq \TD(\SK)$ unless the following case holds: $d = 3$, $\dem(\SK) = 2 > \TD(\SK) = 1$.
\end{itemize}
\end{theorem}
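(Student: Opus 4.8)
The plan is to assemble this from three classical bodies of work --- the duality theory of dimension, \v Stan$'$ko's general-position (``embedding dimension'') theory for compacta in Euclidean space, and the engulfing and taming theorems that accompany it --- quoting the hard inputs from \cite{Stanko, Edwards, Rushing} rather than reproving them. Throughout one may assume $\SK$ has empty interior, the contrary case being covered by Theorem~\ref{theoreminterior}; then $\dem(\SK)\le d-1$ automatically, since a single point of $\SK$ can be displaced off $\SK$ by an arbitrarily small ambient isotopy. The argument splits into a soft universal lower bound and a hard upper bound, the latter bifurcating by the codimension of $\SK$.

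\textbf{Lower bound.} First I would establish $\dem(\SK)\ge\TD(\SK)$ for every compactum. Put $k=\TD(\SK)$. Since topological and cohomological dimension agree for finite-dimensional compacta, Alexander duality in $S^d$ produces a PL $(d-k-1)$-sphere $\Sigma\subset\HH\butnot\SK$ which bounds a PL $(d-k)$-disk $\PP$ in $\HH$ but is nonzero in the $(d-k-1)$-st homology of $\HH\butnot\SK$; equivalently, \emph{every} PL $(d-k)$-polyhedron with boundary $\Sigma$ meets $\SK$. Choosing $0<\epsilon<\dist(\Sigma,\SK)$, condition (III) of Definition~\ref{definitiondem} forces every $F_t$ to fix $\Sigma$ pointwise, so $F_1(\PP)$ is again a PL $(d-k)$-polyhedron with boundary $\Sigma$ and hence meets $\SK$; thus $\dem(\SK)\ge k$. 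An entirely parallel argument, now at the level of $\pi_1$ --- the same mechanism as Observation~\ref{observationdemension}, localized at a point rather than run globally --- shows that if $\SK$ fails to be locally homotopically $1$-coconnected, say at $\pp\in\SK$ with scale $\epsilon_0$, then spanning an arbitrarily small non-nullhomotopic loop near $\pp$ by a small PL $2$-disk $\PP$ and choosing $\epsilon$ small enough that any $\SK$-avoiding image $F_1(\PP)$ would lie inside $B(\pp,\epsilon_0)\butnot\SK$, one forces $F_1(\PP)\cap\SK\ne\emptyset$; hence $\dem(\SK)\ge d-2$. Together with the upper bound below, this already gives all the inequalities ``$\ge$'' asserted in the theorem, including ``$d-2\ge\TD(\SK)$''.

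\textbf{Upper bound.} Here one must show that, under the stated hypothesis, \emph{every} PL polyhedron of the critical dimension \emph{can} be pushed off $\SK$ by arbitrarily small ambient isotopies. If $\SK$ is locally homotopically $1$-coconnected and $n:=\TD(\SK)$ satisfies $d-n\ge3$, this is precisely \v Stan$'$ko's theorem that such a compactum has embedding dimension $n$: it admits arbitrarily fine mapping-cylinder neighbourhoods collapsing to $n$-complexes, so every PL $(d-n-1)$-polyhedron can be engulfed away from $\SK$ by a small ambient isotopy, whence $\dem(\SK)\le n$ and $\dem(\SK)=\TD(\SK)$. In codimension $2$ or $1$ the engulfing must be replaced by $3$- and $4$-dimensional substitutes (Dehn's lemma and the loop theorem when $d=3$; disk-embedding arguments when $d=4$), and it is here that exceptional behaviour can arise. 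In the non-$1$-coconnected case I would run the analogous argument one dimension up: once the only local obstruction is the $\pi_1$-obstruction already exhibited, standard general-position and engulfing techniques free every PL polyhedron of dimension $\le1$ from $\SK$ by small ambient isotopies, giving $\dem(\SK)\le d-2$ and hence $\dem(\SK)=d-2$. The ambient realization of these moves is available when $d\ge4$ but can fail when $d=3$ with $\TD(\SK)=1=d-2$ --- the codimension-two, low-dimensional case realized by Bing--Antoine-type wild $1$-continua in $\R^3$ --- where $3$-manifold topology must be used instead to pin down $\dem(\SK)=2$ (a $1$-complex can be stuck, a $0$-complex cannot).

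\textbf{Main obstacle.} The whole difficulty lies in the upper bound, and specifically in the \v Stan$'$ko--Edwards general-position and engulfing machinery in codimension $\ge3$ together with its delicate replacements in codimension $\le2$; there is no shortcut around invoking these deep results of PL and geometric topology, and the appearance of the $d=3$ exception is precisely the signature of the breakdown of general position in low dimensions. By contrast the lower bound is routine Alexander duality combined with the elementary isotopy-rigidity argument of Observation~\ref{observationdemension}.
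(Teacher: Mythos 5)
The paper does not prove this statement; it is imported verbatim from Edwards and Rushing, so there is no in-paper argument to compare against. Your sketch is a plausible reconstruction of how those sources establish it: the two lower bounds $\dem(\SK)\ge\TD(\SK)$ (via duality) and $\dem(\SK)\ge d-2$ in the non-$1$-coconnected case (by localizing the mechanism of Observation \ref{observationdemension}), together with the matching upper bounds from the \v Stan$'$ko--Edwards general-position and engulfing machinery, the $d=3$ exception being precisely where that machinery breaks down. Two caveats. First, the duality step is stated a little too glibly: $\TD(\SK)=k$ does not directly produce a nontrivial class in $\check{H}^k(\SK;\Z)$; one needs the Alexandroff characterization of dimension via essential maps to $S^k$ (or the relative-cohomology formulation of cohomological dimension) before Alexander duality yields a linking $(d-k-1)$-cycle in the complement that every PL spanning $(d-k)$-chain must pierce. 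This is standard but deserves a sentence, and a cleaner route to $\dem(\SK)\ge\TD(\SK)$ is in fact available from V\"ais\"al\"a's theorem (cited two paragraphs above the statement) that $\dem(\SK)$ is the infimal Hausdorff dimension over ambient-homeomorphic images of $\SK$, combined with Szpilrajn. Second, your upper-bound paragraph for the non-$1$-coconnected case --- ``once the only local obstruction is the $\pi_1$-obstruction, standard general position and engulfing free every $1$-polyhedron'' --- is where essentially all of the content of Edwards' theorem resides, and as written it is more of a slogan than an argument; but since you explicitly defer to the references there, and since reproving it would be out of scope, this is acceptable for a sketch and matches the paper's own treatment (citation, not proof).
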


Here, the set $\SK$ is said to be \emph{locally homotopically 1-coconnected} if for every $x\in \HH$ and neighborhood $U$ of $x$, there exists a neighborhood $V$ of $x$ such that every loop in $V\butnot\SK$ is contractible in $U\butnot\SK$.

The ``exceptional'' case $d = 3$, $\dem(\SK) = 2 > \TD(\SK) = 1$ can in fact occur, as demonstrated by H.\,G.\,Bothe \cite{Bothe} and independently by D.\,R.\,McMillan, Jr. and W.\,H.\,Row, Jr. \cite{McMillanRow}.

\subsection{Demension and Hausdorff dimension}
As mentioned earlier, one characterization of the demension of a compact set $\SK\subset\HH$ is that it is the infimum of the Hausdorff dimension of the image of $\SK$ under a homeomorphism of $\HH$ \cite{Vaisala3}. In particular, this implies that $\HD(\SK)\geq \dem(\SK)$. For completeness we provide an elementary proof of a slightly stronger version of this assertion:

\begin{theorem}[{\cite[Theorem 6.15]{LuukkainenVaisala}}]
\label{theoremszprime}
Let $\SK\subset\HH$ be any set, and let $k = \dem(\SK)$. Then $\scrH^k(\SK) > 0$. In particular, $\HD(\SK) \geq \dem(\SK)$.
\end{theorem}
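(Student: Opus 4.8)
The plan is to prove the contrapositive: assuming $\scrH^k(\SK) = 0$ with $k := \dem(\SK)$, I will produce an ambient isotopy that violates the defining property of the witnessing polyhedron, a contradiction. If $k = 0$ the assertion $\scrH^0(\SK) > 0$ merely says $\SK \neq \emptyset$, so assume $1 \le k \le d$ and set $m := d - k$. By Definition \ref{definitiondem} there are a PL polyhedron $\PP \subset \HH$ of dimension $m$ and an $\epsilon > 0$ such that every $F$ satisfying (I)--(III) has $F_1(\PP) \cap \SK \neq \emptyset$; I will construct such an $F$ with $F_1(\PP) \cap \SK = \emptyset$. Fix a triangulation $T$ of $\PP$ with vertices $v_1, \dots, v_p$; since $F$ is the identity off the $\epsilon$-neighborhood of $\PP \cap \SK$, only the simplices meeting that neighborhood matter, and I may assume $T$ is finite (the case $d = \infty$ being treated at the end).

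The first step reduces the construction of $F$ to a general-position statement. For $\mathbf w = (w_1, \dots, w_p)$ near $\mathbf v = (v_1, \dots, v_p)$, let $\phi_{\mathbf w} : \PP \to \HH$ be the PL map sending each $v_i$ to $w_i$ and affine on each simplex; then $\sup_{x \in \PP} \|\phi_{\mathbf w}(x) - x\| \le \max_i \|w_i - v_i\|$, and if $\max_i \|w_i - v_i\|$ is small enough and only vertices inside the $(\epsilon/2)$-neighborhood of $\PP \cap \SK$ are moved, then $\phi_{\mathbf w}$ is a PL embedding close to the inclusion, which therefore extends (by the PL isotopy extension theorem) to an ambient isotopy $F$ satisfying (I)--(III). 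It thus suffices to find such a $\mathbf w$, arbitrarily close to $\mathbf v$, with $\phi_{\mathbf w}(\PP) \cap \SK = \emptyset$.

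The crux---and the essential use of the hypothesis $\scrH^k(\SK) = 0$ together with the borderline relation $m + k = d$---is that the set of ``bad'' parameters, namely those $\mathbf w$ for which $\phi_{\mathbf w}(\PP)$ meets $\SK$, is Lebesgue-null in the parameter space $\HH^p \cong \R^{dp}$. Fix a $j$-simplex $\sigma$ of $T$, so $j \le m$, with vertices $v_{i_0}, \dots, v_{i_j}$; then $\phi_{\mathbf w}(\sigma)$ meets $\SK$ exactly when $\sum_l t_l w_{i_l} = a$ for some $t$ in the standard $j$-simplex $\Delta^j$ and some $a \in \SK$. On the region $\{t_0 \ge c\}$ one can solve for $w_{i_0}$ as a locally Lipschitz function of $(t, a)$ and of the remaining $d(p-1)$ vertex coordinates, which exhibits the corresponding bad set as a locally Lipschitz image of a subset of $\Delta^j \times \SK \times \R^{d(p-1)}$. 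Since $\Delta^j \times \R^{d(p-1)}$ is locally Ahlfors $\bigl(j + d(p-1)\bigr)$-regular, an elementary covering estimate gives $\scrH^{\,j + d(p-1) + k}\bigl(\Delta^j \times \SK \times \R^{d(p-1)}\bigr) = 0$, and since $j + d(p-1) + k \le m + d(p-1) + k = dp$ this image is Lebesgue-null in $\R^{dp}$. Letting $c \downarrow 0$ along $c = 1/N$, and handling the stratum $\{t_0 = 0\}$ by downward induction on $j$ via the face of $\sigma$ opposite $v_{i_0}$, the whole bad set of $\sigma$ is null; its union over the finitely many simplices of $T$ is then null, so a generic $\mathbf w$ arbitrarily close to $\mathbf v$ works. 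This completes the contradiction, and the ``in particular'' clause follows at once, since $\scrH^k(\SK) > 0$ forces $\scrH^s(\SK) > 0$ for every $s \le k$, hence $\HD(\SK) \ge k$.

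I expect this last, measure-theoretic step to be the main obstacle---in particular, tracking the exponent $m + k = d$ honestly, controlling the Lipschitz constants as $t_0 \to 0$, and verifying the product estimate $\scrH^{j+k}(\Delta^j \times \SK) = 0$ (which is easy precisely because the simplex factor is Ahlfors regular)---whereas the PL-topological ingredients are routine. Finally, the case $d = \infty$ (needed for Theorems \ref{theorem1}--\ref{theorem3} only when $\SK$ is compact; the general case is \cite[Theorem 6.15]{LuukkainenVaisala}) reduces to the finite-dimensional one: $\PP \cap \SK$ is then compact, hence lies in a finite-dimensional affine subspace $V$, and perturbing the relevant vertices within $V' := V \oplus W$ for an auxiliary $k$-dimensional subspace $W \perp V$ keeps $\phi_{\mathbf w}(\PP) \subset V'$, so that $\phi_{\mathbf w}(\PP) \cap \SK = \phi_{\mathbf w}(\PP) \cap (\SK \cap V')$ with $\scrH^k(\SK \cap V') = 0$; the bound $\dim \sigma \le \dim V$ then makes the dimension count in $(V')^p$ identical to the one above.
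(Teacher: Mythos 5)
Your argument is correct in outline, but it takes a genuinely different and more laborious route than the paper's. The paper perturbs $\PP$ by isometries of $\HH$ rather than by moving triangulation vertices: it first observes that for all sufficiently small $g\in\Isom(\HH)$ one has $g(\PP)\cap\SK\neq\emptyset$, then covers $\PP$ by finitely many $(d-k)$-dimensional affine subspaces $A_1,\dots,A_m$, and applies Fubini on $\SO(\HH)\ltimes\HH$ to conclude that for some $i$ and a positive-measure set of rotations $h$, the set of translations $\vv$ with $(h(A_i)+\vv)\cap\SK\neq\emptyset$ has positive Lebesgue measure. That translation set is (up to a shift) $\pi_{V(h)}^{-1}(\pi_{V(h)}(\SK))$, so one reads off $\scrH^k(\pi_{V(h)}(\SK))>0$ and concludes via the $1$-Lipschitz property of orthogonal projections that $\scrH^k(\SK)>0$. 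The group structure of the isometry family is what makes the parameter space low-dimensional and lets Fubini hand you a linear projection for free; your parameter space $\R^{dp}$ of vertex positions is much larger, which is why you need the stratified Lipschitz-image estimate with the borderline exponent $j+d(p-1)+k\le dp$, the product Hausdorff-measure inequality, the downward induction over faces to handle $t_0\to 0$, and the PL isotopy extension theorem (the paper only needs to ``damp'' a small isometry to the identity across an annulus, which is more elementary). Both proofs share the same ``easy'' unproved prerequisite — that a small perturbation of $\PP$ supported near $\PP\cap\SK$ can be realized by an ambient isotopy satisfying (I)--(III) — so your approach is not wrong, just heavier. One small slip: in your $d=\infty$ discussion you assert that $\PP\cap\SK$ lies in a finite-dimensional affine subspace \emph{because} it is compact; compactness alone does not imply finite dimensionality in Hilbert space. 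The conclusion is nonetheless correct because $\PP$ itself is a PL polyhedron, hence finite-dimensional, and $\PP\cap\SK\subset\PP$. (In any case, the paper only invokes demension when $d<\infty$, as is implicit in the $(d-k)$-dimensional polyhedron in Definition \ref{definitiondem}, so the infinite-dimensional case is moot here.)
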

\begin{proof}
Let $\PP\subset\HH$ and $\epsilon > 0$ be as in Definition \ref{definitiondem}, let $\Isom(\HH)$ denote the isometry group of $\HH$, and fix $g\in\Isom(\HH)$. If $g$ is sufficiently small, then there exists an isotopy $(F_t)_t$ satisfying the requirements of Definition \ref{definitiondem} such that $F_1 = g$ on $U = \thickvar{\PP\cap\SK}{\epsilon/2}$ and $\|F_1\| < \dist(\PP\butnot U,\SK\butnot U)$. Since $k = \dem(\SK)$, we have $F_1(\PP)\cap\SK\neq\emptyset$ and thus $g(\PP)\cap\SK\neq\emptyset$.

Let $A_1,\ldots,A_m$ be a collection of $(d - k)$-dimensional affine subspaces of $\HH$ such that $\PP\subset \bigcup_1^m A_i$. Then for all sufficiently small $g\in\Isom(\HH)$, there exists $i = 1,\ldots,m$ such that $g(A_i)\cap\SK \neq\emptyset$. By Fubini's theorem, for some $i = 1,\ldots,m$ and for a positive measure set of $h\in\SO(\HH)$, the set of $\vv\in\HH$ such that $(h(A_i) + \vv)\cap\SK\neq\emptyset$ is of positive measure. Equivalently, $\scrH^k(\pi_{V(h)}(\SK)) > 0$, where $V(h)$ is the orthogonal complement of the linear part of $h(A_i)$ (which satisfies $\dim(V(h)) = k$). It follows that $\scrH^k(\SK) > 0$.
\end{proof}

Now we come to the main theorem of this section, which says that strict inequality holds in Theorem \ref{theoremszprime} for ``dynamically defined'' fractals:

\begin{theorem}
\label{theoremprime}
The results of \62 are still true if $\TD$ is replaced by $\dem$.
\end{theorem}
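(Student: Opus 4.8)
The plan is to re-run the proof of Theorems \ref{theorem1}-\ref{theorem3} almost verbatim, with $k := \dem(\LS)$ in place of $k = \TD(\LS)$, after first observing that this substitution can only strengthen the conclusion. Every set occurring in the ``rigidity'' alternative of Theorems \ref{theorem1}-\ref{theorem3} (a generalized $k$-sphere; or, as in Theorem \ref{theorem2}, the closure of a relatively open, relatively compact subset of a real-analytic $k$-dimensional submanifold of $\HH$; or, as in Theorem \ref{theorem3}, a generalized circle, an arc thereof, or the Riemann sphere) is locally homotopically 1-coconnected, so $\dem = \TD = k$ for such a set by the Edwards--Rushing characterization recalled in \6\ref{sectiondem}. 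Hence in the rigidity alternative both the conclusion and the value of $k$ are literally unchanged when $\TD$ is replaced by $\dem$, and Theorem \ref{theoremprime} reduces to the assertion that, assuming \eqref{mainassumption}, if $\LS$ is \emph{not} one of these sets then $\HD(\LS) > \dem(\LS)$. This is stronger than the inequality $\HD(\LS) > \TD(\LS)$ already supplied by Theorems \ref{theorem1}-\ref{theorem3}, since $\dem(\LS) \geq \TD(\LS)$ always.

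To prove the strengthened inequality I would trace through the proof of Theorems \ref{theorem1}-\ref{theorem3} and locate the (two) places where the identity $k = \TD(\LS)$ is used. The first is Szpilrajn's theorem (Theorem \ref{theoremsz}), invoked to get $\scrH^k(\LS) > 0$ and hence, via \eqref{mainassumption} and the argument of the remark following Theorem \ref{theorem3}, that $\delta \geq k$; here one simply substitutes Theorem \ref{theoremszprime}, the exact demensional analogue of Szpilrajn's theorem, which gives $\scrH^{\dem(\LS)}(\LS) > 0$ and therefore $\delta \geq \dem(\LS)$. The second is Theorem \ref{theoreminterior}, used (when $k = \dimH$, and more generally once the argument has placed $\LS$ inside a $k$-dimensional real-analytic submanifold $M \subseteq \HH$) to pass from $\TD(\LS) = k$ to the statement that $\LS$ has nonempty interior relative to $M$, after which the conformal dynamics force $\LS$ to fill out an open subset of $M$. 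This step needs no modification: once $\LS$ is known to lie inside a $C^1$ $k$-manifold $M$, it is locally homotopically 1-coconnected, so $\TD(\LS) = \dem(\LS) = k = \dim(M)$, and Theorem \ref{theoreminterior} applies exactly as in the original proof. With these two substitutions the argument yields that either $\delta > \dem(\LS)$ or $\LS$ is a set of the appropriate type, which is the content of Theorem \ref{theoremprime}.

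The step I expect to be the real obstacle, and the one this plan defers to the (still to be presented) proof of Theorems \ref{theorem1}-\ref{theorem3}, is confirming that topological dimension is used \emph{nowhere else} in that proof; in particular, that the rectifiability (in infinite dimensions, pseudorectifiability) machinery which deduces from $\scrH^\delta(\RLS) > 0$, \eqref{mainassumption}, and the conformal self-similarity of $\RLS$ first that $\RLS$ is $\delta$-rectifiable and then that $\LS$ is a real-analytic manifold makes no reference to $\TD$, only to Hausdorff measure and to the dynamics. I expect this to hold, since that part of the argument is purely measure-theoretic and geometric, but it must be checked line by line. Finally, note that the whole discussion is vacuous when $\dimH \leq 2$, where $\dem = \TD$ for every subset of $\R^{\dimH}$; the content lies entirely in the range $3 \leq \dimH < \infty$, and it is precisely this strengthening that yields Corollary \ref{corollaryantoine}, since a dynamically defined Antoine necklace $\LS \subseteq \R^3$ satisfies $\dem(\LS) \geq 1 > 0 = \TD(\LS)$ by Observation \ref{observationdemension}.
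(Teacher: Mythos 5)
Your high-level plan — replace $\TD$ by $\dem$ throughout, substitute Theorem~\ref{theoremszprime} for Szpilrajn's theorem, and observe that the conclusion can only get stronger since $\dem \geq \TD$ — matches the paper's. You correctly identify Szpilrajn's theorem as one point of contact. But on the other two points you diverge from the paper, and in one case there is a real gap.

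First, for the step where $\LS$ is shown to have nonempty interior in the manifold $M$: you argue indirectly, claiming that $\LS \subset M$ (a $C^1$ manifold) forces $\LS$ to be locally homotopically $1$-coconnected, hence $\dem(\LS) = \TD(\LS)$ by the Edwards--Rushing characterization, and then you apply Theorem~\ref{theoreminterior} as before. This is plausible, but it rests on the Edwards--Rushing theorem, which the paper explicitly notes requires ``deep topological results,'' and the lhc1c claim itself (especially in codimension one) needs a careful transversality/bicollar argument that you do not supply. The paper instead proves a direct, elementary demensional analogue of Theorem~\ref{theoreminterior}, namely Theorem~\ref{theoreminteriorprime}, by perturbing the polyhedron $\PP$ from Definition~\ref{definitiondem} transversally through $M$; this sidesteps Edwards--Rushing entirely. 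Your route is not wrong, but it is heavier than necessary and less self-contained.

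Second, and more importantly, your conjecture that ``the rectifiability machinery $\ldots$ makes no reference to $\TD$, only to Hausdorff measure and to the dynamics'' is false, and this is exactly where the paper flags that ``the proof requires a slight strengthening of Theorem~\ref{theoremszprime}, specifically in the proof of Lemma~\ref{lemmafederermattila}.'' Lemma~\ref{lemmafederermattila} (non-fractality implies approximate tangent planes) is invoked in Step~2 of the proof of Theorem~\ref{theoremrigidity}($\dimH<\infty$), and its proof uses the hypothesis $k=\TD(\SK)$ through Federer's theorem that a compact set of topological dimension $\geq k$ has $\scrH^k(\pi_V(\SK))>0$ for a positive-measure set of $k$-planes $V$ — which is what feeds into the Besicovitch--Federer projection theorem to get a rectifiable piece. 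For the demensional version, the conclusion $\scrH^k(\pi_{V}(\SK))>0$ for a positive-measure set of $V$ does not follow merely from the \emph{statement} of Theorem~\ref{theoremszprime}, but from its \emph{proof} (the Fubini/$\SO(\HH)$ argument there produces exactly such a family of projections). That is the ``slight strengthening'' the paper alludes to, and without it your plan stops at the point you labelled ``the real obstacle.'' You were right to be suspicious of that step; the resolution is to notice that the proof already given for Theorem~\ref{theoremszprime} supplies the projection-positivity statement that Federer's theorem supplies in the $\TD$ case.
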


\begin{corollary}
\label{corollaryantoine}
For a ``dynamically defined'' Antoine's necklace $\LS$ (cf. Remark \ref{remarkdemdynamical} below), we have $\HD(\LS) > 1$.
\end{corollary}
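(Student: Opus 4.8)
The plan is to obtain Corollary \ref{corollaryantoine} as an essentially immediate consequence of Theorem \ref{theoremprime} applied to Theorem \ref{theorem2}, once the relevant topological fact about Antoine's necklace is recorded. First I would pin down the meaning of a ``dynamically defined'' Antoine's necklace (as in Figure \ref{figureantoine} and Remark \ref{remarkdemdynamical}): it is the limit set $\LS = \RLS$ of a finite conformal IFS $(u_a)_{a\in E}$ on $\R^3$ whose generators are similarities carrying a fixed solid torus $V$ onto the solid tori of a linked chain inside $V$, arranged so that the strong separation condition holds. In particular $(u_a)_{a\in E}$ satisfies the (strong) open set condition, $\LS$ is compact, one has $\LS = \RLS$ so that \eqref{mainassumption} holds trivially, and $\LS$ is homeomorphic to a Cantor set, hence totally disconnected, so $\TD(\LS) = 0$.

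The one genuinely nontrivial ingredient is the computation $\dem(\LS)\geq 1$. For this I would invoke Observation \ref{observationdemension}: it suffices to prove that $\pi_1(\R^3\butnot\LS)\neq 0$, and since $d = 3$ this yields $\dem(\LS)\geq d - 2 = 1$. The nonvanishing of $\pi_1(\R^3\butnot\LS)$ is the classical argument of Blankinship \cite{Blankinship}: a loop $\gamma$ through the hole of the first torus $V$ links every torus at every stage of the construction, and a linking argument (essentially Alexander duality at each finite stage, together with a compactness/limiting argument) shows that $\gamma$ is not contractible in the complement of the necklace. The only point requiring a little care is that this must be checked for the particular self-similar chain of tori used in the dynamical construction rather than for Antoine's original chain; but the argument is insensitive to the precise geometry as long as each torus is linked inside its predecessor, which is exactly the configuration of Figure \ref{figureantoine}.

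Now set $k := \dem(\LS)$, so $k\geq 1$. By Theorem \ref{theoremprime}, Theorem \ref{theorem2} remains valid with $\TD$ replaced by $\dem$; applying it (using that \eqref{mainassumption} holds and that $(u_a)_{a\in E}$ consists of similarities) gives the dichotomy: either $\delta := \HD(\LS) > k$, or $\LS$ is the closure of a relatively open and relatively compact subset of a $k$-dimensional affine plane $M\subset\R^3$. The second alternative is impossible, because a nonempty relatively open subset of a $k$-dimensional manifold with $k\geq 1$ has topological dimension at least $1$, and hence so does its closure, contradicting $\TD(\LS) = 0$. (If one allows the generators to be M\"obius rather than similarities, $M$ would instead be a generalized $k$-sphere, still of positive dimension, so the same contradiction applies.) Therefore the first alternative holds, i.e. $\HD(\LS) > k \geq 1$, which is the assertion of Corollary \ref{corollaryantoine}.

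As for the main obstacle: in this scheme essentially all of the difficulty is pushed into Theorem \ref{theoremprime} itself --- that is, into showing that the rectifiability-based rigidity machinery underlying Theorem \ref{theorem2} still produces the dichotomy when $\TD$ is replaced by $\dem$ --- which is assumed here. Conditional on that, the remaining work is bookkeeping plus the well-known fact $\pi_1(\R^3\butnot\LS)\neq 0$, and the only place where genuine (if routine) verification is needed is confirming that the linking argument goes through verbatim for the self-similar chain of tori defining $\LS$.
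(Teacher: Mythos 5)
Your proposal is correct and follows the same route the paper has in mind: since $\LS$ is the attractor of a finite similarity IFS satisfying the strong separation condition, it is compact with $\LS = \RLS$ (so \eqref{mainassumption} holds trivially) and $\TD(\LS)=0$, while Observation~\ref{observationdemension} together with Blankinship's linking argument gives $\dem(\LS)\geq 1$; applying Theorem~\ref{theoremprime} (via Theorem~\ref{theorem2}) and ruling out the manifold alternative by the dimension mismatch yields $\HD(\LS)>\dem(\LS)\geq 1$. The paper leaves exactly these steps implicit (the corollary is stated without a separate proof, with the key inputs supplied by the Figure~\ref{figureantoine} caption and Remark~\ref{remarkdemdynamical}), so your reconstruction is the intended argument.
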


The proof of Theorem \ref{theoremprime} is given in the next section, following the statement of Theorem \ref{theoremrigidity}. The idea: where Theorems \ref{theoremsz} and \ref{theoreminterior} are used in the proof of Theorems \ref{theorem1}-\ref{theorem3}, instead use Theorem \ref{theoremszprime} and Theorem \ref{theoreminteriorprime} below. Actually, the proof requires a slight strengthening of Theorem \ref{theoremszprime}, specifically in the proof of Lemma \ref{lemmafederermattila}, which is used to prove Theorem \ref{theoremrigidity}.

\begin{theorem}
\label{theoreminteriorprime}
Let $M\subset\HH$ be a $k$-dimensional compact smooth manifold-with-boundary, and suppose that $\SK\subset M$ satisfies $\dem(\SK) = k$. Then $\SK$ has nonempty interior relative to $M$.
\end{theorem}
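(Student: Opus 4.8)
The plan is to establish the contrapositive. Assume that $\SK$ is closed in $M$ (which is the only case we shall need) and has empty interior relative to $M$; I will show that $\dem(\SK)\le k-1$, contradicting $\dem(\SK)=k$. By Definition~\ref{definitiondem} it is enough to show that for \emph{every} PL polyhedron $\PP\subset\HH$ of dimension $d-k$ and every $\epsilon>0$ there is a continuous $F\colon\HH\times[0,1]\to\HH$ satisfying (I)--(III) with $F_1(\PP)\cap\SK=\emptyset$; producing such an $F$ for every $\PP$ contradicts the clause of Definition~\ref{definitiondem} expressing $\dem(\SK)\ge k$. So fix $\PP$ and $\epsilon$ and put $K_0=\PP\cap\SK\subset M$, a set whose closure $\cl K_0$ is compact. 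The basic observation is that $\PP\butnot\NN(\cl K_0,\rho)$ is disjoint from $\SK$ for every $\rho>0$ (a point of $\PP\cap\SK$ lies in $K_0\subseteq\cl K_0$); since $\SK$ is closed and $\PP$ compact, this also gives $\dist\bigl(\PP\butnot\NN(\cl K_0,\rho),\SK\bigr)>0$, so the part of $\PP$ away from $\cl K_0$ stays off $\SK$ under every sufficiently small isotopy. It therefore suffices to produce a small isotopy, supported in a neighborhood of $\cl K_0$ contained in $\NN(K_0,\epsilon)$, after which the polyhedron avoids $\SK$ near $\cl K_0$.

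I would do this in two small steps, each an isotopy supported near $\cl K_0$. \emph{Step 1: general position.} Because $M$ is a smooth $k$-submanifold and $\dim\PP=d-k$ are complementary dimensions, the transversality theorem furnishes a $C^0$-small ambient isotopy $F^{(1)}$, supported near $\cl K_0$, such that $F^{(1)}_1(\PP)$ meets $M$ transversally at every point of $F^{(1)}_1(\PP)\cap M$ near $\cl K_0$; transversality in complementary dimension then forces that part of the intersection—and in particular $F^{(1)}_1(\PP)\cap\SK$, since $\SK\subseteq M$ and $F^{(1)}_1(\PP)\cap\SK$ lies near $\cl K_0$—to be a \emph{finite} set $p_1,\dots,p_N$ of isolated transverse crossings, each in a top-dimensional face of the polyhedron. \emph{Step 2: sliding off $\SK$ within $M$.} This is where the hypothesis enters: $\SK$ having empty interior in $M$ means $M\butnot\SK$ is dense in $M$, so we may choose $q_j\in M\butnot\SK$ as close to $p_j$ as we like. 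In a chart around $p_j$ in which $M$ becomes $\R^k\times\{0\}$ and $p_j$ the origin (so $F^{(1)}_1(\PP)$ meets $\R^k\times\{0\}$ there only at the origin), slide the polyhedron by the tiny $M$-tangent vector carrying $p_j$ to $q_j$, damped by a bump function equal to $1$ near $p_j$ and supported in a small ball $B_j$ about $p_j$ (the $B_j$ pairwise disjoint, missing the other crossings, and inside the allowed neighborhood); call the result $F^{(2)}$. Since the slide is tangent to $M$ it does not change over which normal fiber a point sits, so afterwards the polyhedron still meets $M$ inside $B_j$ only at $q_j\notin\SK$, and away from the $B_j$ it meets $\SK$ nowhere. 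Concatenating $F^{(1)}$ and $F^{(2)}$ yields $F$ with (I)--(III) and $F_1(\PP)\cap\SK=\emptyset$.

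The only step requiring care is Step 1: producing the general-position isotopy within the constraints of Definition~\ref{definitiondem}, i.e.\ $C^0$-small and supported in $\NN(K_0,\epsilon)$. One covers $\cl K_0$ by finitely many submanifold charts of $M$, perturbs $\PP$ by a generic small translation in each—for each simplex of $\PP$ the translations that destroy transversality to $M$, or place a resulting crossing onto $\SK$, form a nowhere dense set (the second because a transverse crossing point depends on the translation via an open map and $\SK$ is nowhere dense in $M$, so Steps 1 and 2 may be merged into this single choice)—and patches with a partition of unity. The rest is the routine, if slightly fiddly, bookkeeping: keeping all supports inside a fixed $\NN(\cl K_0,\epsilon'')\subseteq\NN(K_0,\epsilon)$, keeping total displacement below $\epsilon$ and below the positive distances above (so that the $\SK$-crossings of $F^{(1)}_1(\PP)$ genuinely lie in the region where transversality was arranged, including the thin ``transition'' shells where the damping functions are between $0$ and $1$), and extending $M$ slightly past $\partial M$ inside $\HH$ so that the transversality theorem applies to the compact manifold-with-boundary $M$.

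Two closing remarks. The argument is uniform in the codimension, so it shows in particular that the ``exceptional'' case of the Edwards--Rushing classification ($d=3$, $\dem=2>\TD=1$) cannot occur for a subset of a smooth surface. And when $\SK$ happens to be locally homotopically $1$-coconnected, Step 1 may be skipped entirely: then $\dem(\SK)=\TD(\SK)$ by the Edwards--Rushing theorem, so $\TD(\SK)=k$, and the conclusion follows at once from Theorem~\ref{theoreminterior} applied to $\SK\cap\mathrm{int}(M)$ (a standard reduction handling the boundary).
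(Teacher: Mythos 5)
Your proof is correct and takes essentially the same approach as the paper's: put $\PP$ in general position so it meets $M$ in finitely many transversal crossings, then exploit the fact that a small $M$-tangent perturbation moves each crossing through an open subset of $M$. The paper argues directly (a pigeonhole argument shows one of the openly-parametrized crossing maps must land entirely in $\SK$, yielding an interior point), while you argue the contrapositive (if $\SK$ has empty interior in $M$, slide each crossing into the dense set $M\butnot\SK$); this is a logical repackaging rather than a different route, and the extra care you invest in keeping the isotopy within the constraints of Definition~\ref{definitiondem} fills in bookkeeping the paper leaves implicit.
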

\begin{proof}
Let $\PP\subset\HH$ and $\epsilon > 0$ be as in Definition \ref{definitiondem}. Without loss of generality, we can assume that $\PP$ intersects $M$ only finitely many times, and that each intersection is transversal. (This is due to the dimension relation $\dim(\PP) + \dim(M) = \dimH$.) Write $\PP\cap M = \{\xx_1,\ldots,\xx_m\}$, and without loss of generality suppose $\dist(\xx_i,\xx_j) \geq \epsilon$ for all $i\neq j$. Fix $\vv_1,\ldots,\vv_m\in\HH$ small; then there exists an isotopy $(F_t)_t$ satisfying the requirements of Definition \ref{definitiondem} such that $F_1(\xx) = \xx + \vv_i$ for all $\xx\in B_i := B(\xx_i,\epsilon/3)$ and such that $\dist(\xx,F_1(\xx)) \leq \dist(\PP\butnot \bigcup_i B_i,\SK\butnot\bigcup_i B_i)$. Since $k = \dem(\SK)$, we have $F_1(\PP)\cap\SK\neq\emptyset$.

If $\vv_1,\ldots,\vv_m$ are sufficiently small, then we can write $F_1(\PP)\cap M = \{\gg_1(\vv_1),\ldots,\gg_m(\vv_m)\}$, where $\gg_1,\ldots,\gg_m:U\to M$ are smooth nonsingular maps from some neighborhood $\0\in U\subset\HH$. So for some $i$, we have $\gg_i(\vv_i)\in\SK$. Since $\vv_1,\ldots,\vv_m$ were arbitrary, there exists $i$ such that for all $\vv\in U$, we have $\gg_i(\vv)\in\SK$. But then $\SK$ contains the relatively open set $\gg_i(U)\subset M$.
\end{proof}

\begin{remark}
\label{remarkdemdynamical}
It is not hard to see that in the construction of Antoine's necklace, the small tori in the first step may be chosen to be similar copies of the large torus. (The large torus must be chosen to satisfy a certain inequality in order for this to work.) In this case, Antoine's necklace is actually the limit set of a similarity IFS, giving a nontrivial example of Theorem \ref{theoremprime}. In fact, the optimality of Theorem \ref{theoremprime} (for the case of necklaces) was proven by T.\,B.\,Rushing \cite{Rushing}, who showed that for every $\delta\in (1,3)$ there exists a similarity-generated necklace whose Hausdorff dimension is $\delta$. (He also showed that when $\delta\in\{1,3\}$, there exists a necklace of Hausdorff dimension $\delta$ which is not the limit set of a similarity IFS.)

One might wonder what other dynamically defined fractals have demension strictly larger than their topological dimension. Examples of geometrically finite Kleinian groups acting on $\R^3$ whose limit sets satisfy \eqref{homotopy} but have topological dimension zero are given in \cite{BestvinaCooper, Gusevskii, Matsumoto}. For such examples, $\dem(\SK) \geq 1$ by Observation \ref{observationdemension}, and it seems likely that equality holds.

It is interesting to ask whether the ``exceptional case'' $d = 3$, $\dem(\SK) = 2 > \TD(\SK) = 1$ can occur for $\SK$ dynamically defined. In fact, the original construction of McMillan and Row \cite{McMillanRow} is essentially an IFS construction, although their paper does not make that very clear (which is not surprising as IFSes had not been invented yet). We hope to clarify this in future work.

\end{remark}

\draftnewpage
\section{A general rigidity theorem}
In this section, we state a general rigidity theorem and show that the four theorems of the previous section all reduce to it. In this general theorem, there is no dynamical system mentioned explicitly in the hypotheses, but the hypotheses concern a measure $\mu$ on $\HH$ which should be interpreted as the conformal measure of some conformal dynamical system. To make this interpretation explicit, we include a hypothesis that a certain set $\Rad(\mu)\subset\Supp(\mu)$ has full $\mu$-measure, where $\Supp(\mu)$ denotes the topological support of $\mu$. The set $\Rad(\mu)$ is supposed to represent the radial limit set of the dynamical system which generates $\mu$, in a way made precise by Lemma \ref{lemmaradial} below.

\begin{definition}
\label{definitionradial}
Let $\AA$ be one of the following classes:
\begin{itemize}
\item $\Conf(\HH)$, the class of conformal homeomorphisms between open subsets of $\HH$,
\item $\Mob(\HH)$, the class of M\"obius transformations of $\HH$,
\item $\Sim(\HH)$, the class of similarities of $\HH$.
\end{itemize}
Note that by Liouville's theorem, if $\dimH\geq 3$ then $\Conf(\HH) = \Mob(\HH)$.\Footnote{The classical proof of Liouville's theorem by Nevanlinna \cite{Nevanlinna} is valid in the infinite-dimensional setting. We note in passing that the proof in \cite{Nevanlinna} is incomplete as it stands, as the fourth displayed equation on \cite[p.4]{Nevanlinna} is only valid under the assumption that $\alpha\neq 0$: if $\alpha = 0$, then this equation should be replaced by the equation $\rho(x) = (v,x) + \beta$, where $v$ is a constant vector and $\beta$ is a constant scalar. However, this case can be analyzed in a straightforward way, assuming that the inner product $(\cdot,\cdot)$ is positive definite.} 
Let $\mu$ be a finite measure on $\HH$, let $U\subset \HH$ be a bounded open set, and fix $\delta > 0$. A point $\pp\in\Supp(\mu)$ will be called \emph{$(\delta,\mu)$-radial} (or more properly \emph{$(\AA,U,\delta,\mu)$-radial}) if there exists a sequence of maps $g_n:U\to\HH$ of class $\AA$ such that:
\begin{itemize}
\item[(a)] For all $n$,
\begin{itemize}
\item[(1)] $\|g_n'\| := \sup_{\xx\in U} |g_n'(\xx)| \asymp_\times \inf_{\xx\in U} |g_n'(\xx)|$;\Footnote{We use the notation $|g_n'(\xx)|$ rather than $\|g_n'(\xx)\|$ because $g_n'(\xx)$ is a similarity, so $|g_n'(\xx)|$ denotes its dilatation constant.}
\item[(2)] For all $\xx,\yy\in U$,
\[
\|g_n(\yy) - g_n(\xx)\| \asymp_\times \|g_n'\|\cdot\|\yy - \xx\|;
\]
\item[(3)] For all $A\subset U$,
\begin{equation}
\label{muconformal}
\mu(g_n(A)) \geq \int_A |g_n'|^\delta \; \dee\mu;
\end{equation}
\item[(4)] $\dist(\pp,g_n(U)) \lesssim_\times \|g_n'\|$.
\end{itemize}
\item[(b)] $\|g_n'\| \to 0$.
\end{itemize}
The set of $(\delta,\mu)$-radial points is denoted $\Rad_\delta(\mu)$. If $\pp\in\Rad_\delta(\mu)$, then $C_\pp$ denotes the implied constant of (a4) plus $\diam(U)$ times the implied constant of (a2). The significance of this is that for all $n$, we have
\begin{equation}
\label{Cpdef}
g_n(U) \subset B(\pp,C_\pp\|g_n'\|).
\end{equation}
\end{definition}

\begin{lemma}
\label{lemmaradial}
Every radial limit point of a conformal dynamical system is in $\Rad_\delta(\mu)$, where $\delta$ is the ``natural'' dimension of the conformal dynamical system and $\mu$ is the $\delta$-conformal measure. More precisely, if we are in any of the following scenarios:
\begin{itemize}
\item[(1)] $G\leq\AA = \Mob(\HH)$ is a group whose limit set is compact, $\delta$ is the Poincar\'e exponent of $G$, $\mu$ is the Patterson--Sullivan measure of $G$, and $\RLS$ is the radial limit set;
\item[(2)] $(u_a)_{a\in E}$ is a regular conformal iterated function system on $\HH$ satisfying the open set condition whose generators are of class $\AA$, $\delta$ is the Bowen parameter, $\mu$ is the $\delta$-conformal measure, and $\RLS$ is the limit set;
\item[(3)] $\HH = \C$, $T:\what\C\to\what\C$ is a rational function, $\AA = \Conf(\HH)$, $\delta$ is the hyperbolic dimension, $\mu$ is the $\delta$-conformal measure, and $\RLS$ is the radial Julia set;
\end{itemize}
then (possibly after conjugating in case (3)) there exists an open set $U\subset\HH$ such that $\mu(U) > 0$ and $\RLS \subset \Rad(\AA,U,\delta,\mu)$. Moreover,
\begin{itemize}
\item[(i)] in case \text{(1)}, any bounded open set $U$ for which $\Supp(\mu)\nsubset \cl U$ may be chosen;
\item[(ii)] in case \text{(2)}, the set $U$ may be chosen so that $\Supp(\mu)\subset U$.
\end{itemize}
\end{lemma}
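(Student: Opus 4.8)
The plan is to extract, for each $\pp\in\RLS$, a sequence $(g_n)$ as in Definition~\ref{definitionradial} directly from the dynamics witnessing radiality of $\pp$: in cases (1), (2), (3) the maps $g_n$ will be, respectively, elements of $G$, compositions $u_{\omega_1}\circ\cdots\circ u_{\omega_n}$ of generators, and univalent inverse branches of iterates $T^{m_n}$. In every case the measure-theoretic condition (a3) is \emph{automatic}, being (a weak form of) the $\delta$-conformality of $\mu$: for $g\in G$ one has $\mu(g(A))=\int_A|g'|^\delta\,d\mu$ by the defining property of the Patterson--Sullivan measure; for a word map $u_\omega$ the same identity follows from the construction of the $\delta$-conformal measure of the CIFS together with the open set condition (here ``regular'' guarantees this measure exists); and for an inverse branch $g$ of $T^m$ it is the identity $\mu(T^m(E))=\int_E|(T^m)'|^\delta\,d\mu$ read backwards. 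Thus the real content is the \emph{geometric} part, conditions (a1), (a2), (a4), (b): that $g_n$ has bounded distortion on $U$ and that $g_n(U)$ collapses towards $\pp$ at the rate $\|g_n'\|\to0$.

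\textbf{The IFS and rational cases.} For (2), write $\pp=\pi(\omega)$ with $\pi$ the coding map, put $g_n=u_{\omega_1}\circ\cdots\circ u_{\omega_n}$, and take $U$ an open set with $\Supp(\mu)=\cl\RLS\subset U$ on which the generators extend with the bounded distortion property; compactness of $\cl\RLS$ (a hypothesis when $\dimH=\infty$) and the strong open set condition make this possible, giving at once $\mu(U)\geq\mu(\RLS)>0$ and assertion (ii). Then (a1)--(a2) are the bounded distortion property (an equality in (a1) for similarities), (b) is the uniform contraction $\|u_a'\|\leq s<1$, and (a4) is trivial since $\pp=u_{\omega_1}\circ\cdots\circ u_{\omega_n}\big(\pi(\sigma^n\omega)\big)\in g_n(U)$. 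For (3), first conjugate so that $\infty\in F(T)\butnot\mathrm{PC}(T)$ (possible when $J\neq\what\C$, since $\mathrm{PC}(T)$ is countable and $F(T)$ is open and nonempty; when $J=\what\C$ one has $k=\dimH$ and Theorem~\ref{theorem3} follows from Theorem~\ref{theoreminterior} without this lemma), so that near $J$ we may work with round Euclidean balls; fix a point $z_0$ within distance $r_0$ of $J$ but at distance $>3r_0$ from $\mathrm{PC}(T)$ (possible unless $\mathrm{PC}(T)$ is dense in $J$, a degenerate case needing a variant), and put $U=B(z_0,r_0)$, noting that every iterate then has a full family of univalent inverse branches on $B(z_0,3r_0)$. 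Given $\pp\in\RLS$, take inverse branches $\varphi_j$ of $T^{n_j}$ with $\varphi_j(T^{n_j}\pp)=\pp$ defined on balls $B(T^{n_j}\pp,\varrho_\pp)$, and, using density of the backward orbit of $z_0$ in $J$, pre-compose each $\varphi_j$ with an inverse branch of a suitable further iterate mapping $B(z_0,3r_0)$ into $B(T^{n_j}\pp,\varrho_\pp)$; the resulting branches $g_n$ of $T^{m_n}$ ($m_n\to\infty$) are defined on $U$, satisfy (a1)--(a2) by the Koebe distortion theorem applied on $B(z_0,3r_0)\supset\cl U$, and satisfy (a4), (b) because $g_n(U)$ is a small topological disk lying within $O(\|g_n'\|)$ of $\pp$ and $\|g_n'\|\to0$ along the orbit of $\pp$ (using radiality).

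\textbf{The Kleinian case and the main obstacle.} Let $\pp\in\RLS$ be radial: $d(h_no,[o,\pp))\leq R$ and $d(o,h_no)\to\infty$ for some $h_n\in G$, where $o$ is the basepoint of the hyperbolic space with boundary $\HH\cup\{\infty\}$. With $g_n=h_n$, the standard M\"obius distortion and shadow estimates (see \cite{DSU}) immediately give (b) ($\|h_n'\|\asymp_\times e^{-d(o,h_no)}\to0$), (a4) ($g_n(U)\subset B(\pp,O(\|h_n'\|))$), and the bounded distortion (a1)--(a2) \emph{provided $U$ stays a definite distance from the poles $h_n^{-1}(\infty)$} (equivalently, from the shadows of $h_n^{-1}o$); and (a3) holds with equality since the Patterson--Sullivan measure is $\delta$-conformal. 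The obstacle --- and the reason assertion~(i) demands this for \emph{every} bounded open $U$ with $\Supp(\mu)\nsubset\cl U$ --- is that $h_n^{-1}(\infty)$ accumulates only on $\LS$ but may do so \emph{inside} $U$. The remedy is to decouple the witnessing sequence from $U$: pass to a subsequence so that $h_n^{-1}o\to\zeta\in\LS$ (hence also $h_n^{-1}(\infty)\to\zeta$, as $\infty\notin\LS$); if $\zeta\in\cl U$, choose $f\in G$ with $f^{-1}(\zeta)\notin\cl U$ --- possible because $\LS=\Supp(\mu)\nsubset\cl U$ and the $G$-orbit of $\zeta$ accumulates on all of $\LS$ --- and replace $h_n$ by $h_nf$; this still witnesses radiality of $\pp$ (with $R$ enlarged by $d(o,fo)$), while now $(h_nf)^{-1}(\infty)\to f^{-1}(\zeta)$ stays away from $U$, so after discarding finitely many indices the distortion estimates apply. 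Finally, for the overall statement take $U$ a ball about a point of $\LS$, small enough that $\LS\nsubset\cl U$; then also $\mu(U)>0$.

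\textbf{Main difficulty.} I expect the Kleinian case to be the genuine obstacle, precisely because of the ``any $U$'' clause~(i): one must show that the dynamics witnessing radiality can always be adjusted so that its poles avoid a prescribed region, and the replacement $h_n\mapsto h_nf$ above should do it, modulo care that radiality is preserved and that the $G$-orbit of $\zeta$ is dense enough in $\LS$. A secondary technical point is the chaining step in case (3) reducing \emph{all} radial points to inverse branches on a single ball $U$. Everything else is a routine application of the bounded distortion property, the Koebe distortion theorem, and the shadow lemma, all available in the cited references.
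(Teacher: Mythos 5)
Your overall strategy matches the paper's: take $g_n$ to be the natural dynamical maps in each case, observe that (a3) is just (a weak form of) conformality of $\mu$, and concentrate on the geometric distortion estimates (a1), (a2), (a4), (b). Case (2) is essentially the paper's proof, though your one-line dismissal of (a1)--(a2) as ``the bounded distortion property'' elides a step the paper handles carefully: when $\dimH\leq 2$ the geometric mean value theorem is unavailable, and (a2) is proved by running the ordinary mean value theorem along line segments, using the gap $\dist(X,\HH\butnot V)>0$ to control what happens if a segment exits $g_n(\cl{\thickvar{X}{2\epsilon/3}})$. Your case (1) differs in mechanism but not in substance: you pass to a subsequence along which $h_n^{-1}(\infty)\to\zeta\in\LS$ and precompose by a single $f$ with $f^{-1}(\zeta)\notin\cl U$; the paper avoids the subsequence by fixing a loxodromic $h\in G$ with both fixed points in a set $V$ at positive distance from $U$, iterating $h$ until $V\cup h(V)=\what\HH$, and for each $n$ precomposing $g_n$ by whichever of $\{\id,h\}$ pushes the pole into $V$. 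Both rest on minimality of the limit set.

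Case (3) is where your route genuinely departs from the paper's, and where the one real gap lies. The paper fixes $\pp$, passes to a subsequence with $T^{n_j}(\pp)\to\qq$, takes $U$ to be a small ball about $\qq$, and uses exactly the inverse branches $\phi_j$ that the \emph{definition} of the radial Julia set supplies --- their univalence on $B_\sph(T^{n_j}\pp,\epsilon)$ is part of the hypothesis of radiality, so the postcritical set never appears. Your approach fixes $U=B(z_0,r_0)$ up front and needs $z_0$ simultaneously within $r_0$ of $J$ (to get $\mu(U)>0$) and at distance $>3r_0$ from $\mathrm{PC}(T)$ (to get univalent inverse branches of every iterate on the tripled ball). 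This is impossible precisely when $\mathrm{PC}(T)$ contains a dense subset of $J$, which does occur (e.g.\ a rational map with a recurrent critical point in $J$ whose forward orbit is dense in $J$), and is exactly the situation the notion of radial Julia set is designed to handle without any postcritical separation. You flag this as ``a degenerate case needing a variant,'' but without the variant the case (3) argument is incomplete; the paper sidesteps the difficulty entirely by allowing $U$ to track the accumulation point $\qq$ of the orbit of $\pp$ and drawing univalence directly from the definition of $\RLS$.
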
 
The definitions of the terms used in this lemma will be given in the proof.
\begin{remark}
Lemma \ref{lemmaradial} informally says that ``the radial limit set is always contained in the $\mu$-radial set'', i.e. the inclusion $\RLS\subset\Rad_\delta(\mu)$ always holds. On the other hand, the reverse inclusion $\Rad_\delta(\mu) \subset\RLS$ may fail. For example, suppose that $\dimH < \infty$, and let $G\leq\Mob(\HH)$ be a Kleinian lattice. Then $\mu$ is Lebesgue measure, and $\Rad(\mu) = \HH$ contains the parabolic points of $G$ as well as the radial limit points of $G$. This is because it is impossible to distinguish parabolic points from radial points if one only knows what $\mu$ is and not what the group $G$ is.
\end{remark}
\NPC{Proof}
\begin{proof}[Proof in case \text{(1)}]
Let $G\leq\Mob(\HH)$ be a group of M\"obius transformations of $\HH$. We recall that for each $g\in\Mob(\HH)$, the \emph{Poincar\'e extension} of $g$ is the unique M\"obius transformation $\what g\in\Mob(\R\oplus\HH)$ such that $\what g\given\HH = g$ and $\what g(\H) = \H$, where $\H = (0,\infty)\times\HH$ is the upper half-space model of hyperbolic space. The \emph{limit set} of $G$ is the set
\[
\LS = \{\xx\in\what\HH : \exists g_n\in G \;\; \what g_n(\zero) \to \xx\},
\]
where $\zero = (1,\0) \in\H$. The \emph{Poincar\'e exponent} of $G$ is the infimum of all $s\geq 0$ such that the series
\[
\Sigma_s(G) = \sum_{g\in G} e^{-s\dist_\H(\zero,\what g(\zero))}
\]
converges, where $\dist_\H$ denotes the hyperbolic metric on $\H$. Since, in the assumptions of this lemma, the limit set $\LS$ is compact, there exists a \emph{Patterson--Sullivan measure} $\mu$ (cf. \cite[Th\'eor\`eme 5.4]{Coornaert} or \cite[Theorem 15.4.6]{DSU}), which is a measure on $\LS$ satisfying the transformation equation
\begin{equation}
\label{pattersonsullivan}
\mu(g(A)) = \int_A |g'|^\delta \;\dee\mu \;\; (g\in G, A\subset\what\HH).
\end{equation}
Finally, the \emph{radial limit set} $\RLS\subset\LS$ is the set of all $\pp\in\LS$ with the following property: There exists a sequence $(g_n)_1^\infty$ in $G$ such that if $H:\H\to(0,\infty)$ denotes projection onto the first coordinate, then $\what g_n(\zero)\to\pp$ and
\begin{equation}
\label{radial1}
\|\what g_n(\zero) - \pp\| \lesssim_\times H(\what g_n(\zero)) \all n.
\end{equation}
Now let $U\subset\HH$ be a bounded open set such that $\LS = \Supp(\mu)\nsubset \cl U$. Let $V\subset\HH$ be a bounded open set intersecting $\LS$ such that $\dist(U,V) > 0$; such a set exists by our assumption on $U$. Since $V\cap\LS\neq\emptyset$, there exists a loxodromic isometry $h\in G$ whose fixed points are both in $V$ \cite[Proposition 7.4.7]{DSU}, and by replacing $h$ by a sufficiently large iterate, we can assume that $V\cup h(V) = \what\HH$ (cf. \cite[Theorem 6.1.10]{DSU}).

Fix $\pp\in \RLS$, and let $\what g_n(\zero)\to\pp$ be a sequence satisfying \eqref{radial1}. For each $n$, choose $h_n\in \{\id,h\}$ such that $h_n^{-1}g_n^{-1}(\infty) \in V$. It is not hard to see that if $g_n$ is replaced by $g_n h_n$ in \eqref{radial1}, then the inequality remains valid after changing the constant $C$ appropriately. Thus, we may without loss of generality assume that $g_n^{-1}(\infty)\in V$. We now proceed to demonstrate the conditions of Definition \ref{definitionradial}:
\begin{itemize}
\item[(a1)] Since $U,V$ are bounded sets and $\dist(U,V) > 0$, the inclusion $g_n^{-1}(\infty)\in V$ implies that
\[
\sup_{\xx\in U} \|\xx - g_n^{-1}(\infty)\| \asymp_\times \inf_{\xx\in U} \|\xx - g_n^{-1}(\infty)\|.
\]
Applying the formula
\[
|g_n'(\xx)| = H(\what g_n(\zero))\frac{\|\zero - g_n^{-1}(\infty)\|^2}{\|\xx - g_n^{-1}(\infty)\|^2}
\]
demonstrates that
\begin{equation}
\label{gnprimeasymp}
|g_n'(\xx)| \asymp_\times H(\what g_n(\zero)) \all \xx\in U.
\end{equation}
\item[(a2)] This follows from \eqref{gnprimeasymp} and the geometric mean value theorem (e.g. \cite[Proposition 4.2.4]{DSU} or \cite[(15) on p.19]{Ahlfors}).
\item[(a3)] This follows from \eqref{pattersonsullivan}.
\item[(a4)] This follows from \eqref{gnprimeasymp} and \eqref{radial1}.
\item[(b)] This follows from \eqref{gnprimeasymp} and the convergence $\what g_n(\zero)\to\pp$.
\qedhere\end{itemize}
%
\end{proof}
\begin{proof}[Proof in case \text{(2)}]
Let $(u_a)_{a\in E}$ be a (hyperbolic) conformal iterated function system (CIFS) on $\HH$ satisfying the open set condition (OSC). We recall (cf. \cite[p.6-7]{MauldinUrbanski1} for the case $\dimH < \infty$ and \cite[p.17]{MSU} for the case $3\leq \dimH \leq \infty$) that this means that
\begin{enumerate}[1.]
\item $E$ is a countable (finite or infinite) index set;
\item $X\subset\HH$ is a closed bounded set which is equal to the closure of its interior;
\item (Quasiconvexity) There exists $Q\geq 1$ such that for all $\xx,\yy\in X$, there exists a polygonal line $\gamma\subset X$ connecting $\xx$ and $\yy$ such that $\length(\gamma)\leq Q\|\yy - \xx\|$;
\item (Cone condition) If $\dimH < \infty$, then
\[
\inf_{\xx\in X, r\in (0,1)} \frac{\lambda_\HH(X\cap B(\xx,r))}{r^{\dimH}} > 0,
\]
where $\lambda_\HH$ denotes Lebesgue measure on $\HH$;
\item $V\subset\HH$ is an open connected bounded set such that $\dist(X,\HH\butnot V) > 0$;
\item For each $a\in E$, $u_a$ is a conformal homeomorphism from $V$ to an open subset of $V$;
\item (Open set condition) For all $a\in E$, $u_a(X) \subset X$, and the collection $(u_a(\Int(X)))_{a\in E}$ is disjoint;\Footnote{The \emph{strong open set condition} referred to in Theorem \ref{theorem2} is the additional hypothesis that $\Int(X)\cap\LS\neq\emptyset$.}
\item (Uniform contraction) $\sup_{a\in E} \sup |u_a'| < 1$, and if $E$ is infinite, $\lim_{a\in E} \sup |u_a'| = 0$;
\item (Bounded distortion property) For all $n\in\N$, $\omega\in E^n$, and $\xx,\yy\in V$,
\begin{equation}
\label{BD2}
|u_\omega'(\xx)| \asymp_\times |u_\omega'(\yy)|,
\end{equation}
where
\[
u_\omega = u_{\omega_1}\circ\cdots\circ u_{\omega_n};
\]
\end{enumerate}
For this proof, we assume in addition that the CIFS $(u_a)_{a\in E}$ is \emph{regular} (cf. \cite[p.21]{MauldinUrbanski1} ($\dimH < \infty$) and \cite[p.472]{MSU} ($3\leq\dimH\leq\infty$)), i.e.
\begin{enumerate}[10.]
\item (Regularity) There exists $\delta > 0$ such that
\begin{equation}
\label{bowen}
P(\delta) := \lim_{n\to\infty} \frac1n\log\sum_{\omega\in E^n} \|u_\omega'\|^\delta = 0.
\end{equation}
\end{enumerate}
The $\delta$ which satisfies \eqref{bowen} is called the \emph{Bowen parameter}. The \emph{limit set} of $(u_a)_{a\in E}$ is the set $\RLS = \pi(E^\N)$, where $\pi:E^\N\to X$ is the coding map
\[
\pi(\omega) = \lim_{n\to\infty} u_{\omega_1^n}(\pp_*).
\]
Here $\omega_1^n$ denotes the restriction of $\omega\in E^\N$ to $\{1,\ldots,n\}$, and $\pp_*\in X$ is an arbitrary point. The regularity of the CIFS $(u_a)_{a\in E}$ implies the existence of a \emph{$\delta$-conformal measure} $\mu$ (cf. \cite[Lemma 3.13]{MauldinUrbanski1} for the case $\dimH < \infty$ and \cite[Theorem 7.1]{MSU} for the case $3\leq \dimH \leq \infty$), which is a measure on $\RLS$ which satisfies the transformation equation
\begin{equation}
\label{deltaconformal}
\mu(A) = \sum_{a\in E} \int_{u_a^{-1}(A)} |u_a'|^\delta \;\dee\mu \;\; (a\in E, \; A\subset\RLS).
\end{equation}
Now let $\epsilon = \dist(X,\HH\butnot V) > 0$ and $U = \thickvar X{\epsilon/3}$, so that $\LS := \cl{\RLS} \subset U\subset \cl U\subset V$. Fix $\pp\in \RLS$, and find $\omega\in E^\N$ such that $\pp = \pi(\omega)$. For each $n$, let $g_n = u_{\omega_1^n}$. We now proceed to demonstrate the conditions of Definition \ref{definitionradial}:
\begin{itemize}
\item[(a1)] This is a restatement of \eqref{BD2}.
\item[(a2)] Note that if $\dimH\geq 3$, then this is a consequence of the geometric mean value theorem, but if $\dimH \leq 2$ an additional argument is needed. Fix $\xx,\yy\in U$, and let $\gamma:[0,t_0]\to\HH$ be the length parameterization the line segment connecting $g_n(\xx)$ and $g_n(\yy)$. Let $t_1$ be the largest element of $[0,t_0]$ such that $\gamma(t_1)\in g_n(\cl{\thickvar X{2\epsilon/3}})$. By the mean value theorem,
\[
\|g_n^{-1}\circ\gamma(t_1) - \xx\| \leq t_1 \max_{t\in [0,t_1]} |(g_n^{-1})'(\gamma(t))| \asymp_\times t_1 \|g_n'\|^{-1} \leq \|g_n'\|^{-1} \cdot \|g_n(\yy) - g_n(\xx)\|.
\]
If $t_1 = t_0$, then we get $\|g_n(\yy) - g_n(\xx)\| \gtrsim_\times \|g_n'\|\cdot \|\yy - \xx\|$. Otherwise, we have $g_n^{-1}\circ\gamma(t_1)\in\del\thickvar X{2\epsilon/3}$, which implies $\|g_n^{-1}\circ\gamma(t_1) - \xx\| \geq \epsilon/3$ and thus $\|g_n(\yy) - g_n(\xx)\| \gtrsim_\times \|g_n'\| \gtrsim_\times \|g_n'\| \cdot \|\yy - \xx\|$. So the asymptotic $\|g_n(\yy) - g_n(\xx)\| \gtrsim_\times \|g_n'\|\cdot \|\yy - \xx\|$ holds in either case. The proof of the other asymptotic $\|g_n(\yy) - g_n(\xx)\| \lesssim_\times \|g_n'\|\cdot \|\yy - \xx\|$ is similar.
\item[(a3)] This is immediate from \eqref{deltaconformal}.
\item[(a4)] In fact, $\pp\in g_n(\RLS) \subset g_n(U)$.
\item[(b)] This is immediate from \eqref{BD2} and the uniform contraction hypothesis.
\qedhere\end{itemize}
\end{proof}

\begin{proof}[Proof in case \text{(3)}]
Let $T:\what\C\to\what\C$ be a rational function. The \emph{Julia set} of $T$ is the set
\[
\LS = \what\C\butnot\bigcup\big\{U\subset\what\C : \text{$(T^n\given_U)_0^\infty$ is a normal family}\big\}.
\]
The \emph{hyperbolic dimension} $\delta$ of $T$ is the supremum of the Hausdorff dimensions of closed sets $F\subset\LS$ such that $T\given F$ is a conformal expanding repeller (cf. \cite[p.320]{PrzytyckiUrbanski}). 
By \cite[Theorem 12.3.11]{PrzytyckiUrbanski}, there exists a \emph{$\delta$-conformal measure} $\mu$, which is a measure on $\LS$ which satisfies the transformation equation
\begin{equation}
\label{deltaconformal3}
\int \#\big(A\cap T^{-1}(\xx)\big)\dee\mu(\xx) = \int_A |T'|^k \; \dee\mu \;\; (A\subset\LS).
\end{equation}
Finally, let $\RLS$ denote the \emph{radial Julia set} of $T$, i.e. the set of points $\pp\in\what\C$ such that there exist $\epsilon > 0$ and a sequence $n_j\to\infty$ with the following properties (cf. \cite[Definition 2.5]{Rempe}):
\begin{itemize}
\item[(I)] For each $j$, there exists $\phi_j:B_\sph(T^{n_j}(\pp),\epsilon)\to\what\C$, an inverse branch of $T^{n_j}$, such that $\phi_j(T^{n_j}(\pp)) = \pp$. Here the notation $B_\sph$ means that the ball is taken with respect to the spherical metric.
\item[(II)] $\diam(\phi_j(B_\sph(T^{n_j}(\pp),\epsilon)))\to 0$.
\end{itemize}
Now fix $\pp\in \RLS$, and let $\epsilon > 0$ be as above. By extracting a subsequence from $(n_j)_1^\infty$, we may without loss of generality suppose that $T^{n_j}(\pp)\to \qq\in\LS$. By conjugating if necessary, we can assume $\infty\notin\LS$ and in particular $\qq\neq\infty$. Let $U$ be a bounded open set containing $\qq$ such that $\cl U\subset B_\sph(\qq,\epsilon/2)$, and for each $j$ let $g_j = \phi_j\given U$. We now proceed to demonstrate the conditions of Definition \ref{definitionradial}:
\begin{itemize}
\item[(a1)] This is a consequence of the K\"oebe distortion theorem \cite[Theorem 1.4]{CarlesonGamelin}.
\item[(a2)] This is proven in the same manner as for case (2).
\item[(a3)] This is immediate from \eqref{deltaconformal3}.
\item[(a4)] In fact, $\pp\in g_j(U)$ for all sufficiently large $j$.
\item[(b)] This is immediate from (a1) and part (II) of the definition of the radial Julia set.
\qedhere\end{itemize}
\end{proof}


\noindent We are now ready to state our general rigidity theorem, and prove that it implies the results of \6\ref{sectionstatements}-\ref{sectiondem}:

\begin{theorem}
\label{theoremrigidity}
Let $\HH$ be a separable Hilbert space, and let $\mu$ be a finite measure on $\HH$ whose topological support $\SK$ is compact. Let $k = \TD(\SK)$ or $k = \dem(\SK)$, and suppose that $\scrH^k$-a.e. point of $\SK$ is $(\AA,U,k,\mu)$-radial, where $\AA$ is as in Definition \ref{definitionradial} and $U\subset\HH$ is a bounded open set such that $\mu(U) > 0$. If $\dimH = \infty$, we additionally assume that $\TD(\SK\cap U) = k$. Also suppose that $\Rad_k(\mu)$ is dense in $\SK$. Then there exists a map $h:U\to\HH$ of class $\AA$ such that $h(\SK\cap U)$ is contained in a $k$-plane.
\end{theorem}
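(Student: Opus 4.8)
The plan is to run the rectifiability argument of Mayer--Urba\'nski, suitably adapted to the Hilbert space setting via the "pseudorectifiability" apparatus promised in the introduction. The overall strategy: first show that $\mu$ restricted to $U$ is (up to a null set) a $k$-pseudorectifiable measure, then invoke the pseudorectifiability analogue of the structure theory of rectifiable sets to produce, at $\mu$-almost every point of $\SK\cap U$, an approximate tangent $k$-plane, and finally use conformality plus the radial structure to spread a single tangent plane across all of $\SK\cap U$ by a conformal map. Let me expand on each stage.

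\emph{Stage 1: density estimates and pseudorectifiability.} Using the $(\AA,U,k,\mu)$-radiality hypothesis together with (a1)--(a4) and \eqref{muconformal}, I would first establish two-sided density bounds: for $\scrH^k$-a.e.\ (hence, after checking mutual absolute continuity, $\mu$-a.e.) point $\pp\in\SK\cap U$, one has $\mu(B(\pp,r))\asymp_\times r^k$ along the sequence of scales $r_n = C_\pp\|g_n'\|$ coming from \eqref{Cpdef}. The lower bound is the key one and it comes directly from pushing $\mu\given_U$ forward by $g_n$ using \eqref{muconformal} and the quasi-similarity property (a2): $\mu(B(\pp,C_\pp\|g_n'\|)) \geq \mu(g_n(U)) \geq \int_U |g_n'|^k\,d\mu \asymp_\times \|g_n'\|^k\mu(U)$. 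Combined with Szpilrajn's theorem (Theorem~\ref{theoremsz}) — or Theorem~\ref{theoremszprime} in the demension case, applied to a suitable compact piece — and the upper bound $\scrH^k(\SK\cap U)<\infty$ that must be extracted along the way, this puts us in the regime where the pseudorectifiability criterion applies. I would then verify that the blow-ups $(g_n)^{-1}$ of $\mu$ near $\pp$ converge (in the pseudorectifiable sense, using separability to diagonalize) to a measure supported on an affine $k$-plane; this is where the new notion genuinely replaces classical rectifiability, since in infinite dimensions weak-$*$ compactness of rescaled measures is more delicate and "pseudorectifiable" is precisely the weakening that still retains a usable tangent-plane theory.

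\emph{Stage 2: from $\mu$-a.e.\ tangent planes to a global plane via conformality.} Once $\mu\given_U$ is $k$-pseudorectifiable, the structure theory yields for $\mu$-a.e.\ $\pp$ an approximate tangent $k$-plane $T_\pp$. Now I use that the maps $g_n$ are conformal (class $\AA$): conformal maps send round spheres to round spheres and infinitesimally act as similarities, so the rescaled images $g_n^{-1}(\SK\cap g_n(U))$ not only have a tangent plane at one point but, because the distortion is uniformly bounded (a1) over all of $U$, the set $\SK\cap U$ is "conformally flat" — there is a single map $h$ of class $\AA$ straightening $\SK\cap U$ onto a piece of a $k$-plane. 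Concretely: pick a density point $\pp$ with tangent plane $T_\pp$; for large $n$ the set $g_n^{-1}(\SK)$ is within a small multiple of $\|g_n'\|$ of $T_\pp$ throughout $U$ (here one needs that radial points are dense in $\SK$, Hypothesis, to propagate the flatness from a full-measure set to the whole compact support); letting $n\to\infty$ and extracting a limiting conformal map $h$ (Liouville rigidity / normal families for the class $\AA$, again using $\Supp(\mu)$ compact) gives $h(\SK\cap U)\subset T_\pp \cong$ a $k$-plane. The density hypothesis $\TD(\SK\cap U)=k$ (automatic when $\dimH<\infty$) is used to guarantee that this $k$-plane image actually has the right dimension, i.e.\ that the limiting $h$ is nondegenerate rather than collapsing $\SK\cap U$ into something lower-dimensional.

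\emph{Main obstacle.} The hard part is Stage~1 in the infinite-dimensional case: establishing the pseudorectifiability of $\mu\given_U$. Classical arguments for Besicovitch-type density and tangent-plane theorems use compactness in $\R^d$ that fails in $\HH$; the whole point of the paper's "pseudorectifiable" notion is to find a substitute, and the crux is proving that the rescaled measures $(g_n^{-1})_*\mu$ subconverge along the dyadic-type scale sequence $r_n$ to a measure carried by an affine $k$-plane, uniformly enough over the density points. I expect this to require: (i) the uniform conformal distortion control (a1)--(a2) to ensure the rescalings are a precompact family of quasi-similarities of $U$, (ii) a separability/diagonalization argument replacing sequential compactness of Radon measures, and (iii) the lower density bound from \eqref{muconformal} to rule out the limit measure being supported on a lower-dimensional set. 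Everything downstream — extracting the straightening map $h$, checking it lands in a $k$-plane of the correct dimension, and (in later sections) identifying $h(\SK\cap U)$ with a generalized sphere when $\AA=\Mob$ — is comparatively routine once the pseudorectifiable tangent structure is in hand.
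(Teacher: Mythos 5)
Your high-level three-stage outline (density estimates, tangent planes, zooming) matches the skeleton of the paper's proof in the finite-dimensional case, and your Step-1 density computation from \eqref{muconformal} and (a2) is essentially the Rogers--Taylor argument of the paper's Step 1. But there are three genuine gaps in the proposal, and they concern exactly the parts the paper had to work hardest to supply.

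First, you give no mechanism for passing from the numerical hypotheses (two-sided density bounds, $\scrH^k(\SK\cap U)<\infty$) to the \emph{existence} of approximate tangent planes. Density estimates alone do not yield tangent planes: Ahlfors-regular purely unrectifiable sets such as the four-corner Cantor set satisfy $\mu(B(\pp,r))\asymp r^k$ at every point but have no approximate tangent plane anywhere. The only place the hypothesis $k=\TD(\SK)$ or $k=\dem(\SK)$ enters your Stage 1 is as vague support for ``the regime where the pseudorectifiability criterion applies.'' In the paper, this hypothesis does hard work via the Besicovitch--Federer projection theorem: $\TD(\SK)=k$ (resp.\ $\dem(\SK)=k$) forces $\scrH^k(\pi_V(\SK))>0$ for a positive-measure set of $k$-planes $V$, which rules out pure (pseudo)unrectifiability, which in turn is what yields the tangent-plane structure (Lemma \ref{lemmafederermattila} and, in infinite dimensions, Proposition \ref{propositionfederermattilainfdim}, which additionally needs the Nagata-type separating-pairs characterization of topological dimension and a linear-algebra genericity argument over the space of linear maps). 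Without something of this sort, Stage 1 does not get off the ground. Relatedly, you assert that $\mu\given_U$ is pseudorectifiable; the paper never establishes that, and in fact only proves that a positive-$\scrH^k$-measure subset of $\SK$ is pseudorectifiable, which turns out to be enough.

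Second, your remark that ``everything downstream\ldots is comparatively routine once the pseudorectifiable tangent structure is in hand'' is false in the infinite-dimensional $k>1$ case, and this is precisely where the paper's Lemma \ref{lemmaTPconstant} lives. After the zooming argument one learns only that the extended tangent-plane function of $\SK\cap U$ is constant (modulo a projective conjugation), and a compact set with constant tangent-plane function need \emph{not} lie in any affine $k$-plane. Example \ref{examplemuR1}(ii,iv) in the paper is an IFS limit set in infinite dimensions that is pseudorectifiable with $T_\LS\equiv L_0$ but is not contained in any plane; the additional hypothesis $\TD(\SK\cap U)=k$ is exactly what rules this out, via Lemma \ref{lemmaTPconstant}. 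Your proposal misattributes the role of this hypothesis to ``nondegeneracy of $h$.''

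Third, your proposed route through weak-$*$ limits of blow-up measures (``subconverge along the dyadic-type scale sequence,'' ``separability to diagonalize'') is not what the paper does when $\dimH=\infty$ and $k>1$, and it is unclear that it can be made to work: the relevant compactness failures in $\HH$ are severe, and the paper deliberately avoids weak-$*$ limits of measures. Instead it proves a Lebesgue differentiation theorem for Grassmannian-valued functions on general metric spaces (Theorem \ref{theoremlebesguedifferentiation}), applies it to the extended tangent-plane function, conjugates to the projective model to make the conformal dynamics linear, and then extracts a convergent subsequence of the candidate subspaces $V_n$ by a minimality argument using ``$\epsilon$-intersections'' (Lemma \ref{lemmaepsilonintersection} and Claim \ref{claimintersection}). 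This subsequence-extraction step, and the resulting case analysis (including the isotropic-subspace contradiction in Case 2), is a substantial piece of the proof with no analogue in your sketch.
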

\begin{proof}[Proof of the results of \6\ref{sectionstatements} and of Theorem \ref{theoremprime} assuming Theorem \ref{theoremrigidity}]
Let $\AA$, $\delta$, $\mu$, and $\RLS$ be as in Lemma \ref{lemmaradial}. In the case of CIFSes, this requires some justification, since the CIFS $(u_a)_{a\in E}$ is not assumed to be regular in Theorem \ref{theorem2}, and regularity is necessary to ensure the existence of a $\delta$-conformal measure $\mu$. This justification comes from \cite[Theorem 4.16]{MauldinUrbanski1} in the case $\dimH < \infty$, and from \cite[Proposition 9.2]{MSU} in the case $3\leq \dimH \leq \infty$, since by Theorem \ref{theoremsz} (or Theorem \ref{theoremszprime} if $k = \dem(\LS)$) $\scrH^k(\LS) > 0$, and by hypothesis $\HD(\LS) = k$. Thus by Lemma \ref{lemmaradial}, we get $\RLS\subset\Rad_\delta(\mu)$ after making an appropriate choice for $U$.

We claim that $\delta = \HD(\RLS)$. Indeed, this has been proven for Kleinian groups in \cite[Theorem 1.2.1]{DSU}, for CIFSes in \cite[Theorem 3.15]{MauldinUrbanski1} ($\dimH < \infty$) and \cite[Theorem 10.2]{MSU} ($3\leq\dimH\leq\infty$ and strong OSC), and for rational functions in \cite[Theorem 1.1]{Rempe} (see also \cite[Theorem 2.1]{McMullen_conformal_2}).

Now by Theorem \ref{theoremsz} (or Theorem \ref{theoremszprime} if $k = \dem(\LS)$) we have $\delta\geq k$, and if $\delta > k$ then we are done, so suppose that $\delta = k$. Then $\RLS\subset\Rad_k(\mu)$. Since Theorems \ref{theorem1}-\ref{theorem3} all include the hypothesis $\scrH^\delta(\LS\butnot\RLS) = 0$, we get $\scrH^k(\LS\butnot\Rad_k(\mu)) = 0$. Moreover, it is obvious that $\LS = \SK = \Supp(\mu)$.

If $\dimH = \infty$, then we additionally need to verify $\TD(\LS\cap U) = k$. In the case of CIFSes this follows from (ii) of Lemma \ref{lemmaradial}. In the case of Kleinian groups, it follows from the sum theorem for topological dimension \cite[Theorem 1.5.3]{Engelking} that there exists a closed set $F\propersubset \LS$ such that $\TD(F) = k$, so we get $\TD(\LS\cap U) = k$ for all $U\supset F$.

So by Theorem \ref{theoremrigidity}, there exists a map $h:U\to\HH$ of class $\AA$ such that $h(\LS\cap U)$ is contained in a $k$-plane. In particular, $\LS\cap U$ is contained in:
\begin{itemize}
\item a real-analytic $k$-dimensional manifold if $\AA = \Conf(\HH)$;
\item a generalized $k$-sphere if $\AA = \Mob(\HH)$;
\item a $k$-plane if $\AA = \Sim(\HH)$.
\end{itemize}
We complete the proof by breaking into cases:
\begin{itemize}
\item To prove Theorem \ref{theorem1}, let $(U_n)_1^\infty$ be an increasing sequence of open sets such that $\LS\nsubset\cl{U_n}$ for all $n$ but $\LS\butnot\{\pp\}\subset\bigcup_n U_n$ for some $\pp\in\LS$ (and $U_n\supset F$ if $\dimH = \infty$). For each $n$, by part (i) of Lemma \ref{lemmaradial}, there exists a generalized $k$-sphere $S_n$ which contains $\LS\cap U_n$. Since the set $S = \bigcup_N \bigcap_{n\geq N} S_n$ is the increasing union of generalized spheres of dimension $\leq k$, it is also a generalized sphere of dimension $\leq k$. But $\LS\butnot\{\pp\}\subset S$, and so since $\LS$ is perfect, $\LS\subset S$.

By Theorem \ref{theoreminterior} (or Theorem \ref{theoreminteriorprime} if $k = \dem(\LS)$), $\LS$ has nonempty interior relative to $S$. But then the boundary of $\LS$ relative to $A$ is a closed $G$-invariant set which does not contain $\LS$, hence it is empty. Thus $\LS = S$.
\item To prove Theorem \ref{theorem2}, let $U$ be as in (ii) of Lemma \ref{lemmaradial}. Then $\LS = \LS\cap U \subset M$ for some real-analytic $k$-dimensional manifold $M$.

By Theorem \ref{theoreminterior} (or Theorem \ref{theoreminteriorprime} if $k = \dem(\LS)$), $\LS$ has nonempty interior relative to $M$. Let $I$ be the interior of $\LS$ relative to $M$. Then $I$ is invariant under $(u_a)_{a\in E}$ and is therefore dense in $\LS$. So $I$ is a relatively open relatively compact subset of $M$ whose closure is equal to $\LS$.
\item To prove Theorem \ref{theorem3}, we divide into cases. If $k = 1$, then the conclusion follows from Theorem \ref{theoremeremenkovanstrien}. If $k = 2$, then by Theorem \ref{theoreminterior}, $\LS$ has nonempty interior.  But then the boundary of $\LS$ is a closed $T$-invariant set which does not contain $\LS$, hence it is empty. Thus $\LS = \what\C$.
\qedhere\end{itemize}
\end{proof}

The proof of Theorem \ref{theoremrigidity} will occupy the remainder of this paper. We divide it into three cases: $\dimH < \infty$; $\dimH = \infty$ and $k = 1$; $\dimH = \infty$ and $k > 1$. We refer to these cases as Theorem \ref{theoremrigidity}($\dimH < \infty$), Theorem \ref{theoremrigidity}($\dimH = \infty$, $k = 1$), and Theorem \ref{theoremrigidity}($\dimH < \infty$, $k > 1$), respectively, and we prove them in Sections \ref{sectionF}, \ref{sectionk1}, and \ref{sectionI}, respectively. Section \ref{sectionpseudo} contains preliminaries needed for the proof of Theorem \ref{theoremrigidity}($\dimH = \infty$, $k > 1$), and Section \ref{sectionlemmaTPconstant} contains the proof of a technical lemma needed in the proof of Theorem \ref{theoremrigidity}($\dimH = \infty$, $k > 1$).

\draftnewpage
\section{Proof of Meta-rigidity Theorem \ref{theoremrigidity}($\dimH < \infty$)}
\label{sectionF}
The basic structure of the proof may be summarized as follows:
\begin{itemize}
\item[Step 1:] Show that we can without loss of generality assume $\mu = \scrH^k\given_\SK$. In particular, this implies that $\scrH^k(\SK) < \infty$.
\item[Step 2:] Use the rigidity assumption $k = \TD(\SK)$ or $k = \dem(\SK)$ to prove that $\mu$-a.e. point has an \emph{approximate tangent $k$-plane} (Definition \ref{definitiontangentplane} below), using results from geometric measure theory. This is in some sense the key step, and generalizing it appropriately to infinite dimensions will be one of the main themes in the proof of Theorem \ref{theoremrigidity}($d = \infty$).
\item[Step 3:] Choose a radial limit point $\pp\in\Rad_k(\mu)$ which has an approximate tangent $k$-plane. Then use a ``zooming argument'' to finish the proof.
\end{itemize}
We remark that Step 1 does not depend on finite-dimensionality and in fact the result of this step will be used in the proof of Theorem \ref{theoremrigidity}($\dimH = \infty$).

In the sequel we will often refer to the following well-known fact:

\begin{fact}
\label{factlipschitz}
Let $X$ and $Y$ be metric spaces, and let $f:X\to Y$ be a $K$-Lipschitz function, i.e. a function for which
\[
\dist(f(x),f(y)) \leq K \dist(x,y) \all x,y\in X.
\]
Then for all $A\subset X$,
\[
\int_Y \#(f^{-1}(x)\cap A) \;\dee\scrH^k(x) \leq K^k \scrH^k(A).
\]
In particular, if $f$ is $1$-Lipschitz then $\scrH^k(f(A)) \leq \scrH^k(A)$ for all $A\subset X$.
\end{fact}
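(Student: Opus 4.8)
The plan is to deduce this estimate (a form of Eilenberg's inequality) directly from the covering definition of $\scrH^k$, using a refinement argument to control the multiplicity $\#(f^{-1}(x)\cap A)$. First I would dispose of the case $\scrH^k(A)=\infty$, which is trivial, and record that in the remaining case $A$ is automatically separable: an uncountable $\epsilon$-separated subset of $A$ would admit no countable cover by sets of diameter $<\epsilon$, hence would have infinite $\scrH^k$-measure.

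The first real step is the \emph{baby case}: for every $B\subset X$ one has $\scrH^k(f(B))\leq K^k\,\scrH^k(B)$. Indeed, any cover $\{B_i\}$ of $B$ with $\diam(B_i)<\delta$ yields a cover $\{f(B_i)\}$ of $f(B)$ with $\diam(f(B_i))\leq K\diam(B_i)<K\delta$, so $\scrH^k_{K\delta}(f(B))\leq\sum_i(\diam f(B_i))^k\leq K^k\sum_i(\diam B_i)^k$; taking the infimum over covers of $B$ and letting $\delta\to 0$ gives the claim. Taking $K=1$ here already proves the ``in particular'' assertion, so it remains to prove the multiplicity inequality.

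For that, for each $n$ I would use separability to write $A=\bigsqcup_j A_{n,j}$ as a countable disjoint union of Borel sets with $\diam(A_{n,j})<1/n$, and put $g_n=\sum_j\mathbf 1_{f(A_{n,j})}$. Since the pieces are disjoint, distinct indices $j$ with $x\in f(A_{n,j})$ witness distinct points of $f^{-1}(x)\cap A$, so $g_n\leq\#(f^{-1}(\cdot)\cap A)$ pointwise; conversely, if $y_1,\dots,y_m$ are distinct points of $f^{-1}(x)\cap A$, then for every $n$ with $1/n<\min_{i\neq i'}\dist(y_i,y_{i'})$ they lie in distinct pieces, so $g_n(x)\geq m$ eventually. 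Hence $g_n\to\#(f^{-1}(\cdot)\cap A)$ pointwise. Now the baby case and countable additivity of $\scrH^k$ on the disjoint family $\{A_{n,j}\}$ give
\[
\int_Y g_n\;\dee\scrH^k=\sum_j\scrH^k\big(f(A_{n,j})\big)\leq K^k\sum_j\scrH^k(A_{n,j})=K^k\,\scrH^k(A),
\]
and Fatou's lemma (or monotone convergence, if the $A_{n,j}$ are chosen to refine as $n$ increases) yields $\int_Y\#(f^{-1}(x)\cap A)\;\dee\scrH^k(x)\leq K^k\,\scrH^k(A)$.

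I expect the only delicate point to be measurability: the images $f(A_{n,j})$ need not be Borel, but are continuous images of Borel sets and hence analytic, so they are $\scrH^k$-measurable --- which is automatic in every situation where we apply the Fact, since there $X$ and $Y$ are subsets of the separable Hilbert space $\HH$. Consequently the $g_n$ and their pointwise limit are $\scrH^k$-measurable and all the integrals above are legitimate. This is bookkeeping rather than a genuine obstacle; the substance of the argument is simply keeping the direction of the inequality $g_n\leq\#(f^{-1}(\cdot)\cap A)$ and its limiting behavior straight.
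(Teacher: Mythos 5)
The paper does not prove Fact~\ref{factlipschitz}: it is introduced with the phrase ``the following well-known fact'' and simply cited thereafter. So there is no internal proof to compare against; the statement is a form of Eilenberg's coarea inequality (the case $m=0$, where the $0$-dimensional Hausdorff measure of the fibers is counting measure), and your argument is essentially the textbook proof of it.

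Your proposal is correct. The baby case $\scrH^k(f(B))\leq K^k\scrH^k(B)$ via pushing forward $\delta$-covers is standard and correct; the separability reduction (an uncountable $\epsilon$-separated subset cannot be covered by countably many sets of diameter $<\epsilon$, forcing $\scrH^k_\delta=\infty$ for $\delta<\epsilon$) is sound under the usual countable-cover definition of $\scrH^k$ that the paper uses; the partition of $A$ into Borel pieces $A_{n,j}$ of diameter $<1/n$ is available precisely because $A$ is Borel (per the paper's standing convention) and separable; the pointwise identification $g_n\to\#(f^{-1}(\cdot)\cap A)$ is argued correctly in both directions; and the parenthetical remark about monotone convergence is accurate, since if the partitions are chosen to refine, $g_{n+1}\geq g_n$ pointwise (every witness $a\in A_{n,j}\cap f^{-1}(x)$ sits in some finer $A_{n+1,j'}\subset A_{n,j}$, and disjointness of the $A_{n,j}$ guarantees distinct $j'$). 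Your measurability remark is the right one: the images $f(A_{n,j})$ are analytic, hence $\scrH^k$-measurable, which is all that is needed since every invocation of the Fact in the paper takes place inside a separable Hilbert space. Nothing is missing; this is exactly the argument one would expect the authors to have in mind behind the ``well-known'' label.
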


{\bf Step 1: Without loss of generality, $\mu = \scrH^k\given_\SK$.}
In this step, we will prove that:
\begin{itemize}
\item[(i)] $\nu := \scrH^k\given_\SK$ is finite and absolutely continuous to $\mu$;
\item[(ii)] $\Rad_k(\mu) \subset \Rad_k(\nu)$;
\item[(iii)] $\Supp(\nu) = \SK = \Supp(\mu)$.
\end{itemize}
From these three facts, it follows that if we can prove Theorem \ref{theoremrigidity} in the case $\mu = \nu$, then it holds in the general case as well. We remark that in fact, for the applications of Theorem \ref{theoremrigidity} used in \6\ref{sectionstatements}, it is possible to prove that $\mu = \alpha\nu$ for some $\alpha > 0$ using an ergodicity argument. But for a cleaner formulation and for additional generality, we did not include an ergodicity-type hypothesis in Theorem \ref{theoremrigidity}.

The main tool for this step is the Rogers--Taylor density theorem, which in some circumstances allows one to estimate the Hausdorff measure of a set from the \emph{upper density function}
\begin{equations}
\overline D_\mu^\delta(x) &= \limsup_{r\searrow 0} \frac{1}{r^\delta}\mu(B(x,r))
\end{equations}
of a finite measure $\mu$. Rogers and Taylor originally proved their result for finite-dimensional Euclidean spaces \cite{RogersTaylor}, but it has subsequently been extended to arbitrary metric spaces.
\begin{theorem}[Rogers--Taylor density theorem, {\cite[Theorem 8.2]{MSU}}, {\cite[\62.10.19]{Federer}}]
\label{theoremRTT}
Let $X$ be a metric space, let $\mu$ be a measure on $X$, and fix $\delta > 0$. Then for all $A\subset X$,
\[
\mu(A) \inf_{x\in A}\frac{1}{\overline D_\mu^\delta(x)} \lesssim_\times \scrH^\delta(A) \lesssim_\times \mu(X) \sup_{x\in A}\frac{1}{\overline D_\mu^\delta(x)}\cdot
\]
In particular, if $\overline D_\mu^\delta$ is bounded from below on a set $A\subset X$, then $\scrH^\delta\given_A \leq C\mu$ for some constant $C > 0$. If $\overline D_\mu^\delta$ is strictly positive on $A$, then $\scrH^\delta\given_A$ is $\sigma$-finite and absolutely continuous to $\mu$.
\end{theorem}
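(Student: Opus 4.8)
The statement is classical, and the plan is to reconstruct the standard proof: two logically independent and entirely soft covering arguments, one for each displayed inequality, with the ``in particular'' clauses extracted afterwards. In both parts one may assume the relevant extremum of $\overline D_\mu^\delta$ over $A$ is nondegenerate --- $\sup_A\overline D_\mu^\delta<\infty$ for the left inequality and $\inf_A\overline D_\mu^\delta>0$ for the right --- since otherwise that inequality is vacuous.

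For the \emph{left-hand inequality}, a metric-space mass distribution principle, I would fix $t>\sup_A\overline D_\mu^\delta$ and write $A$ as the increasing union $A=\bigcup_m A_m$, where $A_m=\{x\in A:\mu(B(x,r))\le t r^\delta\text{ for all }r\in(0,1/m]\}$; that $A\subset\bigcup_m A_m$ is exactly the assertion $\limsup_{r\to0}r^{-\delta}\mu(B(x,r))<t$ on $A$. Given any cover $\{U_i\}$ of $A_m$ by sets of diameter $<1/m$, one discards those missing $A_m$, chooses $x_i\in U_i\cap A_m$, and notes that $U_i\subset\{y:\dist(x_i,y)\le\diam(U_i)\}$, a set of $\mu$-measure at most $t\,(\diam(U_i))^\delta$ by the defining property of $A_m$; hence $\mu(U_i)\le t\,(\diam(U_i))^\delta$, and summing over $i$ and taking the infimum over such covers gives $\mu(A_m)\le t\,\scrH^\delta_{1/m}(A_m)\le t\,\scrH^\delta(A)$. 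Letting $m\to\infty$ and then $t\searrow\sup_A\overline D_\mu^\delta$ yields $\mu(A)\le(\sup_A\overline D_\mu^\delta)\,\scrH^\delta(A)$, i.e.\ the first inequality (with implied constant $1$).

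For the \emph{right-hand inequality} I would invoke the $5r$-covering lemma --- the version of the Vitali covering lemma valid in an arbitrary metric space for ball families of uniformly bounded radii. Fix $s<\inf_A\overline D_\mu^\delta$ and $\rho>0$; for each $x\in A$ pick $r_x\in(0,\rho]$ with $\mu(B(x,r_x))>s\,r_x^\delta$, extract from $\{B(x,r_x)\}_{x\in A}$ a countable disjoint subfamily $\{B(x_i,r_i)\}$ with $A\subset\bigcup_i B(x_i,5r_i)$, and estimate
\[
\scrH^\delta_{10\rho}(A)\le\sum_i(10r_i)^\delta<\frac{10^\delta}{s}\sum_i\mu(B(x_i,r_i))\le\frac{10^\delta}{s}\,\mu(X),
\]
the last inequality by disjointness of the $B(x_i,r_i)$. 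Sending $\rho\to0$ and $s\nearrow\inf_A\overline D_\mu^\delta$ gives $\scrH^\delta(A)\le10^\delta\,\mu(X)\sup_A(1/\overline D_\mu^\delta)$; the dimensional constant $10^\delta$ is not sharp, which is immaterial. Running the same computation with $A$ replaced by $A\cap B$ for a Borel set $B$, and keeping the disjoint balls inside an open neighbourhood of $A\cap B$ of $\mu$-measure close to $\mu(A\cap B)$ (outer regularity of $\mu$), yields $\scrH^\delta(A\cap B)\le10^\delta s^{-1}\mu(A\cap B)$ whenever $\overline D_\mu^\delta\ge s$ on $A$; hence $\scrH^\delta\given_A\le10^\delta s^{-1}\mu$, and in particular $\scrH^\delta\given_A\ll\mu$, while if $\overline D_\mu^\delta$ is merely strictly positive on $A$ one stratifies $A=\bigcup_n\{x\in A:\overline D_\mu^\delta(x)\ge1/n\}$ and applies the bounded-below case to each piece to get $\sigma$-finiteness together with absolute continuity.

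\textbf{Main obstacle.} There is no genuine obstacle: the whole argument is elementary. The two points requiring care are (i) the measurability of the auxiliary sets $A_m$ --- needed so that $\mu(A_m)\to\mu(A)$ --- which is handled in the usual way by reducing to countably many rational radii and using the semicontinuity in $x$ of $x\mapsto\mu(B(x,r))$, or by working throughout with outer measures; and (ii) the absence of a Besicovitch-type covering theorem in a general metric space, so that the right-hand inequality must be routed through the $5r$-covering lemma and one should not hope to recover the sharp Euclidean constant there. Every limit interchange involved is monotone, hence routine.
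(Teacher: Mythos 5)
The paper does not actually supply a proof of Theorem \ref{theoremRTT}; it simply cites \cite[Theorem 8.2]{MSU} and \cite[2.10.19]{Federer}. Your argument reconstructs the standard proof found in those references correctly --- a mass-distribution estimate via the stratification $A=\bigcup_m A_m$ for the lower bound, and a Vitali-type $5r$-covering argument for the upper bound, with the two subtleties you flag (Borel measurability of the $A_m$, and the replacement of Besicovitch by a soft covering lemma in a general metric space) handled appropriately. The only cosmetic deviation from the paper's preferred toolkit is that you use the $5r$-covering lemma where the paper elsewhere invokes the $4r$-covering lemma from \cite[Theorem 8.1]{MSU}; this affects only the unimportant implied constant. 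Note also that the finiteness of $\mu$, implicit in the paper's definition of $\overline D_\mu^\delta$, is what makes your $\sigma$-finiteness conclusion immediate after the stratification $A=\bigcup_n\{\overline D_\mu^\delta\geq 1/n\}$.
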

In our setting we take $X = \HH$, $\delta = k$, and
\[
A = \Rad_k(C,\mu) := \{\pp\in\Rad_k(\mu) : \text{the implied constants in Definition \ref{definitionradial} are all $< C$}\},
\]
where $C > 0$ is fixed. Fix $\pp\in A$, and let $(g_n)_1^\infty$ be as in Definition \ref{definitionradial}. Then for all $n$,
\[
\frac{1}{(C_\pp \|g_n'\|)^k} \mu\big(B(\pp,C_\pp \|g_n'\|)\big) \geq \frac{1}{(C_\pp \|g_n'\|)^k} \mu(g_n(U)) \gtrsim_{\times,C} \frac{1}{\|g_n'\|^k} \|g_n'\|^k \mu(U) = \mu(U) > 0.
\]
So $\overline D_\mu^k(\pp) > 0$, and thus by Theorem \ref{theoremRTT},
\begin{equation}
\label{RTT1}
\text{$\nu\given_{\Rad_k(C,\mu)}$ is finite and absolutely continuous to $\mu$.}
\end{equation}
%
%
Let $\scrG$ be the collection of maps $g:U\to\HH$ of class $\AA$ such that for all $A\subset U$, \eqref{muconformal} holds.
\begin{claim}
\label{claimergodic}
If $\mu(A) > 0$ for some $A\subset U$, then $\nu(\SK\butnot \scrG(A)) = 0$, where
\[
\scrG(A) = \bigcup_{g\in\scrG} g(A).
\]
\end{claim}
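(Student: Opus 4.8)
This is an ergodicity-type statement, and the plan is to deduce it from the Rogers--Taylor theorem (Theorem \ref{theoremRTT}) by applying that theorem not to $\mu$ but to the restriction of $\mu$ to the complement of the relevant ``bad set'', which the radiality hypothesis forces to have uniformly positive upper density along the bad set.

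\emph{Reductions.} Since $\scrH^k$-a.e.\ point of $\SK$ is $(\AA,U,k,\mu)$-radial, $\nu(\SK\butnot\Rad_k(\mu))=0$; and $\Rad_k(\mu)=\bigcup_{C\in\N}\Rad_k(C,\mu)$. So it is enough to show $\scrH^k(F_C)=0$ for every $C$, where $F_C:=\Rad_k(C,\mu)\butnot\scrG(A)$. (A routine separability argument lets us replace $\scrG$ by a countable subfamily, so that $\scrG(A)$ and $F_C$ may be taken Borel; I suppress this point.)

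\emph{Main step.} Fix $C$ and put $\lambda:=\mu\given_{\HH\butnot F_C}$, a finite measure; the key assertion is that $\overline D_\lambda^k$ is bounded below on $F_C$. Let $\pp\in F_C\subset\Rad_k(C,\mu)$ with a radial sequence $(g_n)$ as in Definition \ref{definitionradial}, and set $r_n:=C_\pp\|g_n'\|$, so that $g_n(U)\subset B(\pp,r_n)$ by \eqref{Cpdef}, $r_n\to0$, and $C_\pp\lesssim_\times C$ (every implied constant in Definition \ref{definitionradial} being $<C$ on $\Rad_k(C,\mu)$). Each $g_n\in\scrG$, so $g_n(A)\subset\scrG(A)$, whence $g_n(A)\cap F_C=\emptyset$, i.e.\ $g_n(A)\subset\HH\butnot F_C$; also $g_n(A)\subset g_n(U)\subset B(\pp,r_n)$, and by conditions (a1) and (a3) of Definition \ref{definitionradial},
\[
\mu(g_n(A))\ \ge\ \int_A|g_n'|^k\dee\mu\ \gtrsim_{\times,C}\ \|g_n'\|^k\mu(A)\ \gtrsim_{\times,C}\ r_n^k\,\mu(A).
\]
Hence $\lambda(B(\pp,r_n))\ge\mu(g_n(A))\gtrsim_{\times,C}r_n^k\mu(A)$, and letting $n\to\infty$ gives $\overline D_\lambda^k(\pp)\gtrsim_{\times,C}\mu(A)>0$, with implied constant uniform over $\pp\in F_C$ --- this uniformity being precisely the reason for fibering $\Rad_k(\mu)$ over $C$. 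Applying the ``in particular'' clause of Theorem \ref{theoremRTT} to the measure $\lambda$: since $\overline D_\lambda^k$ is bounded below on $F_C$, there is a constant $C'>0$ with $\scrH^k\given_{F_C}\le C'\lambda$. Evaluating both sides on $F_C$,
\[
\scrH^k(F_C)\ =\ \scrH^k\given_{F_C}(F_C)\ \le\ C'\,\lambda(F_C)\ =\ C'\,\mu\big((\HH\butnot F_C)\cap F_C\big)\ =\ 0,
\]
which gives the claim.

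\emph{Expected main difficulty.} The only real idea is the choice of the test measure $\mu\given_{\HH\butnot F_C}$: with it, the ``holes'' $g_n(A)\subset\scrG(A)$, which by construction miss $F_C$, feed the lower density bound directly, so no differentiation of $\mu$ is performed, and the absence of a Besicovitch-type covering theorem in infinite dimensions never intervenes. (A naive density-point argument run against $\mu$ itself would hit exactly this wall, and moreover the ``holes'' live only on a sparse sequence of scales, which ordinary upper-density bounds do not detect; routing everything through the restricted measure $\lambda$ avoids both problems.) What remains is bookkeeping: the uniform control of the implied constants and of $C_\pp$ over $F_C$, handled by the fibering over $C$, and the Borel measurability of $\scrG(A)$ and $\Rad_k(C,\mu)$, handled by a separability/measurable-selection argument.
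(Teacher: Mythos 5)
Your proposal is correct and is essentially the argument given in the paper. The paper also proves the claim by restricting $\mu$ to a test set supporting the ``holes'' $g_n(A)$ (it uses $\gamma := \mu\given_{\scrG(A)}$ rather than your $\mu\given_{\HH\butnot F_C}$, which contains it), derives the same lower bound on the upper $k$-density at radial points from (a3) and \eqref{Cpdef}, and feeds it into Theorem \ref{theoremRTT}; the only cosmetic difference is that the paper invokes the ``strictly positive'' clause (yielding absolute continuity of $\scrH^k\given_{\Rad_k(\mu)}$ with respect to $\gamma$), which spares it the fibration over $C$ that you perform to make the implied constants uniform before invoking the ``bounded below'' clause.
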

\begin{proof} 
Let $\gamma = \mu\given_{\scrG(A)}$. Fix $\pp\in\Rad_k(\mu)$, and let $(g_n)_1^\infty$ be as in Definition \ref{definitionradial}. Then for all $n$,
\[
\frac{1}{(C_\pp\|g_n'\|)^k} \gamma\big(B(\pp,C_\pp\|g_n'\|)\big) \geq \frac{1}{(C_\pp\|g_n'\|)^k} \mu(g_n(A)) \gtrsim_{\times,\pp} \mu(A) > 0.
\]
So $\overline D_\mu^k(\pp) > 0$, and thus by Theorem \ref{theoremRTT}, $\nu = \scrH^k\given_{\Rad_k(\mu)}$ is $\sigma$-finite and absolutely continuous to $\gamma$. So since $\gamma(\SK\butnot \scrG(A)) = 0$, we have $\nu(\SK\butnot \scrG(A)) = 0$.
\end{proof}

Let $A = U$ in Claim \ref{claimergodic}. Since $\mu(U) > 0$ by hypothesis, we get $\nu(\SK\butnot \scrG(U)) = 0$. On the other hand, by Theorem \ref{theoremsz} (or Theorem \ref{theoremszprime} if $k = \dem(\LS)$), $\nu(\SK) > 0$, so $\nu(\scrG(U)) > 0$, and so by Fact \ref{factlipschitz} $\nu(U) > 0$. We now demonstrate (i)-(iii):
\begin{itemize}
\item[(i)] Since by assumption $\nu(\SK\butnot\Rad_k(\mu)) = 0$, we have $\nu(\Rad_k(\mu)\cap U) > 0$, so for some $C > 0$, we have $\nu(\Rad_k(C,\mu)\cap U) > 0$. In particular, by \eqref{RTT1}, $\mu(\Rad_k(C,\mu)\cap U) > 0$. Letting $A = \Rad_k(C,\mu)\cap U$ in Claim \ref{claimergodic}, we get $\nu(\SK\butnot \scrG(\Rad_k(C,\mu)\cap U)) = 0$. But since the elements of $\scrG$ are conformal, it follows that $\Rad_k(C,\mu)$ is $\scrG$-invariant in the sense that $\scrG(\Rad_k(C,\mu)\cap U) \subset \Rad_k(C,\mu)$. So $\nu(\SK\butnot \Rad_k(C,\mu)) = 0$. Using \eqref{RTT1} again, we see that $\nu$ is finite and absolutely continuous to $\mu$.
\item[(ii)] Suppose that $\pp\in\Rad_k(\mu)$, and let $(g_n)_1^\infty$ be as in Definition \ref{definitionradial}. Fix $n$ and let $A = g_n^{-1}(\HH\butnot\SK)\subset U$. Since $\mu(g_n(A)) = 0$, \eqref{muconformal} shows that $\mu(A) = 0$, i.e. $A\cap\SK = \emptyset$. It follows that $g_n(\SK\cap U)\subset\SK$, and then Fact \ref{factlipschitz} and (a2) yield that \eqref{muconformal} holds with $\mu$ replaced by $\nu$. Since $n$ was arbitrary, we get $\pp\in\Rad_k(\nu)$.
\item[(iii)] Since $\nu(U) > 0$, it is obvious that $\Rad_k(\nu)\subset\Supp(\nu)$, so this follows from (ii) together with the hypothesis that $\Rad_k(\mu)$ is dense in $\SK$.
\end{itemize}

%
%
%
%

{\bf Step 2: Existence of approximate tangent planes.}
Let us recall the definition of an approximate tangent plane:

\begin{definition}[Cf. {\cite[Definition 15.17]{Mattila}}]
\label{definitiontangentplane}
Let $\SK$ be a subset of $\HH$, and let $\pp\in \SK$ satisfy
\[
\limsup_{r\searrow 0}\frac{1}{r^k}\scrH^k\big(\SK\cap B(\pp,r)\big) > 0.
\]
A $k$-dimensional linear subspace $L_0\leq\HH$ is called an \emph{approximate tangent $k$-plane for $\SK$ at $\pp$} if for all $\epsilon > 0$,
\begin{equation}
\label{tangentplane}
\lim_{r\searrow 0} \frac{1}{r^k}\scrH^k\big(\SK\cap B(\pp,r)\butnot (\pp + \NN_{\mathrm{proj}}(L_0,\epsilon))\big) = 0.
\end{equation}
Here $\NN_{\mathrm{proj}}(L_0,\epsilon)$ denotes the \emph{projective $\epsilon$-thickening}
\[
\NN_{\mathrm{proj}}(L_0,\epsilon) = \{\xx\in\HH : \dist(\xx,L_0) \leq \epsilon\|\xx\|\}.
\]
\end{definition}


The existence of approximate tangent planes for a large measure set of points on suitably non-fractal\Footnote{Here we use Mandelbrot's terminology \cite[p.38]{Mandelbrot}, in which a set $\SK$ is called a fractal if and only if $\HD(\SK) > \TD(\SK)$.} subsets of $\HH$ is a combination of some well-known geometric measure theory results:


\begin{lemma}[Non-fractality implies approximate tangent planes]
\label{lemmafederermattila}
Suppose that $\dimH < \infty$, and let $\SK\subset\HH$ be a set such that $\scrH^k(\SK) < \infty$, where either $k = \TD(\SK)$ or $k = \dem(\SK)$. Then the set of points in $\SK$ which have an approximate tangent $k$-plane has positive $\scrH^k$ measure.
\end{lemma}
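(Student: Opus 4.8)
The plan is to deduce this lemma from two classical pillars of geometric measure theory: Szpilrajn's theorem (Theorem \ref{theoremsz}), or its demension-strengthening Theorem \ref{theoremszprime}, which guarantees $\scrH^k(\SK) > 0$; and the structure theory of sets of finite Hausdorff measure, specifically the rectifiability criterion of Besicovitch--Federer together with the fact that a rectifiable set has approximate tangent planes almost everywhere. Concretely, since $\scrH^k(\SK) < \infty$ we may decompose $\SK = \SK_{\mathrm{rect}} \cup \SK_{\mathrm{unrect}}$ into its $k$-rectifiable and purely $k$-unrectifiable parts (after discarding a null set). A $k$-rectifiable set of finite measure has an approximate tangent $k$-plane at $\scrH^k$-a.e. of its points (this is \cite[Theorem 15.19]{Mattila}), so it suffices to prove $\scrH^k(\SK_{\mathrm{rect}}) > 0$, equivalently to rule out $\SK$ being purely $k$-unrectifiable modulo null sets.

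The main obstacle — and the only place the hypothesis $k = \TD(\SK)$ or $k = \dem(\SK)$ enters — is exactly this: showing a purely $k$-unrectifiable set of finite $\scrH^k$ measure cannot have topological dimension (or demension) $k$. The plan is to invoke the Besicovitch--Federer projection theorem: if $\SK$ is purely $k$-unrectifiable with $\scrH^k(\SK) < \infty$, then for almost every $k$-dimensional linear subspace $V \leq \HH$, the orthogonal projection $\pi_V(\SK)$ is $\scrH^k$-null, hence Lebesgue-null in $V$. But if $\TD(\SK) = k$ (resp. $\dem(\SK) = k$), then by the argument in the proof of Theorem \ref{theoremszprime} — or more directly, by a projection argument à la that proof, using that some $(d-k)$-plane's generic translate meets $\SK$ — there is a positive-measure family of $k$-planes $V$ for which $\pi_V(\SK)$ has positive $\scrH^k$ measure. (For the $\TD$ case one can alternatively just cite that $\scrH^k(\pi_V(\SK)) > 0$ for a positive-measure set of $V$, which is a known consequence of $\TD(\SK) = k$; the proof of Theorem \ref{theoremszprime} makes this explicit for $\dem$, and the $\TD$ case is the special case covered by the first bullet of the cited Edwards--Rushing theorem since finite-$\scrH^k$-measure forces local properties compatible with $1$-coconnectedness.) This contradicts Besicovitch--Federer, so $\SK$ is not purely $k$-unrectifiable modulo $\scrH^k$-null sets, i.e. $\scrH^k(\SK_{\mathrm{rect}}) > 0$.

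Finally I would assemble the pieces: fix a $k$-rectifiable subset $\SK' \subset \SK$ with $\scrH^k(\SK') > 0$; by \cite[Theorem 15.19]{Mattila}, $\scrH^k$-a.e. $\pp \in \SK'$ is a point where $\SK'$ has an approximate tangent $k$-plane $L_0$. Since $\scrH^k(\SK \butnot \SK') \leq \scrH^k(\SK) < \infty$ and $\SK'$ itself has positive density at such points, a point of $\SK'$ which is moreover a point of $\scrH^k$-density $1$ of $\SK'$ in $\SK$ (almost every point of $\SK'$ qualifies, since $\SK'$ is $\scrH^k$-measurable of finite measure) will have $L_0$ as an approximate tangent plane for all of $\SK$ as well — one just checks that the density of $\SK \butnot \SK'$ near $\pp$ contributes nothing to \eqref{tangentplane}. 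Hence the set of points of $\SK$ possessing an approximate tangent $k$-plane contains an $\scrH^k$-positive subset of $\SK'$, which proves the lemma. I expect the routine bookkeeping (passing from tangent planes of $\SK'$ to tangent planes of $\SK$ via density-$1$ points) to be quick; the real content is the Besicovitch--Federer step and its interplay with the dimension/demension hypothesis.
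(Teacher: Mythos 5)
Your proof is essentially the same as the paper's: both reduce to showing $\scrH^k(\pi_V(\SK))>0$ for a positive-measure set of $k$-planes $V$, invoke Besicovitch--Federer to conclude $\SK$ is not purely $k$-unrectifiable, and then extract approximate tangent planes from the rectifiable part. The paper packages the last step by citing \cite[Corollary 15.20]{Mattila}, whereas you unfold it by hand via the rectifiable/unrectifiable decomposition plus a density-$1$ argument; that bookkeeping is correct (the needed fact is that $\scrH^k$-a.e.\ point of a rectifiable $\SK'\subset\SK$ has upper $\scrH^k$-density zero for $\SK\butnot\SK'$, which holds for any set of finite $\scrH^k$ measure).

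One parenthetical claim you make is wrong, however: finite $\scrH^k$-measure does \emph{not} force local homotopical $1$-coconnectedness, and hence does not force $\dem(\SK)=\TD(\SK)$. Antoine's necklace, appropriately sized, has finite $\scrH^1$-measure but $\TD=0<1=\dem$. So the $\TD=k$ case is not ``the special case covered by the first bullet of the Edwards--Rushing theorem.'' Your first, independent justification for that step --- that $\TD(\SK)=k$ and $\scrH^k(\SK)<\infty$ imply $\scrH^k(\pi_V(\SK))>0$ for a positive-measure set of $V$ --- is the correct one and is what the paper cites (Federer \cite[\S 9]{Federer2}); drop the 1-coconnectedness remark.
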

\NPC{Proof}
\begin{proof} 
The hypothesis implies that $\scrH^k(\pi_V(\SK)) > 0$ for a positive measure set of $k$-dimensional linear subspaces $V\leq\HH$, where $\pi_V$ denotes orthogonal projection onto $V$: if $k = \TD(\SK)$, then this follows from \cite[\69]{Federer2}, and if $k = \dem(\SK)$, then this follows from the proof of Theorem \ref{theoremszprime}. Thus by the Besicovitch--Federer projection theorem (Theorem \ref{theorembesicovitchfederer} below), $\SK$ is not purely $k$-unrectifiable (cf. Definition \ref{definitionunrectifiable} below). The conclusion now follows from \cite[Corollary 15.20]{Mattila}.
\end{proof}
\NPC{Proof}

Since in our context we get either $k = \TD(\SK)$ or $k = \dem(\SK)$ by hypothesis, and the $\sigma$-finiteness of $\scrH^k\given_\SK$ was proven in Step 1, we get that the set of points in $\SK$ which have an approximate tangent $k$-plane has positive $\scrH^k$ measure. In particular, there exists a point $\pp\in\Rad_k(\mu)$ which has an approximate tangent $k$-plane.

\

{\bf Step 3: Zooming argument.}
Let $\pp$ be as above. Let $L_0\leq\HH$ be an approximate tangent $k$-plane for $\SK$ at $\pp$, and let $(g_n)_1^\infty$ be as in Definition \ref{definitionradial}. Then for all $\epsilon > 0$, by \eqref{tangentplane} we have
\[
\frac{1}{\|g_n'\|^k}\mu\Big(B(\pp,C_\pp\|g_n'\|)\butnot (\pp + \NN_{\mathrm{proj}}(L_0,\epsilon))\Big) \tendsto n 0.
\]
Now by (a3),
\[
\mu\left(U\butnot g_n^{-1}\big(\pp + \NN_{\mathrm{proj}}(L_0,\epsilon)\big)\right) \lesssim_\times \frac{1}{\|g_n'\|^k} \mu\big(g_n(U)\butnot (\pp + \NN_{\mathrm{proj}}(L_0,\epsilon))\big) \tendsto n 0.
\]
After extracting a subsequence along which this convergence is geometrically fast, we get $\mu(U\butnot S_\epsilon) = 0$ and thus $\SK\cap U\subset S_\epsilon$, where
\[
S_\epsilon = \bigcup_{N\in\N} \bigcap_{n\geq N} g_n^{-1}\big(\pp + \NN_{\mathrm{proj}}(L_0,\epsilon)\big).
\]
Now write $T_n(\xx) = \pp + \|g_n'\|\xx$ and $h_n = T_n^{-1}\circ g_n\in\AA$; then
\[
S_\epsilon = \bigcup_{N\in\N} \bigcap_{n\geq N} h_n^{-1}\big(\NN_{\mathrm{proj}}(L_0,\epsilon)\big).
\]
But by (a1), $|h_n'(\xx)| \asymp_\times 1$ for all $\xx\in U$, and by \eqref{Cpdef}, $h_n(U)\subset B(\0,C_\pp)$ for all $n$. In particular, by the Arzela--Ascoli theorem, $(h_n)_1^\infty$ is a normal family. After extracting a subsequence, let $h$ be the limit. Then
\begin{align*}
S_\epsilon &\subset h^{-1}\big(\NN_{\mathrm{proj}}(L_0,\epsilon)\big),\\
\SK\cap U \subset \bigcap_{\epsilon > 0} S_\epsilon &\subset h^{-1}\left(\bigcap_{\epsilon > 0}\NN_{\mathrm{proj}}(L_0,\epsilon)\right) = h^{-1}(L_0).
\end{align*}
Since the class $\AA$ is closed under normal limits, we have $h\in\AA$, which completes the proof.

\begin{remark}
\label{remarkpoincareconjecture}
By slightly modifying the above argument, one can show: If $\mu$ is a finite measure on $\HH$, $\SK = \Supp(\mu)$, and $U\subset\HH$ is an open set, then either $\SK\cap U$ is contained in a $k$-dimensional real-analytic manifold, or else no point of $\Rad_\delta(\mu)$ can have a tangent $k$-plane for $\SK$. Here a $k$-dimensional linear subspace $L_0\leq\HH$ is called a \emph{tangent $k$-plane} for $\SK$ at $\pp$ if for all $\epsilon > 0$ there exists $r > 0$ such that $\SK\cap B(\pp,r)\subset \pp + \NN_\Proj(L_0,\epsilon)$. In particular, this proves Poincar\'e's conjecture \cite[Section VIII]{Poincare1} mentioned in the introduction, corresponding to the special case where $\SK$ is the limit set of a certain Kleinian group.
\end{remark}

\draftnewpage
\section{Proof of Meta-rigidity Theorem \ref{theoremrigidity}($\dimH = \infty$, $k = 1$)}
\label{sectionk1}
An analysis of the argument in Section \ref{sectionF} shows that the finite-dimensionality assumption was only used in two places:
\begin{itemize}
\item[1.] in the use of Lemma \ref{lemmafederermattila} (the ``key step'') in Step 2, and
\item[2.] in the use of the Arzela--Ascoli theorem in Step 3.
\end{itemize}

It turns out that when $k = 1$, we can find ``patches'' for these arguments which work in infinite dimensions, keeping the overall proof structure of Theorem \ref{theoremrigidity} the same. But when $k > 1$, a different proof structure will be needed. However, a substantial portion of this section (namely Theorem \ref{theoremlebesguedifferentiation} and Corollary \ref{corollarylebesguedifferentiation}) will also be used in the proof of Theorem \ref{theoremrigidity}($\dimH = \infty$, $k > 1$).

We begin with a lemma which is the analogue of the first two-thirds of the proof of Lemma \ref{lemmafederermattila}:
\begin{lemma}
\label{lemmapatch1}
Let $\SK\subset\HH$ be a compact set such that $\scrH^1(\SK) < \infty$ and $\TD(\SK) = 1$. Then $\SK$ contains a rectifiable curve.
\end{lemma}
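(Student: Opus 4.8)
The plan is to reduce the statement to classical continuum theory: the hypothesis $\TD(\SK) = 1$ will be used only to extract a connected subset of $\SK$ with more than one point, after which the finiteness of $\scrH^1$ forces such a subset to contain a rectifiable arc. Thus this lemma plays, for $k=1$, the role of ``the first two-thirds'' of the proof of Lemma \ref{lemmafederermattila}: there the Besicovitch--Federer projection theorem was used to pass from a dimension hypothesis to non-pure-unrectifiability, whereas here we will pass directly from $\TD(\SK) = 1$ to a genuine rectifiable curve.

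First I would show that $\SK$ is not totally disconnected. If it were, then being compact it would admit a basis of clopen sets and hence have $\TD(\SK) = 0$ (see, e.g., \cite[Ch.~II]{HurewiczWallman}), contradicting $\TD(\SK) = 1$. Therefore $\SK$ has a connected component $C$ containing two distinct points $\pp \neq \qq$. As a component, $C$ is closed in $\SK$, hence compact, so $C$ is a nondegenerate continuum, and it satisfies $0 < \|\pp - \qq\| \leq \scrH^1(C) \leq \scrH^1(\SK) < \infty$ (the lower bound follows by projecting $C$ orthogonally onto the line through $\pp$ and $\qq$ and applying Fact \ref{factlipschitz}).

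Next I would invoke the classical fact that a continuum of finite $\scrH^1$-measure is arcwise connected, and that any simple arc contained in it has finite length equal to its $\scrH^1$-measure, hence is rectifiable. The usual proofs of this (e.g. \cite[Lemma 3.12]{Falconer}, going back to Eilenberg--Harrold) are metric in nature and carry over verbatim to the possibly infinite-dimensional Hilbert space $\HH$. Applying it to $C$ yields an arc $\gamma \subset C \subset \SK$ joining $\pp$ to $\qq$; since $\gamma$ is a simple arc, $0 < \|\pp - \qq\| \leq \scrH^1(\gamma) \leq \scrH^1(C) < \infty$, so $\gamma$ is the desired rectifiable curve in $\SK$.

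The only genuinely nontrivial input is the arcwise connectedness of finite-length continua in a complete metric space; everything else is soft point-set topology. If one wishes to avoid quoting it, one could instead try to mimic the proof of Lemma \ref{lemmafederermattila} more closely by establishing that $\SK$ is not purely $1$-unrectifiable, but any such argument ultimately rests on the same continuum structure theory, so I would cite it directly.
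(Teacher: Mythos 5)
Your proposal is correct and follows essentially the same route as the paper: extract a nondegenerate connected component $C$ of $\SK$ (the paper cites Engelking's Theorem 1.4.5 where you give the direct clopen-basis argument), then use the finiteness of $\scrH^1(C)$ to obtain a rectifiable arc (the paper cites Schul's Lemmas 3.5 and 3.7 where you cite the Eilenberg--Harrold/Falconer result on arcwise connectedness of finite-length continua). The only cosmetic difference is that the paper's citation gives the slightly stronger conclusion that $C$ itself is a rectifiable curve, whereas you only extract a rectifiable arc from $C$; both suffice for the lemma.
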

\begin{proof}
Since $\SK$ is a compact set with $\TD(\SK) = 1$, there exists a connected component $C\subset\SK$ with at least two points \cite[Theorem 1.4.5]{Engelking}. By \cite[Lemmas 3.5 and 3.7]{Schul}, $C$ is equal to a rectifiable curve.
\end{proof}
\begin{remark}
\label{remarkpatch1}
This lemma is the only place in this section where the assumption $k = 1$ is used in a crucial manner. (This fact will be important in the proof of Theorem \ref{theoremrigidity}($\dimH = \infty$, $k > 1$).) It is not clear whether the analogue of Lemma \ref{lemmapatch1} for $k > 1$ holds or not.
\end{remark}

Next, we would like to say that every rectifiable curve contains approximate tangent planes, but the problem is that the usual version of the Lebesgue differentiation theorem does not hold in infinite dimensions. Instead, we need a variant of this theorem, which we state and prove below. When we apply this theorem to the problem of finding approximate tangent planes to rectifiable curves, it turns out that we need to replace the limit \eqref{tangentplane} with the limit taken along a certain sequence $r_n\searrow 0$. The points that we get do not necessarily have ``approximate tangent planes'' in the sense of Definition \ref{definitiontangentplane}, but they are good enough to make the argument of Step 3 work.

\begin{theorem}[Lebesgue differentiation theorem for general metric spaces]
\label{theoremlebesguedifferentiation}
Let $X$ and $Y$ be separable metric spaces, let $\nu$ be a locally finite measure on $X$, and let $f:X\to Y$ be a bounded function. For $\nu$-a.e. $p\in X$, if $(B_n)_1^\infty$ is a sequence of sets such that
\begin{itemize}
\item[(I)] $\diam(B_n)\to 0$,
\item[(II)] $p\in B_n$ for all $n$, and
\item[(III)] $\nu(4B_n)\asymp_\times\nu(B_n)$ for all $n$, where
\[
4B := \thickvar B{1.5\diam(B)},
\]
\end{itemize}
then
\begin{equation}
\label{lebesguedifferentiation}
\lim_{n\to\infty} \frac{1}{\nu(B_n)} \int_{B_n} \dist_Y\big(f(p),f(x)\big) \;\dee\nu(x) = 0.
\end{equation}
\end{theorem}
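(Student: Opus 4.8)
The plan is to mimic the classical proof of the Lebesgue differentiation theorem via a maximal inequality, but taking care that (a) $Y$ is only a metric space, not linear, so we cannot subtract, and (b) the covering argument must go through with the enlargement factor $4B$ appearing in hypothesis (III). First I would reduce to a countable "test" situation: since $Y$ is separable, fix a countable dense set $\{y_j\}\subset f(X)$, and for each $j$ apply a scalar Lebesgue-type theorem to the bounded real-valued function $x\mapsto \dist_Y(f(x),y_j)$. If I can show that for $\nu$-a.e. $p$ and every admissible sequence $(B_n)$,
\[
\lim_{n\to\infty}\frac{1}{\nu(B_n)}\int_{B_n}\big|\dist_Y(f(x),y_j)-\dist_Y(f(p),y_j)\big|\;\dee\nu(x)=0
\]
for all $j$, then the triangle inequality $\dist_Y(f(p),f(x))\le \dist_Y(f(x),y_j)+\dist_Y(f(p),y_j)$ together with choosing $y_j$ close to $f(p)$ yields \eqref{lebesguedifferentiation}. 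So the whole theorem reduces to the case $Y=\R$ and $f$ replaced by a bounded real function $g$, with the averages taken over an arbitrary sequence $B_n\ni p$ with $\diam(B_n)\to0$ and $\nu(4B_n)\asymp_\times\nu(B_n)$.

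For the scalar case, the standard route is: (1) density of continuous functions, and (2) a weak-type $(1,1)$ maximal inequality for the "restricted" maximal operator
\[
\mathcal M g(p)=\sup\Big\{\frac{1}{\nu(B)}\int_B|g|\;\dee\nu : p\in B,\ \nu(4B)\le C\nu(B)\Big\},
\]
where one must be slightly careful that the family of competitors depends on the constant $C$, so I would first decompose the a.e. statement over a countable exhaustion of constants $C\in\N$. Step (1) is easy since $\nu$ is locally finite on a metric space (Lusin/Vitali--Carathéodory gives continuous approximants in $L^1_{\mathrm{loc}}$, or one works locally on a ball of finite measure and uses that $g$ is bounded). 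For a continuous $g$, \eqref{lebesguedifferentiation} with $Y=\R$ is immediate from $\diam(B_n)\to0$ and $p\in B_n$, with no need for condition (III). Step (2) is the maximal inequality: given a set $A$ where $\mathcal M g>\lambda$, cover it by sets $B$ with the doubling-type property $\nu(4B)\le C\nu(B)$ and apply the basic $5r$-covering (Vitali) lemma — here the factor $4B=\thickvar{B}{1.5\diam B}$ is exactly what is needed so that a ball of radius $\tfrac12\diam B$ around a point of $B$, when blown up by $5$, still sits inside $4B$; extracting a countable disjoint subfamily $\{B_i\}$ with $\bigcup B\subset\bigcup 4B_i$ gives $\nu(A)\le\sum\nu(4B_i)\le C\sum\nu(B_i)\le (C/\lambda)\int|g|\,\dee\nu$. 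The standard splitting $g=g_{\mathrm{cts}}+g_{\mathrm{small}}$ with $\|g_{\mathrm{small}}\|_1<\eta$, combined with the maximal inequality applied to $g_{\mathrm{small}}$, then forces the limsup of the averages of $|g-g(p)|$ to be $0$ for $\nu$-a.e.\ $p$, for each fixed $C$, hence for all $C$ simultaneously on a conull set.

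The main obstacle is making the covering argument work cleanly in an arbitrary separable metric space with the nonstandard enlargement: one must verify that the basic $5r$-covering lemma applies to an arbitrary family of bounded sets (not balls) whose diameters are uniformly bounded — which it does, in the form where from any family of sets of bounded diameter one extracts a countable pairwise-disjoint subfamily whose $5$-diameter-enlargements cover the union — and that in separable metric spaces this countable extraction is legitimate (no need for Zorn beyond what separability already gives, or alternatively restrict attention to a fixed finite-measure ball and use that $\nu$ is finite there). A secondary point to be careful about: since the conclusion must hold simultaneously for \emph{all} admissible sequences $(B_n)$ at a.e.\ $p$, I cannot choose the sequence first and then find a good $p$; instead I prove the a.e.\ statement for the maximal function, which bounds the limsup over the whole admissible family at once. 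Once that is in place the passage back to general $Y$ via the dense set $\{y_j\}$ is routine: the only subtlety is that the exceptional null set is a countable union (over $j$ and over the constant $C$) of null sets, hence still null, using separability of both $X$ (for the measure theory) and $Y$ (for the approximation).
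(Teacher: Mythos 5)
Your strategy is correct in outline but genuinely different from the paper's, and it contains one concrete arithmetic gap in the covering step.

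On the difference of route: the paper never reduces to scalar-valued functions and never invokes continuous approximation or a maximal operator. Instead it fixes $\delta>0$, partitions $Y$ into a countable family $\AA_\delta$ of pieces of diameter $\leq\delta$ (using separability of $Y$ directly rather than a dense set), isolates the ``bad'' sets $B\cap f^{-1}(A)$ for which $\nu(B\butnot f^{-1}(A))\geq\delta\nu(B)$ while $\nu(4B)\leq C\nu(B)$, and shows in one shot via the $4r$-covering lemma and regularity of $\nu$ that the set $X_{C,\delta}$ of points admitting arbitrarily small bad sets is $\nu$-null; the conclusion then drops out at every $p\notin\bigcup_\delta X_{C,\delta}$. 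This avoids Lusin/Vitali--Carath\'eodory entirely and handles the $Y$-valued function without passing to $\dist_Y(f(\cdot),y_j)$. Your route (dense set $\{y_j\}\subset Y$ $\to$ scalar Lebesgue points $\to$ density of continuous functions $+$ weak-type $(1,1)$ for the restricted maximal operator $\mathcal M_C$, unioned over countably many $C$) is the standard template and would also work; it is longer and needs the continuous-approximation step, but the reduction to $Y=\R$ via the triangle inequality and $y_j\to f(p)$ is sound, as is the observation that the maximal function bounds the $\limsup$ uniformly over all admissible sequences at once.

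The gap is in your justification of the covering step. The enlargement in this theorem is $4B=\thickvar{B}{1.5\diam B}$, and the hypothesis only controls $\nu(4B)/\nu(B)$; it gives you nothing about $\nu(\thickvar{B}{2\diam B})$. The ``basic $5r$-covering lemma'' for a family of bounded sets, in the form you state, produces a disjoint subfamily $\{G_i\}$ such that every $F$ in the family is contained in $\thickvar{G_i}{2\diam G_i}$ for some $i$ (the factor is $2$, not $1.5$, because one chooses $G_i$ maximal only up to a slack factor of $2$ in diameter). So the weak-type estimate you write, $\nu(A)\leq\sum\nu(4B_i)\leq C\sum\nu(B_i)$, does not follow from the $5r$ lemma --- you would only get $\nu(A)\leq\sum\nu(\thickvar{B_i}{2\diam B_i})$, which the hypothesis does not bound. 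Your attempted repair via balls does not work either: if $p\in B$ and $D=B(p,\tfrac12\diam B)$, then $5D=B(p,\tfrac52\diam B)$, and in general $5D\not\subset 4B$ (for $q$ with $\dist(q,p)$ slightly less than $\tfrac52\diam B$ one only gets $\dist(q,B)<\tfrac52\diam B$, not $<\tfrac32\diam B$). What you actually need, and what the paper cites as \cite[Theorem 8.1]{MSU}, is the sharper $4r$-covering lemma: from any family of sets of uniformly bounded diameter one can extract a countable pairwise-disjoint subfamily $\{G_i\}$ so that every member of the family meets some $G_i$ with $\diam F<\tfrac32\diam G_i$, hence $F\subset\thickvar{G_i}{\tfrac32\diam G_i}=4G_i$. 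This is proved exactly like the $5r$ lemma but processing the family in diameter bands with ratio $\tfrac32$ instead of $2$. Once you replace your covering lemma citation with this version, the rest of your argument goes through.
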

\NPC{Proof}
\begin{proof}
This theorem follows from \cite[\6\62.8-2.9]{Federer} but we write out the proof for clarity. For each $\delta,C > 0$, let $\AA_\delta$ be a countable partition of $Y$ such that $\diam(A)\leq\delta$ for all $A\in\AA_\delta$, and let
\begin{align*}
\CC_{C,\delta} &= \left\{B\cap f^{-1}(A) : \begin{array}{l}B\subset X, \; \nu(4B)\leq C\nu(B), \; A\in\AA_\delta,\\ \nu(B\butnot f^{-1}(A)) \geq \delta\nu(B)\end{array}\right\}\\
X_{C,\delta} &= \Big\{x\in X : \inf\{\diam(E) : x\in E\in\CC_{C,\delta}\} = 0\Big\}.
\end{align*}
\begin{claim}
Fix $p\in X$ and a sequence $(B_n)_1^\infty$ such that \text{(I)-(III)} hold, and let $C$ be the implied constant of \text{(III)}. If $p\notin X_{C,\delta}$ for all $\delta > 0$, then \eqref{lebesguedifferentiation} holds.
\end{claim}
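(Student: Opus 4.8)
This claim is the ``easy direction'' of Theorem \ref{theoremlebesguedifferentiation}: it says that if $p$ manages to avoid all of the exceptional sets $X_{C,\delta}$, then the Lebesgue-type limit \eqref{lebesguedifferentiation} holds along any sequence $(B_n)$ satisfying (I)--(III). The plan is, for each fixed error tolerance $\delta>0$, to work with the single partition $\AA_\delta$ of $Y$ and to extract from the hypothesis $p\notin X_{C,\delta}$ a uniform lower bound on the diameters of the ``bad'' sets in $\CC_{C,\delta}$ that contain $p$; once $\diam(B_n)$ drops below that bound, $B_n$ cannot produce a bad set, which pins down how much of $B_n$ can be mapped by $f$ outside the partition cell of $f(p)$.

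Concretely, first I would fix $\delta>0$ and use $p\notin X_{C,\delta}$ to choose $\rho=\rho(\delta)>0$ such that every $E\in\CC_{C,\delta}$ with $p\in E$ has $\diam(E)\geq\rho$. I would also put $M:=\diam(f(X))<\infty$, finite precisely because $f$ is assumed bounded; this is the only place boundedness is used. Let $A\in\AA_\delta$ be the partition cell containing $f(p)$, so that $p\in B_n\cap f^{-1}(A)$ for every $n$. For all $n$ large enough that $\diam(B_n)<\rho$ (possible by (I)), I claim $B_n\cap f^{-1}(A)\notin\CC_{C,\delta}$. Indeed, $\nu(4B_n)\leq C\nu(B_n)$ by (III) with $C$ its implied constant, and $A\in\AA_\delta$; so if in addition $\nu(B_n\butnot f^{-1}(A))\geq\delta\nu(B_n)$ held, then $B_n\cap f^{-1}(A)$ would be an element of $\CC_{C,\delta}$ containing $p$ with diameter $\leq\diam(B_n)<\rho$, contradicting the choice of $\rho$. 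Hence, for all large $n$,
\[
\nu\big(B_n\butnot f^{-1}(A)\big) < \delta\,\nu(B_n).
\]
(If $\nu(B_n)=0$ the corresponding average is interpreted as $0$ and there is nothing to check, so we may assume $\nu(B_n)>0$.)

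Finally I would split the integral in \eqref{lebesguedifferentiation}. On $B_n\cap f^{-1}(A)$ both $f(p)$ and $f(x)$ lie in $A$, so $\dist_Y(f(p),f(x))\leq\diam(A)\leq\delta$; on $B_n\butnot f^{-1}(A)$ we bound the integrand by $M$. Therefore, for all large $n$,
\[
\frac{1}{\nu(B_n)}\int_{B_n}\dist_Y\big(f(p),f(x)\big)\,\dee\nu(x) \;\leq\; \delta + M\cdot\frac{\nu(B_n\butnot f^{-1}(A))}{\nu(B_n)} \;\leq\; \delta(1+M).
\]
Letting $n\to\infty$ gives $\limsup_n\frac{1}{\nu(B_n)}\int_{B_n}\dist_Y(f(p),f(x))\,\dee\nu\leq\delta(1+M)$, and since $\delta>0$ was arbitrary the $\limsup$ is $0$, which is exactly \eqref{lebesguedifferentiation}.

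\textbf{Expected main obstacle.} Within the claim itself there is essentially no obstacle — it is a two-line estimate once the definitions are unwound — and the genuine work of Theorem \ref{theoremlebesguedifferentiation} lies elsewhere, namely in showing $\nu(X_{C,\delta})=0$ for each pair $C,\delta$ (a Vitali-type covering argument) so that $\nu$-a.e.\ $p$ is covered by this claim. The only points demanding a little care here are the bookkeeping when $\nu(B_n)=0$ and making sure that the constant $C$ appearing in (III) is literally the one used to index $\CC_{C,\delta}$ and $X_{C,\delta}$.
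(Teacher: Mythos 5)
Your proposal is correct and matches the paper's own proof essentially line by line: pick the cell $A\in\AA_\delta$ containing $f(p)$, use $p\notin X_{C,\delta}$ to force $B_n\cap f^{-1}(A)\notin\CC_{C,\delta}$ once $\diam(B_n)$ is small, deduce $\nu(B_n\butnot f^{-1}(A))<\delta\nu(B_n)$, split the integral, and let $\delta\to 0$. The only cosmetic difference is that you make the threshold $\rho(\delta)$ explicit where the paper simply says "for all sufficiently large $n$."
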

\begin{subproof}
Fix $\delta > 0$. Let $A\in\AA_\delta$ be the element which contains $f(p)$. Since $p\notin X_{C,\delta}$, for all sufficiently large $n$ we have $B_n\cap f^{-1}(A)\notin \CC_{C,\delta}$, so
\[
\nu(B_n\butnot f^{-1}(A)) < \delta\nu(B_n).
\]
In particular
\begin{align*}
\int_{B_n} \dist_Y\big(f(p),f(x)\big) \;\dee\nu(x) &\leq \int_{B_n\cap f^{-1}(A)} \dist_Y\big(f(p),f(x)\big) \;\dee\nu(x) + \int_{B_n\butnot f^{-1}(A)} \dist_Y\big(f(p),f(x)\big) \;\dee\nu(x)\\
&\leq \delta\nu(B_n) + \diam_Y(f(X)) \nu(B_n\butnot f^{-1}(A)) \leq \delta\big(1 + \diam_Y(f(X))\big) \nu(B_n).
\end{align*}
Taking the limit as $n\to\infty$ gives
\[
\limsup_{n\to\infty} \frac{1}{\nu(B_n)} \int_{B_n} \dist_Y\big(f(p),f(x)\big) \;\dee\nu(x) \leq \delta\big(1 + \diam_Y(f(X))\big),
\]
and letting $\delta\to 0$ finishes the proof.
\end{subproof}
\noindent So to complete the proof, it suffices to show that $\nu(X_{C,\delta}) = 0$ for all $\delta,C > 0$.

By contradiction, suppose that $\nu(X_{C,\delta}) > 0$ for some $\delta,C > 0$. Then there exists $A\in\AA_\delta$ such that $\nu(X_{C,\delta}\cap f^{-1}(A)) > 0$, and by the inner regularity of $\nu$, there exists a compact set $K\subset X_{C,\delta}\cap f^{-1}(A)$ such that $\nu(K) > 0$. Let $U\supset K$ be an arbitrary open set. Then by the definition of $X_{C,\delta}$,
\[
\CC_{C,\delta,A}(U) := \{B\subset U : \nu(4B)\leq C\nu(B), \;\; B\cap f^{-1}(A)\in\CC_{C,\delta}\}
\]
is a cover of $K$. So by the $4r$-covering lemma (e.g. \cite[Theorem 8.1]{MSU}), there exists a disjoint subcollection $\DD\subset\CC_{C,\delta,A}(U)$ such that $\{4B:B\in\DD\}$ is a cover of $K$. But then
\[
\nu(K) \leq \sum_{B\in\DD} \nu(4B) \asymp_{\times,C} \sum_{B\in\DD} \nu(B) \lesssim_{\times,\delta} \sum_{B\in\DD} \nu(B\butnot f^{-1}(A)) \leq \nu(U\butnot f^{-1}(A)) \leq \nu(U\butnot K).
\]
Since $U$ was arbitrary and $\nu(K) > 0$, this contradicts the outer regularity of $\nu$.
\end{proof}

Applying Theorem \ref{theoremlebesguedifferentiation} to our circumstance, we get:

\begin{corollary}
\label{corollarylebesguedifferentiation}
Let $X = \SK$ and $\nu = \mu = \scrH^k\given_\SK$ be as in Theorem \ref{theoremrigidity}, and let $Y$ and $f$ be as in Theorem \ref{theoremlebesguedifferentiation}. For $\mu$-a.e. $\pp\in\Rad_k(\mu)$, if $(g_n)_1^\infty$ is as in Definition \ref{definitionradial}, then
\begin{equation}
\label{LD2}
\lim_{n\to\infty} \frac{1}{\|g_n'\|^k} \int_{g_n(U)} \dist_Y\big(f(\pp),f(\xx)\big) \;\dee\mu(\xx) = 0.
\end{equation}
\end{corollary}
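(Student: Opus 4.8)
The plan is to apply the Lebesgue differentiation theorem in the form of Theorem \ref{theoremlebesguedifferentiation}, taking $X = \SK$, $\nu = \mu = \scrH^k\given_\SK$, the given $Y,f$, and the sequence of sets $B_n := g_n(U)\cup\{\pp\}$. The first task is to pin down which points $\pp$ are admissible. Let $Z\subset\SK$ be the $\mu$-conull set supplied by Theorem \ref{theoremlebesguedifferentiation}, consisting of those $p$ for which \eqref{lebesguedifferentiation} holds along \emph{every} sequence $(B_n)$ satisfying (I)--(III). Separately, since $\mu = \scrH^k\given_\SK$ is a finite measure, applying the upper inequality of the Rogers--Taylor density theorem (Theorem \ref{theoremRTT}) to the set $A_M := \{\pp\in\SK : \overline D_\mu^k(\pp) > M\}$ gives $\mu(A_M) = \scrH^k(A_M)\lesssim_\times \mu(\HH)/M$, so that $\overline D_\mu^k < \infty$ $\mu$-almost everywhere. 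I will therefore fix $\pp\in Z\cap\Rad_k(\mu)$ with $\overline D_\mu^k(\pp) < \infty$, which accounts for $\mu$-a.e. point of $\Rad_k(\mu)$, and let $(g_n)_1^\infty$ be as in Definition \ref{definitionradial}.

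Next I would verify that $B_n = g_n(U)\cup\{\pp\}$ satisfies hypotheses (I)--(III) of Theorem \ref{theoremlebesguedifferentiation}, using the defining properties of a $(k,\mu)$-radial point. Property (I), $\diam(B_n)\to 0$, follows from (a2) and (a4), which give $\diam(g_n(U))\asymp_\times\|g_n'\|$ and $\dist(\pp,g_n(U))\lesssim_\times\|g_n'\|$, together with $\|g_n'\|\to 0$. Property (II), $\pp\in B_n$, is immediate. For property (III) the crucial observation is the two-sided bound $\nu(B_n)\asymp_{\times,\pp}\|g_n'\|^k$, valid for all large $n$: the lower bound is $\mu(g_n(U))\geq\int_U|g_n'|^k\dee\mu\gtrsim_\times\|g_n'\|^k\mu(U)$, using \eqref{muconformal}, (a1), and $\mu(U)>0$; the upper bound uses \eqref{Cpdef}, which gives $B_n\subset B(\pp,C_\pp\|g_n'\|)$, together with $\overline D_\mu^k(\pp)<\infty$. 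Since also $4B_n\subset B(\pp,C'\|g_n'\|)$ for a suitable constant $C'$, the same upper density bound yields $\nu(4B_n)\lesssim_{\times,\pp}\|g_n'\|^k\lesssim_{\times,\pp}\nu(B_n)\leq\nu(4B_n)$, i.e. (III).

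With (I)--(III) in hand, Theorem \ref{theoremlebesguedifferentiation} gives $\nu(B_n)^{-1}\int_{B_n}\dist_Y(f(\pp),f(\xx))\dee\nu(\xx)\to 0$; since $\dist_Y(f(\pp),f(\pp)) = 0$ one may replace $\int_{B_n}$ by $\int_{g_n(U)}$, and dividing by $\|g_n'\|^k$ rather than by $\nu(B_n)$ changes the expression only by a factor bounded above and below (by $\nu(B_n)\asymp_{\times,\pp}\|g_n'\|^k$), so \eqref{LD2} follows. I expect the only real subtlety to be the verification of (III): one must discard the $\mu$-null set of points of infinite upper $k$-density, whence the appeal to Rogers--Taylor, and one must adjoin $\{\pp\}$ to $g_n(U)$ in order to force (II), since Definition \ref{definitionradial}(a4) only guarantees that $\pp$ lies \emph{near} $g_n(U)$, not inside it (as happens in the Kleinian case). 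Everything else is a routine application of the distortion and conformality estimates built into the notion of a $(k,\mu)$-radial point.
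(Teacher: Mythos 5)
Your proof is correct and follows essentially the same route as the paper: discard the $\mu$-null set where $\overline D_\mu^k = \infty$ via Rogers--Taylor, verify conditions (I)--(III) of Theorem \ref{theoremlebesguedifferentiation} using \eqref{Cpdef}, \eqref{muconformal}, (a1), and finite upper density, then divide out by $\|g_n'\|^k \asymp_{\times,\pp} \nu(B_n)$. The only (cosmetic) difference is the choice of test sets: the paper takes $B_n = B(\pp, C_\pp\|g_n'\|)$, which automatically contains $\pp$ and $g_n(U)$, whereas you take $B_n = g_n(U)\cup\{\pp\}$; both satisfy (I)--(III) for the same reasons and lead to the same conclusion.
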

\begin{proof}
Let $\pp\in\Rad_k(\mu)$ be a point for which
\begin{equation}
\label{Dmu}
\overline D_\mu^k(\pp) = \limsup_{r\searrow 0} \frac{1}{r^k}\mu(B(\pp,r)) < \infty;
\end{equation}
by Theorem \ref{theoremRTT}, \eqref{Dmu} holds for $\mu$-a.e. $\pp\in\Rad_k(\mu)$. Let $(g_n)_1^\infty$ be as in Definition \ref{definitionradial}, and let $B_n = B(\pp,C_\pp\|g_n'\|)$. Then $\mu(4B_n) \lesssim_{\times,\pp} \|g_n'\|^k$ by \eqref{Dmu}, but $\mu(B_n)\geq \mu(g_n(U)) \gtrsim_{\times,\pp} \|g_n'\|^k$ by \eqref{Cpdef} and \eqref{muconformal}. By (b) of Definition \ref{definitionradial}, $\diam(B_n) \leq 2C_\pp \|g_n'\| \to 0$, and by construction $\pp\in B_n$ for all $n$. So by Theorem \ref{theoremlebesguedifferentiation}, \eqref{lebesguedifferentiation} holds for the sequence $(B_n)_1^\infty$ (assuming $\pp$ is not in the $\mu$-nullset where Theorem \ref{theoremlebesguedifferentiation} fails). Since $\mu(B_n) \asymp_{\times,\pp} \|g_n'\|^k$, \eqref{muconformal} finishes the proof.
\end{proof}

We can now prove an analogue of Lemma \ref{lemmafederermattila}:
\begin{lemma}
\label{lemmapatch2}
Let $\SK$ and $\mu = \scrH^k\given_\SK$ be as in Theorem \ref{theoremrigidity}, with $k = 1$. Then there exists a positive $\mu$-measure set of points $\pp\in\Rad_k(\mu)$ such that if $(g_n)_1^\infty$ are as in Definition \ref{definitionradial}, then there exists a $k$-dimensional linear subspace $L_0\leq\HH$ such that for all $\epsilon > 0$,
\begin{equation}
\label{patch2}
\lim_{n\to\infty} \frac1{\|g_n'\|^k} \mu\Big(g_n(U)\butnot (\pp + \NN_{\mathrm{proj}}(L_0,\epsilon))\Big) = 0.
\end{equation}
\end{lemma}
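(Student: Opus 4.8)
The plan is to reduce the statement to the geometry of a single curve sitting inside $\SK$, using Corollary \ref{corollarylebesguedifferentiation} to discard the part of $\mu$ not carried near that curve. First the setup. Since $\mu=\scrH^k\given_\SK$ is finite, $\scrH^1(\SK)<\infty$; and since $\dimH=\infty$ the demension is undefined, so the hypothesis of Theorem \ref{theoremrigidity} that $k=\TD(\SK)$ or $k=\dem(\SK)$ forces $\TD(\SK)=k=1$. Thus Lemma \ref{lemmapatch1} applies and $\SK$ contains a rectifiable curve; since a compact connected set of finite $\scrH^1$-measure is arcwise connected, we may take it to be a simple rectifiable arc $\Gamma\subset\SK$ with arc-length parametrization $\gamma:[0,\ell]\to\HH$. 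So $\gamma$ is injective, $1$-Lipschitz, $\ell=\scrH^1(\Gamma)>0$, and — a separable Hilbert space having the Radon--Nikodym property — $\gamma$ is differentiable with $|\gamma'(t)|=1$ for a.e.\ $t$. Letting $N\subset[0,\ell]$ be the Lebesgue-null set where this fails, Fact \ref{factlipschitz} gives $\scrH^1(\gamma(N))=0$, so for $\scrH^1$-a.e.\ (equivalently $\mu\given_\Gamma$-a.e., since $\mu\given_\Gamma=\scrH^1\given_\Gamma$) point $\pp=\gamma(t_0)\in\Gamma$ the arc has a genuine tangent line $L_0(\pp)=\R\gamma'(t_0)$, in the sense that for every $\epsilon>0$ there is $\eta>0$ with $\gamma\big((t_0-\eta,t_0+\eta)\big)\subset\pp+\NN_{\mathrm{proj}}(L_0,\epsilon)$.

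\emph{Step 1: almost all the local mass lies on $\Gamma$.} I would apply Corollary \ref{corollarylebesguedifferentiation} with $Y=\{0,1\}\subset\R$ and $f:\SK\to Y$ the (Borel, bounded) indicator function of the compact set $\Gamma$. The corollary then says: for $\mu$-a.e.\ $\pp\in\Rad_k(\mu)$ and every sequence $(g_n)_1^\infty$ as in Definition \ref{definitionradial}, $\tfrac1{\|g_n'\|}\int_{g_n(U)}|f(\pp)-f(\xx)|\,\dee\mu(\xx)\to0$; when $\pp\in\Gamma$ the integrand is $1$ on $g_n(U)\butnot\Gamma$ and $0$ elsewhere, so this reads $\mu(g_n(U)\butnot\Gamma)=o(\|g_n'\|)$. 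Since the hypothesis of Theorem \ref{theoremrigidity} that $\scrH^k$-a.e.\ point of $\SK$ is radial, together with $\mu=\scrH^1\given_\SK$, gives $\mu(\Gamma\butnot\Rad_k(\mu))=0$, this conclusion holds for $\mu$-a.e.\ $\pp\in\Gamma$.

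\emph{Step 2: the tangent line of $\Gamma$ is the required $L_0$.} Fix $\pp=\gamma(t_0)$ in the subset of $\Gamma$ consisting of points that lie in $\Rad_k(\mu)$, satisfy the conclusion of Step 1, and are images of differentiability points $t_0\notin N$; this subset has full $\mu\given_\Gamma$-measure, hence $\mu$-measure $\scrH^1(\Gamma)>0$. Put $L_0=\R\gamma'(t_0)$ and let $(g_n)$ be any sequence as in Definition \ref{definitionradial}. Given $\epsilon>0$, choose $\eta>0$ as above so that $\gamma\big((t_0-\eta,t_0+\eta)\big)\subset\pp+\NN_{\mathrm{proj}}(L_0,\epsilon)$; by injectivity, continuity and compactness of $\gamma$, there is $r_0>0$ with $\{t:\|\gamma(t)-\pp\|<r_0\}\subset(t_0-\eta,t_0+\eta)$, hence $\Gamma\cap B(\pp,r_0)\subset\pp+\NN_{\mathrm{proj}}(L_0,\epsilon)$. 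By \eqref{Cpdef}, $g_n(U)\subset B(\pp,C_\pp\|g_n'\|)\subset B(\pp,r_0)$ for $n$ large, so $g_n(U)\cap\Gamma\subset\pp+\NN_{\mathrm{proj}}(L_0,\epsilon)$, and therefore for all large $n$
\[
\mu\big(g_n(U)\butnot(\pp+\NN_{\mathrm{proj}}(L_0,\epsilon))\big)\ \le\ \mu(g_n(U)\butnot\Gamma)\ =\ o(\|g_n'\|),
\]
which is \eqref{patch2} since $k=1$. This establishes the lemma.

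\emph{The main obstacle.} There is really only one substantive input, and it is Lemma \ref{lemmapatch1}: it is what provides a subset of $\SK$ with \emph{genuine} (not merely approximate) tangent lines, and this is precisely what sidesteps the failure, in infinite dimensions, of the Besicovitch--Federer projection theorem and of the Lebesgue density theorem that powered Lemma \ref{lemmafederermattila}. Everything downstream is soft: Corollary \ref{corollarylebesguedifferentiation} supplies exactly the sequence-restricted substitute for Lebesgue differentiation needed to push the residual $\mu$-mass off the curve, and the rest is the elementary calculus of a differentiable arc. This is also exactly why the argument is confined to $k=1$, as anticipated in Remark \ref{remarkpatch1}.
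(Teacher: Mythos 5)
Your argument is correct, and although it shares the same skeleton as the paper's (Lemma \ref{lemmapatch1} $\to$ Rademacher in Hilbert space $\to$ Corollary \ref{corollarylebesguedifferentiation} $\to$ tangent expansion of the curve), it diverges in a genuine way at the step where the Lebesgue differentiation theorem is applied. The paper does \emph{not} pass to a simple arc: it keeps the Lipschitz surjection $\gamma:[0,1]\to C$ supplied by Lemma \ref{lemmapatch1}, chooses a measurable partial inverse $f:C\to[0,1]$ of $\gamma$ (extended by $f\equiv 2$ on $\SK\butnot C$), and feeds this \emph{parameter-valued} $f$ into Corollary \ref{corollarylebesguedifferentiation}. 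The resulting estimate $\mu\big(\{\xx\in g_n(U):f(\xx)\notin[t-\delta,t+\delta]\}\big)=o(\|g_n'\|^k)$ simultaneously controls the mass off $C$ and the mass of points of $C$ whose parameter is far from $t=f(\pp)$, after which the local Taylor expansion of $\gamma$ at $t$ gives the cone containment without any appeal to injectivity. You instead extract a \emph{simple} rectifiable arc $\Gamma\subset C$ (invoking the classical arcwise connectedness of a continuum of finite $\scrH^1$-measure) and apply the corollary to the cruder target $f=\one_\Gamma$, obtaining only $\mu(g_n(U)\butnot\Gamma)=o(\|g_n'\|)$; you must then convert spatial proximity to $\pp$ into parametric proximity to $t_0$, which is exactly where injectivity and compactness are used to produce the radius $r_0$. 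The two routes compensate for what each avoids: yours trades the paper's measurable-selection and parameter bookkeeping for the (classical but not free) simple-arc extraction plus the spatial-to-parametric conversion. Both are sound. One small remark: you are right that in the $\dimH=\infty$ setting of this lemma demension is not defined, so $k=\TD(\SK)$ is forced, which is exactly what Lemma \ref{lemmapatch1} needs; and your appeal to the Radon--Nikodym property for a.e.~differentiability of the arc-length parametrization is an acceptable substitute for the paper's citation of the Ambrosio--Kirchheim metric Rademacher theorem.
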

\begin{proof}
By Lemma \ref{lemmapatch1}, $\SK$ contains a rectifiable curve $C$. We recall that this means that there exists a Lipschitz surjection $\gamma:[0,1]\to C$. By the infinite-dimensional version of Rademacher's theorem \cite[Theorem 3.5]{AmbrosioKirchheim}, $\gamma$ is differentiable Lebesgue-a.e. Let $N$ be the set of points where $\gamma$ is either not differentiable or has a zero derivative. By the metric change-of-variables formula \cite[Theorem 5.1]{AmbrosioKirchheim}, $\scrH^1(\gamma(N)) = 0$.

Let $f:C\to[0,1]$ be a measurable partial inverse of $\gamma$, so that $\gamma\circ f(\xx) = \xx$ for all $\xx\in C$. Extend $f$ to $\SK$ by setting $f \equiv 2$ on $\SK\butnot C$. Let $\pp\in C\cap\Rad_k(\mu)\butnot\gamma(N)$ be a point satisfying the conclusion of Corollary \ref{corollarylebesguedifferentiation}; we claim that $\pp$ satisfies the conclusion of the lemma. Let $(g_n)_1^\infty$ be as in Definition \ref{definitionradial}. Since $\pp\in C\butnot\gamma(N)$, $\gamma$ is differentiable at $t := f(\pp)$ and $\gamma'(t)\neq\0$. Let $L_0 = \R\gamma'(t)$.

Fix $\epsilon > 0$, and let $\delta > 0$ be small enough so that for all $s\in [t - \delta,t + \delta]$,
\[
\|\gamma(s) - \gamma(t) - \gamma'(t)(s - t)\| \leq \sigma \|\gamma'(t)\|\cdot|s - t|,
\]
where $\epsilon = \frac{\sigma}{1 - \sigma}$. Then
\[
\gamma([t - \delta,t + \delta]) \subset \pp + \NN_{\mathrm{proj}}(L_0,\epsilon).
\]
But by \eqref{LD2},
\[
\lim_{n\to\infty} \frac{1}{\|g_n'\|^k} \mu\left(\big\{\xx\in g_n(U) : f(\xx) \notin [t - \delta,t + \delta]\big\}\right) = 0,
\]
which implies \eqref{patch2}.
\end{proof}

Armed with this lemma, no changes are needed for Step 3 of Section \ref{sectionF} until the last paragraph. Instead of using the Arzela-Ascoli theorem, we use Liouville's theorem to extend $h_n$ to a M\"obius tansformation acting on all of $\what\HH$, and then we use the fact that $|h_n'| \asymp_\times 1$ on $U$ to deduce that
\[
h_n^{-1}\big(\pp + \NN_{\mathrm{proj}}(L_0,\epsilon)\big)\cap U \subset \NN(h_n^{-1}(L_0),C\epsilon)
\]
for some large constant $C > 0$. So for every $n\in\N$, there exists a generalized $1$-sphere $S_n$ such that $\SK\cap U\subset \NN(S_n,2^{-n})$. From this it is not hard to see that $\SK\cap U$ is contained in a generalized $1$-sphere, since if the sequence $(S_n)_1^\infty$ does not converge, then $\SK\cap U$ will be contained in a point. (A more detailed version of this argument is given in Step 5 of the proof of Theorem \ref{theoremrigidity}($\dimH = \infty$, $k > 1$) below.)

\draftnewpage
\section{Pseudorectifiability}
\label{sectionpseudo}
We now begin the preliminaries to the proof of infinite-dimensional rigidity, i.e. Theorem \ref{theoremrigidity}($\dimH = \infty$, $k > 1$).

\subsection{Definition and basic properties of pseudorectifiability}
Our main tool in the proof of Theorem \ref{theoremrigidity}($\dimH = \infty$, $k > 1$) will be an analogue of Lemma \ref{lemmafederermattila} based on the notion of \emph{pseudorectifiability}, a generalization of rectifiability. Thus, we begin by recalling the definition of rectifiability, and then introducing the definition of pseudorectifiability:

\begin{definition}
\label{definitionrectifiable}
A set $R\subset\HH$ is called \emph{($k$-)rectifiable} if there exists a countable family of Lipschitz maps $\ff_i:[0,1]^k\to\HH$ such that
\[
\scrH^k\left(R\butnot \bigcup_{i = 1}^\infty \ff_i([0,1]^k)\right) = 0.
\]
\end{definition}
Obviously, any $k$-dimensional $\CC^1$ manifold is rectifiable, and the countable union of rectifiable sets is rectifiable. For more on rectifiable sets, see \cite{Mattila}.

Before defining pseudorectifiability, we need to introduce some notations:

\begin{notation}
\label{notationgrass}
For each $\ell\in\N$, let $\Grass_\ell(\HH)$ denote the set of $\ell$-dimensional linear subspaces of $\HH$, and let
\begin{align*}
\Grass(\HH) &= \bigcup_{\ell = 0}^\infty \Grass_\ell(\HH) = \{V\leq\HH : \text{$V$ is a finite-dimensional vector space}\}.
\end{align*}
For each $V\in\Grass(\HH)$, we let $\pi_V$ denote orthogonal projection onto $V$. Moreover, if $L:V\to\HH$ is any linear map, then $\det(L)$ denotes the metric determinant of $L$, i.e. $\det(L) = |\det_{\mathrm{alg}}(\phi\circ L)|$, where $\phi:L(V)\to V$ is any isometry and $\det_{\text{alg}}$ is the standard (algebraic) determinant.
\end{notation}

\begin{definition}
\label{definitionpseudorectifiable}
A set $R\subset\HH$ such that $\scrH^k\given_R$ is $\sigma$-finite will be called \emph{($k$-)pseudorectifiable} if there exist a function $T_R:R\to \Grass_k(\HH)$ and a $\sigma$-finite measure $\mu_R\asymp\scrH^k\given_R$ such that for every $V\in\Grass_k(\HH)$ and $A\subset R$,
\begin{equation}
\label{pseudorectifiable}
\int \#(\pi_V^{-1}(\yy)\cap A) \; \dee\scrH^k(\yy) = \int_A \det\big(\pi_V\given T_R(\xx)\big) \; \dee\mu_R(\xx).
\end{equation}
The function $T_R$ will be called the \emph{tangent plane function} of $R$ with respect to $\mu_R$.
\end{definition}

\begin{observation}
\label{observationpseudounion}
It follows directly from the definition that any subset of a pseudorectifiable set is pseudorectifiable, and the countable disjoint union of pseudorectifiable sets is pseudorectifiable. Thus, the countable union of pseudorectifiable sets is pseudorectifiable.
\end{observation}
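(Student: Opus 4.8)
The plan is to verify the three assertions in sequence, each reducing to a direct manipulation of the defining identity \eqref{pseudorectifiable} from Definition \ref{definitionpseudorectifiable}.

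First I would handle subsets. Suppose $R$ is $k$-pseudorectifiable, witnessed by a tangent plane function $T_R$ and a $\sigma$-finite measure $\mu_R\asymp\scrH^k\given_R$, and let $S\subset R$. I would simply take $T_S := T_R\given_S$ and $\mu_S := \mu_R\given_S$. Restricting equivalent measures to $S$ keeps them equivalent, and restricting a $\sigma$-finite measure keeps it $\sigma$-finite, so $\mu_S\asymp\scrH^k\given_S$ is $\sigma$-finite; the $\sigma$-finiteness of $\scrH^k\given_S$ itself is inherited from that of $\scrH^k\given_R$. For any $A\subset S$ we have $A\subset R$, so \eqref{pseudorectifiable} applied to $R$ is exactly \eqref{pseudorectifiable} for $S$, once one notes that $T_R$, $\mu_R$ agree with $T_S$, $\mu_S$ on subsets of $S$.

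Next, for a countable disjoint union $R = \bigsqcup_{n} R_n$ of pseudorectifiable sets, witnessed by $(T_n,\mu_n)$, I would define $T_R$ on $R$ by $T_R\given_{R_n} = T_n$ (well defined by disjointness) and $\mu_R := \sum_n \mu_n$, each $\mu_n$ extended by zero off $R_n$. Writing each $R_n$ as a countable union of sets of finite $\scrH^k$-measure shows $\scrH^k\given_R$ is $\sigma$-finite; the same decomposition, together with the fact that $\mu_R$ coincides with $\mu_n$ on subsets of $R_n$, shows $\mu_R$ is $\sigma$-finite, and also that a set $A\subset R$ is $\mu_R$-null iff each $A\cap R_n$ is $\scrH^k$-null iff $A$ is $\scrH^k$-null, so $\mu_R\asymp\scrH^k\given_R$. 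For the main identity, fix $V\in\Grass_k(\HH)$ and $A\subset R$ and set $A_n = A\cap R_n$; since the $A_n$ are disjoint, $\#(\pi_V^{-1}(\yy)\cap A) = \sum_n \#(\pi_V^{-1}(\yy)\cap A_n)$. Integrating against $\scrH^k$, interchanging sum and integral (legitimate by nonnegativity of the integrands), applying \eqref{pseudorectifiable} to each $R_n$, then using that $T_R$, $\mu_R$ restrict to $T_n$, $\mu_n$ on $R_n$ followed by a second monotone-convergence step, yields \eqref{pseudorectifiable} for $R$.

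Finally, a general countable union $\bigcup_n R_n$ is the disjoint union of the sets $S_n := R_n\butnot\bigcup_{m<n}R_m$, each pseudorectifiable by the subset case, so the statement follows from the disjoint-union case. I do not expect any genuine obstacle; the only points needing a little care are checking that $\mu_R$ remains $\sigma$-finite — handled by slicing each $R_n$ into pieces of finite $\scrH^k$-measure — and justifying the interchange of summation and integration, which is immediate since all integrands are nonnegative.
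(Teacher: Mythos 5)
Your proof is correct and fills in exactly the routine verifications the paper leaves implicit with "it follows directly from the definition": restrict $(T_R,\mu_R)$ for subsets, patch together $(T_n,\mu_n)$ with $\mu_R=\sum_n\mu_n$ for disjoint unions (using monotone convergence for the integral identity and the countable-union-of-countable-decompositions observation for $\sigma$-finiteness of both $\mu_R$ and $\scrH^k\given_R$), and then disjointify a general union. Nothing to add.
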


Essentially, Definition \ref{definitionpseudorectifiable} says that a set is said to be pseudorectifiable if the ``change-of-variables theorem'' holds for projections onto $k$-dimensional subspaces, if $T_R(\xx)$ is interpreted to be the ``tangent plane of $R$ at $\xx$'', and $\mu_R$ is interpreted to be an ``idealized''\Footnote{In finite dimensions, $\mu_R$ is actually equal to $\scrH^k\given_R$ (Propositions \ref{propositionrectifiable} and \ref{propositionfindimconverse}), but in infinite dimensions only the inequality $\mu_R\leq\scrH^k\given_R$ holds (Lemma \ref{lemmamuleqH}); a counterexample to equality is given by Example \ref{examplemuR1}(ii).} version of $k$-dimensional Hausdorff measure on $R$. In fact, this interpretation is what allows us to prove that any rectifiable set is pseudorectifiable, as we now show:

\begin{proposition}
\label{propositionrectifiable}
Any rectifiable set $R\subset\HH$ is pseudorectifiable and satisfies $\mu_R = \scrH^k\given_R$. Moreover, if $\ff:\HH\to\R^k$ is any $\CC^1$ map then the the formula
\begin{equation}
\label{rectifiable}
\int \#(\ff^{-1}(\yy)\cap A) \; \dee\scrH^k(\yy) = \int_A \det\big(\ff'(\xx)\given T_R(\xx)\big) \; \dee\mu_R(\xx)
\end{equation}
holds for all $A\subset R$.
\end{proposition}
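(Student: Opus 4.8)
The plan is to reduce the statement to the classical rectifiable change-of-variables formula (the coarea/area formula for Lipschitz maps, e.g. the one quoted as \cite[Theorem 5.1]{AmbrosioKirchheim} or its finite-dimensional counterpart in \cite{Mattila}), and then to verify that the objects produced by that formula satisfy the axioms in Definition \ref{definitionpseudorectifiable}. First I would recall that a rectifiable set $R$ can, up to an $\scrH^k$-null set, be decomposed as a countable disjoint union $R = R_0 \cup \bigcup_{i} R_i$ where $\scrH^k(R_0) = 0$ and each $R_i$ is a Borel subset of a $\CC^1$ $k$-dimensional embedded submanifold $M_i\subset\HH$ (this is standard: approximate the Lipschitz pieces $\ff_i([0,1]^k)$ by $\CC^1$ graphs using Rademacher's theorem plus Whitney/Lusin-type approximation, valid in the separable Hilbert setting by \cite[Theorem 3.5]{AmbrosioKirchheim}). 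On each $M_i$ there is a well-defined classical tangent plane $T_\xx M_i\in\Grass_k(\HH)$ for every $\xx\in M_i$, and I set $T_R(\xx) = T_\xx M_i$ for $\xx\in R_i$ (arbitrary on the null set $R_0$), and $\mu_R = \scrH^k\given_R$. By Observation \ref{observationpseudounion} it suffices to prove \eqref{pseudorectifiable} and \eqref{rectifiable} for each piece $R_i$ separately, so we may assume $R$ is a Borel subset of a single $\CC^1$ manifold $M$.

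Next, for a fixed $V\in\Grass_k(\HH)$ (or a fixed $\CC^1$ map $\ff:\HH\to\R^k$ in the case of \eqref{rectifiable}), the restriction $\pi_V\given M$ (resp. $\ff\given M$) is a $\CC^1$ map between $k$-dimensional manifolds, and its derivative at $\xx$ is exactly the linear map $\pi_V\given T_\xx M : T_\xx M\to V$ (resp. $\ff'(\xx)\given T_\xx M$). The area formula for $\CC^1$ (indeed Lipschitz) maps between $k$-rectifiable metric spaces — which holds in the infinite-dimensional Hilbert setting, again by \cite[Theorem 5.1]{AmbrosioKirchheim} — states precisely that
\[
\int \#\big((\pi_V\given M)^{-1}(\yy)\cap A\big)\;\dee\scrH^k(\yy) = \int_A \det\big(\pi_V\given T_\xx M\big)\;\dee\scrH^k(\xx)
\]
for every Borel $A\subset M$, where $\det$ is the metric Jacobian in the sense of Notation \ref{notationgrass} (one must check that the metric determinant of the linear map coincides with the metric Jacobian appearing in the area formula, which is immediate from the singular-value description of both). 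Since $(\pi_V\given M)^{-1}(\yy)\cap A = \pi_V^{-1}(\yy)\cap A$ for $A\subset M$, this is exactly \eqref{pseudorectifiable} with $\mu_R = \scrH^k\given_R$; the same argument with $\ff$ in place of $\pi_V$ gives \eqref{rectifiable}. Summing over the countable decomposition and discarding the null set $R_0$ completes the proof, and the identification $\mu_R = \scrH^k\given_R$ comes for free from this construction.

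The main obstacle I anticipate is not any single hard estimate but rather the bookkeeping of infinite-dimensionality: one must make sure that every tool invoked (Rademacher's theorem, the $\CC^1$ approximation of Lipschitz pieces, and above all the area formula with its metric Jacobian) is quoted in a form genuinely valid in a separable — possibly infinite-dimensional — Hilbert space, rather than in $\R^n$. The cited paper \cite{AmbrosioKirchheim} is designed exactly for this, so the correct move is to route everything through it. A secondary technical point is the measurability and essential uniqueness of $T_R$: on the overlap of two manifold pieces the tangent planes agree $\scrH^k$-a.e. (both equal the approximate tangent plane of $R$, which is intrinsic), so $T_R$ is well-defined up to a null set and the formulas do not depend on the choice of decomposition; this should be remarked but needs no serious argument. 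Everything else is a routine verification that the area formula says what Definition \ref{definitionpseudorectifiable} asks for.
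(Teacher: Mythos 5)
Your overall strategy---reduce everything to the Ambrosio--Kirchheim area formula---is the right one, and it is the same engine the paper uses. But the specific route you take through a $\CC^1$-manifold decomposition of $R$ introduces two gaps that the paper's proof deliberately sidesteps.

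First, your opening reduction claims that a rectifiable $R\subset\HH$ decomposes $\scrH^k$-almost everywhere into pieces lying on $\CC^1$ embedded submanifolds, citing \cite[Theorem 3.5]{AmbrosioKirchheim}. That theorem gives only a.e.\ (metric/$w^*$-)differentiability of a Lipschitz map with domain $\R^k$; it does not give a Lusin--Whitney $\CC^1$ approximation, and the passage from a $\CC^1$ approximant to pieces that are \emph{embedded} submanifolds requires an Egorov/injectivity argument that is genuinely delicate when the ambient space is infinite-dimensional. None of this is impossible, but it is nontrivial machinery, it is not contained in the cited reference, and the paper avoids needing it entirely: after reducing to a single Lipschitz piece $R = \gg([0,1]^k)$, the paper applies the change-of-variables formula of \cite[Theorem 5.1]{AmbrosioKirchheim} directly to $\gg$ and to $\ff\circ\gg$, defines $T_R(\yy) = \gg'(\hh(\yy))[\R^k]$ via a measurable partial inverse $\hh$ of $\gg$, and obtains \eqref{rectifiable} from the chain rule for a.e.\ derivatives---no $\CC^1$ structure required.

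Second, your resolution of the well-definedness of $T_R$ on overlaps appeals to the approximate tangent plane being intrinsic. The a.e.\ existence of approximate tangent planes for rectifiable sets is a finite-dimensional fact (e.g.\ \cite[Theorem 15.19]{Mattila}); it is not available off the shelf in infinite-dimensional $\HH$, and indeed the paper develops Lemma \ref{lemmaTPunique} precisely so that uniqueness of $(\mu_R,T_R)$ follows from the defining identity \eqref{pseudorectifiable} itself, without ever invoking approximate tangent planes. Your proof would need either to establish the infinite-dimensional approximate-tangent-plane theorem or to replace this step with an argument along the lines of Lemma \ref{lemmaTPunique}. In short: same underlying tool, but your intermediate steps import finite-dimensional folklore that does not transfer for free, whereas the paper's direct computation with the Lipschitz parametrization never needs it.
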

\NPC{Proof}
\begin{proof}
By Observation \ref{observationpseudounion}, it suffices to consider the case where $R = \gg([0,1]^k)$ for some Lipschitz map $\gg:[0,1]^k\to\HH$. By \cite[Theorem 3.5]{AmbrosioKirchheim}, the derivative $\gg'$ exists $\scrH^k$-a.e., and by \cite[Theorem 5.1]{AmbrosioKirchheim}, the change-of-variables theorem holds, both for $\gg$ and for the maps $\ff\circ\gg$ ($\ff:\HH\to\R^k$). Let $\hh:R\to [0,1]^k$ be a measurable partial inverse of $\gg$, so that $\gg\circ\hh(\yy) = \yy$ for all $\yy\in R$. We then define $T_R:R\to\Grass_k(\HH)$ as follows:
\begin{align*}
T_R(\yy) &= \gg'(\hh(\yy))[\R^k] \;\; (\yy\in R).
\end{align*}
Then for all $\ff:\HH\to\R^k$ and $A\subset R$,
\begin{align*}
&\int \#(\ff^{-1}(\zz)\cap A) \;\dee\scrH^k(\zz)
= \int \#\big((\ff\circ \gg)^{-1}(\zz)\cap \hh(A)\big) \;\dee\scrH^k(\zz)\\
&= \int_{\hh(A)} \det\big((\ff\circ \gg)'(\xx)\given \R^k\big) \; \dee \scrH^k(\xx)
= \int_{\hh(A)} \det\big(\ff'(\gg(\xx))\given \gg'(\xx)[\R^k]\big) \det\big(\gg'(\xx)\given \R^k\big) \; \dee \scrH^k(\xx)\\
&= \int \sum_{\xx\in \gg^{-1}(\yy)} \det\big(\ff'(\gg(\xx))\given \gg'(\xx)[\R^k]\big) \one_{\hh(A)}(\xx) \;\dee\scrH^k(\yy) = \int_A \det\big(\ff'(\yy)\given T_R(\yy)\big) \;\dee\scrH^k(\yy),
\end{align*}
i.e. \eqref{rectifiable} holds with $\mu_R = \scrH^k\given_R$. Since \eqref{pseudorectifiable} is just the special case of \eqref{rectifiable} which holds when $\ff$ is linear, this completes the proof.
\end{proof}

In finite dimensions, the converse to Proposition \ref{propositionrectifiable} holds (Proposition \ref{propositionfindimconverse}), but in infinite dimensions there are pseudorectifiable sets which are not rectifiable (Example \ref{examplemuR1}(ii)). However, we need more background before we can prove these facts.

The next most important fact about pseudorectifiability is the essential uniqueness of the tangent plane function.

\begin{lemma}
\label{lemmaTPunique}
If $R$ is a pseudorectifiable set, then the measure $\mu_R$ and the tangent plane function $T_R$ are unique in the sense that if $(\mu_1,T_1)$ and $(\mu_2,T_2)$ are two different pairs that both satify \eqref{pseudorectifiable}, then $\mu_1 = \mu_2$ and $\mu_1(T_1\neq T_2) = 0$.

More generally, let $\mu_1,\mu_2$ be two $\sigma$-finite measures on a measurable space $X$, and let $T_1,T_2:X\to\Grass_k(\HH)$ be measurable functions. If for all $V\in\Grass_k(\HH)$ and $A\subset X$
\begin{equation}
\label{TPunique}
\int_A \det\big(\pi_V\given T_1(x)\big) \; \dee\mu_1(x) = \int_A \det\big(\pi_V\given T_2(x)\big) \; \dee\mu_2(x),
\end{equation}
then $\mu_1 = \mu_2$ and $\mu_1(T_1\neq T_2) = 0$.
\end{lemma}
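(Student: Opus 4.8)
The plan is to prove the general (second) statement of Lemma~\ref{lemmaTPunique}; the first assertion then follows immediately by taking $X = R$ and observing that the left-hand side of \eqref{pseudorectifiable} does not depend on the pair $(\mu_i,T_i)$, so that \eqref{TPunique} holds for $(\mu_1,T_1)$ and $(\mu_2,T_2)$. The whole argument is organized around the elementary function
\[
\alpha(P,V) := \det\big(\pi_V\given P\big) = |\langle\omega_P,\omega_V\rangle|\qquad(P,V\in\Grass_k(\HH)),
\]
where $\omega_P,\omega_V$ denote unit simple $k$-vectors in $\Lambda^k\HH$ representing $P$ and $V$ (the sign ambiguity being absorbed by $|\cdot|$). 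From this identity I would read off the three facts I use: $\alpha$ is symmetric, jointly continuous in $(P,V)$, and satisfies $0\le\alpha\le 1$ with $\alpha(P,V) = 1$ iff $P = V$. I also use that $\Grass_k(\HH)$ is a separable metric space, which holds because $\HH$ is separable. Verifying the displayed identity (hence these three properties) is routine linear algebra and can be relegated to a sentence.

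First I would reduce \eqref{TPunique} to a pointwise statement. Set $\rho := \mu_1 + \mu_2$, a $\sigma$-finite measure, and let $g_i := \dee\mu_i/\dee\rho\in[0,1]$ be the Radon--Nikodym densities. For each fixed $V$, \eqref{TPunique} says that the two nonnegative functions $x\mapsto\alpha(T_1(x),V)\,g_1(x)$ and $x\mapsto\alpha(T_2(x),V)\,g_2(x)$ have equal integrals over every measurable set, hence coincide $\rho$-almost everywhere (here $\sigma$-finiteness is used). Choosing a countable dense set $\{V_n\}\subset\Grass_k(\HH)$ and discarding the corresponding countable union of null sets, I obtain a single $\rho$-conull set off which $\alpha(T_1(x),V_n)\,g_1(x) = \alpha(T_2(x),V_n)\,g_2(x)$ for all $n$ simultaneously; by continuity of $\alpha$ this persists with an arbitrary $V\in\Grass_k(\HH)$ in place of $V_n$.

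Now fix such an $x$ and abbreviate $P_i = T_i(x)$, $a_i = g_i(x)$. Substituting $V = P_1$ and $V = P_2$ into the pointwise identity and using $\alpha(P_i,P_i) = 1$ together with the symmetry of $\alpha$ gives $a_1 = \alpha(P_1,P_2)\,a_2$ and $a_2 = \alpha(P_1,P_2)\,a_1$. If $a_1 = 0$, the second equation forces $a_2 = 0$; if $a_1 > 0$, the two equations combine to $a_1 = \alpha(P_1,P_2)^2 a_1$, whence $\alpha(P_1,P_2) = 1$ and so $P_1 = P_2$, and then $a_1 = a_2$. In all cases $a_1 = a_2$, and whenever this common value is positive, $P_1 = P_2$. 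Consequently $g_1 = g_2$ $\rho$-a.e., so $\mu_1 = \mu_2 =: \mu$; and since $g_1 > 0$ $\mu$-a.e., the set $\{T_1\neq T_2\}$ (which is measurable because $\Grass_k(\HH)$ is a separable metric space) is $\mu$-null, i.e.\ $\mu_1(T_1\neq T_2) = 0$.

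The only genuinely delicate point is reconciling the $V$-dependent exceptional null sets coming out of \eqref{TPunique} with the pointwise case analysis of the third paragraph; this is exactly what the separability of $\Grass_k(\HH)$ and the continuity of $\alpha$ are for, so I do not anticipate a real obstacle. (In finite dimensions one could instead integrate $\alpha(P,V)$ against the invariant measure on the Grassmannian, but that route is unavailable here, which is why the density-plus-countable-dense-set argument is used.)
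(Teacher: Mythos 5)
Your proof is correct and follows essentially the same route as the paper's: reduce to the general statement, set $\rho = \mu_1 + \mu_2$, pass to Radon--Nikodym derivatives, upgrade the $V$-dependent a.e.\ identity to a pointwise one via a countable dense subset of $\Grass_k(\HH)$ and continuity, and then substitute $V = T_1(x)$ and $V = T_2(x)$, exploiting that $\det(\pi_V\given P)\le 1$ with equality iff $V = P$. The only cosmetic difference is that the paper uses $\alpha_1 + \alpha_2 = 1$ to deduce $\alpha_1 = \alpha_2 = 1/2$ directly, while you run a short case analysis on whether $a_1 = 0$; both are fine.
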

\NPC{Proof}
\begin{proof}
If $(\mu_1,T_1)$ and $(\mu_2,T_2)$ are two different pairs that both satify \eqref{pseudorectifiable}, then two applications of \eqref{pseudorectifiable} yield \eqref{TPunique}. So for the remainder of the proof, we just assume that \eqref{TPunique} holds.

Let $\mu = \mu_1 + \mu_2$. Then for all $V\in\Grass_k(\HH)$ and for $\mu$-a.e. $x\in X$,
\begin{equation}
\label{standardanalysis}
\alpha_1(x) \det\big(\pi_V\given T_1(x)\big) = \alpha_2(x) \det\big(\pi_V\given T_2(x)\big),
\end{equation}
where $\alpha_1,\alpha_2:X\to[0,1]$ are Radon--Nikodym derivatives of $\mu_1,\mu_2$ with respect to $\mu$ satisfying $\alpha_1 + \alpha_2 = 1$. Let $\QQ$ be a countable dense subset of $\Grass_k(\HH)$, and fix $x\in X$ such that \eqref{standardanalysis} holds for all $V\in\QQ$. Then by continuity, \eqref{standardanalysis} holds for all $V\in\Grass_k(\HH)$. Plugging in $V = T_1(x)$ yields
\begin{equation}
\label{alphacomparison}
\alpha_1(x) = \alpha_2(x) \det\big(\pi_{T_1(x)}\given T_2(x)\big) \leq \alpha_2(x);
\end{equation}
similarly, plugging in $V = T_2(x)$ yields $\alpha_2(x) \leq \alpha_1(x)$, so $\alpha_1(x) = \alpha_2(x) = 1/2$. Since equality holds in \eqref{alphacomparison}, we must have $T_1(x) = T_2(x)$.
\end{proof}

\begin{remark}
\label{remarkTPunique}
It follows from Lemma \ref{lemmaTPunique} that if $(\mu_1,T_1)$ and $(\mu_2,T_2)$ are any pairs that satisfy \eqref{pseudorectifiable} on a dense subset of $\Grass_k(\HH)$, then $\mu_1 = \mu_2$ and $\mu_1(T_1\neq T_2) = 0$. Indeed, if \eqref{pseudorectifiable} holds on a dense subset, then so does \eqref{TPunique}, but the equation \eqref{TPunique} (unlike \eqref{pseudorectifiable}) is continuous with respect to $V$, so it must hold everywhere.
\end{remark}

The next result shows that when we are proving that a set $R\subset\HH$ is pseudorectifiable, we don't need to check the absolute continuity $\mu_R\lessless \scrH^k\given_R$ which appears in Definition \ref{definitionpseudorectifiable}. However, we note that the reverse direction $\scrH^k\given_R\lessless\mu_R$ does need to be checked, since there are sets which would be pseudorectifiable except for that condition (Example \ref{examplemuR1}(iii)).

\begin{lemma}
\label{lemmamuleqH}
Let $R\subset\HH$ be a set, and suppose that \eqref{pseudorectifiable} holds for some pair $(\mu_R,T_R)$. Then $\mu_R\leq\scrH^k\given_R$.
\end{lemma}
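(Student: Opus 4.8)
The plan is to combine two elementary observations. First, orthogonal projection $\pi_V$ is $1$-Lipschitz, so Fact \ref{factlipschitz} bounds the left-hand side of \eqref{pseudorectifiable} by $\scrH^k(A)$; second, $\det(\pi_V\given W)\le 1$ with equality exactly when $V=W$, so the integrand on the right-hand side is close to $1$ wherever $T_R(\xx)$ is close to $V$. Combining \eqref{pseudorectifiable} with Fact \ref{factlipschitz}, we get for every $V\in\Grass_k(\HH)$ and every $A\subset R$
\[
\int_A \det\big(\pi_V\given T_R(\xx)\big)\,\dee\mu_R(\xx)=\int \#\big(\pi_V^{-1}(\yy)\cap A\big)\,\dee\scrH^k(\yy)\le \scrH^k(A).
\]
Since no single $V$ makes the integrand uniformly close to $1$ on all of $R$, the strategy is to split $R$ into countably many measurable pieces on each of which a fixed $V_j$ does the job, and add up the resulting inequalities.

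To that end, I would put the metric $\dist(V,W)=\|\pi_V-\pi_W\|$ (operator norm) on $\Grass_k(\HH)$ and record a \emph{uniform} continuity estimate for the determinant: there is $c(\epsilon)\in(0,1)$ with $c(\epsilon)\to 1$ as $\epsilon\to 0^+$, depending only on $k$, such that $\dist(V,W)<\epsilon$ forces $\det(\pi_V\given W)\ge c(\epsilon)$. This follows by taking an orthonormal basis $w_1,\dots,w_k$ of $W$, writing $\det(\pi_V\given W)=\sqrt{\det\big(\langle\pi_V w_i,\pi_V w_j\rangle\big)_{i,j}}$, and noting $\|\pi_V w_i-w_i\|=\|(\pi_V-\pi_W)w_i\|<\epsilon$, so the Gram matrix lies within $O(\epsilon)$ of the identity with an error bound independent of $W$.

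Finally, since $\HH$ is separable, $\Grass_k(\HH)$ admits a countable dense set $\{V_j\}$. Fixing $\epsilon>0$, set $R_j=\{\xx\in R:\det(\pi_{V_j}\given T_R(\xx))\ge c(\epsilon)\}\setminus\bigcup_{i<j}R_i$; each $R_j$ is measurable because $\xx\mapsto\det(\pi_{V_j}\given T_R(\xx))$ is (this measurability is implicit in \eqref{pseudorectifiable} making sense), and by the density of $\{V_j\}$ together with the continuity estimate, $R=\bigsqcup_j R_j$. For any $A\subset R$, writing $A_j=A\cap R_j$, the displayed inequality with $V=V_j$ and $A=A_j$ gives $c(\epsilon)\mu_R(A_j)\le\scrH^k(A_j)$; summing over $j$ yields $\mu_R(A)\le c(\epsilon)^{-1}\scrH^k(A)$, and letting $\epsilon\to 0^+$ completes the proof. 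The point deserving attention is the uniformity of the determinant estimate, so that a single $c(\epsilon)$ serves all the pieces $R_j$ simultaneously; the Gram-matrix computation supplies this, and no $\sigma$-finiteness hypothesis on $\mu_R$ is needed since every inequality above is valid with values in $[0,\infty]$.
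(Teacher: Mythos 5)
Your proof is correct, and it takes a genuinely different route from the paper's. The paper starts from the same inequality $\int_A \det\big(\pi_V\given T_R(\xx)\big)\,\dee\mu_R(\xx) \leq \scrH^k(A)$ but then passes to densities: it lets $\nu$ be the absolutely continuous part of $\scrH^k\given_R$ with respect to $\mu_R$, deduces $\frac{\dee\nu}{\dee\mu_R}(\xx)\geq\det\big(\pi_V\given T_R(\xx)\big)$ for $\mu_R$-a.e.\ $\xx$, applies this over a countable dense family of $V$, and then plugs $V = T_R(\xx)$ to get $\frac{\dee\nu}{\dee\mu_R}\geq 1$. Your approach stays at the level of sets: you partition $R$ into countably many pieces on each of which some fixed $V_j$ is $\epsilon$-close to $T_R(\cdot)$, apply the set-level inequality on each piece, sum, and let $\epsilon\to 0$. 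Both proofs exploit the same underlying fact — the determinant is maximized (and equals $1$) when $V=T_R(\xx)$, and one can approximate this uniformly over a countable dense set of planes — but the paper localizes in the ``vertical'' direction (Radon--Nikodym densities) while you localize in the ``horizontal'' direction (a measurable partition). Your route is slightly more elementary in that it avoids the Lebesgue decomposition and the Radon--Nikodym theorem, and hence dispenses with the implicit $\sigma$-finiteness hypothesis on $\mu_R$ that the paper's argument quietly uses; for the applications in the paper this gain is immaterial since pseudorectifiability already builds in $\sigma$-finiteness, but it does make the lemma a cleaner standalone statement. The only point worth pinning down, as you note, is the uniformity of the Gram-matrix estimate in $W$, and your computation $\|\pi_V w_i - w_i\| = \|(\pi_V-\pi_W)w_i\| < \epsilon$ gives exactly this with a bound depending only on $k$.
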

\begin{proof}
For each $V\in\Grass_k(\HH)$, Fact \ref{factlipschitz} implies that for all $A\subset R$,
\[
\int_A \det\big(\pi_V\given T_R(\xx)\big)\dee\mu_R(\xx) = \int \#(\pi_V^{-1}(\yy)\cap A) \; \dee\scrH^k(\yy) \leq \scrH^k(A).
\]
Let $\nu$ be the absolutely continuous component of $\scrH^k\given_R$ with respect to $\mu_R$. Then
\begin{equation}
\label{muleqH}
\frac{\dee\nu}{\dee\mu_R}(\xx) \geq \det\big(\pi_V\given T_R(\xx)\big)
\end{equation}
for $\mu_R$-a.e. $\xx\in R$. Now fix $\xx\in R$ such that \eqref{muleqH} holds for a dense set of $V\in\Grass_k(\HH)$, and hence for all $V$. Letting $V = T_R(\xx)$ gives $\frac{\dee\nu}{\dee\mu_R}(\xx)\geq 1$. Thus $\mu_R\leq\nu\leq\scrH^k\given_R$.
\end{proof}

The next lemma is the main invariance property of pseudorectifiability. It is important for the proof of Theorem \ref{theoremrigidity}($\dimH = \infty$, $k > 1$), since it implies the equivariance of the tangent plane function, allowing us to generalize the ``zooming argument'' of Section \ref{sectionF}.

\begin{lemma}
\label{lemmapseudolinear}
Let $\HH_1,\HH_2$ be separable Hilbert spaces, let $R_1\subset\HH_1$ be a pseudorectifiable set, and let $L:\HH_1\to\HH_2$ be a bounded linear transformation. Let $R_2 = L[R_1]$.
\begin{itemize}
\item[(i)] Suppose that $R_2$ is pseudorectifiable. Then if $T_i:R_i\to\Grass_k(\HH_i)$ are tangent plane functions with respect to the measures $\mu_i$ ($i = 1,2$), then
\begin{align} \label{pseudolinear1}
&\int \#(L^{-1}(\yy)\cap A) \;\dee\mu_2(\yy) = \int_A \det(L\given T_1(\xx)) \;\dee\mu_1(\xx) \text{ for all $A\subset R_1$};\\ \label{pseudolinear2}
&T_2(L(\xx)) = L[T_1(\xx)] \text{ for $\mu_1$-a.e. $\xx\in R_1$ such that $\det(L\given T_1(\xx)) > 0$}.
\end{align}
\item[(ii)] If
\begin{equation}
\label{pseudolinear3}
\det\big(L\given T_1(\xx)\big) > 0 \text{ for $\mu_1$-a.e. $\xx\in R_1$}
\end{equation}
(e.g. if $L$ is invertible), then $R_2$ is pseudorectifiable (and in particular part (i) applies).
\end{itemize}
\end{lemma}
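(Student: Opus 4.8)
The plan is to reduce everything to one ``master identity'' obtained by feeding the composite $\pi_V\circ L$ into the defining equation \eqref{pseudorectifiable} of $R_1$, and then to extract \eqref{pseudolinear1}--\eqref{pseudolinear2} from the uniqueness Lemma \ref{lemmaTPunique}. \textbf{Step 1: extend \eqref{pseudorectifiable} to arbitrary linear maps into a $k$-plane.} I would first show that if $R$ is pseudorectifiable with pair $(\mu_R,T_R)$ and $\ell\colon\HH\to W$ is any bounded linear map with $\dim W=k$, then $\int\#(\ell^{-1}(\yy)\cap A)\,\dee\scrH^k(\yy)=\int_A\det(\ell\given T_R(\xx))\,\dee\mu_R(\xx)$ for all $A\subset R$. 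If $\ell$ does not have full rank $k$ both sides vanish; otherwise factor $\ell=\bar\ell\circ\pi_V$ with $V=(\ker\ell)^\perp\in\Grass_k(\HH)$ and $\bar\ell=\ell\given_V$ a linear isomorphism onto $W$, invoke \eqref{pseudorectifiable} for this $V$, change variables by $\bar\ell$ (constant Jacobian $\det\bar\ell$), and use multiplicativity of the metric determinant. Along the way I record the chain rule $\det(\pi_V\circ L\given T)=\det(\pi_V\given L[T])\cdot\det(L\given T)$ for $T\in\Grass_k(\HH_1)$, with the right side read as $0$ when $\det(L\given T)=0$.

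\textbf{Step 2: part (i).} Assume $R_2$ is pseudorectifiable with pair $(\mu_2,T_2)$, and for $A\subset R_1$ set $m_A(\yy)=\#(L^{-1}(\yy)\cap A)$ and $\Phi(A)=\int_{R_2}m_A(\yy)\,\dee\mu_2(\yy)$; this $\Phi$ is a $\sigma$-finite measure on $R_1$, since Fact \ref{factlipschitz} bounds $\yy\mapsto\#(L^{-1}(\yy)\cap\,\cdot\,)$ on each $\scrH^k$-finite piece of $R_1$ and $\mu_2\leq\scrH^k\given_{R_2}$ by Lemma \ref{lemmamuleqH}. Applying Step 1 to $\ell=\pi_V\circ L$ gives $\int\#((\pi_V\circ L)^{-1}(\zz)\cap A)\,\dee\scrH^k(\zz)=\int_A\det(\pi_V\circ L\given T_1(\xx))\,\dee\mu_1(\xx)$; on the other hand, writing $m_A=\sum_{j\ge1}\one_{\{m_A\ge j\}}$ and applying \eqref{pseudorectifiable} for $R_2$ to each $\{m_A\ge j\}\subset R_2$ shows the left side equals $\int_{R_2}m_A(\yy)\,\det(\pi_V\given T_2(\yy))\,\dee\mu_2(\yy)$, which by the definition of $\Phi$ is $\int_A\det(\pi_V\given T_2(L(\xx)))\,\dee\Phi(\xx)$. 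Rewriting the right side by the chain rule, I arrive at
\[
\int_A\det\big(\pi_V\given T_2(L(\xx))\big)\,\dee\Phi(\xx)=\int_A\det\big(\pi_V\given L[T_1(\xx)]\big)\,\det\big(L\given T_1(\xx)\big)\,\dee\mu_1(\xx)\qquad(V\in\Grass_k(\HH_2),\ A\subset R_1).
\]
Testing this against $A\subset R_1\butnot R_1^+$, where $R_1^+:=\{\xx\in R_1:\det(L\given T_1(\xx))>0\}$, and against $V=T_2(L(\xx))$ shows that $\Phi$ is concentrated on $R_1^+$; the measure $\det(L\given T_1(\cdot))\,\mu_1$ is too, $T_2\circ L$ is $\Phi$-a.e.\ defined, and Lemma \ref{lemmaTPunique} applies on $R_1^+$ to give $\Phi=\det(L\given T_1(\cdot))\,\mu_1$ and $T_2(L(\xx))=L[T_1(\xx)]$ off a $\mu_1$-null subset of $R_1^+$. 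These are exactly \eqref{pseudolinear1} and \eqref{pseudolinear2}.

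\textbf{Step 3: part (ii).} Assume \eqref{pseudolinear3}. By Fact \ref{factlipschitz} applied to $\scrH^k$-finite pieces, $\#(L^{-1}(\yy)\cap R_1)$ is countable for $\scrH^k$-a.e.\ $\yy$; let $N$ be the exceptional $\scrH^k$-null set. Then $L$ restricted to $R_1\butnot L^{-1}(N)$ is a Borel map with countable fibres, so by the Luzin--Novikov selection theorem $R_1\butnot L^{-1}(N)=\bigsqcup_n S_n$ with each $S_n$ Borel and $L\given_{S_n}$ injective, hence a Borel isomorphism onto $L(S_n)$. Running the Step 1 computation with $A=(L\given_{S_n})^{-1}(B)$, $B\subset L(S_n)$, exhibits $L(S_n)$ as pseudorectifiable, with tangent plane function $\yy\mapsto L[T_1((L\given_{S_n})^{-1}(\yy))]$ and with measure the image under $L\given_{S_n}$ of $\det(L\given T_1(\cdot))\,\mu_1\given_{S_n}$; equivalence of this measure with $\scrH^k\given_{L(S_n)}$ uses Lemma \ref{lemmamuleqH} for ``$\le$'' and the a.e.\ positivity of $\det(L\given T_1(\cdot))$ (together with Fact \ref{factlipschitz}) for the reverse absolute continuity. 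Since $L(R_1\cap L^{-1}(N))$ is $\scrH^k$-null, hence trivially pseudorectifiable, $R_2=\bigl(\bigcup_n L(S_n)\bigr)\cup L(R_1\cap L^{-1}(N))$ is pseudorectifiable by Observation \ref{observationpseudounion}, and part (i) then applies.

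\textbf{Main obstacle.} The argument is mostly formal once the right objects are in place, and I expect the two genuine hurdles to be: (a) in part (i), recognising that the fibrewise multiplicities must be absorbed into the auxiliary measure $\Phi$ so that the comparison takes place on $R_1$ rather than on the geometrically unrelated image $R_2$ before Lemma \ref{lemmaTPunique} can be used; and (b) in part (ii), the injective decomposition of $R_1$, which rests on the countability of the fibres of $L$ (from Fact \ref{factlipschitz} together with $\sigma$-finiteness) and on the Luzin--Novikov theorem. The remaining measure-theoretic points --- measurability of $\yy\mapsto\#(L^{-1}(\yy)\cap A)$, $\Phi$-a.e.\ definedness of $T_2\circ L$, and measurability in $\xx$ of the metric determinants --- are routine and would be dispatched in passing.
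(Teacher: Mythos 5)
Your proof is correct and follows essentially the same route as the paper's. In part (i), your auxiliary measure $\Phi$ is precisely the paper's $\w\mu_2(A) = \int \#(L^{-1}(\yy)\cap A)\,\dee\mu_2(\yy)$, and both proofs push the defining identity through $\pi_V\circ L$ to obtain the ``master identity'' and then invoke Lemma \ref{lemmaTPunique}; your explicit restriction to $R_1^+ = \{\det(L\given T_1)>0\}$ before applying the uniqueness lemma is slightly more scrupulous than the paper (which extends $L[T_1(\cdot)]$ arbitrarily off $R_1^+$ and lets the vanishing of $\w\mu_1$ there do the work), but the two are equivalent. The only genuine divergence is in part (ii): the paper takes a single measurable partial inverse $\gg:R_2\to R_1$ of $L$ and defines $\mu_2(A)=\int_{\gg(A)}\det(L\given T_1)\,\dee\mu_1$ and $T_2=L[T_1\circ\gg]$ directly, verifying \eqref{pseudorectifiable} by one computation and then checking $\mu_2\asymp\scrH^k\given_{R_2}$; you instead invoke the Luzin--Novikov theorem to decompose $R_1$ (off an $\scrH^k$-null exceptional set of fibers) into countably many Borel pieces on which $L$ is injective, build the pair on each piece, and glue via Observation \ref{observationpseudounion}. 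Both routes rest on a measurable-selection principle, and yours buys nothing extra here beyond being somewhat more machinery-heavy; the paper's single-selection definition of $\mu_2$ is the more economical one, although it is worth noting that when \eqref{pseudolinear1} is subsequently applied (via part (i)) it upgrades the selection-defined $\mu_2$ to the multiplicity-weighted formula automatically, so no information is lost.
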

\begin{remark*}
If the assumption \eqref{pseudolinear3} is omitted then (ii) is false; see Example \ref{examplemuR1}(iii,v). However, this assumption can be replaced by the assumption that $\dim(\HH_2) < \infty$; this will follow from Claim \ref{claimrectifiable} below.
\end{remark*}
Before we begin the proof, we note that combining \eqref{pseudorectifiable} with the ordinary change-of-variables formula for Lebesgue measure yields that if $R\subset\HH$ is a pseudorectifiable set and if $\pi:\HH\to V$ is a linear map to a $k$-dimensional Euclidean space $V$, then
\[
\int \#(\pi^{-1}(\yy)\cap A) \; \dee\scrH^k(\yy) = \int_A \det\big(\pi\given T_R(\xx)\big) \; \dee\mu_R(\xx).
\]
In particular, the case $\pi = \pi_V\circ L$ will occur twice in the proof below.

\NPC{Proof}
\begin{proof}[Proof of Lemma \ref{lemmapseudolinear}(i)]
Let
\begin{align*}
\w\mu_1(A) &= \int_A \det\big(L\given T_1(\xx)\big) \;\dee\mu_1(\xx),&
\w T_1(\xx) &= L[T_1(\xx)];\\
\w\mu_2(A) &= \int \#(L^{-1}(\yy)\cap A) \; \dee\mu_2(\xx), &
\w T_2(\xx) &= T_2(L(\xx)).
\end{align*}
Then for all $V\in\Grass_k(\HH_2)$ and $A\subset R_1$,
\begin{align*}
&\int_A \det\big(\pi_V\given \w T_1(\xx)\big) \; \dee\w \mu_1(\xx)
= \int_A \det\big(\pi_V\circ L\given T_1(\xx)\big) \; \dee \mu_1(\xx)\\
&= \int \#\big((\pi_V\circ L)^{-1}(\zz)\cap A\big) \;\dee\scrH^k(\zz)
= \int \sum_{\yy\in \pi_V^{-1}(\zz)} \#(L^{-1}(\yy)\cap A) \;\dee\scrH^k(\zz)\\
&= \int \det\big(\pi_V\given T_2(\yy)\big) \#(L^{-1}(\yy)\cap A) \dee\mu_2(\xx)
= \int_A \det\big(\pi_V\given \w T_2(\xx)\big) \; \dee\w\mu_2(\xx).
\end{align*}
So by Lemma \ref{lemmaTPunique}, $\w\mu_1 = \w\mu_2$ and $\w\mu_1(\w T_1\neq \w T_2) = 0$. These assertions are encoded respectively in \eqref{pseudolinear1} and \eqref{pseudolinear2}.
\end{proof}
\begin{proof}[Proof of Lemma \ref{lemmapseudolinear}(ii)]
This proof is similar to the proof of Proposition \ref{propositionrectifiable}. Let $\gg:R_2\to R_1$ be a measurable partial inverse of $L$, so that $L\circ \gg(\yy) = \yy$ for all $\yy\in R_2$. We then define $\mu_2$ and $T_2$ as follows:
\begin{align*}
\mu_2(A) &= \int_{\gg(A)} \det\big(L\given T_1(\xx)\big) \;\dee\mu_1(\xx) \;\; (A\subset R_2), &
T_2(\yy) &= L[T_1(\gg(\yy))] \;\; (\yy\in R_2).
\end{align*}
Then for all $V\in\Grass_k(\HH_2)$ and $A\subset R_2$,
\begin{align*}
&\int \#(\pi_V^{-1}(\zz)\cap A) \;\dee\scrH^k(\zz)
= \int \#\big((\pi_V\circ L)^{-1}(\zz)\cap \gg(A)\big) \;\dee\scrH^k(\zz)\\
&= \int_{\gg(A)} \det\big(\pi_V\circ L\given T_1(\xx)\big) \; \dee \mu_1(\xx)
= \int_{\gg(A)} \det\big(\pi_V\given L[T_1(\xx)]\big) \det\big(L\given T_1(\xx)\big) \; \dee \mu_1(\xx)\\
&= \int \sum_{\xx\in L^{-1}(\yy)} \det\big(\pi_V\given L[T_1(\xx)]\big) \one_{g(A)}(\xx) \;\dee\mu_2(\yy) = \int_A \det\big(\pi_V\given T_2(\yy)\big) \;\dee\mu_2(\yy),
\end{align*}
i.e. \eqref{pseudorectifiable} holds. Now if $\AA$ is any countable partition of $\HH_1$ such that $\mu_1(A) < \infty$ for all $A\in\AA$, then $\gg^{-1}(\AA)$ is a countable partition of $\HH_2$ such that $\mu_2(A) < \infty$ for all $A\in \gg^{-1}(\AA)$. Thus $\mu_2$ is $\sigma$-finite.

By Lemma \ref{lemmamuleqH}, $\mu_2\leq\scrH^k\given_{R_2}$. To show that $\scrH^k\given_{R_2}\lessless\mu_2$, fix $A\subset R_2$ such that $\scrH^k(A) > 0$. By Fact \ref{factlipschitz}, $\scrH^k(\gg(A)) > 0$, so since $\mu_1\asymp\scrH^k\given_{R_1}$, we get $\mu_1(\gg(A)) > 0$. By \eqref{pseudolinear3}, this implies that $\mu_2(A) > 0$.
\end{proof}

\begin{remark}
\label{remarkextendedTP}
Fix $S\subset\HH$ such that $\scrH^k\given_S$ is $\sigma$-finite. By Observation \ref{observationpseudounion}, there exists a pseudorectifiable set $R\subset S$ which $\scrH^k$-almost contains any other pseudorectifiable subset of $S$. It is sometimes convenient to define the \emph{extended tangent plane function} $T_S:S\to\what\Grass_k(\HH) := \Grass_k(\HH)\cup\{\token\}$ via the formula
\[
T_S(\xx) = \begin{cases}
T_R(\xx) & \xx\in R\\
\token & \xx\in S\butnot R
\end{cases}.
\]
Then if $L:\HH_1\to\HH_2$ is an invertible bounded linear map between Hilbert spaces, $S_1\subset\HH_1$, and $S_2 = L[S_1]$, it follows from Lemma \ref{lemmapseudolinear} that
\[
T_{S_2}(L(\xx)) = L[T_{S_1}(\xx)] \text{ for $\scrH^k$-a.e. $\xx\in S_1$},
\]
where by convention $L(\token) = \token$.
\end{remark}

\subsection{Pseudorectifiability and pseudounrectifiability}
The counterpart to the classical notion of rectifiability is \emph{pure unrectifiability}, defined as follows:

\begin{definition}
\label{definitionunrectifiable}
A set $U\subset\HH$ is called \emph{purely ($k$-)unrectifiable} if for every $k$-rectifiable set $R\subset\HH$,
\[
\scrH^k(R\cap U) = 0.
\]
\end{definition}
It is easy to prove directly from the definitions that
\begin{proposition}[{\cite[Theorem 15.6]{Mattila}}]
\label{propositionpartsv1}
Every set $S\subset\HH$ such that $\scrH^k\given_S$ is $\sigma$-finite can be written as the disjoint union of a rectifiable set $R_S$ and a purely unrectifiable set $U_S$. This decomposition is unique up to $\scrH^k$-nullset.
\end{proposition}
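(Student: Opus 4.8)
The plan is to construct $R_S$ as a ``maximal'' rectifiable subset of $S$ and then set $U_S = S\butnot R_S$. The only place where care is needed is that the $\scrH^k$-measures of rectifiable subsets of $S$ need not be bounded, so I would first replace $\scrH^k\given_S$ by a \emph{finite} measure $\nu$ with $\nu\asymp\scrH^k\given_S$. Such a $\nu$ exists precisely because $\scrH^k\given_S$ is $\sigma$-finite: write $S = \bigsqcup_n S_n$ with $\scrH^k(S_n) < \infty$ and put $\nu = \sum_n 2^{-n}(1 + \scrH^k(S_n))^{-1}\,\scrH^k\given_{S_n}$, which is finite and equivalent to $\scrH^k\given_S$.

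Next I would set $m = \sup\{\nu(R) : R\subset S \text{ is } k\text{-rectifiable}\}$, which is finite, and choose $k$-rectifiable sets $R_i\subset S$ with $\nu(R_i)\to m$. Since a countable union of rectifiable sets is rectifiable (the remark after Definition \ref{definitionrectifiable}), the set $R_S := \bigcup_i R_i$ is rectifiable, and the inequalities $\nu(R_i)\leq\nu(R_S)\leq m$ force $\nu(R_S) = m$. Put $U_S := S\butnot R_S$. To see that $U_S$ is purely unrectifiable, suppose toward a contradiction that some $k$-rectifiable set $R\subset\HH$ had $\scrH^k(R\cap U_S) > 0$; then $\nu(R\cap U_S) > 0$ by equivalence of the measures, and $R_S\cup(R\cap U_S)$ would be a rectifiable subset of $S$ with $\nu$-measure $m + \nu(R\cap U_S) > m$, contradicting the choice of $m$. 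Hence $\scrH^k(R\cap U_S) = 0$ for every rectifiable $R$, i.e. $U_S$ is purely unrectifiable.

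For uniqueness, suppose $S = R_S\sqcup U_S = R_S'\sqcup U_S'$ are two such decompositions. Then $R_S\butnot R_S'$ is simultaneously a subset of the rectifiable set $R_S$ and of the purely unrectifiable set $U_S'$, so $\scrH^k(R_S\butnot R_S') = 0$; symmetrically $\scrH^k(R_S'\butnot R_S) = 0$, hence $\scrH^k(R_S\triangle R_S') = 0$, and likewise $\scrH^k(U_S\triangle U_S') = 0$. I do not anticipate a genuine obstacle here; the one point that genuinely uses the hypothesis is the reduction from the possibly infinite total Hausdorff measure to the finite auxiliary measure $\nu$, which is exactly what $\sigma$-finiteness is there to provide.
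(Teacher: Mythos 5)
Your argument is correct and is the standard exhaustion argument (this is essentially the proof one finds for Mattila's Theorem 15.6, to which the paper defers without writing out a proof). The two points you flagged — passing to a finite equivalent measure $\nu$ so the supremum is finite, and using closure of rectifiability under countable unions to realize that supremum by an actual rectifiable subset — are exactly where the $\sigma$-finiteness hypothesis enters, and the uniqueness via the symmetric-difference argument is also right.
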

The sets $R_S$ and $U_S$ in Proposition \ref{propositionpartsv1} are called the \emph{rectifiable part} and \emph{purely unrectifiable part} of $S$, respectively.

The main theorem we will need regarding purely unrectifiable sets is the Besicovitch--Federer projection theorem:

\begin{theorem}[{\cite[Theorem 18.1]{Mattila}}]
\label{theorembesicovitchfederer}
Suppose $\dimH < \infty$, and let $U\subset\HH$ be a purely unrectifiable such that $\scrH^k\given_U$ is $\sigma$-finite.\Footnote{Although \cite[Theorem 18.1]{Mattila} is stated for the case $\scrH^k(U) < \infty$, the generalization to the case where $\scrH^k\given_U$ is $\sigma$-finite is trivial.} Then for Lebesgue-a.e. $V\in\Grass_k(U)$,
\[
\scrH^k(\pi_V(U)) = 0.
\]
\end{theorem}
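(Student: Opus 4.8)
The statement is \cite[Theorem 18.1]{Mattila} together with the trivial passage from finite to $\sigma$-finite Hausdorff measure, so the plan is a short reduction followed by a sketch of the classical theorem underneath it. \textbf{Reduction to $\scrH^k(U)<\infty$.} Since $\scrH^k\given_U$ is $\sigma$-finite, write $U=\bigcup_{j=1}^\infty U_j$ with $\scrH^k(U_j)<\infty$; each $U_j$ is purely $k$-unrectifiable as a subset of $U$. Identifying $\HH$ with $\R^{\dimH}$, the Grassmannian $\Grass_k(\HH)$ carries its unique rotation-invariant probability measure $\gamma$, and ``Lebesgue-a.e.\ $V$'' means ``$\gamma$-a.e.\ $V$''. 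Applying the finite-measure case to each $U_j$ yields a $\gamma$-conull set $W_j\subset\Grass_k(\HH)$ with $\scrH^k(\pi_V(U_j))=0$ for all $V\in W_j$; then $W:=\bigcap_j W_j$ is $\gamma$-conull, and for $V\in W$ we have $\scrH^k(\pi_V(U))\leq\sum_j\scrH^k(\pi_V(U_j))=0$. So it suffices to treat $\scrH^k(U)<\infty$, and by inner regularity we may also assume $U$ compact.

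\textbf{Sketch of the finite case.} Introduce the integralgeometric measure, defined up to a constant $c=c(\dimH,k)$ by
\[
\mathcal{I}^k(A)=c\int_{\Grass_k(\HH)}\int_V\#\big(\pi_V^{-1}(\yy)\cap A\big)\;\dee\scrH^k(\yy)\;\dee\gamma(V),
\]
so that by Fubini the desired conclusion ``$\scrH^k(\pi_V(U))=0$ for $\gamma$-a.e.\ $V$'' is equivalent to $\mathcal{I}^k(U)=0$. (On rectifiable sets the opposite holds: a $\CC^1$ $k$-surface has $\pi_V$ of rank $k$ off a $\gamma$-null set of $V$, and a $k$-rectifiable $R$ is, up to an $\scrH^k$-nullset, a countable union of Lipschitz $k$-cells, so $\mathcal{I}^k(R)>0$ whenever $\scrH^k(R)>0$; this is why $\mathcal{I}^k$ ``detects'' rectifiability.) Thus it suffices to prove the contrapositive: if $\mathcal{I}^k(U)>0$, then $U$ contains a $k$-rectifiable subset of positive $\scrH^k$-measure, contradicting pure unrectifiability. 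This is the content of the Besicovitch--Federer structure theory: a density and covering argument shows that $\mathcal{I}^k(U)>0$ forces, on a set of $\pp\in U$ of positive $\scrH^k$-measure and along a sequence of radii $r\searrow0$, the slices $U\cap B(\pp,r)$ to concentrate near a single $k$-dimensional subspace (a conical-density estimate: $\pi_V$ nearly injective near $\pp$ for a $\gamma$-large set of $V$ traps $U$ in a thin slab about the common plane). Any such $\pp$ has an approximate tangent $k$-plane in the sense of Definition \ref{definitiontangentplane}, and by \cite[Corollary 15.20]{Mattila} the set of points of $U$ with an approximate tangent $k$-plane is $k$-rectifiable; having positive measure, it contradicts pure unrectifiability, so $\mathcal{I}^k(U)=0$.

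\textbf{Main obstacle.} The only genuinely hard step is extracting the planar (conical-density / tangent-plane) structure from the mere positivity of $\mathcal{I}^k$; this is precisely Besicovitch's planar structure theory and Federer's higher-codimensional generalization, and it admits no real shortcut. Finite-dimensionality enters essentially and in two places: the compactness of $\Grass_k(\HH)$ underlies both the existence of $\gamma$ (hence the definition of $\mathcal{I}^k$ and the Fubini step) and the compactness arguments inside the structure theory. This is exactly the obstruction that forces the authors to replace rectifiability with pseudorectifiability when $\dimH=\infty$.
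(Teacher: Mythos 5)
Your $\sigma$-finite reduction (decompose $U=\bigcup_j U_j$ with $\scrH^k(U_j)<\infty$, apply the finite case to each $U_j$, intersect the conull sets of good $V$, then use countable subadditivity of $\scrH^k$) is exactly the "trivial" generalization the paper's footnote alludes to, and it is correct. The paper itself gives no further proof and simply cites \cite[Theorem 18.1]{Mattila} for the finite case, which your sketch of the Besicovitch--Federer structure theory (via the integral-geometric measure $\mathcal{I}^k$ and the equivalence $\mathcal{I}^k(U)=0\iff\scrH^k(\pi_V(U))=0$ for $\gamma$-a.e.\ $V$) accurately summarizes, so you and the paper take the same route.
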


Using this theorem, we can give a short proof of a result mentioned above, namely that in finite dimensions, pseudorectifiability is equivalent to rectifiability:

\begin{proposition}
\label{propositionfindimconverse}
Every pseudorectifiable subset of a finite-dimensional vector space is rectifiable.
\end{proposition}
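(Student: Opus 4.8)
The plan is to combine the rectifiable / purely unrectifiable decomposition of Proposition \ref{propositionpartsv1} with the Besicovitch--Federer projection theorem (Theorem \ref{theorembesicovitchfederer}). Let $R\subset\HH$ be pseudorectifiable with $\dimH < \infty$, and write $R = R_R\sqcup U_R$, where $R_R$ is the rectifiable part and $U_R$ the purely unrectifiable part (this decomposition applies because $\scrH^k\given_R$ is $\sigma$-finite, $R$ being pseudorectifiable). It suffices to prove $\scrH^k(U_R) = 0$: then $R\butnot R_R = U_R$ is an $\scrH^k$-nullset, so $R$ is rectifiable by Definition \ref{definitionrectifiable}.

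By Observation \ref{observationpseudounion}, the subset $U_R$ of $R$ is itself pseudorectifiable; write $T := T_{U_R}$ and $\mu := \mu_{U_R}\asymp\scrH^k\given_{U_R}$ for its tangent plane function and measure, and note $\scrH^k\given_{U_R}$ is $\sigma$-finite. Since $U_R$ is purely unrectifiable, Theorem \ref{theorembesicovitchfederer} gives $\scrH^k(\pi_V(U_R)) = 0$ for Lebesgue-a.e.\ $V\in\Grass_k(\HH)$. For each such $V$, apply \eqref{pseudorectifiable} with $A = U_R$: the left-hand side is the integral of the nonnegative function $\yy\mapsto\#(\pi_V^{-1}(\yy)\cap U_R)$, which is supported on the $\scrH^k$-null set $\pi_V(U_R)$ and hence vanishes; so $\int_{U_R}\det(\pi_V\given T(\xx))\,\dee\mu(\xx) = 0$, and since the integrand is nonnegative, $\det(\pi_V\given T(\xx)) = 0$ for $\mu$-a.e.\ $\xx$.

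Next I would integrate this against the $O(\HH)$-invariant probability measure $\lambda$ on $\Grass_k(\HH)$ (which exists since $\dimH < \infty$) and apply Tonelli's theorem to the jointly measurable nonnegative function $(\xx,V)\mapsto\det(\pi_V\given T(\xx))$ --- joint measurability being clear, as $T$ is measurable and $\pi_V$ depends continuously on $V$. The previous paragraph says the inner integral over $\xx$ vanishes for $\lambda$-a.e.\ $V$; Tonelli then yields that for $\mu$-a.e.\ $\xx\in U_R$ we have $\det(\pi_V\given T(\xx)) = 0$ for $\lambda$-a.e.\ $V$. But for any fixed $W\in\Grass_k(\HH)$ the map $V\mapsto\det(\pi_V\given W)$ is continuous and equals $1$ at $V = W$, hence is strictly positive on a nonempty open --- therefore $\lambda$-positive --- subset of $\Grass_k(\HH)$. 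This contradicts the preceding sentence unless $\mu(U_R) = 0$, whence $\scrH^k(U_R) = 0$ because $\mu\asymp\scrH^k\given_{U_R}$, completing the proof.

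The routine parts --- the decomposition, the inheritance of pseudorectifiability by $U_R$, and the final reduction to rectifiability of $R$ --- need no real work. The one point demanding care is the passage from ``for a.e.\ $V$, for a.e.\ $\xx$'' (which Besicovitch--Federer plus nonnegativity of the integrand directly produces) to ``for a.e.\ $\xx$, for a.e.\ $V$'' (which contradicts the pointwise positivity of $V\mapsto\det(\pi_V\given W)$). That reversal is exactly what the Tonelli swap supplies, and it is the only place where one must be mildly attentive to measurability; I expect no further obstacles.
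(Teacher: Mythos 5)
Your argument is correct, and its skeleton matches the paper's: reduce via Proposition \ref{propositionpartsv1} to showing that a pseudorectifiable, purely $k$-unrectifiable set is $\scrH^k$-null, and then use the Besicovitch--Federer projection theorem to show that \eqref{pseudorectifiable} forces the integral $\int_{U_R}\det(\pi_V\given T(\xx))\,\dee\mu(\xx)$ to vanish for Lebesgue-a.e.\ $V$. Where you differ from the paper is in how you extract $\mu = 0$ from that. The paper observes that the pair $(\mu_R,T_R)$ and the pair $(0,\cdot)$ both satisfy \eqref{pseudorectifiable} on a dense set of $V$, and then cites Remark \ref{remarkTPunique} (the density/continuity extension of the uniqueness Lemma \ref{lemmaTPunique}) to conclude $\mu_R = 0$ in one stroke. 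You instead run a Tonelli swap against the rotation-invariant measure on $\Grass_k(\HH)$ to convert ``for a.e.\ $V$, for $\mu$-a.e.\ $\xx$, $\det(\pi_V\given T(\xx))=0$'' into the pointwise statement ``for $\mu$-a.e.\ $\xx$, $\det(\pi_V\given T(\xx))=0$ for a.e.\ $V$,'' which contradicts the positivity of $V\mapsto\det(\pi_V\given T(\xx))$ near $V=T(\xx)$ unless $\mu(U_R)=0$. The two routes exploit the same continuity of $V\mapsto\det(\pi_V\given W)$; the paper's is shorter because that continuity is already packaged into the uniqueness lemma, while yours is more self-contained (and needs the extra but harmless hypotheses of $\sigma$-finiteness of $\mu$ and joint measurability, which you correctly flag). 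Both are sound.
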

\NPC{Proof}
\begin{proof}
By Proposition \ref{propositionpartsv1}, it suffices to show that any pseudorectifiable purely unrectifiable set in a finite-dimensional vector space has $\scrH^k$ measure zero. Indeed, suppose $R\subset\HH$ is such a set; then by Theorem \ref{theorembesicovitchfederer}, $\eqref{pseudorectifiable}_{\mu_R = 0}$ holds for Lebesgue-a.e. $V\in\Grass_k(\HH)$. By Remark \ref{remarkTPunique}, this is sufficient to conclude that $\mu_R = 0$. Since $\mu_R\asymp\scrH^k\given_R$, it follows that $\scrH^k(R) = 0$.
\end{proof}

We now turn to the definition of pseudounrectifiabilty. Rather than mimicking Definition \ref{definitionunrectifiable} and defining pseudounrectifiability in terms of pseudorectifiability, it turns out to be more fruitful to give an independent definition of pseudounrectifiability, and then to prove the analogue of Proposition \ref{propositionpartsv1} as the limit of its finite-dimensional version.

\begin{definition}
\label{definitionpseudounrectifiable}
A set $U\subset\HH$ will be called \emph{purely ($k$-)pseudounrectifiable} if it can be written in the form $U = \bigcup_n U_n$, where for each $n$ there exists $V_n\in\Grass(\HH)$ such that for all $V\in\Grass(\HH)$ with $V\geq V_n$, the set $\pi_V(U_n)$ is purely $k$-unrectifiable.
\end{definition}
The following lemma demonstrates the ``orthogonality'' of the notions of pseudorectifiability and pure pseudounrectifiability, which followed directly from the definition in the classical setup:

\begin{lemma}
\label{lemmanullintersection}
The intersection of a pseudorectifiable set and a purely pseudounrectifiable set is an $\scrH^k$-nullset.
\end{lemma}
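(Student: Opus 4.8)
The plan is to reduce everything to finitely many dimensions, where the Besicovitch--Federer projection theorem (Theorem \ref{theorembesicovitchfederer}) is available, and then push the conclusion back up to $\HH$. Let $R$ be pseudorectifiable and $U$ purely pseudounrectifiable, and write $U=\bigcup_n U_n$ with $V_n\in\Grass(\HH)$ as in Definition \ref{definitionpseudounrectifiable}. Since $\scrH^k(R\cap U)\le\sum_n\scrH^k(R\cap U_n)$, it suffices to fix $n$ and prove $\scrH^k(R\cap U_n)=0$. Set $R'=R\cap U_n$; by Observation \ref{observationpseudounion} this is a pseudorectifiable set, with tangent plane function $T_{R'}$ and measure $\mu_{R'}\asymp\scrH^k\given_{R'}$, so it is enough to show $\mu_{R'}=0$. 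Using separability of $\HH$, I would fix an increasing sequence of finite-dimensional subspaces $V_n=V^{(0)}\le V^{(1)}\le\cdots$ whose union is dense in $\HH$.

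The first main step is to show, for each fixed $j$, that $\det(\pi_W\given T_{R'}(\xx))=0$ for $\mu_{R'}$-a.e.\ $\xx$ and Lebesgue-a.e.\ $W\in\Grass_k(V^{(j)})$. Indeed, $\pi_{V^{(j)}}(R')\subseteq\pi_{V^{(j)}}(U_n)$, which is purely $k$-unrectifiable because $V^{(j)}\ge V_n$; hence $\pi_{V^{(j)}}(R')$ is purely $k$-unrectifiable, and it carries a $\sigma$-finite $\scrH^k$ since $\pi_{V^{(j)}}$ is $1$-Lipschitz (Fact \ref{factlipschitz}) and $\scrH^k\given_{R'}$ is $\sigma$-finite. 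Applying Theorem \ref{theorembesicovitchfederer} inside $V^{(j)}$ gives, for Lebesgue-a.e.\ $W\in\Grass_k(V^{(j)})$, $\scrH^k(\pi_W(R'))=\scrH^k(\pi_W(\pi_{V^{(j)}}(R')))=0$ (using $\pi_W\circ\pi_{V^{(j)}}=\pi_W$ when $W\le V^{(j)}$). For such $W$, the fibre $\pi_W^{-1}(\yy)$ misses $R'$ whenever $\yy\notin\pi_W(R')$, so $\#(\pi_W^{-1}(\yy)\cap R')=0$ for $\scrH^k$-a.e.\ $\yy$, and then \eqref{pseudorectifiable} (with $V=W$ and $A=R'$) forces $\int_{R'}\det(\pi_W\given T_{R'}(\xx))\,\dee\mu_{R'}(\xx)=0$, whence $\det(\pi_W\given T_{R'}(\xx))=0$ for $\mu_{R'}$-a.e.\ $\xx$. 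A Fubini argument applied to the jointly measurable function $(W,\xx)\mapsto\det(\pi_W\given T_{R'}(\xx))$ on $\Grass_k(V^{(j)})\times R'$ then yields: for $\mu_{R'}$-a.e.\ $\xx$, $\det(\pi_W\given T_{R'}(\xx))=0$ for Lebesgue-a.e.\ $W\in\Grass_k(V^{(j)})$.

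Finally I would intersect these statements over $j\in\N$ to obtain a $\mu_{R'}$-conull set $E\subseteq R'$ such that for every $\xx\in E$ and every $j$, $\det(\pi_W\given T_{R'}(\xx))=0$ for a.e.\ $W\in\Grass_k(V^{(j)})$, and then derive a contradiction unless $E=\emptyset$. Fix $\xx\in E$ and put $T=T_{R'}(\xx)\in\Grass_k(\HH)$. Since $T$ is finite-dimensional and $\bigcup_j V^{(j)}$ is dense, $\pi_{V^{(j)}}\given T$ is injective for all large $j$; for such a $j$, setting $W_0=\pi_{V^{(j)}}(T)\in\Grass_k(V^{(j)})$ one checks $\det(\pi_{W_0}\given T)>0$. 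But $W\mapsto\det(\pi_W\given T)^2$ is a polynomial in the matrix entries of $\pi_W$, hence real-analytic on the connected manifold $\Grass_k(V^{(j)})$; being positive at $W_0$, it vanishes only on a Lebesgue-null set, contradicting the conclusion of the previous paragraph for this $j$. Therefore $E=\emptyset$, so $\mu_{R'}(R')=0$ and $\scrH^k(R\cap U_n)=0$; summing over $n$ finishes the proof. The only delicate point is this last step: turning a countable family of ``vanishing for a.e.\ $W$'' statements into a contradiction that holds at \emph{every} point $\xx$, which is precisely where the density of $\bigcup_j V^{(j)}$ and the real-analyticity in $W$ are needed.
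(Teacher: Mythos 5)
Your proof is correct, and it shares the main idea with the paper's proof: reduce to a single piece $U_n$, apply the Besicovitch--Federer projection theorem to $\pi_V(R\cap U_n)$ for finite-dimensional $V\geq V_n$, and then use the defining identity \eqref{pseudorectifiable} to conclude that $\int_{R\cap U_n}\det(\pi_V\given T_R)\,\dee\mu_R = 0$ for ``enough'' $V\in\Grass_k(\HH)$. Where you diverge is the closing step. The paper observes that the vanishing integral condition holds on a \emph{dense} subset of $\Grass_k(\HH)$, extends it to all $V$ by continuity (the integrand is bounded by $1$ and continuous in $V$, so dominated convergence applies on finite-$\mu_R$ pieces), and then invokes Lemma~\ref{lemmaTPunique} with $\mu_2=0$ to get $\mu_R(R\cap U_n)=0$ directly. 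You instead apply Fubini to swap quantifiers, obtaining a $\mu_{R'}$-conull set $E$ on which $\det(\pi_W\given T_{R'}(\xx))=0$ for a.e.\ $W\in\Grass_k(V^{(j)})$ and every $j$, and then derive a pointwise contradiction from the real-analyticity of $W\mapsto\det(\pi_W\given T)^2$ on $\Grass_k(V^{(j)})$ together with the observation that $\pi_{V^{(j)}}\given T$ is eventually injective (so the function is not identically zero). Both routes are sound; the paper's is shorter because it reuses the uniqueness lemma already developed, while yours is more self-contained and replaces that lemma with an explicit Fubini-plus-analyticity argument, at the cost of having to verify measurability of $(W,\xx)\mapsto\det(\pi_W\given T_{R'}(\xx))$ and the eventual injectivity of $\pi_{V^{(j)}}\given T$ (both of which you handle correctly).
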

\NPC{Proof}
\begin{proof}
By contradiction suppose that $R$ is a pseudorectifiable set and that $U$ is a purely pseudounrectifiable set such that $\scrH^k(R\cap U) > 0$. Let $U_n$ and $V_n$ be as in Definition \ref{definitionpseudounrectifiable}; then $\scrH^k(R\cap U_n) > 0$ for some $n$. Let $A = R\cap U_n$; then for all $V\in\Grass(\HH)$ with $V\geq V_n$, $\pi_V(A)$ is purely unrectifiable. It follows from Theorem \ref{theorembesicovitchfederer} that for a dense set of $V\in\Grass_k(\HH)$, $\scrH^k(\pi_V(A)) = 0$. So by \eqref{pseudorectifiable}, we have
\begin{equation}
\label{mu0}
\int_A \det\big(\pi_V \given T_R(\xx)\big) \;\dee\mu_R(\xx) = 0
\end{equation}
for a dense set of $V\in\Grass_k(\HH)$. By continuity, \eqref{mu0} holds for all $V\in\Grass_k(\HH)$, and then by Lemma \ref{lemmaTPunique}, $\mu_R(A) = 0$. But since $\scrH^k(A) > 0$, this contradicts that $\mu_R\asymp \scrH^k\given_R$.
\end{proof}

We are now ready to prove an analogue of Proposition \ref{propositionpartsv1} for the notions of pseudorectifiability and pseudounrectifability:

\begin{proposition}
\label{propositionpartsv2}
Let $S\subset\HH$ be a set such that $\scrH^k\given_S$ is $\sigma$-finite. Then there exists a pseudorectifiable set $R\subset S$ such that $S\butnot R$ is purely pseudounrectifiable. Moreover, this set $R$ is unique up to an $\scrH^k$-nullset.
\end{proposition}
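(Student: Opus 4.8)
The plan is to produce $R$ as an (essentially unique) maximal pseudorectifiable subset of $S$, and then to prove that its complement $U:=S\butnot R$ is purely pseudounrectifiable; uniqueness will then be a formal consequence of Lemma~\ref{lemmanullintersection}.

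\emph{Existence of a maximal $R$, and uniqueness.} Since $\scrH^k\given_S$ is $\sigma$-finite, fix a finite measure $\lambda\asymp\scrH^k\given_S$ and set $s:=\sup\{\lambda(R'):R'\subseteq S\text{ pseudorectifiable}\}$. Choosing pseudorectifiable $R_i'\subseteq S$ with $\lambda(R_i')\to s$ and putting $R:=\bigcup_iR_i'$, Observation~\ref{observationpseudounion} shows $R$ is pseudorectifiable, and the maximality of $\lambda(R)=s$ forces $\scrH^k(R'\butnot R)=0$ for every pseudorectifiable $R'\subseteq S$ (otherwise $R\cup R'$ would be pseudorectifiable with $\lambda$-measure exceeding $s$); this is exactly the set of Remark~\ref{remarkextendedTP}. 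For uniqueness: if $S=R_1\sqcup U_1=R_2\sqcup U_2$ are two decompositions as in the statement, then $R_1$ is pseudorectifiable and $U_2$ is purely pseudounrectifiable, so Lemma~\ref{lemmanullintersection} gives $\scrH^k(R_1\cap U_2)=0$, i.e. $\scrH^k(R_1\butnot R_2)=0$; by symmetry $\scrH^k(R_2\butnot R_1)=0$. Thus the whole content of the proposition is the assertion that $U:=S\butnot R$, with $R$ maximal, is purely pseudounrectifiable.

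\emph{Reductions and set-up.} First one may assume $\scrH^k(U)<\infty$: writing $U=\bigcup_mU^{(m)}$ with $\scrH^k(U^{(m)})<\infty$, it suffices to treat each $U^{(m)}$, since a countable union of purely pseudounrectifiable sets is purely pseudounrectifiable directly from Definition~\ref{definitionpseudounrectifiable}. Observe also that $U$ contains no rectifiable set of positive $\scrH^k$-measure (such a set is pseudorectifiable by Proposition~\ref{propositionrectifiable}, and $R$ is maximal), so $U$ is purely $k$-unrectifiable in the classical sense. Now fix an increasing sequence of finite-dimensional subspaces $V_1\subseteq V_2\subseteq\cdots$ with $\dim V_n\geq k$ and $\overline{\bigcup_nV_n}=\HH$, and for each $n$ apply Proposition~\ref{propositionpartsv1} inside $V_n$ to split $\pi_{V_n}(U)=\rho_n\sqcup\nu_n$ into a rectifiable part $\rho_n\subseteq V_n$ and a purely $k$-unrectifiable part $\nu_n\subseteq V_n$. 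The candidate pieces are $U_n:=\pi_{V_n}^{-1}(\nu_n)\cap U$, with residual set $U_\infty:=\bigcap_n\pi_{V_n}^{-1}(\rho_n)\cap U$.

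\emph{What must be checked, and the main obstacle.} One needs (a) $\scrH^k(U_\infty)=0$, so that $U=\bigcup_nU_n$ up to an $\scrH^k$-nullset, and (b) for each $n$ and every finite-dimensional $V\supseteq V_n$, the set $\pi_V(U_n)$ is purely $k$-unrectifiable, so that the sequence $(V_n)$ witnesses pseudounrectifiability of $U$ in the sense of Definition~\ref{definitionpseudounrectifiable}. For (a), the maps $\tau_n:=T_{\rho_n}\circ\pi_{V_n}$ (with $T_{\rho_n}:\rho_n\to\Grass_k(V_n)$ the tangent plane function from Proposition~\ref{propositionrectifiable}) are compatible along the projections $\pi_{V_n}$ by Lemma~\ref{lemmapseudolinear}, and on $U_\infty$ they should stabilize $\scrH^k$-a.e.\ (because $\bigcup_nV_n$ is dense) to a $k$-plane-valued function; feeding this limit, together with the change-of-variables formulas \eqref{pseudorectifiable}/\eqref{rectifiable}, into the definition of pseudorectifiability exhibits a pseudorectifiable subset of $U_\infty$, whence $\scrH^k(U_\infty)=0$ by maximality of $R$. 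The main obstacle is (b). If $\pi_V(U_n)$ had a rectifiable part $R''$ with $\scrH^k(R'')>0$ for some finite-dimensional $V\supseteq V_n$, then, using $\pi_{V_n}=\pi_{V_n}\circ\pi_V$, the set $\pi_{V_n}(R'')\subseteq\pi_{V_n}(U_n)\subseteq\nu_n$ would be rectifiable and hence $\scrH^k$-null, so $R''$ is ``vertical over $V_n$''. To reach a contradiction one wants to lift $R''$ to a pseudorectifiable subset of $B:=\pi_V^{-1}(R'')\cap U_n\subseteq U$ of positive $\scrH^k$-measure, contradicting the maximality of $R$; but whether $B$ (or a large subset of it) is pseudorectifiable is precisely the delicate point, since a priori $B$ is only known to be a purely $k$-unrectifiable set projecting onto the rectifiable set $R''$. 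I expect that resolving this — either by a more careful choice of the sequence $(V_n)$ together with a further subdivision of each $U_n$ along a countable cofinal family of finite-dimensional subspaces containing $V_n$, or by invoking the Besicovitch--Federer projection theorem (Theorem~\ref{theorembesicovitchfederer}) inside a finite-dimensional space containing $V$ to propagate pure unrectifiability through all the intermediate projections — will be the technical heart of the proof.
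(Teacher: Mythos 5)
Your dual approach (maximize $R$ pseudorectifiable rather than maximize $U$ purely pseudounrectifiable as the paper does) is a legitimate starting point, and the uniqueness argument via Lemma \ref{lemmanullintersection} is exactly right. But you have correctly sensed, and failed to close, a genuine gap in step (b), and step (a) is also incomplete as stated. The missing ingredient is an absolute-continuity normalization that you do not mention, and neither of the two vague alternatives you float (recoosing $(V_n)$ or invoking Besicovitch--Federer in a bigger finite-dimensional ambient) is what is needed. Concretely, for (b) you must first discard an $\scrH^k$-nullset $N_n$ from $\nu_n$ so that $\pi_{V_n}\bigl(\scrH^k\given_U\bigr)$ restricted to $\nu_n' := \nu_n\butnot N_n$ is absolutely continuous with respect to $\scrH^k$ on $V_n$; only then, given a rectifiable $A\subset\pi_V(U_n')$ with $V\geq V_n$, does $\scrH^k(\pi_{V_n}(A))=0$ propagate (via this absolute continuity) to $\scrH^k(\pi_{V_n}^{-1}(\pi_{V_n}(A))\cap U)=0$, and hence to $\scrH^k(A)=0$ by Fact \ref{factlipschitz}. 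Without this normalization your ``lift $R''$ to a pseudorectifiable subset of $B$'' strategy is unsalvageable: Example \ref{examplemuR1}(iii,v) shows that a purely pseudounrectifiable set can project bi-Lipschitzly into a rectifiable set without containing any positive-measure pseudorectifiable piece.

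For step (a), the bare assertion that the $\tau_n$ ``should stabilize'' and that the resulting tangent plane function ``exhibits a pseudorectifiable subset'' elides the entire construction of the limit measure via increasing $\wbar\mu_n$'s, compatibility along projections, and the extension from $\bigcup_n\Grass_k(V_n)$ to all of $\Grass_k(\HH)$ via Remark \ref{remarkTPunique} --- i.e.\ essentially the whole body of the paper's own proof. Moreover, even after carrying this out, your stated target $\scrH^k(U_\infty)=0$ is too strong. The construction gives a $\sigma$-finite $\mu\leq\scrH^k\given_{U_\infty}$, and maximality of $R$ only kills the part of $U_\infty$ on which $\scrH^k\ll\mu$; on the $\mu$-singular part $A_2$ one has $\mu(A_2)=0$, hence $\scrH^k(\pi_V(A_2))=0$ for all $V$, hence $A_2$ is purely pseudounrectifiable --- but this does not force $\scrH^k(A_2)=0$ (again cf.\ Example \ref{examplemuR1}). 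What one actually proves is that $U_\infty$ is purely pseudounrectifiable, which suffices for the Proposition, but is a different claim than the one you wrote. In short: the approach is repairable but you have the key ideas backward --- the absolute-continuity trick is what makes (b) go, and (a) requires the full tangent-plane/measure limit construction plus a split into absolutely continuous and singular parts, landing on a weaker conclusion than you asserted.
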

\NPC{Proof}
\begin{proof}
The uniqueness assertion follows immediately from Lemma \ref{lemmanullintersection}, so let us demonstrate existence. It follows from Definition \ref{definitionpseudounrectifiable} that there exists a purely pseudounrectifiable set $U\subset S$ which $\scrH^k$-almost contains every other purely pseudounrectifiable set. To complete the proof, we just need to show that $R = S\butnot U$ is purely pseudorectifiable. We begin with the following:
\begin{claim}
\label{claimrectifiable}
For every $V\in\Grass(\HH)$, $\pi_V(R)$ is rectifiable.
\end{claim}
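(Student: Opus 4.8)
Since any subset of a $\dim(V)$-dimensional Euclidean space is automatically $\dim(V)$-rectifiable, the claim is trivial whenever $\dim(V)\le k$ (in particular then $\scrH^k(\pi_V(R))$ may even be $0$), so fix $V\in\Grass(\HH)$ with $\ell:=\dim(V)>k$. The plan is to argue by contradiction, using the maximality of $U$. As $\pi_V$ is $1$-Lipschitz, $\scrH^k\given_{\pi_V(R)}$ is $\sigma$-finite by Fact \ref{factlipschitz}, so Proposition \ref{propositionpartsv1}, applied inside the finite-dimensional space $V$, writes $\pi_V(R)$ as the disjoint union of a rectifiable set and a purely $k$-unrectifiable set $U_V$; since an $\scrH^k$-nullset is rectifiable and countable unions of rectifiable sets are rectifiable, it suffices to prove $\scrH^k(U_V)=0$. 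Suppose instead that $\scrH^k(U_V)>0$, and put $A=R\cap\pi_V^{-1}(U_V)$, so that $\pi_V(A)=U_V$ and, again by Fact \ref{factlipschitz}, $\scrH^k(A)\ge\scrH^k(U_V)>0$.

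The crux is to split off from $A$ its ``pseudorectifiable content''. Let $P\subseteq A$ be a maximal pseudorectifiable subset (which exists by Observation \ref{observationpseudounion}, exactly as in Remark \ref{remarkextendedTP}), and aim to show that $A\butnot P$ is purely pseudounrectifiable. Granting this: $A\butnot P\subseteq R=S\butnot U$ is disjoint from $U$, while $U$ $\scrH^k$-almost contains every purely pseudounrectifiable set, so $\scrH^k(A\butnot P)=0$; hence $A$ equals the pseudorectifiable set $P$ up to an $\scrH^k$-nullset, and carries a tangent plane function $T_A$ and a measure $\mu_A\asymp\scrH^k\given_A$. Now apply the Besicovitch--Federer projection theorem (Theorem \ref{theorembesicovitchfederer}) inside $V$ to the purely unrectifiable set $U_V=\pi_V(A)$: for a.e.\ $k$-dimensional $W\le V$ we get $\scrH^k(\pi_W(A))=\scrH^k\big(\pi_W(\pi_V(A))\big)=0$, so \eqref{pseudorectifiable} forces $\det(\pi_W\given T_A(\xx))=0$ for $\mu_A$-a.e.\ $\xx$. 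A short linear-algebra argument (a $k$-plane $T$ with $\det(\pi_W\given T)=0$ for a.e.\ $k$-plane $W\le V$ must satisfy $T\cap V^\perp\ne\{0\}$) then shows that $\pi_V$ restricted to $T_A(\xx)$ is non-injective, i.e.\ $\det(\pi_V\given T_A(\xx))=0$, for $\mu_A$-a.e.\ $\xx$. Feeding this into the area formula \eqref{rectifiable} for the linear map $\pi_V:\HH\to V$ — in the form valid for maps into a finite-dimensional Euclidean target, obtained as in Proposition \ref{propositionrectifiable} — yields $\int_V\#(\pi_V^{-1}(\yy)\cap A)\,\dee\scrH^k(\yy)=\int_A\det(\pi_V\given T_A(\xx))\,\dee\mu_A(\xx)=0$, whence $\scrH^k(U_V)=\scrH^k(\pi_V(A))=0$, a contradiction.

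The step I expect to be the main obstacle is precisely the assertion that $A\butnot P$ is purely pseudounrectifiable, for two reasons: it must not secretly re-prove Proposition \ref{propositionpartsv2} (which would be circular), and the defining condition for pure pseudounrectifiability quantifies over \emph{all} finite-dimensional $W\ge V$ — an uncountable family with no countable cofinal subfamily. The intended remedy is an iterated carving. Fix a countable family $\mathcal Q\subseteq\Grass(\HH)$ containing $V$, closed under finite sums, and dense in each $\Grass_\ell(\HH)$; running (with infinite repetition) through the members $W\in\mathcal Q$ with $W\ge V$, repeatedly replace the current set by its intersection with $\pi_W^{-1}$ of the purely unrectifiable part of its $\pi_W$-image, and let $A_\infty$ be the resulting decreasing intersection. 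Then $\pi_W(A_\infty)$ is purely unrectifiable for every $W\in\mathcal Q$ above $V$ by construction, and one upgrades this to all $W\ge V$ using the Besicovitch--Federer characterization of pure unrectifiability (a.e.\ $k$-dimensional projection has $\scrH^k$-measure zero) together with the density of $\mathcal Q$; finally the carved-off portions $A\butnot A_\infty$ are shown, by a bookkeeping argument invoking Lemma \ref{lemmapseudolinear}(ii) and Proposition \ref{propositionfindimconverse}, to lie inside a pseudorectifiable subset of $A$ up to an $\scrH^k$-nullset, so that $A_\infty$ may be taken for $A\butnot P$. Making this ``upgrade to all $W$'' rigorous is the delicate point of the whole proof.
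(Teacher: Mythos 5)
Your route diverges from the paper's and has two genuine problems. First, the gap you flag yourself is real and your ``iterated carving'' does not close it: showing that $A\butnot P$ is purely pseudounrectifiable for an arbitrary $A$ is exactly the existence half of Proposition~\ref{propositionpartsv2}, of which Claim~\ref{claimrectifiable} is a lemma, so the plan is circular unless the carving argument is both correct and independent of the Claim --- and your ``upgrade from a countable dense family of $W$'s to all $W\geq V$'' step is not supplied and is not a consequence of Besicovitch--Federer (which concerns almost-every $k$-dimensional projection; a fixed $W$, or its $k$-dimensional subspaces, form a null family). Second, even granting that $A$ is pseudorectifiable with $\det(\pi_V\given T_A)=0$ a.e., the concluding step applies the area formula for the linear map $\pi_V:\HH\to V$ with $\dim V>k$ to a pseudorectifiable but possibly non-rectifiable $A$. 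Proposition~\ref{propositionrectifiable} gives \eqref{rectifiable} only for rectifiable sets, \eqref{pseudorectifiable} only covers $k$-dimensional targets, and the Remark after Lemma~\ref{lemmapseudolinear} states explicitly that the version of \eqref{pseudolinear1} valid for linear maps to finite-dimensional targets ``will follow from Claim~\ref{claimrectifiable} below'' --- so this step also presupposes the Claim. (Example~\ref{examplemuR1}(ii,v) is there precisely to warn that such push-forward formulas fail without a finite-dimensionality input obtained from the Claim.)

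The paper's proof avoids all of this by showing that your set $A = U_R := \pi_V^{-1}(U_V)\cap R$ is \emph{itself} purely pseudounrectifiable, with base subspace $V_n=V$ in Definition~\ref{definitionpseudounrectifiable}, so no carving, no countable family, and no appeal to pseudorectifiability of $A$ are needed. After the WLOG normalization $\pi_V(\scrH^k\given_R)\given_{U_V}\lessless\scrH^k$ (delete a nullset from $U_V$), one fixes $W\geq V$ and a rectifiable $A'\subset\pi_W(U_R)$; since $\pi_V$ factors through $\pi_W$, the image $\pi_V(A')$ is a rectifiable subset of the purely unrectifiable $U_V$, hence $\scrH^k$-null; absolute continuity then gives $\scrH^k(\pi_V^{-1}(\pi_V(A'))\cap R)=0$, so $\scrH^k(\pi_W^{-1}(A')\cap R)=0$, and Fact~\ref{factlipschitz} together with $A'\subset\pi_W(\pi_W^{-1}(A')\cap R)$ forces $\scrH^k(A')=0$. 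Thus $\pi_W(U_R)$ is purely unrectifiable for every $W\geq V$, i.e.\ $U_R$ is purely pseudounrectifiable; by maximality of $U$ this gives $\scrH^k(U_R)=0$, and $\scrH^k(U_V)=\scrH^k(\pi_V(U_R))=0$ follows by Fact~\ref{factlipschitz}. This single factorization trick is the observation your proposal is missing.
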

\begin{subproof}
Fix $V\in\Grass(\HH)$, and let $U_V$ be the purely unrectifiable part of $\pi_V(R)$ (cf. Proposition \ref{propositionpartsv1}); we aim to show that $\scrH^k(U_V) = 0$. Without loss of generality, we can assume that $\pi_V(\scrH^k\given_R)\given_{U_V}\lessless \scrH^k$, as otherwise we can achieve this by decreasing $U_V$ by an $\scrH^k$-nullset.

We claim that the set $U_R := \pi_V^{-1}(U_V)\cap R$ is purely pseudounrectifiable. Indeed, fix $W\in\Grass(\HH)$ such that $W\geq V$, and let $A\subset \pi_W(U_R)$ be a rectifiable set. Then $\pi_V(A)$ is a rectifiable subset of the purely unrectifiable set $U_V$, so $\scrH^k(\pi_V(A)) = 0$, and thus by our absolute continuity assumption,
\[
0 = \scrH^k\given_R\circ \pi_V^{-1}(\pi_V(A)) = \scrH^k(\pi_V^{-1}(\pi_V(A))\cap R) \geq \scrH^k(\pi_W^{-1}(A)\cap R).
\]
So by Fact \ref{factlipschitz}, $\scrH^k(A) = \scrH^k(\pi_W(\pi_W^{-1}(A)\cap R)) = 0$. Since $A$ was arbitrary, $\pi_W(U_R)$ is purely unrectifiable, and since $W\geq V$ was arbitrary, $U_R$ is purely pseudounrectifiable.

By the definitions of of $U$ and $R$, we get $\scrH^k(U_R) = 0$, so by Fact \ref{factlipschitz}, $\scrH^k(U_V) = \scrH^k(\pi_V(U_R)) = 0$, i.e. $\pi_V(R)$ is rectifiable.
\end{subproof}
\begin{remark}
By Lemma \ref{lemmanullintersection}, the result of Claim \ref{claimrectifiable} holds for every pseudorectifiable set $R\subset\HH$.
\end{remark}

Let $(V_n)_1^\infty$ be an increasing sequence in $\Grass(\HH)$ whose union is dense in $\HH$, and for each $n$ write $\pi_n = \pi_{V_n}$ and $R_n = \pi_n(R)$. By Proposition \ref{propositionrectifiable}, we may let $T_n:R_n\to\Grass_k(V_n)$ be the tangent plane function with respect to the measure $\mu_n = \scrH^k\given_{R_n}$. For each $n$ let
\begin{align*}
\wbar\mu_n(A) &= \int \#(\pi_n^{-1}(\yy)\cap A) \; \dee\mu_n(\yy) \;\;(A\subset R), &
\wbar T_n(\xx) &= T_n(\pi_n(\xx)) \;\;(\xx\in R).
\end{align*}
Then for all $A\subset R$ and $n < m$,
\begin{align*}
\wbar\mu_n(A) &= \int \#(\pi_n^{-1}(\zz)\cap A) \; \dee\mu_n(\zz)
= \int \sum_{\yy\in\pi_n^{-1}(\zz)} \#(\pi_m^{-1}(\yy)\cap A) \dee\mu_n(\zz) \noreason\\
&= \int \det\left(\pi_n\given T_m(\yy)\right) \#(\pi_m^{-1}(\yy)\cap A) \dee\mu_m(\yy) \by{Lemma \ref{lemmapseudolinear}}\\
&= \int_A \det\left(\pi_n\given \wbar T_m(\xx)\right) \dee\wbar\mu_m(\xx).
\end{align*}
Writing this in the notation of Radon--Nikodym derivatives yields
\begin{equation}
\label{radonnikodym}
\frac{\dee\wbar\mu_n}{\dee\wbar\mu_m}(\xx) = \det\left(\pi_n\given \wbar T_m(\xx)\right).
\end{equation}
In particular, $\wbar\mu_n\leq \wbar\mu_m$ for all $n < m$, i.e. $(\wbar\mu_n)_1^\infty$ is an increasing sequence. On the other hand, Fact \ref{factlipschitz} automatically implies that $\wbar\mu_n\leq\scrH^k\given_R$. It follows that the measure
\[
\mu = \lim_{n\to\infty}\wbar\mu_n
\]
is finite and satisfies $\mu\leq\scrH^k\given_R$.

Define the function $T:R\to\Grass_k(\HH)$ by the formula
\[
T(\xx) = \lim_{n\to\infty} T_n(\xx).
\]
\begin{claim}
\label{claimwelldefined}
$T$ is well-defined $\mu$-a.e., and for each $n$, $\pi_n[T(\xx)] = \wbar T_n(\xx)$ for $\mu$-a.e. $\xx\in R$ such that $\det\big(\pi_n\given T(\xx)\big) > 0$.
\end{claim}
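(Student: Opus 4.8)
The plan is to show that the sequence $\bigl(\wbar T_n(\xx)\bigr)_n$ --- where, as in the excerpt's formula for $T$, $\wbar T_n(\xx)$ abbreviates $T_n(\pi_n(\xx))$ --- is Cauchy in $\Grass_k(\HH)$ for $\mu$-a.e.\ $\xx$, so that $T(\xx)$ exists by completeness of $\Grass_k(\HH)$, and then to read off the relation $\pi_n[T(\xx)]=\wbar T_n(\xx)$ from continuity of the relevant maps. Three inputs are used: (1) the Radon--Nikodym identity \eqref{radonnikodym}; (2) the fact just established that the $\wbar\mu_n$ increase to $\mu\le\scrH^k\given_R$, so the densities $\rho_n:=\dee\wbar\mu_n/\dee\mu$ form an increasing sequence of $[0,1]$-valued functions with $\rho_n\to1$ for $\mu$-a.e.\ $\xx$ (monotone convergence of Radon--Nikodym derivatives); and (3) the equivariance clause \eqref{pseudolinear2} of Lemma \ref{lemmapseudolinear} applied, for $n<m$, to the orthogonal projection $\pi_n\colon V_m\to V_n$ --- here $R_n=\pi_n[R_m]$, and both $R_n,R_m$ are rectifiable by Claim \ref{claimrectifiable}, hence pseudorectifiable with $\mu_{R_i}=\scrH^k\given_{R_i}$ and tangent plane function $T_i$ --- which gives $\wbar T_n(\xx)=\pi_n[\wbar T_m(\xx)]$ for $\wbar\mu_m$-a.e.\ $\xx$ with $\det(\pi_n\given\wbar T_m(\xx))>0$.

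The step I expect to require the most care is converting this ``$\wbar\mu_m$-a.e.'' statement into a ``$\mu$-a.e.'' one, since the densities $\rho_n$ sit underneath all of the identities. Because $\wbar\mu_m=\rho_m\cdot\mu$, a $\wbar\mu_m$-null set is $\mu$-null on $\{\rho_m>0\}$; and because $\rho_n\nearrow1$ $\mu$-a.e., the locus where $\rho_n=0$ for every $n$ is $\mu$-negligible. Taking the union of the relevant exceptional sets over the countably many pairs $n<m$ produces a single $\mu$-null set $N$ off which: (a) there is $n_0(\xx)$ with $\rho_{n_0(\xx)}(\xx)>0$, hence $\rho_n(\xx)>0$ for all $n\ge n_0(\xx)$, and $\rho_n(\xx)\to1$; (b) by the chain rule and \eqref{radonnikodym}, $\det(\pi_n\given\wbar T_m(\xx))=\rho_n(\xx)/\rho_m(\xx)$ whenever $n_0(\xx)\le n<m$; and (c) if in addition $\det(\pi_n\given\wbar T_m(\xx))>0$, then $\wbar T_n(\xx)=\pi_n[\wbar T_m(\xx)]$.

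Now fix $\xx\notin N$ and $\eta>0$. An elementary principal-angle estimate shows that a $k$-plane $P\le\HH$ with $\det(\pi_W\given P)$ sufficiently close to $1$ has $\pi_W\given P$ injective and $\dist(P,\pi_W[P])<\eta$; let $\epsilon>0$ be a corresponding threshold. Choose $N'\ge n_0(\xx)$ with $\rho_{N'}(\xx)\ge1-\epsilon$. Then for all $N'\le n\le m$ we have $\det(\pi_n\given\wbar T_m(\xx))=\rho_n(\xx)/\rho_m(\xx)\ge\rho_{N'}(\xx)\ge1-\epsilon>0$, so by (c) $\wbar T_n(\xx)=\pi_n[\wbar T_m(\xx)]$, whence $\dist(\wbar T_m(\xx),\wbar T_n(\xx))<\eta$; in particular $\wbar T_{N'}(\xx),\wbar T_{N'+1}(\xx),\dots$ all lie within $2\eta$ of one another. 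As $\eta$ was arbitrary, $\bigl(\wbar T_n(\xx)\bigr)_n$ is Cauchy, and we set $T(\xx):=\lim_n\wbar T_n(\xx)$ for $\mu$-a.e.\ $\xx$; this is the first assertion.

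For the second assertion, fix $n$ and a $\mu$-a.e.\ point $\xx$ (with $\xx\notin N$, $\wbar T_m(\xx)\to T(\xx)$, and $\rho_m(\xx)>0$ for all large $m$) at which $\det(\pi_n\given T(\xx))>0$. The maps $(P,W)\mapsto\det(\pi_W\given P)$ and $P\mapsto\pi_W[P]$ (the latter on $\{\det(\pi_W\given\cdot)>0\}$) are continuous, so $\det(\pi_n\given\wbar T_m(\xx))\to\det(\pi_n\given T(\xx))>0$; hence for all large $m$ (with $m>n$) the determinant is positive and $\pi_n[\wbar T_m(\xx)]=\wbar T_n(\xx)$ by (c). Letting $m\to\infty$ and using continuity of $P\mapsto\pi_n[P]$ gives $\pi_n[T(\xx)]=\wbar T_n(\xx)$. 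The remaining ingredients --- the principal-angle estimate, continuity of the Grassmannian operations, and completeness of $\Grass_k(\HH)$ --- are routine, so the crux really is the measure bookkeeping of the second paragraph.
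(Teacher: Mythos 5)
Your proof is correct and follows essentially the same route as the paper's: both use the Radon--Nikodym identity \eqref{radonnikodym}, the equivariance clause \eqref{pseudolinear2} (your step (c) is the paper's \eqref{tangentprojection}), and the fact that the corresponding densities tend to $1$ $\mu$-a.e.\ to force the determinants $\det(\pi_n\given\wbar T_m(\xx))$ close to $1$ along the tail, then invoke a principal-angle estimate (what the paper calls ``an elementary geometric calculation'') to conclude Cauchyness; the second assertion is likewise handled by a limit/continuity argument in both. Your version merely makes the bookkeeping more explicit by naming the densities $\rho_n=\dee\wbar\mu_n/\dee\mu$ (the paper works with the closely related $c_n(\xx)=\lim_m\det(\pi_n\given\wbar T_m(\xx))$, which equals $\rho_n(\xx)$) and by spelling out the threshold argument, but the underlying ideas are the same.
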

\begin{subproof}
For $\mu$-a.e. $\xx\in R$, we have
\[
1 = \frac{\dee\mu}{\dee\mu}(\xx) = \lim_{n\to\infty}\lim_{m\to\infty} \frac{\dee\wbar\mu_n}{\dee\wbar\mu_m}(\xx) = \lim_{n\to\infty} \lim_{m\to\infty} \det\big(\pi_n\given \wbar T_m(\xx)\big)
\]
and thus there exist infinitely many $n$ such that
\begin{equation}
\label{cnx}
c_n(\xx) := \lim_{m\to\infty} \det\big(\pi_n\given \wbar T_m(\xx)\big) > 0.
\end{equation}
Moreover, by Lemma \ref{lemmapseudolinear}, for each $n$ and for $\wbar\mu_n$-a.e. $\xx\in R$,
\begin{equation}
\label{tangentprojection}
\pi_m[\wbar T_\ell(\xx)] = \wbar T_m(\xx) \text{ for all $n < m < \ell$}.
\end{equation}
It follows that for $\mu$-a.e. $\xx\in R$, there exists $n$ such that both \eqref{cnx} and \eqref{tangentprojection} hold. Fix such an $\xx$. Then
\[
\det\big(\pi_{\wbar T_m(\xx)}\given \wbar T_\ell(\xx)\big) = \det\big(\pi_m \given \wbar T_\ell(\xx)\big) = \frac{\det\left(\pi_n \given \wbar T_\ell(\xx)\right)}{\det\left(\pi_n \given \wbar T_m(\xx)\right)} \tendsto{\ell,m\to\infty} \frac{c_n(\xx)}{c_n(\xx)} = 1.
\]
It then follows from an elementary geometric calculation that $(\wbar T_m(\xx))_1^\infty$ is a Cauchy sequence, i.e. $T(\xx)$ is well-defined. Moreover, taking the limit of \eqref{tangentprojection} as $\ell\to\infty$ shows that $\pi_m[T(\xx)] = \wbar T_m(\xx)$ for all $m > n$. By Lemma \ref{lemmapseudolinear}, this implies that $\pi_{n'}[T(\xx)] = \wbar T_{n'}(\xx)$ for all $n'\leq m$ and for $\mu$-a.e. $\xx\in R$ such that $\det\big(\pi_{n'}\given T(\xx)\big) > 0$. Since $m$ was arbitrary, this equality holds for all $n'$, which completes the proof.
\end{subproof}
Now we want to show that $T$ is a tangent plane function of $R$ with respect to $\mu$. Taking the limit of \eqref{radonnikodym} as $m\to\infty$ gives us
\[
\frac{\dee\wbar\mu_n}{\dee\mu}(\xx) = \det\big(\pi_n\given T(\xx)\big),
\]
i.e. the change-of-variables formula holds for the projections $\pi_n$. The equation $\pi_n[T(\xx)] = \wbar T_n(\xx)$ then allows us to verify that \eqref{pseudorectifiable} holds for all $V\in \bigcup_n \Grass_k(V_n)$.

Fix $W\in\Grass_k(\HH)$, and for each $n$ let $\w V_n = W + V_n$. Then the above argument yields a measure $\w\mu$ and a function $\w T:R\to\Grass_k(\HH)$ such that \eqref{pseudorectifiable} holds for all $V\in\bigcup_n \Grass_k(\w V_n)$. By Remark \ref{remarkTPunique}, $\w\mu = \mu$ and $\w T \equiv T$. But since $W\in\Grass_k(\w V_0)$, \eqref{pseudorectifiable} holds when we plug in $V = W$. Since $W$ was arbitrary, \eqref{pseudorectifiable} holds for all $V\in\Grass(\HH)$.

%

To finish the proof, we just need to show that $\mu\asymp\scrH^k\given_R$. We already know that $\mu\leq\scrH^k\given_R$, so it remains to show that $\scrH^k\given_R\lessless\mu$. Fix $A\subset R$ such that $\mu(A) = 0$. Then by \eqref{pseudorectifiable}, $\scrH^k(\pi_V(A)) = 0$ for all $V\in\Grass(\HH)$. It follows that $A$ is purely unpseudorectifiable, so from the definitions of $U$ and $R$ we have $\scrH^k(A) = 0$.
\end{proof}

\subsection{Pseudorectifiability and topological dimension}
We now prove the main result of this section, an infinite-dimensional analogue of Lemma \ref{lemmafederermattila} regarding pseudorectifiability.

\begin{proposition}
\label{propositionfederermattilainfdim}
Let $\SK\subset\HH$ be a compact set, and suppose that $\scrH^k(\SK) < \infty$, where $k = \TD(\SK)$. Then $\SK$ is not purely pseudounrectifiable. In particular, by Proposition \ref{propositionpartsv2}, $\SK$ contains a pseudorectifiable subset of positive $\scrH^k$ measure.
\end{proposition}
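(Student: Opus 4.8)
The plan is to prove the equivalent assertion that $\SK$ contains a pseudorectifiable subset of positive $\scrH^k$ measure (from which the statement follows via Proposition \ref{propositionpartsv2} and Lemma \ref{lemmanullintersection}), and to produce such a subset as a limit of the rectifiable parts of the finite-dimensional projections of $\SK$, in close analogy with the limiting construction in the proof of Proposition \ref{propositionpartsv2}.

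First I would reduce to finite dimensions. Since $\SK$ is compact with $\TD(\SK)=k$, a standard general-position argument — of the kind underlying the projection estimates in \cite[\S9]{Federer2} — yields a finite-dimensional subspace $V_0\le\HH$ on which $\pi_{V_0}$ is injective, hence a homeomorphism of $\SK$ onto $\pi_{V_0}(\SK)$. Then $\pi_V|_\SK$ is injective for every finite-dimensional $V\ge V_0$, so $\TD(\pi_V(\SK))=k$, while $\scrH^k(\pi_V(\SK))\le\scrH^k(\SK)<\infty$ by Fact \ref{factlipschitz}; applying Lemma \ref{lemmafederermattila} inside the finite-dimensional space $V$ shows that $\pi_V(\SK)$ is not purely $k$-unrectifiable, so its rectifiable part has positive $\scrH^k$ measure.

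Next, fix an increasing sequence $V_0\le V_1\le\cdots$ of finite-dimensional subspaces with dense union, put $\pi_n=\pi_{V_n}$, and let $R_n\subset\pi_n(\SK)$ denote the rectifiable part of $\pi_n(\SK)$ with its tangent plane function $T_n:R_n\to\Grass_k(V_n)$ (Propositions \ref{propositionpartsv1}, \ref{propositionrectifiable}). Because $\pi_{V_n}|_{V_{n+1}}$ sends rectifiable sets to rectifiable sets, $\pi_{V_n}(R_{n+1})$ is $\scrH^k$-almost contained in $R_n$; this compatibility feeds into Lemma \ref{lemmapseudolinear} to produce, exactly as in the proof of Proposition \ref{propositionpartsv2}, coherent change-of-variables relations among the measures $\wbar\mu_n$ on $\SK$ given by $\wbar\mu_n(A)=\scrH^k\big(R_n\cap\pi_n(A)\big)$ and the tangent data $\wbar T_n(\xx)=T_n(\pi_n(\xx))$. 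Each $\wbar\mu_n$ is dominated by $\scrH^k\given_\SK$ (Fact \ref{factlipschitz}), so after passing to a subsequence one gets a limit measure $\mu$ on $\SK$ and, via the Cauchy estimate of Claim \ref{claimwelldefined}, a limit tangent plane function $T$ defined $\mu$-a.e.; checking \eqref{pseudorectifiable} first for $V\in\bigcup_n\Grass_k(V_n)$ and then, by Remark \ref{remarkTPunique}, for all $V\in\Grass(\HH)$ exhibits a pseudorectifiable set $R\subseteq\SK$ with $\mu_R=\mu$ and $\mu_R(\SK)=\lim_n\scrH^k(R_n)$.

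The main obstacle — and the only step where the hypothesis $k=\TD(\SK)$ (rather than just $\scrH^k(\SK)>0$) is indispensable — is to show that this limit does not collapse, i.e.\ that $\lim_n\scrH^k(R_n)>0$. Lemma \ref{lemmafederermattila} only gives $\scrH^k(R_n)>0$ for each individual $n$, and indeed $\scrH^k(R_n)\to 0$ is perfectly possible for a purely pseudounrectifiable $\SK$; what is required is a lower bound on $\scrH^k(R_n)$ — equivalently on the $k$-dimensional integral-geometric (Favard) measure $\mathcal I^k(\pi_n(\SK))$, which agrees with $\scrH^k$ on the rectifiable part and vanishes on the purely unrectifiable part by the Besicovitch--Federer theorem — that is \emph{uniform in} $n$. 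I expect the genuine work to lie in extracting from Federer's argument in \cite[\S9]{Federer2} a quantitative form of the implication ``$\TD(X)=k\Rightarrow\mathcal I^k(X)>0$'' whose constant depends only on data preserved along the tower $(\pi_n)$: e.g.\ fix once and for all an essential map of $\SK$ onto $[0,1]^k$, push it through each homeomorphism $\pi_n|_\SK$, and bound $\mathcal I^k(\pi_n(\SK))$ from below in terms of the (fixed) essentiality constant. Granting such a uniform bound, $\mu\ne 0$, so $\scrH^k(R)>0$ and $\SK$ is not purely pseudounrectifiable.
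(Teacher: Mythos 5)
Your proposal correctly identifies the crux of the argument — a lower bound on $\scrH^k$ of the rectifiable part of $\pi_V(\SK)$ that is \emph{uniform} over finite-dimensional $V\geq V_0$ — but it does not actually establish that bound, and the last paragraph explicitly concedes this (``I expect the genuine work to lie in\ldots'', ``Granting such a uniform bound\ldots''). That uniform bound is not a formality: it is where all the dimension theory enters, and your sketch of how to obtain it (``push the essential map through each homeomorphism $\pi_n|_\SK$'') is both vague and built on a premise that may fail. In particular, the reduction to an injective projection $\pi_{V_0}|_\SK$ is not justified: injectivity of a finite-dimensional projection is far from automatic for a compact set with merely $\TD(\SK)=k$ and $\scrH^k(\SK)<\infty$ (the Hunt--Kaloshin--type projection theorems require control of box-counting dimension or thickness, not Hausdorff or topological dimension alone), and $\pi_n|_\SK$ being a homeomorphism is needed for your ``push the essential map through'' step. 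The paper needs no injectivity whatsoever; it only chooses $V_0$ so that $\|\pi_{V_0}(\xx)-\xx\|\leq\epsilon_1$ on $\SK$, which is trivial by compactness, and the uniformity is then achieved by a fixed smooth Urysohn-type map $\ff:V_0\to\R^k$ (depending only on $V_0$), a stable-value lemma in the style of Nagata, and the Besicovitch--Federer theorem applied to find a small \emph{linear} perturbation $A$ of $\ff\circ\pi_{V_0}$ that annihilates $\pi_V(\SK)$'s purely unrectifiable part; the Lipschitz constant of $\ff\circ\pi_{V_0}+A$ is controlled independently of $V$, which is precisely what makes the lower bound $\scrH^k(R_V)\gtrsim 1$ uniform.

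A secondary point: the limiting construction you build (the tower $\wbar\mu_n$, $\wbar T_n$ with its Cauchy argument) is essentially a re-derivation of Proposition \ref{propositionpartsv2}; once one has the uniform lower bound, it is cleaner to argue directly that $\SK$ is not purely pseudounrectifiable (if it were $\bigcup_n U_n$, choose $N$ with $\scrH^k\bigl(\SK\setminus\bigcup_{n\leq N}U_n\bigr)<\epsilon_3$ and project to $V=V_0+\sum_{n\leq N}V_n$ to contradict the uniform bound) and then invoke Proposition \ref{propositionpartsv2} for the ``in particular'' clause, rather than rebuild the pseudorectifiable subset by hand.
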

\NPC{Proof}
\begin{proof}
We use the following characterization of topological dimension (cf. \cite[Corollary of Theorem II.8]{Nagata} or \cite[Theorem 1.7.9]{Engelking}): $\TD(\SK)\geq k$ if and only if there exist closed sets $F_1,\ldots,F_k\subset \SK$ and relatively open sets $U_1,\ldots,U_k\subset \SK$ such that:
\begin{itemize}
\item[(i)] $F_i\subset U_i$ for all $i$,
\item[(ii)] If $W_1,\ldots,W_k\subset \SK$ are relatively open sets such that $F_i\subset W_i\subset U_i\all i$, then $\bigcap_{i = 1}^k \del W_i\neq \emptyset$, where $\del W_i$ denotes the boundary of $W_i$.
\end{itemize}
For each $i = 1,\ldots,k$ let $F_i' = \SK\butnot U_i$, and let
\[
3\epsilon_1 = \min_{i = 1}^k \dist(F_i,F_i') > 0.
\]
Since $\SK$ is compact, there exists a finite set $A\subset \SK$ such that $\SK\subset\thickvar A{\epsilon_1}$. Let $V_0\in\Grass(\HH)$ be the linear span of $A$, and let $\pi_0 = \pi_{V_0}$. Then
\begin{equation}
\label{pi0epsilon}
\|\pi_0(\xx) - \xx\| \leq \epsilon_1 \text{ for all $\xx\in \SK$}.
\end{equation}
For each $i = 1,\ldots,k$ let $\wbar{F_i} = \pi_0(F_i)$ and $\wbar{F_i'} = \pi_0(F_i')$; then by \eqref{pi0epsilon}
\[
\dist(\wbar{F_i},\wbar{F_i'}) \geq \dist(F_i,F_i') - 2\epsilon_1 \geq \epsilon_1 > 0.
\]
So by the smooth version of Urysohn's lemma, 
there exists a smooth function $f_i:V_0\to\R$ such that $f_i = -1$ on $\wbar{F_i}$ and $f_i = 1$ on $\wbar{F_i'}$. Let $\ff = (f_1,\ldots,f_k):V_0\to\R^k$.
\begin{claim}[Cf. {\cite[Proof of Theorem III.1]{Nagata}}]
\label{claimstablevalue}
If $\gg:\SK\to\R^k$ is a continuous function satisfying
\begin{equation}
\label{condg}
\max_\SK \|\ff\circ\pi_0 - \gg\| < 1/2,
\end{equation}
then
\[
[-1/2,1/2]^k \subset \gg(\SK).
\]
\end{claim}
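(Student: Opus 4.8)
The plan is to run the classical ``stable value'' (essential-map) argument from dimension theory. Fix an arbitrary point $c = (c_1,\ldots,c_k)\in[-1/2,1/2]^k$; it suffices to produce $\xx\in\SK$ with $\gg(\xx) = c$. The first step is to record the separation property that the closeness hypothesis \eqref{condg} buys us. For $\xx\in F_i$ we have $\pi_0(\xx)\in\wbar{F_i} = \pi_0(F_i)$, so $f_i(\pi_0(\xx)) = -1$, and hence by \eqref{condg},
\[
g_i(\xx) < f_i(\pi_0(\xx)) + \tfrac12 = -\tfrac12 \leq c_i,
\]
i.e. $g_i(\xx) - c_i < 0$; symmetrically, for $\xx\in F_i'$ we have $f_i(\pi_0(\xx)) = 1$, so $g_i(\xx) > \tfrac12 \geq c_i$, i.e. $g_i(\xx) - c_i > 0$.

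Next I would introduce, for each $i = 1,\ldots,k$, the relatively open subset of $\SK$
\[
W_i = \{\xx\in\SK : g_i(\xx) - c_i < 0\}.
\]
By the separation property, $F_i\subset W_i$ and $W_i\cap F_i' = \emptyset$; since $F_i' = \SK\butnot U_i$, this says exactly $F_i\subset W_i\subset U_i$. Condition (ii) of the characterization of topological dimension then applies and yields a point $\xx\in\bigcap_{i=1}^k \del W_i$. For each $i$, since $W_i$ is relatively open in $\SK$ we have $\del W_i = \cl{W_i}\butnot W_i$; from $\xx\notin W_i$ we get $g_i(\xx) - c_i\geq 0$, while from $\xx\in\cl{W_i}$ together with continuity of $g_i$ we get $g_i(\xx) - c_i\leq 0$. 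Hence $g_i(\xx) = c_i$ for every $i$, i.e. $\gg(\xx) = c$. As $c\in[-1/2,1/2]^k$ was arbitrary, $[-1/2,1/2]^k\subset\gg(\SK)$.

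This argument is essentially routine, so I do not expect a genuine obstacle. The only points requiring care are: (a) the strict-versus-nonstrict inequality bookkeeping — the constant $1/2$ in \eqref{condg} together with the bound $|c_i|\leq 1/2$ are precisely what force $g_i - c_i$ to be \emph{strictly} negative on $F_i$ and \emph{strictly} positive on $F_i'$, which is what makes each $W_i$ an honest relatively open set sandwiched between $F_i$ and $U_i$; and (b) keeping in mind that all topological operations ($\del W_i$, $\cl{W_i}$, relative openness) are taken inside $\SK$, so that condition (ii) can be invoked verbatim. The real substance of the surrounding Proposition \ref{propositionfederermattilainfdim} lies downstream of this claim, in combining the stability it provides with the Besicovitch--Federer projection machinery to contradict pure pseudounrectifiability.
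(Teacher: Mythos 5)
Your argument is exactly the paper's: you define the same $W_i = \{\xx\in\SK : g_i(\xx) < c_i\}$, verify $F_i\subset W_i\subset U_i$ using \eqref{condg} together with $|c_i|\leq 1/2$, invoke the dimension characterization to get a point in $\bigcap_i\del W_i$, and conclude by continuity. The paper compresses these steps into three lines, but the route and the underlying estimates are identical.
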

\begin{subproof}
Given $\yy\in [-1/2,1/2]^k$, for each $i = 1,\ldots,k$ define $W_i = \{\xx\in \SK : g_i(\xx) < y_i\}$.  The condition \eqref{condg} implies that $F_i\subset W_i\subset U_i$, so we have $\bigcap_{i = 1}^k \del W_i\neq \emptyset$. After choosing $\xx\in\bigcap_{i = 1}^k \del W_i$, we have $\xx\in \SK$ and $\gg(\xx) = \yy$, so $\yy\in\gg(\SK)$.
\end{subproof}

Now fix $V\in\Grass(\HH)$ with $V\geq V_0$, and let $R_V$ and $U_V$ be the rectifiable and purely unrectifiable parts of $\pi_V(\SK)$, respectively (cf. Proposition \ref{propositionpartsv1}). 
Let $M = \max_{\xx\in \SK} \|\xx\|$ and $\epsilon_2 = 1/(2M)$, and let
\[
\SS = \{A \in L(V,\R^k) : \|A\| < \epsilon_2\}.
\]
\begin{claim}
There exists $A\in\SS$ such that
\begin{equation}
\label{ASdef}
\scrH^k\big((\ff\circ\pi_0 + A)(U_V)\big) = 0.
\end{equation}
\end{claim}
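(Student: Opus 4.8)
\emph{Proposal.} The plan is to reduce the statement to the Besicovitch--Federer projection theorem (Theorem~\ref{theorembesicovitchfederer}) by a graph trick that linearizes the nonlinear map $\ff\circ\pi_0$. First I would note that, since $\ff$ is smooth (hence $C^1$, hence Lipschitz on the compact set $\pi_0(\pi_V(\SK))\subset V_0$) and $\pi_0$ is $1$-Lipschitz, the map $\ff\circ\pi_0$ is Lipschitz on the compact set $\pi_V(\SK)\supset U_V$; let $C$ be a Lipschitz constant. Set $\widetilde V=V\oplus\R^k$ (a finite-dimensional Hilbert space) and consider the graph $\widetilde U=\{(\xx,(\ff\circ\pi_0)(\xx)):\xx\in U_V\}\subset\widetilde V$. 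The graph map $\xx\mapsto(\xx,(\ff\circ\pi_0)(\xx))$ is bi-Lipschitz (its inverse is the restriction of the orthogonal projection $\widetilde V\to V$), so $\widetilde U$ is purely $k$-unrectifiable (because $U_V$ is) and $\scrH^k(\widetilde U)<\infty$ (because $\scrH^k(U_V)\le\scrH^k(\pi_V(\SK))\le\scrH^k(\SK)<\infty$). The key point is that each $A\in\SS$ induces a genuinely \emph{linear} map $\widetilde A:\widetilde V\to\R^k$, $\widetilde A(\xx,\zz)=\zz+A\xx$, with $(\ff\circ\pi_0+A)(U_V)=\widetilde A(\widetilde U)$; and since $\widetilde A$ restricts to the identity on $\{\0\}\times\R^k$, it has rank $k$ for \emph{every} $A$.

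Given that $\widetilde A$ has rank $k$, writing $P(A):=(\ker\widetilde A)^\perp\in\Grass_k(\widetilde V)$ we have $\widetilde A(\widetilde U)=\bigl(\widetilde A\given_{P(A)}\bigr)\bigl(\pi_{P(A)}(\widetilde U)\bigr)$ with $\widetilde A\given_{P(A)}$ a linear isomorphism onto $\R^k$, so $\scrH^k\bigl((\ff\circ\pi_0+A)(U_V)\bigr)=0$ as soon as $\scrH^k\bigl(\pi_{P(A)}(\widetilde U)\bigr)=0$. A direct computation gives $\ker\widetilde A=\{(\xx,-A\xx):\xx\in V\}$, hence $P(A)$ is the graph of the adjoint $A^{*}\in L(\R^k,V)$; thus the assignment $A\mapsto P(A)$ is precisely (the adjoint followed by) the standard affine chart of the Grassmannian, and restricted to the nonempty open set $\SS$ it is a diffeomorphism onto an open subset of $\Grass_k(\widetilde V)$. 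By Theorem~\ref{theorembesicovitchfederer} the ``bad'' set $\{P\in\Grass_k(\widetilde V):\scrH^k(\pi_P(\widetilde U))>0\}$ is Lebesgue-null in $\Grass_k(\widetilde V)$; pulling this nullset back through the diffeomorphism shows that $\{A\in\SS:\scrH^k(\pi_{P(A)}(\widetilde U))>0\}$ is Lebesgue-null in $\SS$. Hence for a.e. (in particular for some) $A\in\SS$ we obtain $\scrH^k\bigl((\ff\circ\pi_0+A)(U_V)\bigr)=\scrH^k\bigl(\widetilde A(\widetilde U)\bigr)=0$, which is \eqref{ASdef}.

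The one genuinely delicate point — the ``main obstacle'' — is that $\ff\circ\pi_0$ is not linear, so Theorem~\ref{theorembesicovitchfederer} cannot be applied to $U_V$ directly; the graph construction is exactly what replaces the affine family $\{\ff\circ\pi_0+A\}_{A\in\SS}$ by a family of honest linear rank-$k$ maps, parametrized by an open chart of a Grassmannian, acting on the auxiliary set $\widetilde U$, which remains purely unrectifiable with $\sigma$-finite (indeed finite) $\scrH^k$ by bi-Lipschitz invariance. Everything else is routine: bi-Lipschitz invariance of pure unrectifiability and of finiteness of $\scrH^k$, the identification of $P(A)$ with the graph of $A^{*}$, and the fact that a diffeomorphism pulls back nullsets to nullsets. (The precise value $\epsilon_2=1/(2M)$ plays no role here beyond guaranteeing that $\SS$ is a nonempty open ball; it matters only later, when $(\ff\circ\pi_0+A)\circ\pi_V$ is fed into Claim~\ref{claimstablevalue}.)
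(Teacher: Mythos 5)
Your proposal is correct and follows essentially the same route as the paper: lift $U_V$ to its graph $\widetilde U$ under the Lipschitz map $\ff\circ\pi_0$ so as to linearize the affine family $\{\ff\circ\pi_0+A\}_{A\in\SS}$, observe that $\widetilde U$ remains purely unrectifiable with finite $\scrH^k$ by bi-Lipschitz invariance, and apply the Besicovitch--Federer projection theorem. You are merely more explicit than the paper about why the projection theorem (stated for orthogonal projections onto elements of $\Grass_k$) transfers to the linear maps $\widetilde A$, via the observation that $A\mapsto P(A)=(\ker\widetilde A)^\perp$ is (the adjoint composed with) the standard affine chart of the Grassmannian and thus pulls nullsets back to nullsets.
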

\begin{subproof}
Since pure unrectifiability is preserved under bi-Lipschitz maps, the set
\[
\wbar U = \ff\circ\pi_0\oplus I_V(U_V) = \left[\begin{array}{c}\ff\circ\pi_0 \\ I_V\end{array}\right](U_V) \subset \R^k\oplus V
\]
is purely unrectifiable. Moreover, for all $A \in L(V,\R^k)$,
\[
(\ff\circ\pi_0 + A)(U_V) = \left[\begin{array}{cc} I_k & A\end{array}\right](\wbar U) \subset \R^k.
\]
Thus by Theorem \ref{theorembesicovitchfederer}, \eqref{ASdef} holds for Lebesgue-a.e. $A \in L(V,\R^k)$, so in particular \eqref{ASdef} holds for some $A\in\SS$.
\end{subproof}
Let $A\in\SS$ be as in the claim, and let $\gg = \ff\circ\pi_0 + A : V\to\R^k$. Then by Claim \ref{claimstablevalue}, $\gg\circ\pi_V(\SK)\supset [-1/2,1/2]^k$, so
\[
\scrH^k(R_V) \gtrsim_\times \scrH^k\big(\gg(R_V)\big) = \scrH^k\big(\gg\circ \pi_V(\SK)\big) \geq \scrH^k([-1/2,1/2]^k) \asymp_\times 1,
\]
where the first asymptotic comes from the fact that
\[
\max_\SK\|\gg'\| \leq \max_{\pi_0(\SK)}\|\ff'\| + \epsilon_2 \lesssim_\times 1.
\]
So
\[
2\epsilon_3 := \inf_{\substack{V\in\Grass(\HH) \\ V\geq V_0}} \scrH^k(R_V) > 0.
\]
By contradiction, suppose that $\SK$ is purely pseudounrectifiable, and let $U_n$ and $V_n$ be as in Definition \ref{definitionpseudounrectifiable}. By choosing $N$ large enough, we get $\scrH^k\big(\bigcup_{n\leq N} U_n\big) \geq \scrH^k(\SK) - \epsilon_3$. Letting $V = V_0 + \sum_1^N V_n$ and $A = \pi_V\big(\bigcup_{n\leq N}U_n\big)$, we have
\[
\scrH^k(A\cap R_V) \geq \scrH^k(R_V) - \scrH^k(\pi_V(\SK)\butnot A) \geq 2\epsilon_3 - \epsilon_3 > 0,
\]
contradicting Definitions \ref{definitionunrectifiable} and \ref{definitionpseudounrectifiable}.
\end{proof}
\begin{remark*}
In the last step of this proof we used the hypothesis $\scrH^k(\SK) < \infty$ in a crucial way, namely to guarantee the existence of $N$ such that $\scrH^k\big(\bigcup_{n\leq N} U_n\big) \geq \scrH^k(\SK) - \epsilon_3$.
\end{remark*}

\subsection{More invariance properties of pseudorectifiability}
In the proof of Theorem \ref{theoremrigidity}($d = \infty$, $k > 1$), we will need the class of pseudorectifiable sets to be closed under certain operations more general than just images under linear maps (i.e. Lemma \ref{lemmapseudolinear}): specifically, we need to be able to do a cone construction, and we need to be able to say something about subsets of the graph of a differentiable function. Note that more general properties such as the closure of pseudorectifiability under general differentiable maps, while not entirely implausible, seem to be difficult to prove.


\begin{lemma}
\label{lemmaM}
Let $L\subset\HH$ be an affine subspace such that $\0\notin L$, let $R_1\subset L$, and let $R_2 = \R R_1$. Then $R_2$ is $(k + 1)$-pseudorectifiable if and only if $R_1$ is $k$-pseudorectifiable. Moreover, in this case
\begin{align*}
\mu_2(A) &= \int_{M^{-1}(A)} \det\big(M'(\xx,t)\given T_1(\xx)\oplus\R\big) \;\dee(\mu_1\times\lambda)(\xx,t),\\
T_2(t\xx) &= \R\xx + T_1(\xx),
\end{align*}
where $M(\xx,t) = t\xx$. Here $\lambda$ denotes Lebesgue measure on $\R$.
\end{lemma}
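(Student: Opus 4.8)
The plan is to realize the map $M(\xx,t) = t\xx$ as a change of variables between the set $R_1 \times \R \subset L \times \R$ and the cone $R_2 = \R R_1$, and then to transport pseudorectifiability through $M$ using the machinery already built for linear maps. First I would fix an isometric identification of the affine subspace $L$ with $\{1\} \times \HH_0$ for a suitable closed subspace $\HH_0 \leq \HH$, so that a point of $L$ is written $(1,\xx)$ with $\xx \in \HH_0$ and the cone map becomes $M(\xx,t) = (t, t\xx) \in \R \oplus \HH_0 \cong \HH'$ for some Hilbert space $\HH'$. The key structural observation is that $M$ is a $\CC^1$ diffeomorphism from the open set $\{(\xx,t) : t \neq 0\} \subset \HH_0 \times \R$ onto $\HH' \butnot (\{0\}\times\HH_0)$; in particular $M$ is locally bi-Lipschitz away from $t = 0$, and on the locus $t > 0$ (say; the locus $t<0$ is handled symmetrically and $t = 0$ contributes an $\scrH^{k+1}$-nullset since it is a single $k$-dimensional slice) it has a globally defined $\CC^1$ inverse. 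Since $R_1$ has $\scrH^k\given_{R_1}$ $\sigma$-finite (being $k$-pseudorectifiable, or else we are proving the ``only if'' direction), the set $R_1 \times (0,\infty)$ has $\scrH^{k+1}\given$ $\sigma$-finite.

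For the ``if'' direction (assuming $R_1$ is $k$-pseudorectifiable), I would argue as follows. The product $R_1 \times (0,\infty)$ is $(k+1)$-pseudorectifiable: this follows from the observation that the product of a pseudorectifiable set with an interval is pseudorectifiable, with $\mu_{R_1 \times (0,\infty)} = \mu_{R_1} \times \lambda$ and tangent plane function $T_1(\xx) \oplus \R$ — this is a direct check against \eqref{pseudorectifiable} using Fubini and the fact that $\pi_V$ of a $(k+1)$-plane of the form $W \oplus \R$ has metric determinant computable from $\pi_{\pi_V(\HH_0)}\given W$ and the extra direction. (Alternatively, one can just verify the defining identity directly.) Then, rather than invoking Lemma \ref{lemmapseudolinear} verbatim (since $M$ is nonlinear), I would re-run the proof of Proposition \ref{propositionrectifiable} / Lemma \ref{lemmapseudolinear}(ii) with $M$ in place of the linear map $L$: define
\begin{align*}
\mu_2(A) &= \int_{M^{-1}(A)} \det\big(M'(\xx,t)\given T_1(\xx)\oplus\R\big)\,\dee(\mu_1\times\lambda)(\xx,t),\\
T_2(t\xx) &= M'(\xx,t)\big[T_1(\xx)\oplus\R\big],
\end{align*}
and verify \eqref{pseudorectifiable} for $R_2$ by the chain rule applied to $\pi_V \circ M$: for $\ff = \pi_V$ a linear projection to a $(k+1)$-dimensional space, $\pi_V\circ M$ is $\CC^1$ on $\{t>0\}$, so the change-of-variables formula \eqref{rectifiable} of Proposition \ref{propositionrectifiable} (which is stated for $\CC^1$ maps out of $\HH$, hence applies after restricting to the pseudorectifiable set $R_1 \times (0,\infty)$ — strictly, one needs the statement of that formula for $\CC^1$ maps, which is exactly what Proposition \ref{propositionrectifiable} gives) yields
\[
\int \#\big((\pi_V\circ M)^{-1}(\zz)\cap M^{-1}(A)\big)\,\dee\scrH^{k+1}(\zz) = \int_{M^{-1}(A)} \det\big(\pi_V \circ M'(\xx,t)\given T_1(\xx)\oplus\R\big)\,\dee(\mu_1\times\lambda),
\]
and the left side equals $\int \#(\pi_V^{-1}(\zz)\cap A)\,\dee\scrH^{k+1}(\zz)$ while the right side factors, via the multiplicativity of metric determinants $\det(\pi_V\circ M' \given E) = \det(\pi_V\given M'[E])\det(M'\given E)$, as $\int_A \det(\pi_V\given T_2)\,\dee\mu_2$. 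Finally I would check $\mu_2 \asymp \scrH^{k+1}\given_{R_2}$: the inequality $\mu_2 \leq \scrH^{k+1}\given_{R_2}$ is Lemma \ref{lemmamuleqH}, and $\scrH^{k+1}\given_{R_2} \lessless \mu_2$ follows because $M'(\xx,t)$ is invertible for $t > 0$ (so $\det(M'(\xx,t)\given T_1(\xx)\oplus\R) > 0$ whenever $\dim(T_1(\xx)\oplus\R) = k+1$, which holds $\mu_1$-a.e.), hence the argument of Lemma \ref{lemmapseudolinear}(ii) giving absolute continuity applies locally on $\{t > 0\}$ after covering by sets where $M$ is bi-Lipschitz.

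For the ``only if'' direction, I would observe that if $R_2 = \R R_1$ is $(k+1)$-pseudorectifiable, then since $\scrH^{k+1}\given_{R_2}$ is $\sigma$-finite and $M^{-1}$ restricted to $\{t>0\}$ is locally Lipschitz, $\scrH^{k+1}\given_{R_1 \times (0,\infty)}$ is $\sigma$-finite, hence (slicing by Fubini for Hausdorff measure, or using that $R_1 = M(R_1\times\{1\})$ up to rescaling) $\scrH^k\given_{R_1}$ is $\sigma$-finite; then the same $\CC^1$-change-of-variables computation run backwards, using $M^{-1}$ on a neighborhood of $\{t = 1\}$, exhibits a tangent plane function and measure for $R_1$ satisfying \eqref{pseudorectifiable}, and $\mu_{R_1}\asymp\scrH^k\given_{R_1}$ again by Lemma \ref{lemmamuleqH} together with invertibility of $(M^{-1})'$. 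The main obstacle I anticipate is purely bookkeeping: handling the singular locus $t = 0$ cleanly (showing it is negligible in all the relevant measures and can simply be deleted) and making sure the $\CC^1$ change-of-variables formula of Proposition \ref{propositionrectifiable}, which is phrased for maps defined on all of $\HH$, is invoked correctly — this is fine since $M$ extends to a $\CC^1$ map on the open set $\{t\neq 0\}$ and pseudorectifiability is a countable-union property, so one can work chart-by-chart on sets where $M$ is a bi-Lipschitz $\CC^1$ diffeomorphism. No genuinely new idea beyond the chain rule and the multiplicativity of the metric determinant is needed; the content is that the cone map is a $\CC^1$ diffeomorphism off a nullset, which lets us reuse Propositions \ref{propositionrectifiable} and the proof technique of Lemma \ref{lemmapseudolinear}.
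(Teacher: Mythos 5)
Your proposal has a genuine gap, and it is precisely the gap the paper is at pains to avoid. The plan rests on applying a $\CC^1$ change-of-variables formula (the analogue of \eqref{rectifiable}) directly to the \emph{pseudorectifiable} set $R_1\times(0,\infty)$ via the nonlinear cone map $M$. But \eqref{rectifiable} is established in Proposition \ref{propositionrectifiable} only for \emph{rectifiable} sets. For a set that is merely pseudorectifiable, the defining identity \eqref{pseudorectifiable} holds for \emph{linear} orthogonal projections $\pi_V$ only; there is no known extension to general $\CC^1$ maps, and the paper says so explicitly in the paragraph introducing Lemma \ref{lemmaM}: ``more general properties such as the closure of pseudorectifiability under general differentiable maps, while not entirely implausible, seem to be difficult to prove.'' In infinite dimensions pseudorectifiable does \emph{not} imply rectifiable (Example \ref{examplemuR1}(ii)), so you cannot invoke Proposition \ref{propositionrectifiable} on $R_1\times(0,\infty)$. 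Your opening claim that ``the product of a pseudorectifiable set with an interval is pseudorectifiable, ... a direct check'' is also unsubstantiated and is essentially a restatement of the content of the lemma.

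The paper's proof circumvents this by projecting first. The crucial ingredient you have not used is Claim \ref{claimrectifiable}: for every $V\in\Grass(\HH)$, the projection $\pi_V(R_1)$ of a pseudorectifiable set is genuinely \emph{rectifiable} (being a subset of a finite-dimensional space, by Proposition \ref{propositionfindimconverse}). The paper then forms $R_4 = \pi_V(R_1)\times\R$, which is rectifiable, and applies the $\CC^1$ change-of-variables \eqref{rectifiable} to the cone map $M(\zz,t)=t\zz$ in the finite-dimensional space $V$; the pseudorectifiability of $R_1$ itself is only invoked for the linear projection $\pi_V$, which is legitimate. Your ``only if'' direction has the same problem: you again want to run a nonlinear change of variables backward through $M^{-1}$ on a pseudorectifiable set. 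The paper's ``only if'' argument is structurally different and avoids any such nonlinear change of variables: it uses Proposition \ref{propositionpartsv2} to extract a purely pseudounrectifiable $U_1\subset R_1$ of positive $\scrH^k$ measure, shows (via Theorem \ref{theorembesicovitchfederer} applied to projections) that $\R U_1$ is purely $(k+1)$-pseudounrectifiable with positive $\scrH^{k+1}$ measure, and then invokes Lemma \ref{lemmanullintersection} to contradict the pseudorectifiability of $R_2$.
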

\begin{proof}
First suppose that $R_1$ is $k$-pseudorectifiable. Fix $V\in\Grass_{k + 1}(\HH)$. By Claim \ref{claimrectifiable}, $R_3 = \pi_V(R_1)$ is $k$-rectifiable. It follows that $R_4 = R_3\times\R$ is $(k + 1)$-rectifiable and $T_4(\zz,t) = T_3(\zz)\oplus\R$. Fix $A\subset R_2$. Then
\begin{align*}
&\int \#(\pi_V^{-1}(\yy)\cap A) \;\dee\scrH^{k + 1}(\yy)\\
&= \int \sum_{(\zz,t)\in M^{-1}(\yy)} \#\big(t(\pi_V^{-1}(\zz)\cap R_1)\cap A\big) \;\dee\scrH^{k + 1}(\yy) \since{$R_1\subset L$}\\
&= \int \det\big(M'(\zz,t)\given T_4(\zz,t)\big) \#\big(t(\pi_V^{-1}(\zz)\cap R_1)\cap A\big) \;\dee\scrH^{k + 1}(\zz,t) \by{\eqref{rectifiable}}\\
&= \int\int \det\big(tI\oplus\zz\given T_3(\zz)\oplus\R\big) \#\big(\pi_V^{-1}(\zz)\cap (t^{-1}A\cap R_1)\big) \;\dee\scrH^k(\zz)\;\dee\lambda(t)\noreason\\
&= \int\int_{t^{-1}A\cap R_1} \det\big(tI\oplus\pi_V(\xx)\given T_3(\pi_V(\xx))\oplus\R\big) \det\big(\pi_V\given T_1(\xx)\big) \;\dee\mu_1(\xx) \;\dee\lambda(t)\noreason\\
&= \int\int_{t^{-1}A\cap R_1} \det\big(t\pi_V\oplus\pi_V(\xx)\given T_1(\xx)\oplus\R\big) \;\dee\mu_1(\xx) \;\dee\lambda(t)\\
&= \int\int_{t^{-1}A\cap R_1} \det\big(\pi_V\given T_1(\xx) + \R\xx\big) \det\big(tI\oplus\xx\given T_1(\xx)\oplus\R\big) \;\dee\mu_1(\xx) \;\dee\lambda(t)\noreason\\
&= \int_A \det\big(\pi_V\given T_2(\ww)\big) \;\dee\mu_2(\ww).
\end{align*}
It is clear that $\mu_2$ is $\sigma$-finite, and by Lemma \ref{lemmamuleqH}, $\mu_2\leq\scrH^{k + 1}\given_{R_2}$. An elementary covering argument (cf. \cite[Product formula 7.3]{Falconer_book}) shows that $\scrH^{k + 1}\given_{R_1\times\R^*} \lessless \scrH^k\given_{R_1}\times\lambda$. Since $M$ is locally Lipschitz, Fact \ref{factlipschitz} shows that $\scrH^{k + 1}\given_{R_2}\lessless M(\mu_1\times\lambda)$. Finally, since $R_1\subset L$, the determinant $\det\big(M'(\xx,t)\given T_1(\xx)\oplus\R\big)$ is always nonzero, so $\scrH^{k + 1}\given_{R_2}\lessless \mu_2$, completing the proof that $R_2$ is $(k + 1)$-pseudorectifiable.

On the other hand, suppose that $R_1$ is not $k$-pseudorectifiable. Then by Proposition \ref{propositionpartsv2}, there exist $U_1\subset R_1$ and $V_1\in\Grass(\HH)$ such that $\scrH^k(U_1) > 0$ and such that for all $V\geq V_1$, $\pi_V(U_1)$ is purely $k$-unrectifiable. Let $U_2 = \R U_1$, and fix $V\geq V_1$. By Theorem \ref{theorembesicovitchfederer}, we have $\scrH^k(\pi_W(U_1)) = 0$ for Lebesgue-a.e. $W\in\Grass_k(V)$. Since $\pi_W(U_2) = \R\pi_W(U_1)$ and $\scrH^{k + 1}\lessless M(\scrH^k\times\lambda)$ as proven above, we have $\scrH^{k + 1}(\pi_W(U_2)) = 0$ for Lebesgue-a.e. $W\in\Grass_k(V)$. Thus $\pi_V(U_2)$ is purely $(k + 1)$-unrectifiable, and since $V$ was arbitrary, $U_2$ is purely $(k + 1)$-pseudounrectifiable. On the other hand, an elementary covering argument based on weighted Hausdorff measures shows that $\scrH^{k + 1}(U_2) > 0$ (cf. \cite[Lemma 8.16]{Mattila}). Thus, by Lemma \ref{lemmanullintersection} $R_2$ is not $(k + 1)$-pseudorectifiable.
\end{proof}

\begin{lemma}
\label{lemmagraph}
Let $f:\HH\to\R$ be a differentiable function, let $F(\xx) = (\xx,f(\xx))\in\HH\oplus\R$, fix $R_1\subset\HH$, and let $R_2 = F(R_1)\subset\HH\oplus\R$. Then if $R_2$ is pseudorectifiable, then $R_1$ is pseudorectifiable, and
\begin{align*}
\mu_2(A) &= \int_{\pi_\HH(A)} \det\big(F'(\xx)\given\HH\big) \;\dee\mu_1(\xx),&
T_2(F(\xx)) &= F'(\xx)[T_1(\xx)].
\end{align*}
Here $\det\big(F'(\xx)\given\HH\big)$ is interpreted in the obvious way, i.e.
\[
\det\big(F'(\xx)\given\HH\big) = \sqrt{1 + \|f'(\xx)\|^2}.
\]
\end{lemma}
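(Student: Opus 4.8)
The plan is to realize $R_1$ as the image of $R_2$ under the orthogonal projection $\pi_\HH\colon\HH\oplus\R\to\HH$ and then invoke Lemma~\ref{lemmapseudolinear}. Since $\pi_\HH\circ F=\mathrm{id}_\HH$, we have $R_1=\pi_\HH(R_2)$, and $\pi_\HH$ restricts to a $1$-Lipschitz bijection $R_2\to R_1$; in particular $\scrH^k\given_{R_1}$ is $\sigma$-finite by Fact~\ref{factlipschitz}. Moreover $F$ is differentiable everywhere (because $f$ is), with $F'(\xx)\vv=(\vv,f'(\xx)\vv)$, so $F'(\xx)$ is injective, $\pi_\HH\circ F'(\xx)=\mathrm{id}_\HH$, and its image $F'(\xx)[\HH]=\{(\vv,f'(\xx)\vv):\vv\in\HH\}$ is a hyperplane not containing the vertical unit vector $\ee:=(\0,1)$. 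The whole lemma will follow from Lemma~\ref{lemmapseudolinear}(ii), applied with the bounded linear map $L=\pi_\HH$ and the pseudorectifiable set $R_2$, together with Lemma~\ref{lemmapseudolinear}(i), \emph{provided} we can verify its hypothesis
\begin{equation}\label{planA}
\det\bigl(\pi_\HH\given T_2(\ww)\bigr)>0\qquad\text{for $\mu_2$-a.e.\ }\ww\in R_2 .
\end{equation}
An elementary Gram-matrix computation gives $\det(\pi_\HH\given V)=\sqrt{1-\|\pi_V\ee\|^2}$ for $V\in\Grass_k(\HH\oplus\R)$, so \eqref{planA} says exactly that the tangent plane function $T_2$ is $\mu_2$-almost everywhere \emph{non-vertical}, i.e.\ $\ee\notin T_2(\ww)$. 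Granting \eqref{planA}, Lemma~\ref{lemmapseudolinear} shows that $R_1=\pi_\HH(R_2)$ is pseudorectifiable, that $T_1(\xx)=\pi_\HH[T_2(F(\xx))]$ (so $\pi_\HH$ is injective on $T_2(F(\xx))$, which, since $\pi_\HH\circ F'(\xx)=\mathrm{id}$, forces $T_2(F(\xx))=F'(\xx)[T_1(\xx)]$), and, after inverting the Radon--Nikodym relation in \eqref{pseudolinear1} and using the multiplicativity identity $\det(\pi_\HH\given F'(\xx)[T_1(\xx)])\cdot\det(F'(\xx)\given T_1(\xx))=\det(\mathrm{id}\given T_1(\xx))=1$, the stated change-of-variables formula for $\mu_2$.

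Thus everything reduces to \eqref{planA}. Suppose it fails, so $E:=\{\ww\in R_2:\ee\in T_2(\ww)\}$ has $\scrH^k(E)>0$ (equivalently $\mu_2(E)>0$, as $\mu_2\asymp\scrH^k\given_{R_2}$). For $\ww\in E$ and any $V\in\Grass_k(\HH)\subseteq\Grass_k(\HH\oplus\R)$ the map $\pi_V$ annihilates $\ee\in T_2(\ww)$, so $\det(\pi_V\given T_2(\ww))=0$; feeding this into \eqref{pseudorectifiable} for $R_2$ yields $\int\#(\pi_V^{-1}(\yy)\cap E)\,\dee\scrH^k(\yy)=0$, hence $\scrH^k(\pi_V(E))=0$, for \emph{every} $V\in\Grass_k(\HH)$. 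So a positive-$\scrH^k$ subset $E$ of the differentiable graph $\{(\xx,f(\xx)):\xx\in\HH\}$ would have all of its $\HH$-directional projections $\scrH^k$-null; note that $E\subset R_2$ is itself pseudorectifiable (Observation~\ref{observationpseudounion}) with $T_E=T_2\given_E\ni\ee$ everywhere. Differentiability of $f$ must now be used in an essential way: for a general set the situation is not absurd (a four-corner Cantor set has positive $\scrH^1$ yet all of its projections onto lines are $\scrH^1$-null), so the contradiction has to exploit that $E$ lies in a differentiable graph.

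I expect this last step to be the main obstacle. The plan is to reduce to finite dimensions. First write $\HH=\bigcup_m B_m$ with each $B_m$ of small diameter and $f\given_{B_m}$ Lipschitz — possible because differentiability of $f$ gives $|f(\yy)-f(\xx)|\le(\|f'(\xx)\|+1)\|\yy-\xx\|$ for $\yy$ near $\xx$, so $\HH$ is a countable union of small sets on which $f$ is genuinely Lipschitz; on such a piece $F\given_{B_m}$ is bi-Lipschitz onto its image, so $\scrH^k(E\cap F(B_m))\asymp_{\times,m}\scrH^k(\pi_\HH(E\cap F(B_m)))$ and it is enough to prove $\scrH^k(\pi_\HH E)=0$. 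Since $R_2$ is pseudorectifiable, Claim~\ref{claimrectifiable} shows $\pi_W(R_2)$ — and hence $\pi_W(\pi_\HH E)$ — is rectifiable for every $W\in\Grass(\HH)$; a rectifiable subset of a finite-dimensional space all of whose $k$-dimensional projections are $\scrH^k$-null is $\scrH^k$-null (by \eqref{rectifiable} and Remark~\ref{remarkTPunique}, exactly as in the proof of Proposition~\ref{propositionfindimconverse}), so in fact $\scrH^k(\pi_W(\pi_\HH E))=0$ for \emph{all} $W\in\Grass(\HH)$. The remaining, delicate point is to upgrade this to $\scrH^k(\pi_\HH E)=0$; I would do so on each piece $B_m$ by passing to a finite-dimensional coordinate space and invoking the identification of the tangent plane function with the classical approximate tangent plane (Proposition~\ref{propositionrectifiable} and \cite[Chapter~15]{Mattila}), together with the fact that an approximate tangent plane at a density point of a set contained in a differentiable graph must lie in the graph's tangent hyperplane $F'(\xx)[\HH]$ and is therefore non-vertical — contradicting $\ee\in T_E(\ww)$ throughout $E$ and forcing $\scrH^k(E)=0$.
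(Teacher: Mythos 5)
Your reduction is the right one: everything hinges on verifying the hypothesis of Lemma~\ref{lemmapseudolinear}(ii) with $L=\pi_\HH$, and you are correct that the remaining clauses of the lemma then come out of Lemma~\ref{lemmapseudolinear}(i). However there are two genuine gaps, and the second, which you flag as ``delicate,'' is exactly where the proof's real content lies.

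First, a more subtle point you may have missed: non-verticality, i.e.\ your \eqref{planA}, is \emph{not sufficient} to derive the stated tangent-plane identity. The deduction ``$\pi_\HH$ is injective on $T_2(F(\xx))$, which, since $\pi_\HH\circ F'(\xx)=\mathrm{id}$, forces $T_2(F(\xx))=F'(\xx)[T_1(\xx)]$'' is a non sequitur: there are infinitely many $k$-dimensional subspaces $W\subset\HH\oplus\R$ with $\ee\notin W$ and $\pi_\HH[W]=T_1(\xx)$ (any subspace of the form $\{\vv+\psi(\vv)\ee:\vv\in T_1(\xx)\}$ for $\psi\in T_1(\xx)^*$ qualifies). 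What is actually needed, and what the paper proves, is the stronger statement $T_2(F(\xx))\subset F'(\xx)[\HH]$ for $\mu_2$-a.e.\ $\xx$ --- that the abstract tangent plane lies in the \emph{geometric} tangent hyperplane of the graph --- and once one has that, both $\det(\pi_\HH\given T_2)>0$ and $T_2=F'[T_1]$ fall out at once. Your contradiction set $E=\{\ww:\ee\in T_2(\ww)\}$ only captures failure of the weaker condition.

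Second, and independently, your attempted proof that $\scrH^k(E)=0$ doesn't close. You correctly get $\scrH^k(\pi_V(E))=0$ for every $V\in\Grass_k(\HH)$, correctly observe (four-corner Cantor set) that this alone is inconclusive, and correctly note that for $W\in\Grass(\HH)$ the projection $\pi_W(E)$ is rectifiable and hence $\scrH^k$-null. But ``$\scrH^k(\pi_W(E))=0$ for all finite-dimensional $W\leq\HH$'' does \emph{not} imply $\scrH^k(E)=0$ in infinite dimensions --- the set $R_2$ of Example~\ref{examplemuR1} is precisely a counterexample to that implication. Your proposed remedy, ``passing to a finite-dimensional coordinate space'' and using that the classical approximate tangent plane at a density point of a set lying in a $C^1$ graph is contained in the graph's tangent hyperplane, does not work here: the graph structure is destroyed by the projections $\pi_W$ (since $f$ depends on all of $\xx\in\HH$, the image $\pi_W(F(\HH))$ is not the graph of a function over $W\cap\HH$), so there is nothing for the finite-dimensional tangent-plane criterion to act on.

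The paper's route avoids both obstacles simultaneously. From \eqref{pseudorectifiable} applied to $R_2$ and a coarea-type estimate (a small-pieces application of Fact~\ref{factlipschitz}), one gets, for every $V\in\Grass_k(\HH\oplus\R)$ and $A\subset R_1$,
\[
\int_{F(A)}\det\bigl(\pi_V\given T_2\bigr)\,\dee\mu_2 \;\leq\; \int_A \det\bigl((\pi_V\circ F)'(\xx)\bigr)\,\dee\scrH^k(\xx).
\]
Passing to Radon--Nikodym derivatives with respect to $\nu_1=\pi_\HH(\mu_2)$, using that the right-hand side is continuous in $V$, and then choosing $V$ to contain the unit normal of $F'(\xx)[\HH]$ (which makes the right-hand integrand vanish) forces $\det(\pi_V\given T_2(F(\xx)))=0$ for all such $V$, hence $T_2(F(\xx))\subset F'(\xx)[\HH]$ --- the strong statement --- from which everything else follows. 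This is a genuinely different mechanism from your projection-nullity argument: it is a pointwise Jacobian inequality along the graph rather than an attempt to detect nullity via the Besicovitch--Federer circle of ideas, and it is what makes the use of differentiability of $f$ precise.
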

\begin{remark}
It seems difficult to prove the converse (that if $R_1$ is pseudorectifiable, then so is $R_2$). Fortunately, it is not necessary for the proof of Theorem \ref{theoremrigidity}($d = \infty$, $k > 1$).
\end{remark}
\begin{proof}
Fix $V\in\Grass_k(\HH\oplus\R)$ and $A\subset R_1$. Then
\begin{align*}
\int_{F(A)} \det\big(\pi_V\given T_2(\xx)\big) \;\dee\mu_2(\xx)
&= \int \#\big((\pi_V\circ F)^{-1}(\yy)\cap A\big)\;\dee\scrH^k(\yy) \by{\eqref{pseudorectifiable}}\\
&\leq \int_A \det\big((\pi_V\circ F)'(\xx)\big)\;\dee\scrH^k(\xx) \note{see below}\\
&= \int_A \det\big(\pi_V\given F'(\xx)[\HH]\big)\det\big(F'(\xx)\given\HH\big)\;\dee\scrH^k(\xx).\noreason
\end{align*}
To justify the inequality, let $\AA$ be a partition of $A$ into small pieces, and for each $B\in\AA$, apply Fact \ref{factlipschitz} to the map $L_B\circ\pi_V\circ F$, where $L_B:V\to\R^k$ is a linear map chosen so that $L_B\circ\pi_V\circ F$ is $\lambda$-Lipschitz on $B$ for some $\lambda\sim 1$, but $\det(L_B) \sim 1/\det\big((\pi_V\circ F)'(\xx)\big)$ for all $\xx\in B$. (For example, we could let $L_B = [(\pi_V\circ F)'(\xx)]^{-1}$ for any $\xx\in B$.)

Let $\nu_1 = \pi_\HH(\mu_2)$. Then we can write the above inequality in terms of Radon--Nikodym derivatives as
\begin{equation}
\label{piVT1}
\det\big(\pi_V\given T_2(F(\xx))\big) \leq \det\big(\pi_V\given F'(\xx)[\HH]\big)\det\big(F'(\xx)\given\HH\big) \frac{\dee\scrH^k\given_{R_1}}{\dee\nu_1}(\xx)
\end{equation}
for $\nu_1$-a.e. $\xx\in R_1$. Note that the Radon--Nikodym derivative is well-defined since by Fact \ref{factlipschitz}, $\scrH^k\given_{R_1}\lessless \nu_1$. Since \eqref{piVT1} is continuous with respect to $V$, for $\nu_1$-a.e. $\xx\in R_1$, it holds for all $V\in\Grass_k(\LL)$. Fix such an $\xx$. Then for all $V\in\Grass_k(\LL)$ such that $\det\big(\pi_V\given F'(\xx)[\HH]\big) = 0$, we have $\det\big(\pi_V\given T_2(F(\xx))\big) = 0$. It follows that $T_2(F(\xx))\subset F'(\xx)[\HH]$.

In particular, $\det\big(\pi_\HH\given T_2(\xx)\big) > 0$ for $\mu_2$-a.e. $\xx\in R_2$. So by Lemma \ref{lemmapseudolinear}, $R_1$ is pseudorectifiable and satisfies
\begin{align*}
\mu_1(A) &= \int_{F(A)} \det\big(\pi_\HH\given T_2(\xx)\big) \;\dee\mu_2(\xx),&
T_1(\xx) &= \pi_\HH[T_2(F(\xx))].
\end{align*}
Since $T_2(F(\xx))\subset F'(\xx)[\HH]$, we must have $T_2(F(\xx)) = F'(\xx)[T_1(\xx)]$.
\end{proof}

\draftnewpage

\section{Proof of Meta-rigidity Theorem \ref{theoremrigidity}($\dimH = \infty$, $k > 1$)}
\label{sectionI}
We just need a few more preliminaries before we can begin the proof. Since pseudorectifiability is defined in terms of linear maps, it is not obvious what its equivariance properties are with respect to arbitrary conformal maps. We resolve this issue by conjugating our set $\SK$ into the projective model of conformal geometry. Since conformal maps are linear after conjugation to the projective model, this will allow us to use the tools of the previous section.

We therefore recall the definition of the projective model of conformal geometry, see e.g. \cite{AkivisGoldberg, Hertrich}. 
Let $\LL$ be a Hilbert space and let $\QQ$ be a nondegenerate quadratic form on $\LL$ of signature 1. For concreteness, we let $\LL = \R^2\oplus\HH$, and we define the quadratic form $\QQ$ by the formula
\[
\QQ(t_0,t_1,\xx) = 2t_0 t_1 + \|\xx\|^2.
\]
Let $[\LL]$ denote projective space over $\LL$, and for each $\yy\in\LL\butnot\{\0\}$, let $[\yy]$ denote the corresponding point in projective space, so that $[\yy] = [t\yy]$ for all $t\in\R\butnot\{0\}$. Let $L_\QQ = \{\xx\in\LL : \QQ(\xx) = 0\}$. Then $[L_\QQ]$ is a conformal manifold isomorphic to $\what\HH$.\Footnote{To be precise, we should give the conformal structure on $[L_\QQ]$: for each function $\alpha:L_\QQ\to(0,\infty)$ such that $\alpha(t\xx) = |t|\alpha(\xx)$, the Riemannian metric $g_\xx(\yy) = \alpha^{-2}(\xx)\QQ(\yy)$ on $L_\QQ$ is invariant under scaling, and so it induces a Riemannian metric on $[L_\QQ]$; the conformal structure on $[L_\QQ]$ is the class of such metrics.} Concretely, let
\[
\iota(\xx) = (1,-\|\xx\|^2/2,\xx) \;\; (\xx\in\HH).
\]
Then the map $\phi:\what\HH\to [L_\QQ]$ defined by the equation
\[
\phi(\xx) = \begin{cases}
[\iota(\xx)] & \xx\neq\infty\\
[(0,1,\0)] & \xx = \infty
\end{cases}
\]
is a conformal isomorphism between $\what\HH$ and $[L_\QQ]$. The fact that $\phi$ conjugates conformal maps to linear maps is expressed by the following well-known result:

\begin{proposition}[Cf. {\cite[Theorem 1.3.14]{Hertrich}} or {\cite[\61.1]{AkivisGoldberg}}]
\label{propositionPhi}
For every M\"obius transformation $g:\what\HH\to\what\HH$, there exists a $\QQ$-preserving bounded invertible linear transformation $g_\Proj:\LL\to\LL$ such that for all $\xx\in\what\HH$,
\[
\phi(g(\xx)) = [g_\Proj](\phi(\xx)).
\]
The transformation $g_\Proj$ is unique up to a factor of $-1$.
\end{proposition}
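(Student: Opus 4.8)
The plan is to prove the proposition in two stages: first verify it for a generating set of $\Mob(\what\HH)$, then extend it to an arbitrary $g$ by composing, using the uniqueness clause to see that the composite $g_\Proj$ does not depend on the chosen factorization. Recall that $\Mob(\what\HH)$ is generated by the similarities of $\HH$ — translations $\xx\mapsto\xx+\vv$, dilations $\xx\mapsto\lambda\xx$ ($\lambda>0$), and linear isometries $\xx\mapsto U\xx$ — together with the inversion $j(\xx)=\xx/\|\xx\|^2$ (with $j(\0)=\infty$, $j(\infty)=\0$). For each generator I would exhibit an explicit bounded invertible $\QQ$-preserving linear map $g_\Proj$ on $\LL=\R^2\oplus\HH$ and check the intertwining identity $[g_\Proj]\circ\phi=\phi\circ g$ on the affine chart $\iota(\xx)=(1,-\|\xx\|^2/2,\xx)$ and at $\phi(\infty)=[(0,1,\0)]$. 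Concretely one may take
\[
g_\Proj(t_0,t_1,\xx)=\big(t_0,\ t_1-\langle\xx,\vv\rangle-\tfrac12 t_0\|\vv\|^2,\ \xx+t_0\vv\big)
\]
for the translation by $\vv$, $g_\Proj(t_0,t_1,\xx)=(\lambda^{-1}t_0,\lambda t_1,\xx)$ for the dilation by $\lambda$, $g_\Proj(t_0,t_1,\xx)=(t_0,t_1,U\xx)$ for the isometry $U$, and $j_\Proj(t_0,t_1,\xx)=(-2t_1,-\tfrac12 t_0,\xx)$ for the inversion; verifying boundedness, invertibility, $\QQ\circ g_\Proj=\QQ$, and the intertwining relation for each of these is the only computational content of the argument. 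For a general $g=g_1\circ\cdots\circ g_m$ written as a composition of generators I would set $g_\Proj=(g_1)_\Proj\circ\cdots\circ(g_m)_\Proj$, which is then bounded, invertible, $\QQ$-preserving, and satisfies $[g_\Proj]\circ\phi=\phi\circ g$.

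The remaining point — which also establishes that the composition rule is well defined — is uniqueness up to sign. Suppose $A,B:\LL\to\LL$ are bounded invertible $\QQ$-preserving linear maps with $[A]\circ\phi=[B]\circ\phi$ on $\what\HH$, and put $C=B^{-1}A$. Then $C$ preserves $\QQ$, hence (by polarization) the associated bilinear form $\langle\cdot,\cdot\rangle_\QQ$, and $[C]$ fixes every point of $[L_\QQ]$, i.e. $C$ maps each null line to itself. Fixing a unit vector $\vv\in\HH$ and the three null vectors $e_+=\iota(\0)=(1,0,\0)$, $e_-=(0,1,\0)=\phi(\infty)$, $f=\iota(\vv)$, we have $Ce_+=ae_+$, $Ce_-=be_-$, $Cf=cf$ for nonzero scalars; comparing $\langle\cdot,\cdot\rangle_\QQ$ before and after applying $C$ to the pairs $(e_+,e_-)$, $(e_+,f)$, $(e_-,f)$ gives $ab=ac=bc=1$, so $a=b=c=\sigma$ with $\sigma\in\{+1,-1\}$. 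Running the same comparison with $f$ replaced by $\iota(\xx)$ for an arbitrary $\xx\in\HH\butnot\{\0\}$ (noting $\langle e_+,\iota(\xx)\rangle_\QQ=-\tfrac12\|\xx\|^2\neq0$) forces $C\iota(\xx)=\sigma\iota(\xx)$ with the same sign $\sigma$. Since $\{\iota(\xx):\xx\in\HH\}\cup\{e_-\}$ spans $\LL$ — an elementary check, as $\iota(\xx)-\iota(\0)$ together with a scaling recovers every $(0,0,\xx)$ and then $e_-$ — we get $C=\sigma I$, i.e. $A=\pm B$. Applying this to two factorizations of the same $g$ shows $g_\Proj$ is well defined up to a factor of $-1$, which is exactly the assertion.

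I do not expect a genuine obstacle here: the statement is classical, and in finite dimensions one could simply invoke \cite{Hertrich} or \cite{AkivisGoldberg}. The only care needed is bookkeeping so that all the constructions stay within the category of \emph{bounded} linear maps on the possibly infinite-dimensional space $\LL$, and so that the spanning and eigenvalue arguments in the uniqueness step do not secretly use finite-dimensionality — both of which the plan above handles directly. If anything is slightly delicate it is making the uniqueness argument fully coordinate-free, but working with the concrete form $\QQ(t_0,t_1,\xx)=2t_0t_1+\|\xx\|^2$ and the null vectors $e_\pm,\iota(\xx)$ renders it routine.
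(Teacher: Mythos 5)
Your proof is correct, and it fills in explicitly what the paper leaves to the cited references (Hertrich-Jeromin, Akivis--Goldberg) together with the one-line remark that the argument carries over to $\dimH=\infty$. The approach — verify the intertwining and $\QQ$-preservation on the standard generating set $\{$translations, dilations, orthogonal maps, inversion $j\}$ of $\Mob(\what\HH)$, then compose, with the uniqueness argument simultaneously proving well-definedness of the composite — is precisely the classical one, and all the explicit formulas you give check out (I verified $\QQ\circ g_\Proj = \QQ$ and $[g_\Proj]\circ\phi = \phi\circ g$ for each generator). The uniqueness step via the bilinear form $\langle(s_0,s_1,\xx),(t_0,t_1,\yy)\rangle_\QQ = s_0t_1 + s_1t_0 + \langle\xx,\yy\rangle$ and the null vectors $e_\pm$, $\iota(\xx)$ is clean and works verbatim in infinite dimensions, since the spanning argument is purely algebraic (finite linear combinations suffice — no density or eigenbasis argument is invoked). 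The only thing you tacitly use without citation is that similarities together with $j$ do generate $\Mob(\what\HH)$ when $\dimH=\infty$; this is established in \cite[\S 2]{DSU} and is worth a pointer, but it is not a gap in the mathematics.
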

Although the cited references assume that $\dimH < \infty$, there is no essential difference to the case $\dimH = \infty$; cf. \cite[\62]{DSU}.

We are now ready to begin the proof of Theorem \ref{theoremrigidity}($d = \infty$, $k > 1$).

{\bf Step 1: Without loss of generality, $\mu = \scrH^k\given_\SK$.} This step proceeds in the same way as in the proof of Theorem \ref{theoremrigidity}($d < \infty$).

{\bf Step 2: Comparing the extended tangent plane functions in the two models.}
Let
\begin{align*}
\SK_\Euc &= \SK \subset\HH,&
\SK_\para &= \iota(\SK) \subset \{1\}\times\R\times\HH,&
\SK_\Proj &= \R\SK_\para \subset\LL,
\end{align*}
and let $T_\Euc:\SK_\Euc\to\what\Grass_k(\HH)$, $T_\para:\SK_\para\to\what\Grass_k(\LL)$, and $T_\Proj:\SK_\Proj\to\what\Grass_{k + 1}(\LL)$ denote the extended tangent plane functions (cf. Remark \ref{remarkextendedTP}). 
Let $\mu_\Euc = \mu = \scrH^k\given_{\SK_\Euc}$, $\mu_\para = \scrH^k\given_{\SK_\para}$, and $\mu_\Proj = \scrH^{k + 1}\given_{\SK_\Proj}$. Then $\mu_\Euc$, $\mu_\para$, and $\mu_\Proj$ are $\sigma$-finite and satisfy $\iota(\mu_\Euc)\asymp\mu_\para$, $\mu_\Proj \asymp M(\mu_\para\times\lambda)$, where $M$ is as in Lemma \ref{lemmaM} and $\lambda$ is Lebesgue measure. 
By Lemmas \ref{lemmaM} and \ref{lemmagraph},
\begin{align} \label{M}
T_\Proj(t\zz) &= T_\para(\zz) + \R\zz \;\; (\zz\in\SK_\para,\; t\in\R^*),\\ \label{graph}
T_\para(\iota(\xx)) &= \text{$\token$ or  $\iota'(\xx)[T_\Euc(\xx)]$} \;\; (\xx\in\HH),
\end{align}
with the understanding in \eqref{M} that $T_\Proj(t\zz) = \token$ if $T_\para(\zz) = \token$. The ambiguity in $T_\para(\iota(\xx))$ comes from the fact that we don't have a converse to Lemma \ref{lemmagraph}, so it may happen that $T_\para(\iota(\xx)) = \token$ while $T_\Euc(\xx) \neq\token$. The solution is to apply Proposition \ref{propositionfederermattilainfdim} to $\SK_\para$ instead of to $\SK_\Euc$, so we get $\mu_\para(T_\para\neq\token) > 0$. For convenience of notation, let
\[
\w T_\Euc(\xx) = \begin{cases}
T_\Euc(\xx) & T_\para(\iota(\xx))\neq\token\\
\token & \text{otherwise}
\end{cases},
\]
so that we can write
\begin{equation}
\label{graphv2}
T_\para(\iota(\xx)) = \iota'(\xx)[\w T_\Euc(\xx)] \;\; (\xx\in\HH)
\end{equation}
and $\mu_\Euc(\w T_\Euc(\xx)\neq\token) > 0$.

{\bf Step 3. A limit equation for $\w T_\Euc$.}
Apply Corollary \ref{corollarylebesguedifferentiation} to the function $f = \w T_\Euc : \SK\to Y = \what\Grass_k(\HH)$ to get a point $\pp\in\Rad_k(\mu)$ such that if $(g_n)_1^\infty$ is as in Definition \ref{definitionradial}, then
\[
\lim_{n\to\infty} \frac{1}{\|g_n'\|^k} \int_{g_n(U)} \dist_\Grass\big(\w T_\Euc(\xx),L_0\big) \;\dee\mu(\xx) = 0,
\]
where $L_0 = \w T_\Euc(\pp) \neq \token$. Then by \eqref{muconformal},
\[
\lim_{n\to\infty} \int_U \dist_\Grass\big(\w T_\Euc(g_n(\xx)),L_0\big) \;\dee\mu(\xx) = 0.
\]
Passing to a subsequence along which the convergence is at least geometrically fast and using Markov's inequality, we get
\begin{equation}
\label{limitA}
\lim_{n\to\infty} \w T_\Euc(g_n(\xx)) = L_0 \text{ for $\mu$-a.e. $\xx\in \SK\cap U$}.
\end{equation}
We now rewrite \eqref{limitA} using equivariance of the extended tangent plane function. For each $n$, let $g_n^\Proj:\LL\to\LL$ be as in Proposition \ref{propositionPhi}. Then by Lemma \ref{lemmapseudolinear},
\[
g_n^\Proj[T_n^\Proj(\xx)] = T_n^\Proj(g_n^\Proj(\zz)) \text{ for $\mu_\Proj$-a.e. $\zz\in \SK_\Proj\cap U_\Proj$},
\]
where $U_\Proj = \R\iota(U)$. So by \eqref{M} and \eqref{graphv2},
\begin{equation}
\label{equivariance}
g_n'(\xx)[\w T_\Euc(\xx)] = \w T_\Euc(g_n(\xx)) \text{ for $\mu$-a.e. $\xx\in \SK\cap U$ and for all $n$}.
\end{equation}
Fix $\xx\in \SK\cap U$ such that both \eqref{limitA} and \eqref{equivariance} hold. Since $g_n$ is conformal, $g_n'(\xx)$ is a similarity, so
\[
\dist_\Grass\big(\w T_\Euc(\xx),g_n'(\xx)^{-1}(L_0)\big) = \dist_\Grass\big(g_n'(\xx)[\w T_\Euc(\xx)],L_0\big) = \dist_\Grass\big(\w T_\Euc(g_n(\xx)),L_0\big) \tendsto n 0.
\]
Letting
\[
L_n^\Euc(\xx) = g_n'(\xx)^{-1}(L_0),
\]
we have
\begin{equation}
\label{limitB}
\dist_\Grass(L_n^\Euc(\xx),\w T_\Euc(\xx))\to 0 \text{ for $\mu$-a.e. $\xx\in \SK\cap U$}.
\end{equation}
In particular, $\w T_\Euc(\xx)\neq\token$ for $\mu$-a.e. $\xx\in \SK\cap U$.

{\bf Step 4: Extracting a convergent subsequence.}
To continue the proof, we switch back to the projective model. Let
\[
V_0 = \lb 0\rb\times\R\times L_0 \subset\LL,
\]
and for each $n$ let
\begin{align}
\label{Vndef}
V_n &= (g_n^\Proj)^{-1}[V_0],&
L_n^\Proj(\zz) &= \R\zz + V_n\cap\zz^\perp.
\end{align}
Here $\zz^\perp$ denotes the $\QQ$-orthogonal complement of $\zz$. It is readily verified that
\[
L_n^\Proj(\iota(\xx)) = \iota'(\xx)[L_n^\Euc(\xx)] + \R\iota(\xx) \all\xx\in\HH,
\]
so by \eqref{M} and \eqref{graphv2}, the pointwise convergence \eqref{limitB} implies that $L_n^\Proj(\zz)\to T_\Proj(\zz)$ for $\mu_\Proj$-a.e. $\zz\in U_\Proj$. Equivalently,
\[
\dist_\Grass(T_\Proj(\zz),\R\zz + V_n\cap\zz^\perp) \to 0 \all^{\mu_\Proj} \zz\in \SK_\Proj\cap U_\Proj.
\]
Let
\[
\dist_\subset(V,W) = \inf\{\epsilon > 0 : V\subset \NN_\Proj(W,\epsilon)\}.
\]
Then we may weaken our state of knowledge as follows:
\begin{align}
\label{limit}
\exists X\in\Grass(\LL) &\all^{\mu_\Proj} \zz\in U_\Proj\butnot X\\ \label{limitinternal}
& \dist_\subset(T_\Proj(\zz),\R\zz + V_n) \to 0.
\end{align}
(We will use the equation number \eqref{limit} to refer to both lines of this formula, while \eqref{limitinternal} refers only to the second line.)

The idea now is that if $(V_n)_1^\infty$ is not a Cauchy sequence, then we should be able to find another sequence $(V_n)_1^\infty$ satisfying \eqref{limit}, but with smaller dimension. The new sequence will be constructed as a sequence of ``almost-intersections'' of members of the old sequence. Specifically, we will use the following lemma:

\begin{lemma}
\label{lemmaepsilonintersection}
Fix $\epsilon\in (0,1)$ and $V_1,V_2\in\Grass(\LL)$. There exists $V\in\Grass(\LL)$ with $\dist_\subset(V,V_i)\leq\epsilon$ such that for all $\ww\in\LL$,
\[
\dist(\ww,V) \leq (3/\epsilon)\max_{i = 1}^2 \dist(\ww,V_i).
\]
\end{lemma}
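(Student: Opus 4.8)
The plan is to construct $V$ from the classical canonical-angle decomposition of the pair $(V_1,V_2)$: keep exactly the directions along which $V_1$ and $V_2$ are $\epsilon$-close, and discard the rest.

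First I would invoke the standard structure theorem for two finite-dimensional subspaces of a Hilbert space (obtained from the spectral theorem applied to the self-adjoint operator $\pi_{V_1}\pi_{V_2}\pi_{V_1}$ on $V_1$; I would quote it rather than reprove it): $V_1+V_2$ splits as an orthogonal direct sum $V_1+V_2=\bigoplus_j H_j$ of subspaces $H_j$ with $\dim H_j\le 2$, each invariant under both $\pi_{V_1}$ and $\pi_{V_2}$, with the restriction of $\pi_{V_i}$ to $H_j$ equal to the orthogonal projection of $H_j$ onto $V_i\cap H_j$, and with every two-dimensional $H_j$ spanned by the lines $V_1\cap H_j$ and $V_2\cap H_j$, which meet at a principal angle $\theta_j\in(0,\pi/2)$. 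I would then let $S$ be the set of indices $j$ such that either $H_j\subseteq V_1\cap V_2$ or $H_j$ is two-dimensional with $\sin\theta_j\le\epsilon$, and set $V=\bigoplus_{j\in S}(V_1\cap H_j)$.

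Verifying the two required properties is then a matter of bookkeeping against this orthogonal splitting. Since $V\subseteq V_1$ we get $\dist_\subset(V,V_1)=0$; and for $\vv\in V$, writing $\vv=\sum_{j\in S}\vv_j$ with $\vv_j\in V_1\cap H_j$, each $\vv_j$ either lies in $V_2$ (when $H_j\subseteq V_1\cap V_2$) or satisfies $\dist(\vv_j,V_2)=\dist(\vv_j,V_2\cap H_j)=\|\vv_j\|\sin\theta_j\le\epsilon\|\vv_j\|$, so orthogonality gives $\dist(\vv,V_2)^2=\sum_j\dist(\vv_j,V_2\cap H_j)^2\le\epsilon^2\|\vv\|^2$, i.e. $\dist_\subset(V,V_2)\le\epsilon$. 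For the distance estimate, note that for any $\ww\in V_1+V_2$ and any $W\in\{V,V_1,V_2\}$ the $\pi_{V_i}$-invariance and orthogonality of the splitting give $\dist(\ww,W)^2=\sum_j\dist(\pi_{H_j}(\ww),W\cap H_j)^2$, so it suffices to estimate contributions $H_j$ by $H_j$. The pieces with $H_j\subseteq V_1\cap V_2$ contribute nothing to $\dist(\ww,V)$; a one-dimensional piece inside $V_1$ but orthogonal to $V_2$, or inside $V_2$ but orthogonal to $V_1$, contributes $\|\pi_{H_j}(\ww)\|^2$, which equals $\dist(\pi_{H_j}(\ww),V_2\cap H_j)^2$ resp. $\dist(\pi_{H_j}(\ww),V_1\cap H_j)^2$; a surviving two-dimensional piece contributes $\dist(\pi_{H_j}(\ww),V_1\cap H_j)^2$; and a discarded two-dimensional piece ($\sin\theta_j>\epsilon$) contributes $\|\pi_{H_j}(\ww)\|^2$, for which an elementary planar estimate — in a plane spanned by two lines at angle $\theta$, a vector has $|a|\sin\theta$ bounded by the sum of its distances to the two lines, where $a$ is its coordinate along the first line — gives $\|\pi_{H_j}(\ww)\|^2\le(3/\sin^2\theta_j)\big(\dist(\pi_{H_j}(\ww),V_1\cap H_j)^2+\dist(\pi_{H_j}(\ww),V_2\cap H_j)^2\big)\le(3/\epsilon^2)(\cdots)$. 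Summing all four types of contribution and using $\sum_j\dist(\pi_{H_j}(\ww),V_i\cap H_j)^2=\dist(\ww,V_i)^2\le m^2$ with $m=\max_i\dist(\ww,V_i)$ yields $\dist(\ww,V)^2\le(3+6/\epsilon^2)m^2\le(3/\epsilon)^2m^2$, the last step using $\epsilon<1$. Finally, for general $\ww\in\LL$ I would decompose $\ww$ into its component in $V_1+V_2$ and its component orthogonal to $V_1+V_2$ (hence orthogonal to each of $V_1$, $V_2$, $V$); the latter contributes equally to $\dist(\ww,V)^2$ and to every $\dist(\ww,V_i)^2$, so the same bound with the same constant survives since $3/\epsilon\ge1$.

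The only point requiring real care is the planar estimate together with the attendant constant-chasing: distances to two lines in a plane control a vector's norm only with a factor of order $1/\sin\theta$, and it is exactly this factor, cut off at the threshold $\sin\theta_j>\epsilon$, that produces the $1/\epsilon$ loss in the statement; everything else is routine linear algebra with the orthogonal splitting.
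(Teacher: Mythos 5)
Your proof is correct, and it constructs the same subspace $V$ as the paper: the eigenvalues $c_i$ of the quadratic form $\RR(\vv) = \|\vv - \pi_{V_2}(\vv)\|^2$ on $V_1$ used in the paper's proof are exactly the $\sin^2\theta_j$ for the principal angles, so keeping the eigenvectors with $c_i \leq \epsilon^2$ is the same as keeping the directions $V_1 \cap H_j$ with $\sin\theta_j \leq \epsilon$. Where the two proofs part ways is in machinery and verification. The paper never decomposes $V_1 + V_2$ into $\pi_{V_1}$- and $\pi_{V_2}$-invariant two-dimensional pieces; it only diagonalizes $\RR$ on $V_1$ itself. Once this is done, the inclusion bounds are immediate ($V \subset V_1$; $\RR \leq \epsilon^2\|\cdot\|^2$ on $V$), and the distance bound is a four-line chain: set $\vv_1 = \pi_{V_1}(\ww)$, note $\|\vv_1 - \pi_{V_2}(\vv_1)\| \leq \|\vv_1 - \pi_{V_2}(\ww)\| \leq 2\delta$, use the eigenvalue cutoff to get $\dist(\vv_1,V) \leq 2\delta/\epsilon$, and conclude via the triangle inequality $\dist(\ww,V) \leq \delta + 2\delta/\epsilon \leq 3\delta/\epsilon$. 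Your argument instead quotes the full Halmos-type two-subspace structure theorem and verifies the distance bound by splitting into four types of orthogonal blocks, using an in-plane estimate for the discarded two-dimensional pieces and a separate reduction for the component orthogonal to $V_1 + V_2$; the constants still close (your $\sqrt{3 + 6/\epsilon^2} \leq 3/\epsilon$ needs $\epsilon < 1$, just like the paper's $1 + 2/\epsilon \leq 3/\epsilon$). So the construction is identical, but your verification invokes heavier structure and requires substantially more bookkeeping, whereas the paper's is elementary and self-contained; the one thing your route makes more visible is the geometric meaning of the cutoff (principal angles above $\arcsin\epsilon$ are discarded), which is implicit in the paper.
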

The set $V$ will be called an \emph{$\epsilon$-intersection} of $V_1$ and $V_2$. Of course, it is not unique.

\NPC{Proof}
\begin{proof}
Let $\pi_i = \pi_{V_i}$ ($i = 1,2$), and consider the positive-definite quadratic form $\RR(\vv) = \|\vv\|^2 - \|\pi_2(\vv)\|^2 = \|\vv - \pi_2(\vv)\|^2$ on $V_1$. Choose an orthonormal basis $\ff_1,\ldots,\ff_n$ of $V_1$ with respect to which $\RR$ is diagonal, say
\[
\RR\left(\sum_{i = 1}^n v_i \ff_i\right) = \sum_{i = 1}^n c_i v_i^2
\]
for some $c_1,\ldots,c_n \in [0,1]$. Without loss of generality suppose that $c_1 \leq \cdots \leq c_m \leq \epsilon^2 < c_{m + 1} \leq \cdots \leq c_n$. (The cases $m = 0$ and $m = n$ are allowed.) Let $V = \sum_1^m \R\ff_i \leq V_1$. Then for all $\vv\in V$, $\|\vv - \pi_2(\vv)\|^2 = \RR(\vv) \leq \epsilon^2 \|\vv\|^2$, so $\|\vv - \pi_2(\vv)\| \leq \epsilon\|\vv\|$. Thus $\dist_\subset(V,V_2)\leq \epsilon$.

Now fix $\ww\in\LL$, and let $\vv_i = \pi_i(\ww)$ ($i = 1,2$) and $\delta = \max_{i = 1}^2 \dist(\ww,V_i)$. Then
\begin{align*}
\|\ww - \vv_i\| &\leq \delta\\
\|\vv_1 - \pi_2(\vv_1)\| &\leq \|\vv_1 - \vv_2\| \leq 2\delta\\
4\delta^2 &\geq \|\vv_1 - \pi_2(\vv_1)\|^2 = \RR(\vv_1) = \sum_{i = 1}^n c_i v_{1,i}^2\\
& \geq \epsilon^2 \sum_{i = m + 1}^n v_{1,i}^2 = \epsilon^2 \dist^2(\vv_1,V)\\
\dist(\vv_1,V) &\leq 2\delta/\epsilon\\
\dist(\ww,V) &\leq 2\delta/\epsilon + \delta \leq 3\delta/\epsilon.
\qedhere\end{align*}
\end{proof}

\begin{claim}
\label{claimintersection}
Let $(V_n^{(1)})_1^\infty$ and $(V_n^{(2)})_1^\infty$ be two sequences of bounded dimension satisfying \eqref{limit}. Fix $\epsilon > 0$, and for each $n$ let $V_n$ be an $\epsilon$-intersection of $V_n^{(1)}$ and $V_n^{(2)}$. Then the sequence $(V_n)_1^\infty$ satisfies \eqref{limit}.
\end{claim}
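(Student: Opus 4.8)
The plan is to take the exceptional subspace for the new sequence to be $X:=X^{(1)}+X^{(2)}\in\Grass(\LL)$, where $X^{(j)}$ witnesses \eqref{limit} for $(V_n^{(j)})_1^\infty$. First note that $(V_n)_1^\infty$ automatically has bounded dimension: since $\dist_\subset(V_n,V_n^{(1)})\le\epsilon<1$, the orthogonal projection $\pi_{V_n^{(1)}}$ is injective on $V_n$, so $\dim(V_n)\le\dim(V_n^{(1)})$. With the above choice of $X$ it then remains to verify \eqref{limitinternal} for $\mu_\Proj$-a.e.\ $\zz\in U_\Proj\butnot X$. So fix such a $\zz$, also outside the $\mu_\Proj$-nullsets where $T_\Proj$ fails to be finite-dimensional; then $T:=T_\Proj(\zz)$ is a genuine $(k+1)$-dimensional subspace, $\R\zz\subseteq T$ by \eqref{M}, and $\eta_n^{(j)}:=\dist_\subset(T,\R\zz+V_n^{(j)})\to0$ for $j=1,2$. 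We must show $\dist_\subset(T,\R\zz+V_n)\to0$.

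Since $\R\zz$ is contained in $T$ and in each of the subspaces $\R\zz+V_n^{(j)}$ and $\R\zz+V_n$, the supremum defining each quantity $\dist_\subset(T,\cdot)$ is attained over the unit sphere of the $k$-dimensional subspace $T':=T\cap(\R\zz)^\perp$. Hence it suffices to bound $\dist(\ww,\R\zz+V_n)$ for a fixed unit vector $\ww\in T'$ by a quantity tending to $0$ independently of $\ww$, and then to let $\ww$ range over an orthonormal basis of $T'$. Applying the defining inequality of the $\epsilon$-intersection (Lemma \ref{lemmaepsilonintersection}) to the vector $\ww-c\zz$ and taking the infimum over $c\in\R$ yields
\[
\dist(\ww,\R\zz+V_n)\;\le\;\frac{3}{\epsilon}\,\inf_{c\in\R}\,\max_{j=1,2}\,\dist(\ww-c\zz,V_n^{(j)}).
\]
For each $j$ the convex function $c\mapsto\dist(\ww-c\zz,V_n^{(j)})$ attains its minimum, equal to $\dist(\ww,\R\zz+V_n^{(j)})\le\eta_n^{(j)}$, at some $c_j^*$, and an elementary one-variable estimate bounds the right-hand side above by $\frac{3}{\epsilon}\big(\eta_n^{(1)}+\eta_n^{(2)}+|c_1^*-c_2^*|\,\|\zz\|\big)$. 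Writing $\ww=c_j^*\zz+v_j+e_j$ with $v_j\in V_n^{(j)}$ and $\|e_j\|\le\eta_n^{(j)}$ gives $(c_1^*-c_2^*)\zz=v_2-v_1-(e_2-e_1)$, whence $\dist\big((c_1^*-c_2^*)\zz,\,V_n^{(1)}+V_n^{(2)}\big)\le\eta_n^{(1)}+\eta_n^{(2)}$ and therefore
\[
|c_1^*-c_2^*|\,\|\zz\|\;\le\;\frac{\eta_n^{(1)}+\eta_n^{(2)}}{\dist(\hat\zz,\,V_n^{(1)}+V_n^{(2)})},\qquad\hat\zz:=\zz/\|\zz\|.
\]

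The main obstacle is the denominator above. If $\dist(\hat\zz,V_n^{(1)}+V_n^{(2)})$ stays bounded away from $0$ along the sequence then $|c_1^*-c_2^*|\,\|\zz\|\to0$, so $\dist(\ww,\R\zz+V_n)\to0$ uniformly in $\ww\in T'$, and hence $\dist_\subset(T,\R\zz+V_n)\to0$, as required. The delicate case is when $\hat\zz$ is approximated arbitrarily well by $V_n^{(1)}+V_n^{(2)}$ along a subsequence: then the splittings $\ww=c_j^*\zz+v_j$ become ill-conditioned and $|c_1^*-c_2^*|$ may blow up. To handle this I would absorb the near-$\zz$ direction already present in each $V_n^{(j)}$ into the approximants before comparing them — replacing the terms $c_j^*\zz$ by vectors drawn from $V_n^{(j)}$ with comparable residual $\zz$-coefficients — and use that the $\epsilon$-intersection $V_n$ inherits this common near-$\zz$ direction shared by $V_n^{(1)}$ and $V_n^{(2)}$, so that $\R\zz+V_n$ still contains a good approximation of $T$. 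It is in this step that the bound on $\dim V_n^{(j)}$ is used essentially.
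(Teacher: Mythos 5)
Your argument tracks the paper's proof up to the quantitative estimate
\[
\dist_\subset\bigl(T_\Proj(\zz),\R\zz+V_n\bigr)\;\lesssim_\times\;\frac{\max_{i=1}^2 \dist_\subset\bigl(T_\Proj(\zz),\R\zz+V_n^{(i)}\bigr)}{\dist\bigl(\zz,V_n^{(1)}+V_n^{(2)}\bigr)},
\]
which is exactly where the paper also arrives. But your treatment of the case $\liminf_n\dist(\zz,V_n^{(1)}+V_n^{(2)})=0$ --- the whole crux of the claim --- is vague and does not work as described. You propose to ``absorb the near-$\zz$ direction already present in each $V_n^{(j)}$,'' but this presupposes that $\zz$ is approximated by $V_n^{(1)}$ and by $V_n^{(2)}$ separately, which need not hold: $\zz$ can be close to $V_n^{(1)}+V_n^{(2)}$ while staying a bounded angle away from each summand (e.g.\ near the diagonal between a direction of $V_n^{(1)}$ and a direction of $V_n^{(2)}$), in which case neither $V_n^{(j)}$, and hence certainly not the $\epsilon$-intersection $V_n$, contains anything close to $\zz$. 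Moreover, even in the favorable sub-case where both $V_n^{(j)}$ (hence also $V_n$, by Lemma~\ref{lemmaepsilonintersection}) do contain vectors close to $\zz$, nothing in your sketch shows that $\R\zz+V_n$ approximates the remaining $k$ directions of $T_\Proj(\zz)$, which is what is actually required.

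The gap is in your choice of exceptional subspace: $X=X^{(1)}+X^{(2)}$ is too small. The fix is to \emph{enlarge} $X$ rather than to argue that the estimate survives. After passing to a subsequence in $n$, the set
\[
W=\bigl\{\zz\in\LL : \dist(\zz,V_n^{(1)}+V_n^{(2)})\to 0\bigr\}
\]
is a vector subspace of dimension at most $2\ell$, where $\ell$ bounds $\dim V_n^{(i)}$; you rightly sense the bounded-dimension hypothesis must enter here, but it enters via this bound on $\dim W$, not via any structure of the $\epsilon$-intersection. A further subsequence extraction that maximizes $\dim W$ guarantees $\liminf_n\dist(\zz,V_n^{(1)}+V_n^{(2)})>0$ for every $\zz\notin W$. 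Taking $X=W+X^{(1)}+X^{(2)}\in\Grass(\LL)$, the displayed estimate then yields \eqref{limitinternal} for $\mu_\Proj$-a.e.\ $\zz\in U_\Proj\butnot X$. (In particular the claim, properly read, produces \eqref{limit} only along a subsequence of $(V_n)$, which is what the subsequent Cauchy argument uses.)
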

\begin{proof}
Fix $\zz\in U_\Proj$ such that $\eqref{limitinternal}_{V_n = V_n^{(i)}}$ holds, and without loss of generality suppose $\|\zz\| = 1$. For each $n\in\N$ let
\[
\delta_n = \max_{i = 1}^2 \dist_\subset(T_\Proj(\zz),\R\zz + V_n^{(i)}),
\]
so that $\delta_n\to 0$. Fix $n\in\N$ and $\vv\in T_\Proj(\zz)$, and for $i = 1,2$ find $a_i\in\R$ and $\vv_i\in V_n^{(i)}$ such that
\[
\|\vv - (a_i\zz + \vv_i)\| \leq \delta_n \|\vv\|.
\]
Subtracting gives
\[
\|(a_2 - a_1)\zz + (\vv_2 - \vv_1)\| \leq 2\delta_n \|\vv\|
\]
and thus
\[
|a_2 - a_1| \leq \frac{2\delta_n\|\vv\|}{\dist(\zz,V_n^{(1)} + V_n^{(2)})}\cdot
\]
Since
\begin{align*}
\dist(\vv_1,V_n^{(2)}) \leq \dist\big((a_2 - a_1)\zz + \vv_2,V_n^{(2)}\big) + 2\delta_n \|\vv\| \leq |a_2 - a_1| + 2\delta_n \|\vv\|,
\end{align*}
by Lemma \ref{lemmaepsilonintersection} we get
\[
\dist(\vv_1,V_n) \leq (3/\epsilon)(|a_2 - a_1| + 2\delta_n \|\vv\|) \lesssim_\times \frac{\delta_n\|\vv\|}{\dist(\zz,V_n^{(1)} + V_n^{(2)})}
\]
and thus
\[
\dist(\vv,\R\zz + V_n) \leq \|\vv - (a_1\zz + \vv_1)\| + \dist(\vv_1,V_n) \lesssim_\times \frac{\delta_n\|\vv\|}{\dist(\zz,V_n^{(1)} + V_n^{(2)})}\cdot
\]
Since $\vv$ was arbitrary,
\[
\dist_\subset(T_\Proj(\zz),\R\zz + V_n) \lesssim_\times \frac{\delta_n}{\dist(\zz,V_n^{(1)} + V_n^{(2)})},
\]
so \eqref{limitinternal} holds for all $\zz\in U_\Proj$ which satisfy both $\eqref{limitinternal}_{V_n = V_n^{(i)}}$ and
\begin{equation}
\label{liminfy}
\liminf_{n\to\infty} \dist(\zz,V_n^{(1)} + V_n^{(2)}) > 0.
\end{equation}
Now let
\[
W = \left\{\zz\in\LL : \dist(\zz,V_n^{(1)} + V_n^{(2)}) \tendsto n 0\right\}.
\]
Evidently, $W$ is a vector space of dimension at most $2\ell$, where $\ell$ is the maximum dimension of the vector spaces $V_n^{(i)}$ ($n\in\N$, $i = 1,2$). But if there exists any $\zz\in\LL\butnot W$ such that \eqref{liminfy} fails, then by passing to a subsequence we may add $\zz$ to $W$, thus increasing its dimension; thus by passing to a subsequence which maximizes $\dim(W)$, we guarantee \eqref{liminfy} for all $\zz\in\LL\butnot W$. Letting $X_i$ be as in $\eqref{limit}_{V_n = V_n^{(i)}}$, we get \eqref{limitinternal} for $\mu_\Proj$-a.e. $\zz\in U_\Proj\butnot (W + X_1 + X_2)$, i.e. \eqref{limit} holds.
\end{proof}

Now let $\ell$ be the smallest number such that there exists a sequence $(V_n)_1^\infty$ of $\ell$-dimensional subspaces which satisfies \eqref{limit}, and let $(V_n)_1^\infty$ be such a sequence.
\begin{claim}
$(V_n)_1^\infty$ is a Cauchy sequence.
\end{claim}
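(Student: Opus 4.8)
The plan is to derive a contradiction from the minimality of $\ell$. Suppose $(V_n)_1^\infty$ is not Cauchy. Since all the $V_n$ are $\ell$-dimensional and the natural distances between subspaces of a fixed dimension are mutually comparable, non-Cauchyness yields an $\epsilon\in(0,1)$ together with two subsequences $(V_{n_j})_j$ and $(V_{m_j})_j$ (with $n_j,m_j\to\infty$) such that $\dist_\subset(V_{n_j},V_{m_j})>\epsilon$ for all $j$. The first, easy, observation is that passing to a subsequence preserves \eqref{limit}: with the same exceptional subspace $X$, if $\dist_\subset(T_\Proj(\zz),\R\zz+V_n)\to 0$ for $\mu_\Proj$-a.e. $\zz\in U_\Proj\butnot X$, then the same holds along any subsequence. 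Hence both $(V_{n_j})_j$ and $(V_{m_j})_j$ satisfy \eqref{limit}.

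Next, for each $j$ let $W_j$ be an $(\epsilon/2)$-intersection of $V_{n_j}$ and $V_{m_j}$, as furnished by Lemma \ref{lemmaepsilonintersection}. By Claim \ref{claimintersection} — applied with the sequences $(V_{n_j})_j$, $(V_{m_j})_j$ in place of $(V_n^{(1)})_1^\infty$, $(V_n^{(2)})_1^\infty$ and with intersection parameter $\epsilon/2$ — the sequence $(W_j)_j$ again satisfies \eqref{limit}. The crucial point, which is where I expect the only real content of the argument to lie, is that $\dim W_j\leq\ell-1$ for every $j$. Indeed, inspecting the construction in the proof of Lemma \ref{lemmaepsilonintersection}, $W_j$ is the span of those vectors in an eigenbasis of $V_{n_j}$ for the form $\vv\mapsto\|\vv-\pi_{V_{m_j}}(\vv)\|^2$ whose eigenvalue is at most $(\epsilon/2)^2$; if all $\ell$ of these eigenvalues were at most $(\epsilon/2)^2$, then $V_{n_j}\subset\NN_\Proj(V_{m_j},\epsilon/2)$, i.e. $\dist_\subset(V_{n_j},V_{m_j})\leq\epsilon/2$, contradicting the defining property of the two subsequences. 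So at least one eigenvalue exceeds $(\epsilon/2)^2$, whence $\dim W_j\leq\ell-1$.

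Finally, since the dimensions $\dim W_j$ are bounded by $\ell-1$, I would pass to a further subsequence along which $\dim W_j$ is constant, equal to some $\ell'\leq\ell-1$; this subsequence still satisfies \eqref{limit}. We have thus produced a sequence of $\ell'$-dimensional subspaces of $\LL$ satisfying \eqref{limit} with $\ell'<\ell$, contradicting the minimality of $\ell$. Therefore $(V_n)_1^\infty$ is Cauchy. The only mildly technical point is the comparability of the subspace distances used at the very first step to extract the divergent subsequences; the rest is a direct combination of Lemma \ref{lemmaepsilonintersection}, Claim \ref{claimintersection}, and the minimality of $\ell$, so I do not anticipate any genuine obstacle.
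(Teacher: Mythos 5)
Your argument is correct and follows the same overall plan as the paper's: assume $(V_n)_1^\infty$ is not Cauchy, extract two subsequences that stay a definite distance apart, form $\epsilon$-intersections via Lemma \ref{lemmaepsilonintersection} and Claim \ref{claimintersection}, and contradict the minimality of $\ell$. The one genuine divergence is in how you finish. You show directly that $\dim W_j\le\ell-1$ by opening up the eigenvalue construction inside the proof of Lemma \ref{lemmaepsilonintersection}: if every eigenvalue of the form $\RR$ were $\le(\epsilon/2)^2$, you would have $V_{n_j}\subset\NN_\Proj(V_{m_j},\epsilon/2)$, contradicting the separation of the two subsequences. The paper instead keeps Lemma \ref{lemmaepsilonintersection} as a black box: it notes that minimality already forces $\dim W_j=\ell$ for all large $j$ (else one is done), and then --- since all three subspaces now have the same dimension, so that the one-sided quantity $\dist_\subset$ controls the symmetric $\dist_\Grass$ --- derives $\dist_\Grass(V_{n_j},V_{m_j})\le\dist_\subset(W_j,V_{n_j})+\dist_\subset(W_j,V_{m_j})\le 2\epsilon<3\epsilon$, contradicting the choice of subsequences. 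Both routes are sound, and each leans at some point (yours at the outset when you replace $\dist_\Grass$ by $\dist_\subset$, the paper's at the end) on the comparability of the asymmetric gap $\dist_\subset$ with the symmetric Grassmannian distance for subspaces of equal finite dimension. Your variant makes the dimension drop explicit and avoids the final gap-comparison step, at the mild cost of relying on the internal construction of Lemma \ref{lemmaepsilonintersection} rather than only its statement.
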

\begin{proof}
Suppose not; then there exist $\epsilon > 0$ and sequences $n_j,m_j\to\infty$ such that for all $j$, $\dist_\Grass(V_{n_j},V_{m_j}) \geq 3\epsilon$. Write $W_j^{(1)} = W_{n_j}$ and $W_j^{(2)} = W_{m_j}$, and let $W_j$ be an $\epsilon$-intersection of $W_j^{(1)}$ and $W_j^{(2)}$. Then by Claim \ref{claimintersection}, the sequence $(W_j)_1^\infty$ satisfies \eqref{limit}. But then the minimality of $\ell$ implies that $\dim(W_j) = \ell$ for all sufficiently large $j$. For such $j$,
\[
\dist_\Grass(V_{n_j},V_{m_j}) \leq \dist_\subset(W_j,W_j^{(1)}) + \dist_\subset(W_j,W_j^{(2)}) \leq 2\epsilon,
\]
a contradiction.
\end{proof}
So write $V_n\to V$ for some $V\in\Grass_\ell(\LL)$. Note that the sequence $(V)_1^\infty$ also satisfies \eqref{limit}, i.e.
\begin{equation}
\label{limit2}
\exists X\in\Grass(\LL) \all^{\mu_\Proj} \zz\in U_\Proj\butnot X \;\; T_\Proj(\zz) \subset \R\zz + V.
\end{equation}
{\bf Step 5. Finishing the proof.}
We now break up into cases:

{\bf Case 1:} $\mu_\Proj(U_\Proj\butnot X) = 0$. In this case, \eqref{limit2} becomes trivial but we can get the conclusion anyway. Namely, translating back to Euclidean space we get
\[
\mu(U\butnot X_\Euc) = 0
\]
for some finite-dimensional generalized sphere $X_\Euc$. After increasing the dimension of $X_\Euc$ by one, we can assume that $X_\Euc$ is a linear subspace of $\HH$. But then the theorem follows from applying Theorem \ref{theoremrigidity}($\dimH < \infty$) to
\begin{align*}
\w\HH &= X_\Euc,&
\w\mu &= \mu\given_{U\cap X_\Euc},&
\w U &= U\cap X_\Euc.
\end{align*}
{\bf Case 2.} $\mu_\Proj(U_\Proj\butnot X) > 0$, $\ell < k + 1$. In this case, for $\mu_\Proj$-a.e. $\zz\in U_\Proj\butnot X$, we have
\begin{align*}
T_\Proj(\zz) &\subset \R\zz + V\\
\dim(T_\Proj(\zz)) = k + 1 &\geq \ell + 1 = \dim(\R\zz + V),\\
T_\Proj(\zz) &= \R\zz + V,\\
V &\subset T_\Proj(\zz) \subset \zz^\perp,\\
\zz &\in V^\perp.
\end{align*}
Hence $\SK_\Proj\cap U_\Proj\butnot X\subset V^\perp$, whence it follows (e.g. from Lemma \ref{lemmapseudolinear}) that $T_\Proj(\zz)\subset V^\perp$ for $\mu_\Proj$-a.e. $\zz\in\SK_\Proj\cap U_\Proj\butnot X$. But then from the above we have $V\subset V^\perp$, i.e. $V$ is $\QQ$-isotropic. Since $\dim(V) = \ell = k \geq 1$, this implies that $V^\perp\cap L_\QQ\subset V$, so $\SK_\Proj\cap U_\Proj\butnot X\subset V$. This contradicts the hypothesis that $\mu_\Proj(U_\Proj\butnot X) > 0$, since $\mu_\Proj = \scrH^{k + 1}\given_{\SK_\Proj}$.

{\bf Case 3.} $\mu_\Proj(U_\Proj\butnot X) > 0$, $\ell = k + 1$. Note that this is the last case, since $\ell\leq k + 1$ because our original sequence $(V_n)_1^\infty$ was of dimension $k + 1$. In fact, this means that when $\ell = k + 1$, then we did not need to extract a subsequence, and $(V_n)_1^\infty$ is our original sequence defined by \eqref{Vndef}. But each element in this sequence satisfied $\#([V_n\cap L_\QQ]) = 1$, so the limiting subspace $V\in\Grass_{k + 1}(\LL)$ must also satisfy $\#([V\cap L_\QQ]) = 1$. Also, $X = \emptyset$, since the sequence defined by \eqref{Vndef} did not require an $X$.

After conjugating, we can without loss of generality suppose that $V = \lb 0\rb\times\R\times L_0$ for some $L_0\in\Grass_k(\HH)$. Then for $\mu$-a.e. $\xx\in U$,
\[
\R\iota(\xx) + V \supset T_\Proj(\iota(\xx)) = \R\iota(\xx) + \iota'(\xx)[T_\Euc(\xx)],
\]
which implies that $T_\Euc \equiv L_0$ on $U$. On the other hand, by the hypothesis of Theorem \ref{theoremrigidity}, we have $\TD(\SK\cap U) = k$. By the sum theorem for topological dimension \cite[Theorem 1.5.3]{Engelking}, we have $\TD(K) = k$ for some compact $K\subset \SK\cap U\butnot\{\infty\}$. So to finish the proof, we need the following lemma:

\begin{lemma}
\label{lemmaTPconstant}
Let $K\subset\HH$ be a compact pseudorectifiable set with tangent plane function $T_K\equiv L_0$ such that $\TD(K) = k$. Then there exists an affine $k$-plane $\zz + L_0$ such that $\scrH^k(K\cap (\zz + L_0)) > 0$.
\end{lemma}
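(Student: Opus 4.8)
First I would reduce to the case $\mu_K=\scrH^k\given_K$ with $\scrH^k(K)<\infty$: pass to a compact subset of $K$ of full topological dimension and finite $\scrH^k$‑measure, and run Step 1 of the proof of Theorem \ref{theoremrigidity}$(\dimH<\infty)$ (which is purely measure‑theoretic). Write $\HH=L_0\oplus W$ with $W=L_0^\perp$, and let $p=\pi_{L_0}$, $q=\pi_W$. Applying the defining identity \eqref{pseudorectifiable} of pseudorectifiability with $V=L_0$ (so that $\det(\pi_V\given L_0)=1$) gives, for all $B\subset L_0$,
\[
\mu_K\big(p^{-1}(B)\cap K\big)=\int_B N(\yy)\,\dee\scrH^k(\yy),\qquad N(\yy):=\#\big(p^{-1}(\yy)\cap K\big).
\]
Since $\mu_K(K)<\infty$, this forces $N(\yy)<\infty$ for $\scrH^k$‑a.e.\ $\yy\in L_0$; and since $\TD(K)=k$, Theorem \ref{theoremsz} gives $\scrH^k(K)>0$, hence $\scrH^k(p(K))>0$.

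\textbf{Reduction to a single graph over a set with interior.} Let $P_m=\{\yy:N(\yy)=m\}$ for $m\in\N$; then $\scrH^k(\{N=\infty\})=0$, and using a measurable selection one can write $p^{-1}(P_m)\cap K=\bigsqcup_{j=1}^m\Gamma_{j,m}$, where $\Gamma_{j,m}$ is the graph of a measurable map $\phi_{j,m}:P_m\to W$. Thus $K$ is, up to an $\scrH^k$‑nullset $Z$, the countable union $\bigcup_{m\geq1}\bigsqcup_{j\leq m}\Gamma_{j,m}$. Now I would invoke the sum theorem for topological dimension \cite[Theorem 1.5.3]{Engelking}: after replacing each $\Gamma_{j,m}$ by a $\sigma$‑compact subset carrying full $\scrH^k$‑measure (and absorbing the excess, together with $Z$, into the exceptional set) and then localizing to a compact subset of $\Gamma_{j,m}$ of full topological dimension, the identity $\TD(K)=k$ forces some compact piece $\Gamma:=\Gamma_{j,m}$ to satisfy $\TD(\Gamma)=k$. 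On $\Gamma$ the projection $p$ is a continuous bijection from a compact set, hence a homeomorphism onto $P:=p(\Gamma)\subset L_0$, so $\TD(P)=k$; by Theorem \ref{theoreminterior}, $P$ has nonempty interior $O$ in $L_0\cong\R^k$, and $\scrH^k(O)>0$.

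\textbf{Flattening the graph.} Over $O$, $\Gamma$ is the graph of a continuous map $\phi:O\to W$; it is pseudorectifiable (Observation \ref{observationpseudounion}) with $T_\Gamma\equiv L_0$, and, crucially, because $\mu_K=\scrH^k\given_K$ exactly and $p$ is $1$‑to‑$1$ over $O$,
\[
\scrH^k\big(\Gamma\cap p^{-1}(O)\big)=\int_O 1\,\dee\scrH^k=\scrH^k(O).
\]
The plan is to deduce from this that $\phi$ is constant on each connected component of $O$: since $\scrH^k(\Gamma\cap p^{-1}(O))=\scrH^k(O)$, the map $\phi$ has vanishing ``vertical variation'', and at a.e.\ point of $O$ the graph is a $C^1$‑graph with horizontal tangent plane (this is where Lemma \ref{lemmagraph} enters, applied to the $\scrH^k$‑a.e.\ piece where $\phi$ is differentiable, giving that the approximate derivative of $\phi$ is $0$ a.e.); combining the a.e.\ statement with the exact area identity rules out Cantor‑staircase behaviour and propagates local constancy along paths in $O$. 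Picking a component $O_0$ and $\zz_0:=\phi(O_0)\in W$, we get $\Gamma\cap p^{-1}(O_0)\subset \zz_0+L_0$, whence $\scrH^k\big(K\cap(\zz_0+L_0)\big)\geq\scrH^k(O_0)>0$, as required.

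\textbf{Main obstacle.} The genuinely delicate step is the flattening of $\phi$ in the last paragraph: an arbitrary pseudorectifiable graph with identically horizontal tangent plane need \emph{not} be contained in countably many parallel planes (a graph of a suitable singular function over a fat Cantor set is a counterexample), so one must use the exact equality $\scrH^k(\Gamma\cap p^{-1}(O))=\scrH^k(O)$ — which is precisely what fails in that counterexample — to force $\phi$ to be (locally) constant, and this argument has to be carried out within the infinite‑dimensional pseudorectifiability formalism. A secondary technical point is the topological‑dimension bookkeeping in the second paragraph: the countable sum theorem applies to $F_\sigma$ pieces but the measurable sheets $\Gamma_{j,m}$ and the exceptional nullset are only Borel, so some care (using compactness of $K$ and regularity of $\scrH^k\given_K$) is needed to legitimately extract a compact graph piece of full topological dimension.
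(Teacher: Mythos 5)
The proposal has a genuine gap, concentrated in the ``flattening'' step that you yourself flag, but there is also a problem earlier, at the reduction in the setup.

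\textbf{The reduction is not justified.} You propose to ``run Step~1 of the proof of Theorem~\ref{theoremrigidity}($\dimH<\infty$)'' to reduce to $\mu_K=\scrH^k\given_K$. But Step~1 concerns the \emph{conformal} measure $\mu$ appearing in the hypotheses of Theorem~\ref{theoremrigidity}, replacing it by $\scrH^k\given_\SK$ via the Rogers--Taylor density theorem and the radiality hypothesis. There is no conformal measure or radial set in the hypotheses of Lemma~\ref{lemmaTPconstant}; the measure $\mu_K$ here is the pseudorectifiability measure of Definition~\ref{definitionpseudorectifiable}, a different object. In infinite dimensions one only has $\mu_K\leq\scrH^k\given_K$ (Lemma~\ref{lemmamuleqH}), and Example~\ref{examplemuR1}(ii) exhibits a compact pseudorectifiable set $R_1$ with $T_{R_1}\equiv L_0$ for which $\mu_{R_1}\neq\scrH^1\given_{R_1}$. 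So the hypothesis $\mu_K=\scrH^k\given_K$ cannot be assumed, and the identity you actually have over a graph piece $\Gamma$ with base $O$ is $\mu_\Gamma(\Gamma)=\scrH^k(O)$, not $\scrH^k(\Gamma)=\scrH^k(O)$.

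\textbf{The flattening step does not close.} With only $\mu_\Gamma(\Gamma)=\scrH^k(O)$ (rather than the Hausdorff-measure identity), your argument for local constancy of $\phi$ breaks. Example~\ref{examplemuR1} is exactly a graph over $[0,1]$ with $T\equiv L_0$ and $\mu_{R_1}(R_1)=\scrH^1(p(R_1))$, yet $R_1$ is not contained in any translate of $L_0$; the reason it is not a counterexample to the lemma is that $\TD(R_1)=0<1$, which means the missing ingredient is precisely how $\TD(K)=k$ enters \emph{after} you have extracted the graph. Pointing to Lemma~\ref{lemmagraph} does not close this, since that lemma requires differentiability of $\phi$, and in the pseudorectifiable setting $\phi$ is only a continuous (or merely measurable) selection; the whole difficulty is that Rademacher-type differentiability is not available in infinite dimensions. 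There is also a nontrivial bookkeeping issue in the decomposition step (you flag it as ``secondary''): the countable sum theorem needs $F_\sigma$ pieces, and while $\scrH^k$-nullity of the exceptional set gives $\TD<k$ via Szpilrajn and the Tumarkin enlargement theorem gives a $G_\delta$ envelope, one still has to combine a $G_\delta$ piece with $F_\sigma$ pieces, which the sum theorem does not do directly.

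\textbf{The paper's proof is structurally different and avoids these issues.} The paper never decomposes $K$ into graph sheets and never needs $\phi$ to be locally constant. Instead it uses the characterization of $\TD(K)\geq k$ by essential families of closed separating sets (Lemma~\ref{lemmatopology2}), projects onto a finite-dimensional subspace $V_1$ where Urysohn's lemma gives a smooth map $\hh$, and proves Claim~\ref{claimK4}: for any $\ff\circ\pi_1\oplus\pi_2$ with values in a $k$-dimensional target and $\ff$ of class $\CC^1$, the image of $K$ has $\scrH^k$-measure zero (this is where $T_K\equiv L_0$ and Propositions~\ref{propositionfindimconverse}/\ref{propositionrectifiable} are exploited). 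A recursive combinatorial argument with closed pairs $(G_{i,p},G_{i,p}')$ then shows $\bigcap_{\yy\in(-1,1)^k}\pi_{L_0^\perp}\big(\hh^{-1}(\yy)\cap K\big)\neq\emptyset$, which yields the required affine plane $\zz+L_0$ directly and without any appeal to differentiability of a selection.
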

The proof of this lemma will be given in the next section. Once we have it, then the argument of Section \ref{sectionk1} finishes the proof, since $\SK$ contains a rectifiable set. (Cf. Remark \ref{remarkpatch1}).

\begin{remark}
To prove Theorem \ref{theorem1} for the case of weakly discrete Kleinian groups (cf. \cite[Definition 5.2.1]{DSU} for the definition of weak discreteness), Lemma \ref{lemmaTPconstant} is not necessary. Indeed, by Claim \ref{claimintersection}, the intersection of any two subspaces satisfying \eqref{limit2} also satisfies \eqref{limit2}, so the minimality of $\ell$ implies that $V$ is the only $(k + 1)$-dimensional subspace satisfying \eqref{limit2}. As in Step 1 let $\scrG = \{g\in G : g(U)\subset U\}$. Then for all $g\in \scrG$, $g_\Proj^{-1}[V]$ also satisfies the criterion \eqref{limit2}, so $g_\Proj[V] = V$. In particular, $g_\Proj([V\cap L_\QQ]) = [V\cap L_\QQ]$, i.e. $g(\infty) = \infty$. Thus $\scrG$ consists of similarities. But a weakly discrete Kleinian group cannot contain two similarities with distinct fixed points \cite[Proposition 6.4.1]{DSU}, so $\SK\cap U\butnot\{\infty\}$ is a singleton, a contradiction.

On the other hand, the need for an additional lemma to complete the proof of Theorem \ref{theorem2} is demonstrated by Example \ref{examplemuR1}(ii,iv), an example of a compact pseudorectifiable IFS limit set $\LS$ satisfying $T_\LS \equiv L_0$. This example shows that the assumption $\TD(K) = k$ in Lemma \ref{lemmaTPconstant} is necessary (and cannot be replaced by the assumption that $K$ is the limit set of an IFS).
\end{remark}

\draftnewpage
\section{Proof of Lemma \ref{lemmaTPconstant} (Eliminating the exceptional case)}
\label{sectionlemmaTPconstant}

We will need the following characterization of topological dimension for compact metric spaces:

\begin{lemma}
\label{lemmatopology2}
For a compact metric space $X$, the following are equivalent:
\begin{itemize}
\item[(A)] $\TD(X) \geq k$.
\item[(B)] There exist closed sets $F_1,\ldots,F_k\subset X$ and open sets $U_1,\ldots,U_k\subset X$ such that for each $i$, $F_i\subset U_i$, and such that if $F_i\subset W_i\subset U_i$ for some open sets $W_1,\ldots,W_k$, then $\bigcap_1^k \del W_i \neq \emptyset$.
\item[(C)] There exist closed sets $F_1,\ldots,F_k\subset X$ and $F_1',\ldots,F_k'\subset X$ such that for each $i$, $F_i\cap F_i' = \emptyset$, and such that if $G_i\supset F_i$ and $G_i'\supset F_i'$ are closed sets such that $G_i\cap G_i' = \emptyset$, then $\bigcup_1^k (G_i\cup G_i') \propersubset X$.
\end{itemize}
\end{lemma}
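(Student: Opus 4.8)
The statement is a standard characterization of covering dimension, so the plan is to prove the cycle of implications $(A)\Rightarrow(B)\Rightarrow(C)\Rightarrow(A)$, quoting classical partition-type characterizations of topological dimension where possible. The implication $(A)\Rightarrow(B)$ is exactly the characterization already invoked in the proof of Proposition \ref{propositionfederermattilainfdim} (cf. \cite[Corollary of Theorem II.8]{Nagata} or \cite[Theorem 1.7.9]{Engelking}), so there is essentially nothing to do there beyond citing it. The substance is in $(B)\Rightarrow(C)$ and $(C)\Rightarrow(A)$.

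For $(B)\Rightarrow(C)$: given the closed sets $F_i\subset U_i$ from (B), set $F_i' = X\butnot U_i$, which is closed and disjoint from $F_i$. Now suppose $G_i\supset F_i$, $G_i'\supset F_i'$ are closed and disjoint. I want to produce open sets $W_i$ with $F_i\subset W_i\subset U_i$ whose boundaries are contained in $X\butnot(G_i\cup G_i')$, so that (B) forces $\bigcap_i\del W_i\neq\emptyset$ and hence $\bigcup_i(G_i\cup G_i')\propersubset X$. Since $G_i$ and $G_i'$ are disjoint compact sets, by normality of $X$ choose an open set $W_i$ with $G_i\subset W_i\subset\cl{W_i}\subset X\butnot G_i'$; then $F_i\subset W_i$, and $W_i\subset X\butnot G_i'\subset X\butnot F_i' = U_i$, and $\del W_i = \cl{W_i}\butnot W_i$ is disjoint from both $G_i$ (since $G_i\subset W_i$) and $G_i'$ (since $\cl{W_i}\cap G_i'=\emptyset$). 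Applying (B) to these $W_i$ yields a point $x\in\bigcap_i\del W_i$, which lies in $X\butnot\bigcup_i(G_i\cup G_i')$, giving (C).

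For $(C)\Rightarrow(A)$: this is the classical fact that if $X$ can be "almost covered" by disjoint separating-type pairs, then $\TD(X)<k$ — contrapositively, one shows $\TD(X)\le k-1$ fails. Concretely, assume (C) fails for every choice of $k$ pairs; equivalently, for every $k$ disjoint closed pairs $(F_i,F_i')$ one can enlarge them to disjoint closed $(G_i,G_i')$ with $\bigcup_i(G_i\cup G_i')=X$. Writing $G_i\cup G_i' = X$ with $G_i\cap G_i'=\emptyset$ exhibits $G_i,G_i'$ as a clopen-style partition after shrinking, and the boundary of a separator between $F_i$ and $F_i'$ can be taken empty; iterating over $i=1,\dots,k$ and using the partition characterization of dimension (\cite[Theorem 1.7.9]{Engelking} or \cite[Theorem III.1]{Nagata}, in the form: $\TD(X)\le k-1$ iff for every $k$ pairs $(F_i,F_i')$ there exist separators $L_i$ between $F_i$ and $F_i'$ with $\bigcap_i L_i=\emptyset$) converts the failure of (C) into $\TD(X)\le k-1$. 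Thus (C) implies $\TD(X)\ge k$. I expect this last implication $(C)\Rightarrow(A)$ to be the main obstacle, since it requires carefully matching the "$\bigcup(G_i\cup G_i')\propersubset X$" formulation against the standard separator formulation — the translation is routine for a dimension theorist but needs the observation that a separator $L_i$ between disjoint closed sets can be taken to be the common boundary $\del W_i$ of an open set $W_i$ containing $F_i$, and that $\bigcap_i L_i=\emptyset$ is precisely the negation of the conclusion of (C) after passing to complements $G_i = \cl{W_i}$, $G_i' = X\butnot W_i$. All three steps use only compactness (hence normality) of $X$ and the cited classical characterizations, so no new ideas beyond bookkeeping are needed.
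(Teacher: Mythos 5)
Your $(B)\Rightarrow(C)$ argument is correct and is essentially identical to the paper's: set $F_i'=X\butnot U_i$, and given disjoint closed $G_i\supset F_i$, $G_i'\supset F_i'$, use normality to insert an open $W_i$ with $G_i\subset W_i\subset\cl{W_i}\subset X\butnot G_i'$, observe $\del W_i$ misses $G_i\cup G_i'$, and apply (B). No issues there.

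For the converse you take a genuinely different route from the paper. The paper proves $(C)\Rightarrow(B)$ directly, staying entirely within the boundary formulation: given $W_i$ with $F_i\subset W_i\subset U_i:=X\butnot F_i'$ and supposing $\bigcap\del W_i=\emptyset$, it chooses open neighborhoods $N_i\supset\del W_i$ with $\bigcap N_i=\emptyset$ and sets $G_i=F_i\cup(W_i\butnot N_i)$, $G_i'=X\butnot W_i$, which are disjoint closed sets covering $X$ and thus contradict (C). You instead aim for $(C)\Rightarrow(A)$ via the separator characterization (``$\TD(X)\le k-1$ iff every $k$ disjoint closed pairs admit separators with empty intersection''). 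That idea is workable, but the translation you sketch is flawed in two places. First, you write ``$G_i\cup G_i'=X$ \dots\ exhibits $G_i,G_i'$ as a clopen-style partition'' — but the failure of (C) only gives $\bigcup_i(G_i\cup G_i')=X$, not $G_i\cup G_i'=X$ for each $i$, so no single pair is a partition and the separator between $F_i$ and $F_i'$ cannot be taken empty. Second, the proposed ``passing to complements $G_i=\cl{W_i}$, $G_i'=X\butnot W_i$'' does not produce disjoint sets, since $\cl{W_i}\cap(X\butnot W_i)=\del W_i$, which is generally nonempty; so these are not admissible as the $G_i,G_i'$ of (C). The correct translation is instead: given the $G_i,G_i'$ from the failure of (C), take disjoint open $A_i\supset G_i$, $B_i\supset G_i'$ by normality, and let $L_i=X\butnot(A_i\cup B_i)$; then $L_i$ is a separator between $F_i$ and $F_i'$ and $L_i\subset X\butnot(G_i\cup G_i')$, whence $\bigcap_i L_i\subset X\butnot\bigcup_i(G_i\cup G_i')=\emptyset$. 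With that repair your route is fine, though it is a longer detour than the paper's self-contained $(C)\Rightarrow(B)$, which avoids invoking the separator characterization altogether.
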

The equivalence of (A) and (B) is proven in \cite[Corollary of Theorem II.8]{Nagata} (cf. \cite[Theorem 1.7.9]{Engelking}).
\begin{proof}[Proof of \text{(B) \implies (C)}]
Let $F_i' = X\butnot U_i$ for each $i$, and let $G_i\supset F_i$ and $G_i'\supset F_i'$ be closed sets such that $G_i\cap G_i' = \emptyset$. For each $i$, let $W_i$ be an open set which contains $G_i$ such that $\cl{W_i}\cap G_i' = \emptyset$. By definition, $\bigcap_1^k \del W_i \neq\emptyset$. But by construction, $\bigcap_1^k \del W_i \subset X\butnot \bigcup_1^k (G_i\cup G_i')$, so $\bigcup_1^k (G_i\cup G_i') \propersubset X$.
\end{proof}
\begin{proof}[Proof of \text{(C) \implies (B)}]
Let $U_i = X\butnot F_i'$ for each $i$, and let $W_1,\ldots,W_k$ be open sets such that $F_i\subset W_i\subset U_i$. By contradiction suppose that $\bigcap_1^k \del W_i = \emptyset$. Let $N_1,\ldots,N_k$ be open neighborhoods of $\del W_1,\ldots,\del W_k$ such that $\bigcap_1^k N_i = \emptyset$. Letting $G_i = F_i\cup (W_i\butnot N_i)$ and $G_i' = X\butnot W_i$ finishes the proof.
\end{proof}

Now we begin the proof of Lemma \ref{lemmaTPconstant}. Let $F_1,\ldots,F_k\subset K$ and $F_1',\ldots,F_k'\subset K$ be as in Lemma \ref{lemmatopology2}. As in the proof of Proposition \ref{propositionfederermattilainfdim}, we let $V_1\in\Grass(\HH)$ be a subspace large enough so that if $\pi_1 = \pi_{V_1}$, then for all $i$, $\pi_1(F_i)\cap\pi_1(F_i') = \emptyset$. For each $i = 1,\ldots,k$, we let $g_i:V_1\to\R$ be as in the smooth version of Urysohn's lemma, so that $g_i = -1$ on $F_i$ and $g_i = 1$ on $F_i'$. Let $\gg = (g_1,\ldots,g_k):\HH\to\R^k$, and let $\hh = \gg\circ\pi_1:\HH\to [-1,1]^k$.
\begin{claim}
\label{claimK4}
Fix $V_2\in\Grass(L_0^\perp)$, let $\pi_2 = \pi_{V_2}$, and let $\ff:V_1\to\R^{k - 1}$ be a $\CC^1$ map. Then
\[
\scrH^k\big([(\ff\circ\pi_1)\oplus \pi_2](K)\big) = 0.
\]
In particular, by Theorem \ref{theoremsz}
\[
\TD\big([(\ff\circ\pi_1)\oplus \pi_2](K)\big) < k.
\]
\end{claim}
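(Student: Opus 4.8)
The plan is to establish the stronger fact that $\scrH^k\big([(\ff\circ\pi_1)\oplus\pi_2](K)\big) = 0$; the topological-dimension statement is then the ``in particular'' already indicated, following from Szpilrajn's theorem (Theorem \ref{theoremsz}). Write $\Phi = (\ff\circ\pi_1)\oplus\pi_2 : \HH\to\R^{k-1}\oplus V_2$ and set $V' = V_1 + V_2\in\Grass(\HH)$. Since $V_1,V_2\subset V'$ we have $\pi_i = \pi_i\circ\pi_{V'}$ for $i = 1,2$, so $\Phi = \widetilde\Phi\circ\pi_{V'}$ for some $\CC^1$ map $\widetilde\Phi : V'\to\R^{k-1}\oplus V_2$. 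The set $K' := \pi_{V'}(K)$ is rectifiable by Claim \ref{claimrectifiable} and the remark following it (using that $K$ is pseudorectifiable), and since $K$ is compact, $\widetilde\Phi$ is Lipschitz on the bounded set $K'$; hence $[(\ff\circ\pi_1)\oplus\pi_2](K) = \Phi(K) = \widetilde\Phi(K')$ is rectifiable.

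The crux is to show that every projection of $\Phi(K)$ onto a $k$-plane is $\scrH^k$-null. Fix $W\in\Grass_k(\R^{k-1}\oplus V_2)$ and identify it isometrically with $\R^k$. The composition $\pi_W\circ\Phi : \HH\to W$ is $\CC^1$, so applying the change-of-variables formula \eqref{rectifiable} to the pseudorectifiable set $K$ (whose tangent plane function is $\equiv L_0$ by hypothesis) yields
\[
\scrH^k\big(\pi_W(\Phi(K))\big) \leq \int \#\big((\pi_W\circ\Phi)^{-1}(\zz)\cap K\big)\,\dee\scrH^k(\zz) = \int_K \det\big((\pi_W\circ\Phi)'(\xx)\given L_0\big)\,\dee\mu_K(\xx).
\]
For $\vv\in L_0$ one has $\pi_2\vv = \pi_{V_2}\vv = \0$ because $V_2\subset L_0^\perp$, so $\Phi'(\xx)$ maps $L_0$ into $\R^{k-1}\oplus\{\0\}$; hence $(\pi_W\circ\Phi)'(\xx)\given L_0 : L_0\to W$ has rank at most $k - 1 < \dim L_0$, and its determinant vanishes. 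Therefore $\scrH^k(\pi_W(\Phi(K))) = 0$ for every such $W$.

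It remains to pass from ``every $k$-plane projection is $\scrH^k$-null'' to ``$\scrH^k$-null'' for the rectifiable set $\Phi(K)$. This is the one genuinely non-bookkeeping point, and I expect it to be the main obstacle: it is the classical fact that on rectifiable sets the integral-geometric and Hausdorff measures agree, or concretely, that a rectifiable set of positive $\scrH^k$-measure contains a positive-measure piece lying on a $\CC^1$ $k$-dimensional manifold (cf.\ the structure theory used in Lemma \ref{lemmafederermattila} and \cite[Chapter 15]{Mattila}), on which orthogonal projection onto the tangent plane at a density point is locally bi-Lipschitz and hence preserves positive measure --- contradicting nullity of that projection. Granting this, the previous paragraph forces $\scrH^k(\Phi(K)) = 0$, which proves the claim; everything else reduces to the factorization $\Phi = \widetilde\Phi\circ\pi_{V'}$, the formula \eqref{rectifiable}, and the observation that $\pi_2$ annihilates $L_0$.
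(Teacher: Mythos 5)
The central computation is correct but rests on an unjustified step: you apply the change-of-variables formula \eqref{rectifiable} directly to the pseudorectifiable set $K$, but Proposition \ref{propositionrectifiable} only establishes \eqref{rectifiable} for \emph{rectifiable} sets. In the setting of Lemma \ref{lemmaTPconstant} one only knows that $K$ is pseudorectifiable, and in infinite dimensions this does not imply rectifiability --- that is exactly what Example \ref{examplemuR1}(ii) shows. So the identity
\[
\int \#\big((\pi_W\circ\Phi)^{-1}(\zz)\cap K\big)\,\dee\scrH^k(\zz) = \int_K \det\big((\pi_W\circ\Phi)'(\xx)\given L_0\big)\,\dee\mu_K(\xx)
\]
is not available to you as written, and this is the genuine gap. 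By contrast, the last step you flagged as the ``main obstacle'' --- passing from ``rectifiable with all $k$-plane projections $\scrH^k$-null'' to ``$\scrH^k$-null'' --- is routine and poses no difficulty.

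The paper sidesteps the issue by first projecting $K$ into the finite-dimensional subspace $V_3 = L_0 + V_1 + V_2$, which crucially \emph{contains $L_0$}. Since $\det(\pi_{V_3}\given L_0) = 1 > 0$, Lemma \ref{lemmapseudolinear} shows that $K_3 := \pi_{V_3}(K)$ is pseudorectifiable with $T_{K_3}\equiv L_0$, and since $V_3$ is finite-dimensional, Proposition \ref{propositionfindimconverse} upgrades pseudorectifiability of $K_3$ to rectifiability. Because $V_1,V_2\subset V_3$ gives $\pi_i\circ\pi_{V_3} = \pi_i$, one has $[(\ff\circ\pi_1)\oplus\pi_2](K) = [(\ff\circ\pi_1)\oplus\pi_2](K_3)$, and \eqref{rectifiable} now legitimately applies to $K_3$, yielding $\scrH^k\big([(\ff\circ\pi_1)\oplus\pi_2](K)\big) \leq \int_{K_3}\det\big((\ff'(\pi_1(\xx))\circ\pi_1)\oplus\pi_2\given L_0\big)\,\dee\scrH^k(\xx) = 0$, with the vanishing being exactly your rank-at-most-$(k-1)$ observation. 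Your choice $V' = V_1 + V_2$ omits $L_0$; including it would have handed you a rectifiable surrogate for $K$ with tangent plane $\equiv L_0$, after which the whole rectifiability-plus-null-projections detour becomes unnecessary and the claim follows in one line.
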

\begin{proof}
Let $V_3 = L_0 + V_1 + V_2$, and let $\pi_3 = \pi_{V_3}$. Then by Lemma \ref{lemmapseudolinear}, $K_3 := \pi_3(K)$ is pseudorectifiable with tangent plane function $T_{K_3} \equiv L_0$. By Proposition \ref{propositionfindimconverse}, $K_3$ is rectifiable, so \eqref{rectifiable} is valid for sets $A\subset K_3$. But
\[
[(\ff\circ\pi_1)\oplus \pi_2](K) = [(\ff\circ\pi_1) \oplus \pi_2](K_3),
\]
so
\[
\scrH^k\big([(\ff\circ\pi_1)\oplus \pi_2](K)\big) \leq \int_{K_3} \det\big((\ff'(\pi_1(\xx))\circ\pi_1)\oplus \pi_2\given L_0\big) \;\dee\scrH^k(\xx) = 0,
\]
where the last equality is because for each $\xx\in K_3$,
\begin{align*}
\rank\big((\ff'(\pi_1(\xx))\circ\pi_1)\oplus \pi_2\given L_0\big)
&= \dim\big([(\ff'(\pi_1(\xx))\circ\pi_1)\oplus \0](L_0)\big)\\
&\leq \dim(\R^{k - 1}) = k - 1 < k = \dim(L_0).
\qedhere\end{align*}
\end{proof}

\begin{claim}
The set
\begin{equation}
\label{bigintersection}
\bigcap_{\yy\in (-1,1)^k} \pi_{L_0^\perp}(\hh^{-1}(\yy)\cap K)
\end{equation}
is nonempty.
\end{claim}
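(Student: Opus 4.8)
The plan is to reduce the statement, in two steps, to a finite-dimensional Hurewicz-type fact about essential maps, and then to extract that fact from Claim~\ref{claimstablevalue} (essentiality of $\hh$) together with Claim~\ref{claimK4} (vanishing of $\scrH^k$ under the relevant coordinate deletions).

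First I would recast the statement. Writing $C_\yy=\pi_{L_0^\perp}(\hh^{-1}(\yy)\cap K)$ and using that $\pi_{L_0^\perp}^{-1}(\zz)=\zz+L_0$ whenever $\zz\in L_0^\perp$, one checks directly that ``$\bigcap_\yy C_\yy\neq\emptyset$'' is \emph{equivalent} to the slicing statement: there is $\zz\in L_0^\perp$ with $\hh(K\cap(\zz+L_0))\supset(-1,1)^k$. (That each $C_\yy$ is nonempty is a by-product: running the proof of Claim~\ref{claimstablevalue} with $g=\hh$ and the sets $F_i,F_i'$ of Lemma~\ref{lemmatopology2} gives $\hh(K)\supset(-1,1)^k$.) Next I would pass to finite dimensions: since $P:=\pi_{L_0^\perp}(K)$ is compact, for each $m$ there is a finite-dimensional $V_2\le L_0^\perp$ with $\|\pi_{L_0^\perp}(\xx)-\pi_{V_2}(\xx)\|<1/m$ for all $\xx\in K$, and it suffices to find, for each such $V_2$, a point $\ww_0\in V_2$ with $\hh(K\cap\pi_{V_2}^{-1}(\ww_0))\supset(-1,1)^k$. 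Indeed $K\cap\pi_{V_2}^{-1}(\ww_0)$ then lies in a $1/m$-slab about $\ww_0$ in the $\pi_{L_0^\perp}$-direction, and a diagonal/compactness argument (extract a subsequence along which $\ww_0^{(m)}\to\zz$ \emph{first}, then for each fixed $\yy$ extract convergent preimages) recovers the slicing statement with a $\zz$ independent of $\yy$.

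For the finite-dimensional statement, set $N=(\hh\oplus\pi_{V_2})(K)\subset[-1,1]^k\times V_2$. Because $\hh=\mathrm{pr}_1\circ(\hh\oplus\pi_{V_2})$, the argument of Claim~\ref{claimstablevalue} transfers verbatim to $\mathrm{pr}_1|_N$: every continuous map $N\to\R^k$ uniformly near $\mathrm{pr}_1$ is onto $[-1/2,1/2]^k$, so $\mathrm{pr}_1|_N$ is essential and $\TD(N)\ge k$. On the other hand, deleting the $i$-th of the first $k$ coordinates of $N$ yields $(\ff_i\circ\pi_1)\oplus\pi_{V_2}$ applied to $K$, where $\ff_i=(g_1,\dots,\widehat{g_i},\dots,g_k):V_1\to\R^{k-1}$ is $\CC^1$; so Claim~\ref{claimK4} forces $\scrH^k$ of this deleted set to vanish, hence it has $\TD\le k-1$ (and, with $\ff=0$, also $\TD(\pi_{V_2}(K))\le k-1$, indeed $=0$ when $k=1$). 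Since an essential map into $[-1,1]^k$ is surjective, the finite-dimensional statement reduces to producing a $\pi_{V_2}$-fibre of $N$ on which $\mathrm{pr}_1$ is essential. When $\TD(\pi_{V_2}(K))=0$ (automatic if $k=1$) this is easy: partition $\pi_{V_2}(K)$ into clopen pieces of small diameter, note that an essential map restricts essentially to some piece of a finite clopen partition, and pass to a limit as in Section~\ref{sectionk1} (cf.\ Remark~\ref{remarkpatch1}).

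The remaining case $k>1$ is a Hurewicz-type fibre-extraction lemma, and \textbf{this is where I expect the main difficulty to lie}. The intended route: suppose $\hh$ is essential on no $\pi_{V_2}$-fibre; then upper semicontinuity of $\ww\mapsto\hh(K\cap\pi_{V_2}^{-1}(\ww))$ together with compactness yields a finite closed cover $\{Z_j\}$ of $\pi_{V_2}(K)$ and open balls $B_j\subset(-1,1)^k$ with $B_j\cap\hh(K\cap\pi_{V_2}^{-1}(Z_j))=\emptyset$; one then refines $\{Z_j\}$ to a cover of multiplicity $\le k$ (legitimate since $\TD(\pi_{V_2}(K))\le k-1$) and amalgamates the local omissions $B_j$, through a partition of unity pulled back by $\pi_{V_2}$, into a single perturbation of $\hh$ that omits a point of $[-1/2,1/2]^k$, contradicting the stable-value property above. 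The genuinely delicate point is to carry out this amalgamation so that the perturbed map simultaneously omits a \emph{common} point and preserves the sign behaviour on $F_i,F_i'$ required by Claim~\ref{claimstablevalue}; this is precisely where the full strength of ``$\scrH^k$ of every first-coordinate deletion of $N$ vanishes'' --- that is, Claim~\ref{claimK4} for all the $\ff_i$ at once --- has to be used.
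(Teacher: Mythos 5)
Your proposal is a genuinely different route from the paper's, and for $k>1$ it has a gap that I do not think is a matter of mere detail. The paper first uses the finite intersection property of compact sets to reduce the claim to $\bigcap_{\yy\in F}\pi_{L_0^\perp}(\hh^{-1}(\yy)\cap K)\neq\emptyset$ for a \emph{finite} $F\subset(-1,1)^k$, then applies a diffeomorphism $\Phi$ of $[-1,1]^k$ fixing the boundary that moves $F$ into the line $\R\ee_k$. After this normalization, a \emph{single} coordinate deletion $\ff=(\pi_*\circ\hh)\oplus\pi_2$ (with $V_2\in\Grass(L_0^\perp)$ chosen from the finitely many sets $A_p$, not from a uniform approximation of $\pi_{L_0^\perp}$) suffices, and Claim~\ref{claimK4} gives $\TD(\ff(K))<k$. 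What remains is a \emph{one-parameter} recursion along the $\ee_k$-axis that threads the local omissions $B_1,\ldots,B_r$ one at a time, building pairs $(F_{i,p},F_{i,p}')$ in $K'=\ff(K)$ via condition~(C) of Lemma~\ref{lemmatopology2} and lifting them to a covering $\bigcup_1^k(G_i\cup G_i')=K$ that contradicts $\TD(K)=k$. The diffeomorphism trick is the crucial idea: it linearises the configuration of omissions so that they can be dealt with sequentially rather than simultaneously.

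Your plan replaces this with a fibre-extraction lemma for essential maps, which is substantially stronger than anything the Hurewicz dimension-lowering formula gives. That formula, applied to $\pi_{V_2}|_N$ with $\TD(N)\geq k$ and $\TD(\pi_{V_2}(K))\leq k-1$, yields only that some fibre has topological dimension $\geq 1$, whereas you need a fibre of dimension $\geq k$ on which $\hh$ is still essential. Your proposed amalgamation — take a multiplicity-$\leq k$ cover $\{W_j\}$ with locally omitted balls $B_j$ and blend the omissions through a partition of unity pulled back by $\pi_{V_2}$ — is exactly the kind of global obstruction-theoretic step the paper's diffeomorphism trick is designed to avoid: on overlaps of the $W_j$, a map of the form $\hh-\sum_j\rho_j\cc_j$ is a convex combination whose avoidance of a fixed target point depends on compatibility data over the full $(k-1)$-dimensional nerve of the cover, and neither the multiplicity bound nor the $\scrH^k$-nullity of the deletions $\ff_i$ by themselves make such a perturbation possible while simultaneously preserving the sign conditions on $F_i,F_i'$ needed for Claim~\ref{claimstablevalue}. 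You flag this yourself as the delicate point, but as it stands the step is not carried out and it is not clear it can be pushed through by the stated means; the $k=1$ case goes through because the obstruction to essentiality there is literally a clopen separation, which is a feature special to $k=1$. In short: the finite-dimensional reduction and the identification of the inputs (Claims~\ref{claimstablevalue} and~\ref{claimK4}) are correct, but the missing idea is the paper's normalization $\Phi(F)\subset\R\ee_k$, which converts the amalgamation into a tractable recursion; without it, the fibre-extraction lemma for $k>1$ is an unproved and nontrivial assertion.
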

\begin{proof}
By contradiction suppose not. Then by the intersection property of compact sets, there exists a finite set $F\subset (-1,1)^k$ such that
\begin{equation}
\label{contradictionhypothesis}
\bigcap_{\yy\in F} \pi_{L_0^\perp}(\hh^{-1}(\yy)\cap K) = \emptyset.
\end{equation}
By replacing $\hh$ with $\Phi\circ\hh$, where $\Phi:[-1,1]^k\to [-1,1]^k$ is a diffeomorphism such that $\Phi\given\del [-1,1]^k = \id$ and $\Phi(F) \subset \R\ee_k$, we may without loss of generality assume that $F\subset\R\ee_k$. Write $F = \{t_1\ee_k,\ldots,t_r\ee_k\}$, where $-1 < t_1 < \ldots < t_r < 1$. For each $p = 1,\ldots,r$ let
\begin{align*}
A_p &= \pi_{L_0^\perp}(\hh^{-1}(t_p\ee_k)\cap K),
\end{align*}
so that $\bigcap_{p = 1}^r A_p = \emptyset$. Then there exists $V_2\in\Grass(L_0^\perp)$ such that $\bigcap_{p = 1}^r \pi_2(A_p) = \emptyset$, where $\pi_2 = \pi_{V_2}$. Let $\ff = (\pi_*\circ\hh)\oplus\pi_2$, where $\pi_*:\R^k\to\R^{k - 1}$ is the natural projection sending $\ee_k$ to $\0$; then $\bigcap_{p = 1}^r \ff(A_p) = \emptyset$. Let $B_1,\ldots,B_r$ be closed neighborhoods of $\ff(A_1),\ldots,\ff(A_r)$ such that $\bigcap_{p = 1}^r B_p = \emptyset$. Then since $K$ is compact, there exist neighborhoods $U_1,\ldots,U_r$ of $t_1\ee_k,\ldots,t_r\ee_k$ such that for all $p$,
\[
\ff(\hh^{-1}(U_p)\cap K)\subset B_p.
\]
Let $\delta > 0$ be small enough such that for all $p = 1,\ldots,r$,
\[
t_p\ee_k + [-\delta,\delta]^k \subset U_p.
\]
Let $K' = \ff(K)$; then by Claim \ref{claimK4}, $\TD(K') < k$. We proceed to use this fact to recursively construct a sequence of $2k$-tuples $((F_{i,p},F_{i,p}')_{i = 1}^k)_{p = 0}^r$ of closed subsets of $K'$. The first $2k$-tuple is defined by the equations
\begin{equations}
F_{i,0} &= \{(\yy,\zz)\in K' : y_i \leq -\delta\} \;\; (i = 1,\ldots,k - 1),&
F_{k,0} &= K',\\
F_{i,0}' &= \{(\yy,\zz)\in K' : y_i \geq \delta\}, \;\; (i = 1,\ldots,k - 1),&
F_{k,0}' &= \bigcap_{p = 1}^r B_p = \emptyset.
\end{equations}
Now suppose that the $2k$-tuple $(F_{i,p},F_{i,p}')_{i = 1}^k$ satisfies $F_{i,p}\cap F_{i,p}' = \emptyset$ for all $i = 1,\ldots,k$. Since $\TD(K') < k$, by Lemma \ref{lemmatopology2} there exist closed sets $G_{i,p}\supset F_{i,p}$ and $G_{i,p}'\supset F_{i,p}'$ such that $G_{i,p}\cap G_{i,p}' = \emptyset$ and
\[
\bigcup_{i = 1}^k (G_{i,p}\cup G_{i,p}') = K'.
\]
Then we let
\begin{equations}
F_{i,p + 1} &= G_{i,p} \;\; (i = 1,\ldots,k - 1),&
F_{k,p + 1} &= G_{k,p}\cap B_p,\\
F_{i,p + 1}' &= G_{i,p}' \;\; (i = 1,\ldots,k - 1),&
F_{k,p + 1}' &= G_{k,p}'\cup\bigcap_{q > p + 1} B_q.
\end{equations}
We claim that $F_{i,p + 1}\cap F_{i,p + 1}' = \emptyset$ for all $i$. This is obvious for $i < k$, and
\[
F_{k,p + 1}\cap F_{k,p + 1}' \subset G_{k,p}\cap (G_{k,p}' \cup \bigcap_{q > p} B_q) \subset G_{k,p}\cap (G_{k,p}' \cup F_{k,p}') = \emptyset.
\]
This guarantees that the recursion can continue.

Note that for $i = 1,\ldots,k - 1$, we have $F_{i,0}\subset F_{i,r}$ and $F_{i,0}'\subset F_{i,r}'$. Also note that $F_{k,0} = F_{k,r}' = K'$, which implies that $G_{k,0}' = G_{k,r} = \emptyset$.

Now we define the sets $G_i,G_i'\subset K$ ($i = 1,\ldots,k$) to get a contradiction to the fact that $\TD(K) = k$ using Lemma \ref{lemmatopology2}. For each $p = 0,\ldots,r$ let
\begin{equations}
T_p &= \hh^{-1}\big([-\delta,\delta]^{k - 1}\times [t_p,t_{p + 1}]\big)\cap K,
\end{equations}
with the convention that $t_0 = -1$ and $t_{r + 1} = 1$. Then let
\begin{equations}
G_i &= \ff^{-1}(G_{i,r})\cap K \;\; (i = 1,\ldots,k - 1),&
G_k &= F_k\cup\bigcup_{p = 0}^r (T_p \cap \ff^{-1}(G_{k,p})),\\
G_i' &= \ff^{-1}(G_{i,r}')\cap K \;\; (i = 1,\ldots,k - 1),&
G_k' &= F_k'\cup\bigcup_{p = 0}^r (T_p\cap \ff^{-1}(G_{k,p}')).
\end{equations}
Then clearly, $G_i\supset F_i$ and $G_i'\supset F_i'$ for all $i$. To get a contradiction, we must demonstrate the following relations:
\begin{align} \label{contra1}
G_i\cap G_i' &= \emptyset \;\; (i = 1,\ldots,k),\\ \label{contra2}
\bigcup_{i = 1}^k (G_i\cup G_i') &= K.
\end{align}
Now, \eqref{contra1} is obviously satisfied for $i = 1,\ldots,k - 1$. By contradiction, suppose that $\xx\in G_k\cap G_k'$. First suppose that $\xx\in F_k$, i.e. $h_k(\xx) = -1$. Since $F_k\cap F_k' = \emptyset$, we have $\xx\in T_p\cap \ff^{-1}(G_{k,p}')$ for some $p = 0,\ldots,r$. Since $\xx\in T_p$, we have $p = 0$, so $\ff(\xx)\in G_{k,0}'$. But this contradicts that $G_{k,0}' = \emptyset$, as we observed above. A similar argument rules out the case $\xx\in F_k'$.

Alternatively, suppose that $\xx\in T_p\cap \ff^{-1}(G_{k,p})\cap T_q\cap \ff^{-1}(G_{k,q}')$ for some $p,q = 0,\ldots,r$. Since $G_{k,p}\cap G_{k,p}' = \emptyset$, we get $p\neq q$, and since $T_p\cap T_q \neq \emptyset$, we get $|p - q| = 1$. Without loss of generality suppose that $q = p + 1$.  Then $\hh(\xx)\in T_p\cap T_q \subset U_p$ and thus $\ff(\xx)\in B_p$. Since
\begin{align*}
G_{k,p} \cap B_p &\subset G_{k,p + 1},&
G_{k,p}' &\subset G_{k,p + 1}',
\end{align*}
it follows that $\ff(\xx)$ is in at most one of the sets $G_{k,p}\cup G_{k,p + 1}$, $G_{k,p}'\cup G_{k,p + 1}'$. This is again a contradiction, so we get \eqref{contra1}.

Similarly, fix $\xx\in K$. If $\hh(\xx)\notin [-\delta,\delta]^{k - 1}\times[-1,1]^k$, then there exists $i = 1,\ldots,k - 1$ such that $\hh(\xx)\in F_{i,0}\cup F_{i,0}'$ and thus $\xx\in G_i\cup G_i'$. So suppose $\hh(\xx)\in [-\delta,\delta]^{k - 1}\times[-1,1]^k$; then $\xx\in T_p$ for some $p = 0,\ldots,r$. Thus
\[
\xx\in T_p\cap \ff^{-1}\left(\bigcup_{i = 1}^k (G_{i,p}\cup G_{i,p}')\right) \subset \bigcup_{i = 1}^k (G_i\cup G_i').
\]
So \eqref{contra2} holds, contradicting that $\TD(K) = k$.
\end{proof}

Let $\zz$ be a member of \eqref{bigintersection}. Then for all $\yy\in (-1,1)^k$, we have $\hh^{-1}(\yy)\cap K\cap (\zz + L_0)\neq\emptyset$. Equivalently,
\[
(-1,1)^k \subset \hh(K\cap (\zz + L_0)).
\]
So by Fact \ref{factlipschitz}, $\scrH^k(K\cap (\zz + L_0)) > 0$. This completes the proof of Lemma \ref{lemmaTPconstant}.
%
%
%

\appendix
\section{An example regarding pseudorectifiability}
The following example conveniently combines counterexamples to several natural hypotheses about pseudorectifiable sets:

\begin{example}
\label{examplemuR1}
There exist sets $R_1,R_2\subset\HH$ such that
\begin{itemize}
\item[(i)] Both $R_1$ and $R_2$ satisfy $\eqref{pseudorectifiable}_{k = 1}$ for some pairs $(\mu_1,T_1)$ and $(\mu_2,T_2)$.
\item[(ii)] $\mu_1\asymp\scrH^1\given_{R_1}$, but $\mu_1\neq\scrH^1\given_{R_1}$. Thus $R_1$ is pseudorectifiable but not rectifiable.
\item[(iii)] $\scrH^1(R_2) < \infty$, but $\scrH^1\given_{R_2}\not\lessless \mu_2$. Thus $R_2$ is not pseudorectifiable despite satisfying \eqref{pseudorectifiable}.
\item[(iv)] Both $R_1$ and $R_2$ are limit sets of similarity IFSes satisfying the strong separation condition.
\item[(v)] There exists a bounded linear map $L:\HH\to\HH$ such that $R_2 = L[R_1]$.
\end{itemize}
\end{example}
Of course, $k = 1$ is just for convenience and the examples can easily be extended to higher dimensions.
\NPC{Proof}
\begin{proof}
Let $\HH_* = \ell^2(\N^2)$ and $\HH = \R\oplus\HH_*$, and let $\pi_* = \pi_{\HH_*}$. Define the function $f:[0,1]\to\HH_*$ as follows:
\[
f(t) = \sum_{k = 1}^\infty \frac{1}{2^k} \ee_{k,\lfloor 2^k t\rfloor}.
\]
Fix $\alpha > \sqrt 2$ and let $F(t) = (t,\alpha f(t))$. We may now define the sets $R_1,R_2\subset\HH$ and their corresponding pairs $(\mu_1,T_1)$ and $(\mu_2,T_2)$ as follows:
\begin{align*}
R_1 &= \cl{F([0,1])},& \mu_1 &= F[\lambda],& T_1 &\equiv L_0 := \R\times\lb\0\rb\\
R_2 &= \cl{\alpha f([0,1])},& \mu_2 &= 0,& T_2 &\equiv 0.
\end{align*}
To demonstrate that \eqref{pseudorectifiable} holds, we first prove the following result:

\begin{claim}
\label{claimpiecewiseepsilon}
Fix $V\in\Grass(\HH_*)$ and $\epsilon > 0$. Then there exists $K = K_V\subset [0,1]$ such that $\pi_V\circ f\given_K$ is piecewise $\epsilon$-Lipschitz, and $\lambda([0,1]\butnot K) \leq \epsilon$.
\end{claim}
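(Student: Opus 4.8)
The plan is to exploit the finite rank of $\pi_V$ together with the self-similar dyadic structure of $f$. First I would write $\pi_V\circ f = \sum_{\ell\ge 1}g_\ell$, where $g_\ell(t) = 2^{-\ell}\pi_V\ee_{\ell,\lfloor 2^\ell t\rfloor}$ is a step function that is constant on the dyadic intervals of generation $\ell$ (the series converges uniformly since $\pi_V$ is bounded, so this is a legitimate decomposition). Finite rank gives $\sum_{\ell,j}\|\pi_V\ee_{\ell,j}\|^2 = \|\pi_V\|_{\mathrm{HS}}^2 < \infty$; setting $b_\ell = \big(\sum_j\|\pi_V\ee_{\ell,j}\|^2\big)^{1/2}$ we get $\sum_\ell b_\ell^2 < \infty$, hence both $\beta_N := \sum_{\ell>N}2^{-\ell/2}b_\ell$ (by Cauchy--Schwarz) and $\gamma_N := \big(\sum_{\ell>N}b_\ell^2\big)^{1/2}$ tend to $0$ as $N\to\infty$. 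A one-line Cauchy--Schwarz estimate over the $\le 2^\ell$ jump locations of $g_\ell$ shows $\mathrm{Var}(g_\ell)\lesssim 2^{-\ell/2}b_\ell$, so $\pi_V\circ f$ is a pure-jump function of bounded variation whose jumps sit at the dyadic rationals, and the tail $\sum_{\ell>N}g_\ell$ has total variation $\lesssim\beta_N$.

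Given $\epsilon$, choose $N$ large (in terms of $\epsilon$) and split $\pi_V\circ f = \phi_N + \psi_N$ with $\phi_N = \sum_{\ell\le N}g_\ell$ constant on each generation-$N$ dyadic interval and $\psi_N = \sum_{\ell>N}g_\ell$. For a dyadic rational $p\in(0,1)$ of generation $n$, with level-$\ell$ index $a(\ell,p) = 2^{\ell-n}\cdot(\text{numerator of }p)$, set $\tilde J(p) = \sum_{\ell\ge\max(n,N+1)}2^{-\ell}\big(\|\pi_V\ee_{\ell,a(\ell,p)}\| + \|\pi_V\ee_{\ell,a(\ell,p)-1}\|\big)$; this is a manifestly nonnegative quantity that dominates the jump of $\psi_N$ at $p$, satisfies $\tilde J(p)\ge 2^{-\ell}\|\pi_V\ee_{\ell,a(\ell,p)}\|$ for every admissible $\ell$, and interchanging sums gives $\sum_p\tilde J(p)\le 2\beta_N$. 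Now define $K = [0,1]\setminus\bigcup_p\big(p-r(p),\,p+r(p)\big)$ with $r(p) = \tfrac{2}{\epsilon}\tilde J(p)$; then $\lambda([0,1]\setminus K)\le\sum_p 2r(p) = \tfrac{4}{\epsilon}\sum_p\tilde J(p)\le\tfrac{8}{\epsilon}\beta_N < \epsilon$ once $N$ is large. Finally partition $[0,1]$ into the $2^N$ generation-$N$ dyadic intervals; since $\phi_N$ is constant on each, it remains to prove that $\psi_N$ is $\epsilon$-Lipschitz on $K$, which then makes $\pi_V\circ f$ $\epsilon$-Lipschitz on $K$ intersected with each piece.

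The heart of the argument is that last estimate. If $s<t$ lie in $K$, they are continuity points of $\psi_N$ (a genuine jump point has $\tilde J>0$, hence is removed), so $\psi_N(t)-\psi_N(s) = \sum_{p\in(s,t)}(\text{jump of }\psi_N\text{ at }p)$ by absolute convergence, and thus $\|\psi_N(t)-\psi_N(s)\|\le\sum_{p\in(s,t)}\tilde J(p)$. Put $L=t-s$ and $\ell_0 = \min\{\ell:2^\ell L\ge 1\}$. Because $2^{\ell_0-1}L<1$, the interval $(s,t)$ contains at most one dyadic point $p^*$ of generation $<\ell_0$; since $p^*\in(s,t)$ and $s,t\in K$ we get $r(p^*)\le L/2$, i.e. $\tilde J(p^*)\le\tfrac{\epsilon}{4}L$. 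For the remaining points, of generation $\ge\ell_0$, interchange the sums defining $\sum\tilde J(p)$: for each level $\ell\ge\max(\ell_0,N+1)$ the relevant indices lie in an integer window of length $\lesssim 2^\ell L$, so Cauchy--Schwarz bounds their contribution by $\lesssim\sqrt{2^\ell L}\,b_\ell$, and summing over $\ell$ using $\sum_{\ell\ge\ell_0}2^{-\ell}\le 2L$ together with one more Cauchy--Schwarz gives a total $\lesssim\gamma_N L$. Hence $\|\psi_N(t)-\psi_N(s)\|\le\tfrac{\epsilon}{4}L + C\gamma_N L\le\epsilon L$ provided $\gamma_N$ is small enough.

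The hard part, which this scheme is engineered to handle, is pairs $s,t\in K$ with $L=t-s$ very small: there the naive bound $\|\psi_N(t)-\psi_N(s)\|\le\mathrm{Var}(\psi_N;(s,t))$ fails badly near the midpoints of large dyadic intervals, so one genuinely must (i) remove neighborhoods of jumps of radius proportional to the jump size — affordable precisely because the high-frequency tail $\psi_N$ has small total variation — and (ii) separate the single "coarse" dyadic point of $(s,t)$, whose jump is then controlled directly by the removal, from the "fine" scales $\ell\ge\ell_0$ where $2^\ell L\ge 1$ and a clean square-function estimate is available. (The "piecewise" conclusion comes out with the finite partition into $2^N$ dyadic intervals, which is at least as strong as the bare claim.)
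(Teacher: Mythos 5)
Your proof is correct, but it takes a noticeably different route than the paper's. Both arguments hinge on the same starting point, namely the finite-trace / Hilbert--Schmidt identity $\sum_{k,i}\|\pi_V(\ee_{k,i})\|^2 = \dim(V) < \infty$, and both ultimately exhibit $K$ by removing thin neighborhoods of dyadic points. The difference is in how the exceptional set is carved out and how the Lipschitz estimate is closed. The paper removes two families of sets for all $k\geq k_0$: the intervals $B_k$ where the \emph{individual} coefficient $\|\pi_V(\ee_{k,\lfloor 2^kt\rfloor})\|^2$ exceeds $2^{-3k/4}$ (controlled by a simple Markov count against the trace), and the uniform neighborhoods $\widetilde B_k$ of radius $2^{-5k/4}$ around generation-$k$ dyadic points. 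Then for $t_1,t_2\in K$ in the same generation-$k_0$ piece, the ``first scale of disagreement'' $k_1$ gives a lower bound $|t_2-t_1|\gtrsim 2^{-5k_1/4}$ from $\widetilde B_{k_1}$, while the exclusion from $B_k$ forces the coefficient tail from level $k_1$ on to be a fast geometric series $\lesssim 2^{-11k_1/8}$, and comparing the two exponents (with $k_1 > k_0$ large) closes the estimate in one line. Your argument instead works with total variation and jumps: you remove an \emph{adaptively sized} neighborhood of each dyadic point with radius proportional to a dominating jump size $\tilde J(p)$, sum these to control the removed measure, then split the increment over $(s,t)$ into a single ``coarse'' dyadic point (controlled exactly because its removal radius is at most $|t-s|/2$) plus a square-function / Cauchy--Schwarz estimate across fine scales $\ell\geq\ell_0$. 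Both proofs are sound; yours is somewhat longer and requires the absolute-convergence rearrangement for the jump sum and the two Cauchy--Schwarz passes, while the paper's is shorter and more elementary because it never needs to sum jumps -- it works coefficient-by-coefficient and never forms a variation. What your version buys is robustness: the adaptive removal and the $L^2$ square-function bound are closer to the standard machinery of martingale/harmonic analysis and would generalize more readily if the dyadic structure were perturbed, whereas the paper's choice of exponents $3/4$ and $1/4$ is tailored to make the two geometric series cancel cleanly.
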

In what follows we will use the notation
\[
I(k,i) = \left(\frac{i}{2^k},\frac{i + 1}{2^k}\right).
\]
\begin{subproof}
We recall the trace formula for $\dim(V)$:
\[
\infty > \dim(V) = \Tr[\pi_V] = \Tr[\pi_V^2] = \sum_{k = 1}^\infty \sum_{i = 0}^{2^k - 1} \|\pi_V(\ee_{k,i})\|^2.
\]
In particular, for each $k$,
\[
\#\left\{i = 0,\ldots,2^k - 1 : \|\pi_V(\ee_{k,i})\|^2 \geq \frac{1}{2^{3k/4}}\right\} \leq 2^{3k/4} \dim(V).
\]
Letting
\begin{align*}
B_k &= \left\{t\in[0,1] : \|\pi_V(\ee_{k,\lfloor 2^k t\rfloor})\|^2 \geq 2^{-3k/4}\right\}\\
\w B_k &= \left\{t\in[0,1] : \dist(2^k t,\Z) \leq 2^{-k/4}\right\}
\end{align*}
we have
\[
\lambda(B_k) \leq 2^{-k/4}\dim(V), \;\; \lambda(\w B_k) \leq 2\cdot 2^{-k/4}.
\]
Letting $K = [0,1]\butnot \bigcup_{k\geq k_0} (B_k\cup\w B_k)$ for some sufficiently large $k_0$, we get $\lambda([0,1]\butnot K) \leq \epsilon$.

We claim that for each interval of the form $I = I(k_0,i)$, $\pi_V\circ f\given_{K\cap I}$ is $\epsilon$-Lipschitz. Indeed, fix distinct $t_1,t_2\in K\cap I$, and write
\begin{align*}
\ell_k^{(i)} &= \lfloor 2^k t_i\rfloor \;\; (i = 1,2, \; k\in\N)\\
k_1 &= \min\{k : \ell_k^{(1)}\neq \ell_k^{(2)}\} > k_0.
\end{align*}
Since $t_1,t_2\notin \w B_{k_1}$, we have
\[
|t_2 - t_1| \geq 2^{-5k_1/4}.
\]
On the other hand, since $t_1,t_2\notin B_k$ for all $k\geq k_1$, we have
\begin{align*}
&\|\pi_V\circ f(t_2) - \pi_V\circ f(t_1)\|
= \left\|\sum_{k = k_1}^\infty \frac{1}{2^k} \pi_V(\ee_{k,\ell_k^{(2)}} - \ee_{k,\ell_k^{(1)}})\right\|\\
&\leq \sum_{k = k_1}^\infty \frac{2}{2^k} \max_{i = 1}^2 \|\pi_V(\ee_{k,\ell_k^{(i)}})\|
\leq \sum_{k = k_1}^\infty \frac{2}{2^k} \sqrt{2^{-3k/4}}
\leq \frac{\epsilon}{2^{5k_1/4}}
\leq \epsilon|t_2 - t_1|,
\end{align*}
where the second-to-last inequality holds assuming $k_0$ is sufficiently large.
\end{subproof}
Next, note that for each $k\in\N$ and $i = 0,\ldots,2^k - 1$,
\[
\diam\big(F\big(I(k,i)\big)\big) \leq \sum_{\ell = k + 1}^\infty \frac{1 + \alpha}{2^\ell} \asymp_\times \frac{1}{2^k} = \lambda\big(I(k,i)\big) = \mu_1\big(F\big(I(k,i)\big)\big).
\]
So by Theorem \ref{theoremRTT}, $\scrH^1\given_{R_1}\leq C\mu_1$ for some constant $C > 0$.

To show that \eqref{pseudorectifiable} holds, fix $V\in\Grass(\HH)$. Then $V\subset W := \R\oplus U$ for some $U\in\Grass(\HH_*)$. Fix $\epsilon > 0$, and let $K = K_U\subset[0,1]$ be as in Claim \ref{claimpiecewiseepsilon}. Since $\pi_U\circ f\given_K$ is piecewise $\epsilon$-Lipschitz, $\pi_W\circ F\given_K$ is piecewise $(1 + \alpha\epsilon)$-Lipschitz, so by Fact \ref{factlipschitz},
\begin{align*}
\scrH^1(\pi_W(R_1)) &= \scrH^1(\pi_W\circ F([0,1])) \leq \scrH^1(\pi_W\circ F(K)) + \scrH^1\big(F([0,1]\butnot K)\big)\\
&\leq (1 + \alpha\epsilon)\scrH^1(K) + C\mu_1\big(F([0,1]\butnot K)\big) \leq (1 + \alpha\epsilon) + C\epsilon.
\end{align*}
Similarly,
\begin{align*}
\scrH^1(\pi_W(R_2)) &= \alpha\scrH^1(\pi_W\circ f([0,1])) \leq \alpha\scrH^1(\pi_W\circ f(K)) + \alpha\scrH^1\big(F([0,1]\butnot K)\big)\\
&\leq \alpha\epsilon\scrH^1(K) + C\mu_1\big(F([0,1]\butnot K)\big) \leq \alpha\epsilon + C\epsilon.
\end{align*}
Since $\epsilon$ was arbitrary, $\scrH^1(\pi_W(R_1)) \leq 1$ and $\scrH^1(\pi_W(R_2)) = 0$. In particular, \eqref{pseudorectifiable} holds for $R_2$.

Since $\scrH^k\big(\pi_{L_0}\circ\pi_W(R_1)\big) = \scrH^k([0,1]) = 1$, another application of Fact \ref{factlipschitz} shows that $\scrH^1(\pi_W(R_1)) = 1$. On the other hand, Claim \ref{claimpiecewiseepsilon} implies that $\pi_W(R_1) = \pi_W\circ F([0,1])$ is rectifiable, and thus pseudorectifiable by Proposition \ref{propositionrectifiable}. Applying \eqref{pseudorectifiable} with $V = L_0$ shows that the tangent plane function of $\pi_W(R_1)$ is identically equal to $L_0$. Thus for all $A\subset R_1$
\begin{align*}
\int \#(\pi_V^{-1}(\zz)\cap A) \; \dee\scrH^1\given_V(\zz) &= \int \det(\pi_V\given L_0) \#(\pi_W^{-1}(\yy)\cap A) \;\dee\scrH^1\given_W(\yy)\\
&= \int_A \det(\pi_V\given T_1(\xx)) \;\dee\mu_1(\xx),
\end{align*}
i.e. \eqref{pseudorectifiable} holds for $R_1$.

Having proven (i), we proceed to (ii)-(v). For (v), we observe that $R_2 = \pi_*(R_1)$. To see (iv), note that $R_1$ and $R_2$ are the limit sets of the iterated function systems
\begin{align*}
u_a^{(1)}(\xx) &= \frac 12(a\ee_0 + \alpha\ee_{1,a}) + \frac12\left(x_0\ee_0 + \sum_{k = 1}^\infty \sum_{i = 0}^{2^k - 1} x_{k,i} \ee_{k + 1,2^k a + i}\right) \;\; (a = 0,1)
\end{align*}
\begin{align*}
u_a^{(2)}(\xx) &= \frac \alpha2\ee_{1,a} + \frac12\sum_{k = 1}^\infty \sum_{i = 0}^{2^k - 1} x_{k,i} \ee_{k + 1,2^k a + i} \;\; (a = 0,1)
\end{align*}
respectively. The strong separation condition is proven by observing that if $W = (\R\ee_{1,0} + \R\ee_{1,1})^\perp$, then $u_a^{(i)}(R_i) \subset \frac \alpha2\ee_{1,a} + W$.

Next, since both these IFSes have Bowen parameter $\delta = 1$, by \cite[Theorems 11.2 and 11.3]{MSU}, the Hausdorff 1-measures $\scrH^1(R_i)$ are positive and finite. This demonstrates (iii). In fact, an elementary calculation of the Hausdorff 1-measure using the mass distribution principle shows that $\scrH^1(R_1) \geq \alpha/\sqrt 2 > 1 = \mu_1(R_1)$, so $\mu_1\neq\scrH^1\given_{R_1}$. Since we already showed that $\scrH^1\given_{R_1}\lessless\mu_1$, we get that $R_1$ is pseudorectifiable, but by Proposition \ref{propositionrectifiable}, $R_1$ is not rectifiable. This demonstrates (ii) and completes the proof.
\end{proof}

\bibliographystyle{amsplain}

\bibliography{bibliography}

\end{document}